\documentclass[11pt]{article}
\usepackage{bbm}
\usepackage{geometry}
\usepackage{color}
\usepackage{amsfonts}
\usepackage{algorithm}
\usepackage{algorithmic}
\usepackage{latexsym, amssymb, amsmath, amscd, amsthm, amsxtra}
\usepackage{mathtools}
\usepackage{enumerate}
\usepackage[all]{xy}
\usepackage{mathrsfs}
\usepackage{fancyhdr}
\usepackage{listings}
\usepackage{hyperref}
\usepackage{enumitem}

\pagestyle{plain}

\def\11{\mathbbm{1}}
\def\Gc{\mathcal{G}}

\def\Es{\mathsf E}
\def\ER{Erd\H{o}s-R\'enyi\ }
\def\Ac{\mathcal A}

\def\Vs{\mathsf V}
\def\Gs{\mathsf G}
\def\vs{\mathsf v}

\def\Rs{\mathsf R}
\def\Eb{\mathbb E}

\newcommand{\Pb}{\mathbb P}

\newtheorem{thm}{Theorem}[section]

\newtheorem{proposition}[thm]{Proposition}

\newtheorem{lemma}[thm]{Lemma}
\newtheorem{cor}[thm]{Corollary}

\theoremstyle{definition}
\newtheorem{remark}{Remark}[section]

\numberwithin{equation}{section}

\newenvironment{definition}[1][Definition]{\begin{trivlist}
		\item[\hskip \labelsep {\bfseries #1}]}{\end{trivlist}}

\makeatletter

\makeatother

\hypersetup{colorlinks=true,linkcolor=blue}

\begin{document}
	
	\title{The Algorithmic Phase Transition of Random Graph Alignment Problem}
	
	\author{Hang Du\\MIT\and Shuyang Gong\\Peking University\and Rundong Huang\\Peking University}
	
	\maketitle

\begin{abstract}
    We study the graph alignment problem over two independent Erd\H{o}s-R\'enyi random graphs on $n$ vertices, with edge density $p$ falling into two regimes separated by the critical window around $p_c:=\sqrt{\log n/n}$. Our result reveals an algorithmic phase transition for this random optimization problem: polynomial-time approximation schemes exist in the sparse regime, while statistical-computational gap emerges in the dense regime. Additionally, we establish a sharp transition on the performance of online algorithms for this problem when $p$ is in the dense regime, resulting in a $\sqrt{8/9}$ multiplicative constant factor gap between achievable solutions and optimal solutions. 
\end{abstract}

\section{Introduction and main results}\label{sec1}
The \emph{Graph Alignment Problem} (GAP) for two simple graphs with the same number of vertices involves finding a vertex bijection which maximizes the size of overlap of these two graphs. Specifically, given $G(V,E)$ and $\mathsf G(\mathsf V,\mathsf E)$ with $|V|=|\mathsf V|$, the goal is to find a bijection $\pi:V\to\mathsf V$ that maximizes the expression
	\[
	\sum_{v_i\neq v_j}\mathbf{1}_{(v_i,v_j)\in E}\mathbf{1}_{(\pi(v_i),\pi(v_j))\in \mathsf E}\,.
	\] 
Closely related to the \emph{Maximum Common Subgraph Problem} (MCS), GAP is an important but challenging  combinatorial optimization problem, which plays essential roles in various applied fields, such as computational biology \cite{SXB08,VCP15}, social networking \cite{NS08,NS09}, computer vision \cite{BBM05,CSS07} and natural language processing \cite{HNM05}. The study of efficient algorithms for solving GAP exactly or approximately has been conducted extensively through the past decades.

Unfortunately, as a special case of the \emph{Quadratic Assignment Problem} (QAP) \cite{PRW94,BCPP98}, the exact solution of GAP is known to be NP-hard, meaning that every problem in NP can be reduced to it in polynomial time. Moreover, it was shown in \cite{MMS10} that approximating QAP within a factor $2^{\log^{1-\varepsilon}(n)}$ for any $\varepsilon>0$ is also NP-hard, so finding near-optimal solutions for GAP efficiently also seems to be out of reach in general.

Due to the worst-case hardness result, analyses of GAP in existing literature are typically restricted to specific classes of instances, such as sparse graphs (e.g. \cite{KHK11,KP11}) or correlated \ER graphs (e.g. \cite{PG11, YG13, LFP14, KHG15, FQRM+16, SGE17, BCL19, DMWX21, FMWX22a, FMWX22b, BSH19, CKMP19, DCKG19, MX20, GM20, MRT23, GMS22+, MWXY23+, DL23+}, which we will discuss in details in Section~\ref{subsec-BG}). In this paper we consider GAP over typical instances for a pair of \emph{independent} \ER graphs with the same edge density, which we refer to as the random GAP. Our primary objective is to find near-optimal solutions within a multiplicative constant factor. 

Formally we fix $n\in \mathbb{N}, p\in(0,1)$, denote $V=\{v_1,\dots,v_n\},\Vs=\{\mathsf{v}_1,\dots,\mathsf{v}_n\}$ and let $G,\Gs$ be independent samples of \ER graphs on $V,\Vs$ with edge density $p$ (whose distribution we denote as $\mathbf G(n,p)$), respectively. For $1\le i\neq j\le n$, we let
	\begin{equation*}
		G_{i,j}=G_{j,i}=\mathbf{1}_{(v_i,v_j)\in E},\quad \Gs_{i,j}=\Gs_{j,i}=\mathbf{1}_{(\mathsf{v}_i,\mathsf{v}_j)\in \Es}\,.
	\end{equation*}
	Then, $\{G_{i,j}\}_{1\le i<j\le n},\{\Gs_{i,j}\}_{1\le i<j\le n}$ are independent Bernoulli variables with parameter $p$ by definition. %We denote  the symmetric group by $\operatorname{S}_n$. 
 For any permutation $\pi\in \operatorname{S}_n$, %it naturally corresponding to a bijection $\Pi$ from $V$ to $\Vs$ by setting $\Pi(v_i)=\vs_{\pi(i)}$ for any $1\le i\le n$. With this notation, 
	define 
	\[
	\operatorname{O}(\pi)=\sum_{1\le i<j\le n}G_{i,j}\Gs_{\pi(i),\pi(j)}
	\]
	as the overlap of $G,\Gs$ along with the vertex correspondence $v_i\mapsto \vs_{\pi(i)}$. We will study the optimization problem of $\operatorname{O}(\pi)$ under typical realizations of the random inputs $G$ and $\Gs$. %To this end, sometimes we will actually work on certain fixed realization of $\Gs=(\Vs,\Es)$ which satisfies certain \emph{$p$-regularity condition} (which holds for an \ER graph $\mathbf G(n,p)$ with high probability; see Definition and Proposition below), and simply exploits the randomness of the sole graph $G$. From this perspective, 
	we note that for any $\pi\in \operatorname{S}_n$, the expectation of $\operatorname{O}(\pi)$ is given by $E_{n,p}=\binom{n}{2}p^2$. We will see later that when $p$ is large enough, $E_{n,p}$ is indeed the leading order term of $\max_{\pi\in\operatorname{S}_n} \operatorname O(\pi)$, so it would be reasonable to consider the centered version of the family of variables
	\begin{equation}\label{eq-centered-version}
		\{\widetilde{\operatorname{O}}(\pi)\}_{\pi\in \operatorname{S}_n}\stackrel{\operatorname{def}}{=}\{\operatorname{O}(\pi)-E_{n,p}\}_{\pi\in \operatorname{S}_n}\,.
	\end{equation}

	For positive functions $f,g:\mathbb N\to \mathbb R^+$, we use standard notations like $f=o(g),O(g),\Omega(g)$ to mean that $f/g$ is converging to $0$, bounded above from $\infty$, bounded below from $0$, respectively. %Also we use $f\ll g$ (resp. $f\gg g$) to mean $f/g$ tends to $0$ (resp. $\infty$) as $n\to\infty$, and $f\asymp g$ stands for that $cf\le g\le Cf$ holds for some universal constants $0<c<C$.
 %(maybe changed to) 
 Additionally, we use the notation $f\ll g$ (resp. $f\gg g$) to indicate that $f/g$ tends to $0$ (resp. $\infty$) as $n\to\infty$. We also use $f\asymp g$ to denote that $cf\le g\le Cf$ for some universal constants $0<c<C$. In this paper, we focus on two regimes of the parameter $p$ that are separated by the critical window around $p_c=\sqrt{\log n/n}$. Formally, we introduce the following conventions:
	\begin{itemize}
		\item we say $p$ is in the \textbf{sparse regime}, if $\log n/n\le p\ll p_c$;
		\item we say $p$ is in the \textbf{dense regime}, if $p_c\ll p\ll 1$.
	\end{itemize}

    Our first result establishes the asymptotics of the maximum centered overlap.

\begin{thm}\label{thm-info}
	Let $(G,\Gs)\sim \mathbf G(n,p)^{\otimes 2}$, then the following hold.\\
\noindent \emph{(i)} When $p$ is in the sparse regime, for $S_{n,p}{=}\frac{n\log n}{\log \big({\log n}/{np^2}\big)}$, we have
			\begin{equation}\label{eq-sparse-info}
				\max_{\pi\in\operatorname{S}_n}\widetilde{\operatorname{O}}(\pi)\Big\slash S_{n,p}\stackrel{\text{in probability}}{\rightarrow} 1\,, \text{as }n\to \infty\,.
            \end{equation}
			\noindent \emph{(ii)} When $p$ is in the dense regime, for $D_{n,p}=\sqrt{n^3p^2\log n}$, we have
			\begin{equation}\label{eq-dense-info}
				\max_{\pi\in\operatorname{S}_n}\widetilde{\operatorname{O}}(\pi)\Big\slash D_{n,p}\stackrel{\text{in probability}}{\rightarrow} 1\,, \text{as }n\to\infty\,.
			\end{equation}
%	In particular, \eqref{eq-sparse-info} and \eqref{eq-dense-info} holds for $G,\Gs$ independently sampled from $\mathbf G(n,p)$.
\end{thm}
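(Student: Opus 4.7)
The plan is to prove matching upper and lower bounds in each of the two regimes via the first and second moment methods, respectively. A key structural observation simplifies the analysis: for any \emph{fixed} $\pi \in \operatorname{S}_n$, the $\binom{n}{2}$ variables $\{G_{i,j}\Gs_{\pi(i),\pi(j)}\}_{i<j}$ are mutually independent $\operatorname{Bernoulli}(p^2)$ random variables, since $\pi$ induces a bijection on unordered pairs and $G,\Gs$ are independent. Thus $\operatorname{O}(\pi) \sim \operatorname{Binomial}(\binom{n}{2}, p^2)$ exactly, and sharp Chernoff-type tails apply directly.

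For the upper bound, I combine a fixed-$\pi$ large-deviation estimate with the Stirling bound $\log n! = n\log n - n + O(\log n)$ and a union bound over $\operatorname{S}_n$. In the dense regime, $\operatorname{Var}(\operatorname{O}(\pi)) = \binom{n}{2}p^2(1-p^2) \asymp n^2 p^2$, and the target deviation $D_{n,p}$ lies inside the sub-Gaussian window of Bennett's inequality (precisely because $D_{n,p} \ll n^2 p^2$ iff $p \gg p_c$), giving $\mathbb P(\widetilde{\operatorname{O}}(\pi) \ge (1+\varepsilon) D_{n,p}) \le \exp(-(1+\varepsilon)^2(1+o(1)) n\log n)$. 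In the sparse regime, the binomial tail $\mathbb P(\operatorname{O}(\pi) \ge k) \le \binom{\binom{n}{2}}{k}p^{2k}$, evaluated at $k=(1+\varepsilon)S_{n,p}$, gives $\exp(-(1+\varepsilon)(1+o(1)) n\log n)$; the very form of $S_{n,p}$ is dictated by balancing this binomial-entropy bound against $\log n!$. The union bound in either regime then closes the upper direction.

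For the lower bound, I apply a second moment argument to $N_\varepsilon := |\{\pi \in \operatorname{S}_n : \widetilde{\operatorname{O}}(\pi) \ge (1-\varepsilon)\cdot(\text{scale})\}|$. The first moment $\mathbb E N_\varepsilon \to \infty$ follows from a matching lower-tail estimate (anti-concentration/local CLT in the dense regime, Stirling on the binomial coefficient in the sparse regime). A direct four-case computation---sorting the products in $\operatorname{O}(\pi_1)\operatorname{O}(\pi_2)$ by whether the two $G$-edges and the two $\Gs$-edges coincide---yields
\[
\operatorname{Cov}(\operatorname{O}(\pi_1), \operatorname{O}(\pi_2)) = 2\binom{n}{2} p^3(1-p) + D(\pi_1,\pi_2)\,p^2(1-p)^2,
\]
where $D(\pi_1,\pi_2) := \#\{\{i,j\} : \{\pi_2^{-1}\pi_1(i), \pi_2^{-1}\pi_1(j)\} = \{i,j\}\}$ is a simple function of the fixed points and $2$-cycles of $\pi_2^{-1}\pi_1$. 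For a uniform-random pair one has $D = O(1)$, and the pairwise correlation is $\asymp p = o(1)$.

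The main obstacle is obtaining $\mathbb E N_\varepsilon^2 \le (1+o(1))(\mathbb E N_\varepsilon)^2$, which requires uniform control of the bivariate tail $\mathbb P(\widetilde{\operatorname{O}}(\pi_1) \ge t,\, \widetilde{\operatorname{O}}(\pi_2) \ge t)$. The $(\pi_1,\pi_2)$-independent term $2\binom{n}{2} p^3(1-p)$ is driven entirely by the fluctuations of the total edge counts $|E(G)|, |E(\Gs)|$ and, left alone, is large enough to break the naive second moment in the dense regime; my plan is therefore to condition on $|E(G)|$ and $|E(\Gs)|$ lying within a typical window around $\binom{n}{2}p$ and to re-center each $\operatorname{O}(\pi)$ by the conditional mean $|E(G)|\,|E(\Gs)|/\binom{n}{2}$, after which the residual covariance is governed purely by $D(\pi_1,\pi_2)$. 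One then groups pairs $(\pi_1,\pi_2)$ by the cycle type of $\pi_2^{-1}\pi_1$, uses that permutations with a prescribed number of fixed points and $2$-cycles form a small fraction of $\operatorname{S}_n$, and sums the $D$-dependent contributions against the combinatorial room. Carrying out this cycle-type sum cleanly and uniformly across both regimes, while rigorously handling the conditioning on edge counts (and the small shift it induces on the target scale), is expected to be the most delicate step.
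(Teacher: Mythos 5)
There is a genuine gap, and it sits exactly at the step you flag as "the most delicate": the bound $\mathbb E N_\varepsilon^2\le (1+o(1))(\mathbb E N_\varepsilon)^2$ is not just delicate, it is false, and conditioning on $|E(G)|,|E(\Gs)|$ does not rescue it. Your claim that the pair-independent covariance term $2\binom{n}{2}p^3(1-p)$ is "driven entirely by the fluctuations of the total edge counts" is incorrect: only the part coming from coincident $G$-edges with distinct $\Gs$-edges is an edge-count effect; the symmetric part, where the two products $G_{i,j}\Gs_{\pi_1(i),\pi_1(j)}$ and $G_{k,l}\Gs_{\pi_2(k),\pi_2(l)}$ use distinct $G$-edges but the \emph{same} $\Gs$-edge, contributes $\approx\binom{n}{2}p^3$ and depends on which edges of $\Gs$ are present, not on how many. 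Equivalently, conditioning on $G$, the two overlaps share the binomial $S_0\sim\mathbf B(|\operatorname{OL}(G,\pi_1^{-1}\pi_2)|,p)$ with $|\operatorname{OL}|\approx\binom n2p^2$ even when $\pi_1^{-1}\pi_2$ is a derangement. At the deviation scale $M=(1-\varepsilon)D_{n,p}$ this residual covariance of order $n^2p^3$, while giving correlation coefficient only $O(p)=o(1)$, inflates each pairwise joint tail by roughly $\exp\big(\operatorname{Cov}\cdot M^2/\operatorname{Var}^2\big)=\exp\big(\Theta(np\log n)\big)\to\infty$ (a deviation of $\Theta(\sqrt{n\log n})$ standard deviations amplifies even an $o(1)$ correlation), and this applies to the typical pairs that dominate the count. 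So the best one can hope for is $\mathbb E N_\varepsilon^2=\exp(o(n\log n))(\mathbb E N_\varepsilon)^2$, whence Paley--Zygmund/Chebyshev only yields success probability $\exp(-o(n\log n))$, not $1-o(1)$. This is precisely the obstruction the paper addresses: it conditions on an admissible realization of $G$, proves the two-point estimate and the $\exp(o(n\log n))$-ratio second moment (Propositions~\ref{prop-two-point-est} and~\ref{prop-second-moment-est}), and then boosts the vanishing Paley--Zygmund bound to $1-o(1)$ via Talagrand's concentration inequality for $\max_\pi\operatorname{O}(\pi)$ (Proposition~\ref{prop-concentration-of-max-O-pi}), whose failure probability $\exp(-\gamma n\log n)$ is beaten by $\exp(-o(n\log n))$. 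Your proposal contains no such concentration step, and without it the dense lower bound does not close.

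The sparse regime is a second, independent gap. The paper proves the lower bound there \emph{constructively}: it is a byproduct of the analysis of the greedy matching algorithm $\mathcal A_\eta$ (Proposition~\ref{prop-sparse-PTAS}), and the authors explicitly state they found no non-constructive proof. Your plan to run the same second-moment-plus-cycle-type computation "uniformly across both regimes" does not engage with why this regime is harder: the target scale $S_{n,p}=n\log n/\log(\log n/np^2)$ is $o(n\log n)$, so even if one tried to import the concentration-boost trick, the $o(n\log n)$ slack inherent in the two-point estimates is no longer negligible compared with the relevant concentration scale $\Theta(S_{n,p})$, and the Poisson-type shared-budget correlations again prevent a $(1+o(1))$ factorization. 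As written, your argument establishes the upper bounds (which do match the paper's union-bound proof) but neither lower bound.
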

\begin{remark}
	A simple computation gives that
		\[
		p\ll p_c\Rightarrow S_{n,p}\gg E_{n,p}\quad\text{and}\quad
		p\gg p_c\Rightarrow D_{n,p}\ll E_{n,p}\,.
		\]
		From these relations, it follows that the typical asymptotic behavior of $\max_{\pi\in \operatorname{S}_n}\operatorname{O}(\pi)$ is governed by $S_{n,p}$ in the sparse regime. While in the dense regime, it is captured by the expectation $E_{n,p}$. Moreover, a straightforward application of concentration inequalities suggests that in the dense regime, all the $\operatorname O(\pi)$ values concentrate around $E_{n,p}$. This explains why we focus on the centered version $\widetilde{\operatorname{O}}(\pi)$. Otherwise, the problem would become trivial in the dense regime, both in terms of informational and computational aspects.
	\end{remark}
We now shift our attention to the algorithmic aspect of this problem, and we start by presenting the precise definition of graph alignment algorithms.
\begin{definition}[Graph alignment algorithms]
	%	For a (potentially randomized) \emph{graph alignment algorithm}, we mean an algorithm $\mathcal A=\Ac(\Omega,f,\{\mathbb{P}_{G,\Gs}\})$, where $\Omega$ is some abstract sample space, $f$ is a deterministic function from $\operatorname{G}_n\times\operatorname{G}_n\times \Omega$ to $\operatorname{S}_n$,  $\{\mathbb{P}_{G,\Gs}\}$ is a family of probability measures on $\Omega$ indexed by $\operatorname{G}_n\times \operatorname{G}_n$, and $\mathcal A$ is defined as follows: for an input $(G,\Gs)$, the output $\mathcal A(G,\Gs)=f(G,\Gs,\omega)\in \operatorname{S}_n$ with $\omega$ sampled from $\mathbb{P}_{G,\Gs}$. 
		%	Denote the set of graph alignment algorithm as $\operatorname{GAA}$. 
  Denote $\mathfrak{G}_n$ as the collection of all simple graphs on $n$ vertices. A \emph{graph alignment algorithm} is a (potentially randomized) algorithm denoted by $\mathcal{A}=\mathcal(\Omega,f,{\mathbb{P}_{G,\Gs}})$. Here, $\Omega$ is an abstract sample space, $f$ is a deterministic function from $\mathfrak{G}_n\times\mathfrak{G}_n\times \Omega$ to $\operatorname{S}_n$, and ${\mathbb{P}_{G,\Gs}}$ is a family of probability measures on $\Omega$ indexed by $\mathfrak{G}_n\times \mathfrak{G}_n$. Given an input $(G,\Gs)$, the algorithm first samples the internal randomness $\omega$ from $\mathbb{P}_{G,\Gs}$ and then outputs $\mathcal{A}(G,\Gs)=f(G,\Gs,\omega)\in \operatorname{S}_n$.

The set of graph alignment algorithms will be denoted as $\operatorname{GAA}$.
	\end{definition} 
We aim to determine the power limit of \emph{efficient algorithms} in $\operatorname{GAA}$ for typical instances of $(G,\Gs)\sim \mathbf G(n,p)^{\otimes 2}$. For $G,\Gs$ independently sampled from $\mathbf{G}(n,p)$, let $\operatorname{S}_\beta(G,\Gs)$ be the set of permutations $\pi$ which satisfy $\widetilde{\operatorname{O}}(\pi)\ge\beta S_{n,p}$ (resp. $\widetilde{\operatorname{O}}(\pi)\ge \beta D_{n,p}$) for $p$ in the sparse regime (resp. the dense regime). Namely, $\operatorname{S}_\beta(G,\Gs)$ is the set of asymptotically $\beta$-optimal solutions to the random GAP. 
	 %Specifically, we seek to identify the values of $\beta$ for which there exists a polynomial-time algorithm $\Ac\in \operatorname{GGA}$ such that the probability $\Pb[\mathcal{A}(G,\Gs)\in \operatorname{S}_\beta(G,\Gs)]$ is non-vanishing or close to one. Here, the probability is taken over the random input $(G,\Gs)\sim \mathbf{G}(n,p)^{\otimes 2}$, together with the additional randomness in the algorithm itself.
	%In light of Theorem~\ref{thm-info}, we will informally call $p\ll p_c$ as the \emph{sparse regime}, $p_c\ll p\ll 1$ as the \emph{dense regime} and $\Omega(1)\le p\le 1-\Omega(1)$ as the \emph{super dense regi
		Our next result states that for $p$ in the sparse regime,  polynomial-time algorithms may successfully find solutions in $\operatorname{S}_\beta(G,\Gs)$ with high probability for $\beta$ arbitrarily close to $1$.
\begin{thm}\label{thm-PTAS}When $p$ is in the sparse regime, there is a polynomial-time algorithm $\mathcal A\in \operatorname{GAA}$ (depending on $n,p,\varepsilon$) such that as $n\to \infty$, 
		\[
		\mathbb P[\mathcal A(G,\Gs)\in \operatorname{S}_{1-\varepsilon}(G,\Gs)]=1-o(1)\,,
		\]
		where $\mathbb P$ is taken over $(G,\Gs)\sim \mathbf G(n,p)^{\otimes 2}$ as well as the internal randomness of $\mathcal A$.
\end{thm}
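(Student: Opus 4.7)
The plan is to exhibit an explicit polynomial-time greedy algorithm and to analyze its performance via a sequential edge-revealing argument that reduces the bound to extremal statistics of Binomials.

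\emph{Algorithm.} Process the vertices $v_1,\dots,v_n$ of $G$ in order, maintaining at the start of round $i$ a partial injection $\pi$ from $\{v_1,\dots,v_{i-1}\}$ into $\Vs$. In round $i$, for each $\vs_j$ not yet in the image of $\pi$, compute the local score
\[
S_i(j)\;:=\;\sum_{i'<i}G_{i,i'}\,\Gs_{j,\pi(v_{i'})},
\]
and set $\pi(v_i):=\vs_{j^\star}$ for any maximizer $j^\star$. The output overlap equals $\sum_{i=1}^{n}S_i(\pi(v_i))$, and the algorithm runs in $O(n^3)$ time.

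\emph{Sequential revealing and conditional independence.} Let $\mathcal F_{i-1}$ be the $\sigma$-algebra generated by all $G$-edges together with those $\Gs$-edges lying inside the current image $\pi(\{v_1,\dots,v_{i-1}\})$. The key observation is that, conditional on $\mathcal F_{i-1}$, the edges $\{\Gs_{j,\pi(v_{i'})}:i'<i,\ \vs_j\text{ unmatched}\}$ remain fresh iid $\operatorname{Bern}(p)$. Hence each score $S_i(j)$ is distributed as $\operatorname{Bin}(d_i,p)$ where $d_i:=\sum_{i'<i}G_{i,i'}$ is $\mathcal F_{i-1}$-measurable, and the scores $\{S_i(j)\}_{\vs_j\text{ unmatched}}$ are conditionally iid.

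\emph{Extremal analysis.} Since $d_ip\le np^2\ll\log n$ throughout the sparse regime, the maximum of $\sim n$ iid $\operatorname{Bin}(d_i,p)$ variables lies in the Poisson tail regime. Standard extreme-value asymptotics give, for typical $d_i$,
\[
\max_{j\text{ unmatched}}S_i(j)\;=\;(1+o(1))\,\frac{\log n}{\log(\log n/(d_ip))}.
\]
Since $d_i$ concentrates around $(i-1)p$ by Chernoff, summing over $i$ yields $\sum_i\max_j S_i(j)=(1+o(1))\,n\log n/\log(\log n/np^2)=(1+o(1))S_{n,p}$. Because the baseline $E_{n,p}$ is negligible compared to $S_{n,p}$ in the sparse regime, we get $\widetilde{\operatorname O}(\pi)\ge(1-\varepsilon)S_{n,p}$ in expectation; a Doob martingale over the $n$ rounds combined with a truncation plus Azuma-Hoeffding argument upgrades this to high probability.

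\emph{Main obstacle.} The principal technical difficulty is sharpening the extremal estimate to yield the tight constant $(1-\varepsilon)$ rather than merely $\Omega(1)$. This requires (i) uniform extreme-value asymptotics as $d_i$ varies across $n$ rounds, (ii) careful treatment of early rounds where $d_i$ is too small for the Binomial to enter its extremal regime, and (iii) control of martingale increments that are not uniformly bounded (handled via truncation). Should the naive vertex-by-vertex greedy fall short of the sharp constant, a natural refinement is block-wise processing: at each round enumerate the $n^K$ possible images of a block of $K=K(\varepsilon)$ vertices to realize the per-block maximum exactly. This preserves polynomial runtime (with degree depending on $\varepsilon$) while pushing the approximation ratio arbitrarily close to $1$.
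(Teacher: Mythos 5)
Your algorithm is the same naive greedy matching the paper uses, but the step you label as the ``key observation'' is exactly where the proof breaks. It is not true that, conditional on your $\mathcal F_{i-1}$, the edges $\Gs_{j,\pi(v_{i'})}$ from unmatched vertices $\vs_j$ to the matched image are fresh i.i.d.\ $\operatorname{Bern}(p)$. First, the trajectory $\pi(v_1),\dots,\pi(v_{i-1})$ is not even measurable with respect to the $G$-edges plus the $\Gs$-edges inside the image: the greedy choice at each round compared scores of \emph{all} unmatched candidates, so the identity of the image depends on edges outside it. Second, and more substantively, the event that $\vs_j$ is still unmatched at round $i$ means precisely that $j$ lost every earlier comparison, i.e.\ $\sum_{i'<k}G_{k,i'}\Gs_{j,\pi(v_{i'})} < \max_{j'}S_k(j')$ for all $k<i$; conditioning on this biases the very edges entering $S_i(j)$ downward, since the neighborhoods $\operatorname{N}_k=\{i'<k:G_{k,i'}=1\}$ overlap with $\operatorname{N}_i$. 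Handling this conditioning is the main technical content of the paper's proof: they introduce the conditioned law $F_s$ of a candidate's score given the losing events $\mathcal D_{r,s}$, show via the admissible/good events that the pairwise intersections $\operatorname{N}_k\cap\operatorname{N}_s$ are small (so that a ``typical'' decomposition of the losing events barely involves the relevant edges), and then use the FKG inequality to prove $1-F_s(M_\eta)\ge (1-o(1))(1-F^*_s(M_\eta))\gg \log n/n$, where $F^*_s$ is the unconditioned binomial law. Your proposal simply asserts freshness and thereby skips the entire argument; as written, the extremal analysis is applied to a distribution that is not the conditional distribution of the scores.

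A second gap concerns the range of $p$. Your extreme-value asymptotic $\max_j S_i(j)=(1+o(1))\log n/\log(\log n/(d_ip))$ is only meaningful when this quantity diverges, i.e.\ when $p=n^{-1/2+o(1)}$; for $p=n^{-\alpha+o(1)}$ with $\alpha$ bounded away from $1/2$ the per-step maximum is $O(1)$, integer effects dominate, and the naive greedy is known to reach the sharp constant only for special values of $\alpha$ (this is exactly why the paper restricts its own analysis to $p=n^{-1/2+o(1)}$, $p\ll p_c$, and invokes the more sophisticated algorithms of \cite{DDG22} for the rest of the sparse regime). Your suggested fallback of block-wise enumeration over $n^{K}$ images is a reasonable gesture in that direction, but it is not analyzed, and the same conditioning problem reappears there. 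So both the correlation control and the treatment of the bulk of the sparse regime are missing, and these are the two ingredients that constitute the paper's actual proof of the theorem.
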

\begin{remark}\label{rmk-connection-to-DDG24}
In \cite{DDG22}, the authors developed a polynomial-time approximation scheme for the random GAP when the parameter $p$ satisfies $p=n^{-\alpha+o(1)}$ for some constant $\alpha\in (1/2,1]$ with $p\geq \log n/n$ (the maximal overlap therein is asymptotically $\frac{n}{2\alpha-1}$, which aligns with the result of Theorem~\ref{thm-info}-(i)). Therefore, the analysis in this paper is presented only for the edge case $p=n^{-1/2+o(1)}$, $p\ll p_c$.

Unsurprisingly, the algorithms presented in this paper are closely related to those appeared in \cite{DDG22}, and we now elaborate more on this connection. Consider a naive greedy algorithm that sequentially determines $\pi(1),\pi(2),\dots,\pi(n)$. For each $k$, $\pi(k)$ is selected from the remaining unmatched vertices to maximize $\sum_{i<k}G_{i,k}\Gs_{\pi(i),\pi(k)}$. In [16], the authors first noticed that this naive algorithm indeed achieves near-optimal performance for certain specific values of $\alpha$. Inspired by this, the authors extended the simple greedy matching idea to design more sophisticated matching algorithms for general $\alpha\in (1/2,1]$.

Following the same spirit, our efficient algorithms for finding near optimal matchings in the regime $p=n^{-1/2+o(1)},p\ll p_c$ are essentially minor variants of the naive greedy matching algorithm (see Section~\ref{subsec-algo-framework} for the precise definition). %Hence in this regime, intriguingly, such an easy algorithm is information-theoretically near-optimal.
Additionally, the idea extends into the dense regime $p\gg p_c$, where we will analyze the same algorithms to prove Theorem~\ref{thm-algo}-(i) below. %This suggests that in the dense regime, although the greedy algorithm is suboptimal, its performance reaches the computational limit of random GAP (see Theorem~\ref{thm-algo}-(ii) and the discussions therein for more details). 

\end{remark}

According to Theorem~\ref{thm-PTAS}, there is no statistical-computational gap in the sparse regime. However, in the dense regime, algorithmic barrier seems to emerge. We present evidence for such a claim by establishing the so-called \emph{overlap-gap property} in the dense regime, and by illustrating the failure of stable algorithms via this property. We state the result informally as below, and the precise version will be given in Section~\ref{subsec-two-OGP}.
	\begin{thm}[\textbf{informal}]\label{thm-OGP}
		There exists a universal constant $\beta_0<1$ such that for any $p$ in the dense regime, the solution space $\operatorname{S}_{\beta_0}(G,\Gs)$ satisfies 2-OGP with high probability (with respect to $(G,\Gs)\sim \mathbf G(n,p)^{\otimes 2}$). As a result, no stable algorithm in $\operatorname{GAA}$ can find solutions in $\operatorname{S}_{\beta_0}(G,\Gs)$ with (sufficiently) high probability in the dense regime.
	\end{thm}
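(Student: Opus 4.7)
The plan is a first-moment proof of 2-OGP, followed by the standard interpolation argument to rule out stable algorithms.

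Define the overlap of two permutations by $f(\pi_1,\pi_2):=|\{i:\pi_1(i)=\pi_2(i)\}|$. I aim to find universal constants $\beta_0<1$ and $0<\alpha_1<\alpha_2<1$ such that the expected number of pairs $(\pi_1,\pi_2)\in \operatorname{S}_{\beta_0}(G,\Gs)^2$ with $f(\pi_1,\pi_2)\in(\alpha_1 n,\alpha_2 n)$ tends to $0$. By Stirling, the number of ordered pairs with overlap exactly $f=\alpha n$ is $n!\cdot\binom{n}{f}D_{n-f}=\exp\!\bigl(n\log n\,(2-\alpha)+o(n\log n)\bigr)$. For the joint tail I would establish a bivariate moderate-deviations bound of the form
\[
\mathbb P\bigl(\widetilde{\operatorname{O}}(\pi_1)\ge \beta_0 D_{n,p},\,\widetilde{\operatorname{O}}(\pi_2)\ge \beta_0 D_{n,p}\bigr)\le \exp\!\Bigl(-\frac{2\beta_0^2\,n\log n}{1+\rho(\pi_1,\pi_2)}\,(1+o(1))\Bigr),
\]
via a moment generating function computation exploiting the independence of the $G_{ij}$'s (the target deviation, of order $\sqrt{n\log n}$ standard deviations, sits in the moderate-deviations regime where Chernoff is tight). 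A case analysis of the covariance---tracking whether $G$-indices match, whether $\Gs$-indices match (controlled by the fixed points and $2$-cycles of $\pi_2\pi_1^{-1}$), or both---yields $\rho(\pi_1,\pi_2)=\alpha^2+O(p)=\alpha^2+o(1)$ for generic pairs at overlap $\alpha n$.

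Combining these, the expected count of forbidden pairs at overlap $\alpha n$ is at most $\exp\bigl(n\log n\,(h(\alpha)+o(1))\bigr)$ with $h(\alpha)=(2-\alpha)-2\beta_0^2/(1+\alpha^2)$. Setting $g(\alpha):=(2-\alpha)(1+\alpha^2)/2$, one computes $g(0)=g(1)=1$ and that $g$ has a strict interior minimum at $\alpha=1/3$ with value $25/27$. Any $\beta_0$ with $\beta_0^2\in(25/27,1)$ therefore makes $h(\alpha)<0$ on some neighborhood $(\alpha_1,\alpha_2)$ of $1/3$; summing over integer $f\in(\alpha_1 n,\alpha_2 n)$ gives a vanishing first moment, so 2-OGP follows by Markov.

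For the failure of stable algorithms I would use the by-now standard interpolation argument. Couple two iid inputs $(G^0,\Gs^0)$ and $(G^1,\Gs^1)$ through an edge-by-edge interpolation $\{(G^t,\Gs^t)\}_{t\in[0,1]}$, and extend the first-moment calculation (with the minor modification of tracking the additional correlations between $(G^0,\Gs^0)$ and $(G^t,\Gs^t)$ from shared edges) to a joint 2-OGP: with high probability, simultaneously for all $t$, no $\pi_1\in \operatorname{S}_{\beta_0}(G^0,\Gs^0)$ and $\pi_2\in \operatorname{S}_{\beta_0}(G^t,\Gs^t)$ has $f(\pi_1,\pi_2)\in(\alpha_1 n,\alpha_2 n)$. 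If a stable algorithm $\mathcal A$ succeeds with probability tending to $1$, a union bound gives $\pi_t^\ast:=\mathcal A(G^t,\Gs^t)\in \operatorname{S}_{\beta_0}(G^t,\Gs^t)$ for every $t$; stability forces $f(\pi_0^\ast,\pi_t^\ast)/n$ to vary gradually, starting at $1$ at $t=0$ and ending close to $0$ at $t=1$ (since $\pi_1^\ast$ is essentially independent of $\pi_0^\ast$), hence entering $(\alpha_1,\alpha_2)$ for some $t^\ast$, contradicting the joint 2-OGP.

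The principal technical obstacle is the bivariate tail bound with the right exponent $1+\rho(\pi_1,\pi_2)$: pinning down the $o(1)$ correction to $\rho$, showing that atypical pairs (those whose ratio has many short cycles) contribute negligibly, and verifying that the moderate-deviations regime for these Bernoulli-product sums is indeed governed by the bivariate Gaussian exponent all require care. A secondary difficulty is setting up the joint 2-OGP along the interpolation, whose first moment must incorporate the $t$-dependent correlations between the two input graphs.
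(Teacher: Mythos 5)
Your first-moment computation is exactly the paper's: the same bivariate tail exponent $2\beta_0^2/(1+\alpha^2)$ against the pair-entropy $(2-\alpha)$, the same function $g(\alpha)=(2-\alpha)(1+\alpha^2)/2$ minimized at $\alpha=1/3$ with value $25/27$, hence the same choice $\beta_0^2\in(25/27,1)$ and forbidden overlap window around $n/3$; the interpolation-plus-stability-plus-chaining deduction is also the paper's skeleton (the paper proves the two-point bound by splitting $\operatorname{O}(\pi_1)=S_0+S_1$, $\operatorname{O}(\pi_2)=S_0+S_2$ into independent binomials and using Chernoff rather than an MGF moderate-deviations computation, and it handles your worry about atypical permutations by conditioning on an \emph{admissible} $\Gs$, whose uniform control of $|\operatorname{OL}(\Gs,\pi)|$ shows the transposition contribution is always negligible; it also interpolates only in $G$, keeping $\Gs$ fixed, which simplifies the $t$-dependent correlation bookkeeping you flag).

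Two genuine gaps in your hardness deduction. First, the step ``if $\mathcal A$ succeeds with probability tending to $1$, a union bound gives $\pi_t^*\in\operatorname{S}_{\beta_0}(G^t,\Gs^t)$ for every $t$'' does not work: the edge-by-edge path has $\binom n2$ steps, so a union bound needs per-instance success probability $1-o(1/n^2)$, not $1-o(1)$. This is exactly why the paper's precise statement (Theorem~\ref{thm-stable-hardness}) only excludes almost $(1,\eta n)$-stable algorithms succeeding with probability at least $1-1/n^2$ (giving $\Pb[\mathcal E_{\operatorname{suc}}]\ge 1/2$), and why the informal theorem says ``(sufficiently) high probability''; ruling out all $1-o(1)$ algorithms by this route would require either coarser interpolation steps (which degrades the stability parameter) or extra stability assumptions, as the paper remarks. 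Second, the endpoint claim that $f(\pi_0^*,\pi_1^*)/n$ ends ``close to $0$ since $\pi_1^*$ is essentially independent of $\pi_0^*$'' is not a proof — the outputs are not uniform permutations; you need a separate first-moment estimate showing that two \emph{independent} instances admit no pair of $\beta_0$-optimal solutions with overlap above $(\rho_0-\eta)n$ (the paper's event $\mathcal E_{\operatorname{ends}}$, proved from the two-point bound at correlation $\alpha=0$, using $\beta_0^2>25/27>(2-\rho_0+\eta)/2$). Both gaps are fixable with the machinery you already set up, but as written the deduction overclaims.
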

	
 Building upon the existence of the statistical computational gap, our aim is to determine the precise threshold of optimal computationally achievable solutions, commonly referred to as the \emph{computational threshold}. While lower bound on this threshold can be obtained by constructing and analyzing specific efficient algorithms, establishing a non-trivial upper bound for the entire class of polynomial-time algorithms poses a major challenge. However, there are strong indications and widely-used approaches that demonstrate algorithmic limitations beyond certain thresholds within specific sub-classes. In this paper, we narrow our focus to a specific class of algorithms called \emph{online algorithms}, which are defined as follows:
	
	\begin{definition}[Online algorithms]\label{def-online-algo}
	%	We define an algorithm $\mathcal A\in \operatorname{GAA}$ as an online algorithm if the output $\pi^*$ of $\mathcal A$ satisfies the following conditions:
 An algorithm $\mathcal{A}\in \operatorname{GAA}$ is considered as an online algorithm if $\mathcal A$ outputs a permutation $\pi^*$ following the rules that
 %\red{An algorithm $\mathcal{A}\in \operatorname{GAA}$ is considered an online algorithm if its output $\pi^*$ satisfies the following conditions:}
		\begin{itemize}
			\item The values of $\pi^*(1),\pi^*(2),\dots,\pi^*(n)$ are determined sequentially.
			\item For each $1\le k\le n$, once $\pi^*(1),\dots,\pi^*(k-1)$ are determined, $\pi^*(k)$ is a random variable (with internal randomness of the algorithm itself) that takes a value in $[n]\setminus \{\pi^*(1),\dots,\pi^*(k-1)\}$ with a distribution $\Pb_k$ determined by $\{G_{ij}\}_{1\le i<j\le k}$, $\{\Gs_{ij}\}_{1\le i<j\le n}$, and $\pi^*(1),\dots,\pi^*(k-1)$.
		\end{itemize} 
		The set of online algorithms will be denoted as $\operatorname{OGAA}$.
	\end{definition}
	Intuitively, one might think of $G$ as a graph that is constructed online, vertex by vertex, while $\Gs$ is a pre-specified offline graph. From this perspective, the above definition implies that an online algorithm must determine $\pi^*(k)$ using only the information before the $k$-th vertex in $G$ is constructed.
 
    We focus on the power limit of online algorithms and demonstrate a computational phase transition in the dense regime at $$\beta_c=\int_0^1\sqrt{2x}\operatorname{d}\!x=\frac{2\sqrt{2x^3}}{3}\Big|^1_0=\sqrt{\frac 89}\,.$$

	\begin{thm}\label{thm-algo}
	For $p$ in the dense regime and any $\varepsilon>0$, the following hold.\\
\emph{(i)} Assume further that $p\le 1/(\log n)^{4}$, then there exists $\Ac^*\in \operatorname{OGGA}$ which runs in $O(n^3)$ times, such that as $n\to\infty$, 
			\[
			\Pb\big[\Ac^*(G,\Gs)\in \operatorname{S}_{\beta_c-\varepsilon}(G,\Gs)\big]=1-o(1)\,,
			\]
			i.e. $\mathcal A^*$ finds $(\beta_c-\varepsilon)$-optimal solutions for typical instances of $(G,\Gs)\sim\mathbf G(n,p)^{\otimes 2}$.
\\
\emph{(ii)} There exists $c=c(\varepsilon)>0$ such that for any $\mathcal{A}\in \operatorname{OGAA}$, 
			\[
			\Pb\Big[\mathbb{Q}\big[\mathcal A(G,\Gs)\in \operatorname{S}_{\beta_c+\varepsilon}(G,\Gs)\big]\ge \exp(-cn\log n)\Big]=o(1)\,,
			\]
			where $\mathbb Q$ denotes the internal randomness of the algorithm $\mathcal A$. In other words, no online algorithm can find $(\beta_c+\varepsilon)$-optimal solutions with probability exceeding $\exp\big(-\Omega(n\log n)\big)$ under typical instances of $(G,\Gs)\sim \mathbf G(n,p)^{\otimes 2}$. %n.given that $\Gs$ satisfies $p$-regularity conditio
	\end{thm}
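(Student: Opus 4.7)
For part (i), I would analyze the natural greedy online algorithm $\mathcal{A}^*$: at step $k$, having observed $\{G_{ij}\}_{1\le i<j\le k}$, set $\pi^*(k):=\arg\max_{v\notin\pi^*([k-1])}Y_v^{(k)}$ with $Y_v^{(k)}:=\sum_{i<k}G_{i,k}\Gs_{v,\pi^*(i)}$; this runs in $O(n^2)$ per step and $O(n^3)$ total. Conditional on the filtration $\mathcal F_{k-1}$ just before step $k$, the bits $\{G_{i,k}\}_{i<k}$ are fresh iid Bernoulli($p$) independent of the history, so $Y_v^{(k)}\sim\mathrm{Binomial}(N_v^{(k)},p)$ with $N_v^{(k)}:=\sum_{i<k}\Gs_{v,\pi^*(i)}$. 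Since $\Gs$ is independent of $G$ and the greedy choices $\pi^*(<k)$ do not systematically align with the $\Gs$-neighborhood of any fixed vertex, an \ER degree-concentration bound gives $N_v^{(k)}=(k-1)p(1+o(1))$ uniformly over the $n-k+1$ candidates with high probability. Bernstein plus a union bound over $v$ then yields, with high conditional probability,
\[
X_k\ge (k-1)p^2+(1-o(1))\sqrt{2(k-1)p^2\log(n-k+1)}.
\]
Summing via $\sum_{k=1}^n\sqrt{k-1}\sim\tfrac{2}{3}n^{3/2}$, $\log(n-k+1)\sim\log n$, and $\tfrac{2\sqrt{2}}{3}=\beta_c$ gives $\sum_k X_k\ge E_{n,p}+(\beta_c-o(1))D_{n,p}$, and a Freedman-type martingale concentration on $\sum_k(X_k-\Eb[X_k\mid\mathcal F_{k-1}])$ promotes this to the high-probability guarantee $\mathcal{A}^*(G,\Gs)\in\operatorname{S}_{\beta_c-\varepsilon}(G,\Gs)$.

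For part (ii), by a Fubini-Markov reduction to the worst case over internal randomness, it suffices to establish: for every \emph{deterministic} online algorithm, $\Pb[\widetilde{\operatorname{O}}(\pi^*)>(\beta_c+\varepsilon)D_{n,p}]\le\exp(-\Omega(n\log n))$. The deterministic inequality $X_k\le M_k:=\max_{v\notin\pi^*([k-1])}Y_v^{(k)}$ reduces the task to an upper bound on $\sum_k M_k$. The same freshness plus Bernstein and a union over $v$ give, for any $\delta>0$,
\[
\Pb\!\Big[Y_v^{(k)}\ge N_v^{(k)}p+(1+\delta)\sqrt{2N_v^{(k)}p(1-p)\log(n-k+1)}\,\Big|\,\mathcal F_{k-1}\Big]\le(n-k+1)^{-(1+\delta)^2},
\]
so that $M_k\le pN_{\max}^{(k)}+(1+\delta)\sqrt{2pN_{\max}^{(k)}\log(n-k+1)}$ with conditional probability $\ge 1-(n-k+1)^{-2\delta}$. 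A Freedman-type martingale inequality for $\sum_k(M_k-\Eb[M_k\mid\mathcal F_{k-1}])$, whose conditional variance is of order $N_{\max}^{(k)}p\asymp(k-1)p^2$ on the good event, yields tail $\exp(-\Omega(\varepsilon^2n\log n))$ since $\sum_k(k-1)p^2\asymp n^2p^2$ and $D_{n,p}^2/(n^2p^2)\asymp n\log n$.

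The main obstacle is uniform control of $N_{\max}^{(k)}$ against adaptive adversarial choices: a ``cheating'' strategy that pre-commits $\pi^*(<k)$ to the $\Gs$-neighborhood of some $v^*$ can drive $N_{v^*}^{(k)}$ up to $\min(k-1,\max\deg(\Gs))\asymp\min(k,np)$, a priori inflating $M_k$. I would resolve this by combining the deterministic identity $\sum_k N_{\pi^*(k)}^{(k)}=|E(\Gs)|=\binom{n}{2}p(1+o(1))$ with the high-probability bound $\max\deg(\Gs)\le np(1+o(1))$: these jointly constrain the algorithm's ``cheating budget'' so that the additional contribution to both $\sum_k pN_{\max}^{(k)}$ and $\sum_k\sqrt{pN_{\max}^{(k)}\log n}$ is $o(D_{n,p})$ in the dense regime $p=o(1)$, via a Cauchy-Schwarz/Jensen amortization. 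This budget-and-concavity step is what I expect to be the most delicate ingredient of the proof; once it is in place, the mean bound $\sum_k\Eb[M_k\mid\mathcal F_{k-1}]\le E_{n,p}+(\beta_c+o(1))D_{n,p}$ combines with the Freedman tail above to yield the target $\exp(-\Omega(n\log n))$ estimate.
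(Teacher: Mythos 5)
Both parts of your proposal have genuine gaps. For part (i), the lower bound on the per-step maximum is exactly where the difficulty lies, and your sketch assumes it away. In your conditioning (fix $\Gs$, treat the bits $\{G_{i,k}\}_{i<k}$ as fresh), the candidate scores $Y_v^{(k)}$ for different $v$ are built from the \emph{same} fresh bits, so once those bits are revealed the maximum over $v$ is a deterministic function of $\Gs$; ``Bernstein plus a union bound over $v$'' can only bound the maximum from above, never from below, so it cannot produce the claimed $X_k\ge (k-1)p^2+(1-o(1))\sqrt{2(k-1)p^2\log(n-k+1)}$. If instead you condition on $G$ and use the randomness of $\Gs$ (as the paper does), the candidate scores at step $k$ are independent across $v$, but for each unmatched $v$ the edges $\Gs_{v,\pi^*(i)}$ are biased downward by the event that $v$ lost every previous competition; showing this bias is negligible is the heart of the matter and is what the paper's whole Section 3 machinery does (the comparison $1-F_s(O_s)\ge(1-o(1))(1-F_s^*(O_s))$, the variables $c_s$ dominated by exponentials, FKG, the $\eta n$ burn-in and tail completion, and the technical hypothesis $p\le 1/(\log n)^4$). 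Degree concentration of $N_v^{(k)}$ does not address this distributional conditioning at all.

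For part (ii), your route (a direct trajectory/martingale bound for online algorithms) is genuinely different from the paper's, which rules out a forbidden structure on tree-correlated instances by a first-moment bound and then converts this into online hardness via an iterated Jensen inequality ($p_{\operatorname{suc}}^L\le p_{\operatorname{forbid}}$). However, the step you yourself flag as delicate — the budget-and-concavity amortization — fails quantitatively, and this is not a repairable technicality of your scheme: the identity $\sum_k N^{(k)}_{\pi^*(k)}=|E(\Gs)|\approx n^2p/2$ together with concavity only caps $\sum_k\sqrt{2pN^{(k)}_{\pi^*(k)}\log n}$ by its value at the equal-spread profile $N^{(k)}\approx np/2$, which is $(1+o(1))D_{n,p}$, strictly above $\beta_cD_{n,p}\approx 0.943\,D_{n,p}$ (the honest increasing profile $N^{(k)}\approx kp$ is what gives $\beta_c$). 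So budget plus concavity can at best reproduce the informational bound $D_{n,p}$ and cannot pin down the constant $\beta_c$; excluding ``pet-vertex''/equal-spread strategies requires using the ordering/adaptivity constraint globally, which is precisely what the branching-OGP encodes through solutions forced to agree on prefixes. Separately, plain Freedman does not reach the claimed rate: increments of $M_k$ can be of order $np$ (the maximum degree of $\Gs$), so the bounded-increment term limits the tail to about $\exp(-O(\varepsilon\sqrt{n\log n}))$, far from $\exp(-\Omega(n\log n))$; obtaining speed $n\log n$ would require per-step conditional tail/MGF bounds of size $n^{-\Theta(\delta)}$ multiplied over $\Omega(n)$ steps, i.e.\ an argument of the same strength as the paper's first-moment bound over $\exp(O(n\log n))$ many configurations.
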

	\begin{remark}\label{rmk-thm-algo}
Several remarks about Theorem~\ref{thm-algo} are provided below.
\\\noindent
(i) As mentioned earlier in Remark~\ref{rmk-connection-to-DDG24}, the algorithm $\mathcal A^*$ is a minor variant of the naive greedy matching algorithm. Therefore, Theorem~\ref{thm-algo} implies that this simple algorithm achieves the asymptotically optimal performance among online algorithms. Additionally, the assumption $p\le 1/(\log n)^{4}$ is made for technical reasons. Indeed, we expect that a similar result holds for any $p_c\ll p\ll 1$ (see Section~\ref{subsec-algo-framework} for a heuristic derivation and Remark~\ref{rmk-p=o(1)} for more technical discussions).
\\\noindent
(ii) The upper bound of success probability $\exp\big(-\Omega(n\log n)\big)$ in Theorem~\ref{thm-algo}-(ii) is exponentially tight up to a multiplicative constant. This is because even a trivial guessing would find solutions in $\operatorname{S}_{\beta_c+\varepsilon}(G,\Gs)$ with probability at least $1/n!>\exp(-n\log n)$ under typical instances.
\\\noindent
(iii) In Theorem~\ref{thm-algo}-(ii), we present the hardness result specifically for online algorithms, as the proof in this case is relatively straightforward and intuitive. Nevertheless, the same hardness result extends to a broader class of algorithms, termed greedy local iterative algorithms. Informally, these algorithms operate in a local and iterative manner, where each iteration maximally exploits the advantages of the current step, potentially at the cost of introducing adverse effects on subsequent steps (see Definition~\ref{def-GLIA} and Proposition~\ref{prop-extended-hardness} for precise details). For instance, the algorithms in \cite{DDG22}, while not online, still fall within the framework of greedy local iterative algorithms.

It is worth noting that for many null-type random optimization problems (as opposed to planted problems), greedy local iterative algorithms are believed to capture the full power of efficient algorithms. A prominent example is the max-clique problem in $G \sim \mathcal{G}(n, 1/2)$, where a simple greedy algorithm is widely believed to be computationally optimal. Other examples where greedy local iterative algorithms achieve state-of-the-art computational performance include the largest submatrix problem~\cite{GL18} and the random discrepancy minimization problem~\cite{GKPX23+}. Based on this perspective, we expect that $\beta_c$ indeed represents the true computational threshold for the random GAP in the dense regime.
\\\noindent
(iv) It is plausible that the framework introduced in \cite{HS21} can be applied to exclude the possibility of any sufficiently stable algorithm in $\operatorname{GAA}$ finding solutions beyond the threshold $\beta_c$ with non-vanishing probability. Moreover, building on the ideas from \cite{HS21}, a more recent work \cite{HSS25} characterizes the stable algorithm threshold for the random perceptron model in terms of a stochastic control problem. However, the stochastic control problem possesses a very abstract form, making it difficult to directly compute the closed form threshold. It is likely that a similar stochastic control problem formulation can be obtained for the random GAP, but pinpointing the precise threshold for stable algorithms (particularly, determining whether it equals to $\beta_c$) remains an intriguing avenue for future research.
\end{remark}

 Based on the aforementioned theorems, we establish sharp informational thresholds in both regimes, as well as an algorithmic phase transition as $p$ changes from one regime to the other. To provide a comprehensive overview, let us briefly discuss the problem when $p$ falls into other regimes.
Firstly, when $p$ is below the threshold $\log n/n$, the asymptotic value of the typical maximal overlap is given by $cn$ for some constant $c=c(p)\le 1$. However, both graphs are likely to shatter into components with various sizes in this regime, posing difficulty for determining the exact value of $c$ as well as understanding the algorithmic picture of the random GAP.
Secondly, it would be interesting to consider the regime when $p=\Omega(1)$ and we believe this regime shares a more or less similar picture with the dense regime. However, our work does not aim to cover this specific regime as it does differ in specific details from the dense regime, both in terms of informational and computational perspectives (see the proof of Proposition \ref{prop-two-point-est} and Proposition \ref{prop-forbidden-structure}). %Our work does not aim to cover this specific regime.
Thirdly, arguably the most intriguing regime is when p is within the critical window and interpolates between the sparse and dense regimes, where multiple phase transitions (in terms of informational and computational results) occur. Unfortunately, our current methods are tailored specifically for the sparse and dense regimes, and they do not establish a sharp threshold for $p$ within the critical window. Further insights beyond the scope of this paper are necessary to fully understand how these phase transitions take place in this regime.
 %in the sparse regime ensures the connectivity of $G$ and $\Gs$ with high probability. This assumption is natural when considering the Graph Alignment Problem (GAP), as without it, one could simply match between the components. 

	\subsection{Backgrounds and related works}\label{subsec-BG}
	Our motivation for considering the random GAP is twofold. On the one hand, the GAP for a pair of independent \ER graphs arises naturally in the study of correlated random graph models, which is an extensively studied topic in the field of combinatorial statistics in recent years. A deeper understanding of either the informational threshold or the computational threshold for random GAP may also shed lights on the correlated model itself. 
	On the other hand, there is inherent intrigue in gaining insights into the computational complexity of such a natural and important problem. Although the computational intractability for the worst-case scenario is already known (primarily due to certain highly structured ``bad" instances), the situation for the average-case setting remains unclear. 
	We next delve into these two aspects that motivate our current work in greater detail.
	
	\noindent\textbf{Correlated random graph model.}
	The \emph{correlated random graph model} refers to a pair of correlated \ER random graphs with the same number of vertices, where the correlation between them is determined by a \emph{latent} vertex bijection. The study of the correlated model primarily focuses on recovering the hidden correspondence based solely on the topological structures of these two graphs, as well as on the closely related correlation detection problem. Substantial progress has been made in recent years, including information-theoretic analysis \cite{CK16, CK17, HM20, WXY20, WXY21, DD22a, DD22b} and proposals for various efficient algorithms \cite{PG11, YG13, LFP14, KHG15, FQRM+16, SGE17, BCL19, DL22+, DMWX21, FMWX22a, FMWX22b, BSH19, CKMP19, DCKG19, MX20, GM20, MRT23, MWXY21+, GMS22+, MWXY23+, DL23+}, etc. Currently, the community has achieved a comprehensive understanding of the informational thresholds for the correlated random graph model. However, a substantial statistical-computational gap remains and the precise computational threshold has yet to be determined.
	
	The initial exploration of the GAP over two independent instances of \ER graphs stemmed from the study of correlation detection for a pair of random graphs. This can be formulated as a hypothesis testing problem, where, under the null hypothesis, the two graphs are independently sampled, while under the alternative hypothesis, the pair is sampled from the correlated law. The authors of \cite{WXY20} were the first to investigate this problem, and they introduced the maximal overlap of these two graphs as the testing statistic, which is where the concept of random GAP arises. However, the authors only derived an informational negative result using a straightforward first moment method, which was sufficient for their purposes. Subsequently, in \cite[Section 7, Open question Q1]{GML20}, the authors formally formulated the random GAP and sought to determine the exact informational thresholds for different regimes of $p$. One of the main objectives of this paper is to provide an answer to this question.
	\\
	\noindent\textbf{Computational complexity of random optimization problems.}
\emph{Random optimization problems} often refer to solving optimization problems when the input instances are generated randomly. These problems arise in various fields, including computer science, operations research, and statistical physics. Due to the difficulty caused by non-convexity and high--dimensionality, the computational complexity of random optimization problems is currently an active but challenging research area. As mentioned earlier, while worst-case hardness results for these problems are well-established, the problem of average-case complexity (captured by the computational difficulty associated with solving the problem over randomly generated data) can be much more intricate.
	
	As seen for many random optimization problems, statistical-computational gaps may exist, indicating that efficient algorithms encounter barriers below the optimal threshold. Notable examples include the hidden clique problem and the closely related problem of densest submatrix detection (for an overview, see \cite{wu_xu_2021}). That being said, there is also the scenario where a polynomial-time algorithm is available for finding near-optimal solutions for typical instances of random optimization problems; in this case polynomial-time algorithms must exploit specific structural properties thanks to the randomness since these problems are NP-hard in the 
worst case.  A successful example of designing a polynomial-time approximation scheme is given in \cite{Subag21}, where the author constructed a greedy algorithm that finds near ground states of certain Gaussian processes on the $n$-dimensional sphere with high probability. Inspired by this work, \cite{Montanari19} presented an efficient approximation scheme for finding near ground state of the Sherrington-Kirkpatrick model (as well as a broader class of spin glass models), and the algorithms can also be tailored to find near-optimal solutions for the max-cut problem in dense \ER graphs. It is worth noting that all the efficient approximation algorithms mentioned above rely on a specific property of the underlying stochastic models called \emph{full replica symmetry breaking (FRSB)}, and in fact it is tempting to conjecture FRSB as an indication of low computational complexity \cite{GMZ22}.
	
	Over the past few decades, various frameworks have been proposed to provide insights into the computational hardness of optimization problems with random inputs (for surveys, see \cite{ZK16, BPW18, RSS19, Garmarnik21}). Notably, the overlap-gap property, initially introduced in \cite{GS14} and popularized in \cite{GMZ22, GKPX23+, GZ19+, HS21, RV17, Wein22}, serves as a geometric barrier that provides solid evidence for the failure of stable algorithms to find solutions beyond a certain threshold \cite{Garmarnik21}. In the last ten years, several generalizations of the initial overlap-gap property have been discovered, such as the multi-OGP \cite{RV17} and the ladder-OGP \cite{Wein22}, which establish computational hardness results close to the believed computational thresholds. Recently, a seminal work \cite{HS21} introduced yet another variant of the overlap-gap property, called the \emph{branching-OGP}, which was used to provide tight hardness results for stable algorithms for mean-field spin glasses. More recently, the branching-OGP framework has also been applied to determine the algorithmic threshold for random perceptron models \cite{HSS25}. The current work is also significantly inspired by the branching-OGP framework. 

	\subsection{Proof overview}

The proof of the main results can be divided into four parts: informational negative/positive results and computational positive/hardness results.

We obtain the informational negative result in Section~\ref{subsec-info-upper} for both the sparse and dense regimes, via a simple union bound (a.k.a. the first moment method). For the informational positive result in the dense regime, we employ a truncated second-moment method argument in Section~\ref{subsec-info-lower}. It is important to note that a straightforward application of the Paley-Zygmund inequality only yields a vanishing lower bound of probability. However, inspired by \cite{CG22}, we enhance the lower bound to $1-o(1)$ by combining with Talagrand's concentration inequality.

In the case where $p$ is in the sparse regime, surprisingly, we have not discovered any non-constructive proof of the informational positive result. Instead, in Section~\ref{sec-PTAS}, we construct specific algorithms to find near-optimal solutions for the random GAP. By analyzing these algorithms in Section~\ref{subsec-analysis-sparse}, we establish an algorithmic positive result in the sparse regime, which naturally implies an informational positive result and happens to match the informational negative result (note that the same story occurs in \cite{DDG22} too). This completes the proof of Theorem~\ref{thm-info} and thereby concludes Theorem~\ref{thm-PTAS}. Additionally, in Section~\ref{subsec-algo-analysis-dense}, we analyze the performance of the same algorithms in the dense regime and show that these algorithms can find near $\beta_c$-optimal solutions, leading to the first item in Theorem~\ref{thm-algo}.

In Section~\ref{sec-2-OGP}, we focus on the computational hardness results in the dense regime and derive Theorem~\ref{thm-OGP} as well as the second item in Theorem~\ref{thm-algo}. We establish the existence of algorithmic barriers by taking advantage of various types of the overlap-gap property. While the argument for the failure of stable algorithms via $2$-OGP in Section~\ref{subsec-two-OGP} is rather standard, our proof of the failure of online algorithms in Section~\ref{subsec-branching-OGP} is a novel combination of the branching-OGP framework presented by \cite{HS21} and a resampling technique inspired by \cite{GKPX23+}.
\\
\ \\
\noindent\textbf{Acknowledgements.} We would like to express our sincere gratitude to Jian Ding for his heartfelt guidance and support during the early stages of this project, as well as for providing us with many helpful suggestions for this manuscript. 
We are also grateful to Nike Sun for introducing us to the crucial literature reference \cite{HS21}. Furthermore, we thank Brice Huang, Zhangsong Li, and Mark Sellke for having engaging discussions with us on the algorithmic aspects of this problem. H. Du and S. Gong are partially supported by NSFC Key Program Project No.12231002.
\section{The informational thresholds}
This section is devoted to the proof of Theorem~\ref{thm-info}, where the negative and positive results are established separately in the next two subsections.
\label{sec-proof-info}
\subsection{Informational negative results}\label{subsec-info-upper}
In this short subsection we prove the upper bound for Theorem~\ref{thm-info}. First we recall the Chernoff bound for binomial variables.
\begin{proposition}[Chernoff bound]\label{prop-chernoff}
	For any $N\in \mathbb{N},P\in (0,1)$ and $\delta>0$, let $X\sim\mathbf{B}(N,P)$, it holds that
	\begin{equation}
        \label{eq-chernoff-bound-upper}
		\Pb[X\ge (1+\delta)NP]\le \exp\Big(-NP\big((1+\delta)\log(1+\delta)-\delta\big)\Big)\,,
	\end{equation}
 and 
 \begin{equation}\label{eq-chernoff-bound-lower}
     \Pb[X\le(1-\delta)NP]\le \exp\left(-\frac{\delta^2NP}{2}\right)\,.
 \end{equation}
In particular, we have for any $K\ge 0$,
\begin{equation}\label{eq-chernoff-bound}
	\Pb[|X-NP|\ge K]\le 2\exp\left(-\frac{K^2}{2(NP+K)}\right)\,.
\end{equation}
\end{proposition}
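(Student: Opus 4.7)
The plan is to apply the standard exponential-moment method (Chernoff's argument), which directly yields the one-sided bounds (2.1) and (2.2), and then to deduce the two-sided form (2.3) by combining them with a short calculus inequality. Since $X \sim \mathbf{B}(N,P)$ is a sum of $N$ i.i.d.\ Bernoulli$(P)$ variables, the moment generating function satisfies $\mathbb{E}[e^{tX}] = (1 + P(e^t - 1))^N \le \exp(NP(e^t - 1))$ by the elementary bound $1 + x \le e^x$.

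For (2.1), I would apply Markov's inequality to $e^{tX}$ with $t > 0$, obtaining $\mathbb{P}[X \ge (1+\delta)NP] \le \exp(-t(1+\delta)NP + NP(e^t - 1))$, and then optimize by setting $t = \log(1+\delta)$; the resulting exponent is exactly $-NP((1+\delta)\log(1+\delta) - \delta)$. For (2.2), the analogous argument applied to $e^{-tX}$ with $t > 0$ gives the symmetric bound $\exp(-NP((1-\delta)\log(1-\delta) + \delta))$. One then invokes the elementary inequality $(1-\delta)\log(1-\delta) + \delta \ge \delta^2/2$ on $[0,1)$, which is verified by noting both sides vanish at $\delta = 0$ and comparing derivatives, to reach the form $\exp(-\delta^2 NP / 2)$.

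For the combined estimate (2.3), I would substitute $\delta = K/(NP)$ into the previous two bounds and then take a union bound, which introduces the factor of $2$. The lower-tail contribution is immediate, since $K^2/(2NP) \ge K^2/(2(NP+K))$. For the upper tail, I would rely on the standard Bernstein-type inequality $(1+\delta)\log(1+\delta) - \delta \ge \delta^2/(2(1+\delta))$ valid for $\delta \ge 0$; multiplying by $NP$ and substituting $\delta = K/(NP)$ yields precisely $K^2/(2(NP+K))$.

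There is no real obstacle here: the entire proof is a routine application of Chernoff's method followed by elementary calculus. The only step requiring minor care is verifying the convex inequality $(1+\delta)\log(1+\delta) - \delta \ge \delta^2/(2(1+\delta))$, which follows from standard Bernstein-type computations (equivalently, from the sharper Bennett inequality by weakening $2\delta/3$ to $2\delta$). Accordingly, I would present this proposition with only a brief derivation, treating it as a black-box tool for the probabilistic arguments to follow.
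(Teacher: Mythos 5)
Your proposal is correct and matches the paper's treatment: the paper simply cites a reference for the one-sided bounds \eqref{eq-chernoff-bound-upper}--\eqref{eq-chernoff-bound-lower} (which are proved exactly by the exponential-moment argument you give) and then derives \eqref{eq-chernoff-bound} from the same elementary inequality $(1+\delta)\log(1+\delta)-\delta\ge \delta^2/(2(1+\delta))$ that you invoke. No gaps; this is the intended routine derivation.
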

The proof of \eqref{eq-chernoff-bound-upper} and \eqref{eq-chernoff-bound-lower} can be found in, for example, \cite[Appendix C, Lemma 11]{WXY20}. Subsequently, \eqref{eq-chernoff-bound} follows from the easy-checked fact that
\[
(1+\delta)\log (1+\delta)-\delta\ge \frac{\delta^2}{2(1+\delta)}\,.
\]
The remaining proof is just a straightforward application of the union bound.

\begin{proof}[Proof of the upper bound for Theorem~\ref{thm-info}]
%We write $G=(V,E)$ and it is clear that $|E|$ follows a binomial distribution $\mathbf B(\binom{n}{2},p)$. From \eqref{eq-chernoff-bound} we can readily conclude that \begin{equation}\label{eq-E-is-not-large}
%\Pb\Big[|E|\le \binom{n}{2}p+\sqrt{n^2p\log n}\Big]=1-o(1)\,.
%\end{equation}
%We then fix a realization of $G$ with this property. 
For any fixed permutation $\pi \in \operatorname{S}_n$, it is evident that $\operatorname{O}(\pi) \sim \mathbf{B}(\binom{n}{2}, p^2)$. We will show that for any fixed $\varepsilon > 0$, the probability of such a binomial variable deviating from its expectation by more than $(1+\varepsilon)S_{n,p}$ or $(1+\varepsilon)D_{n,p}$ is much less than $1/n!$. This accomplishes our goal through a simple union bound.
 
When $p$ is in the sparse regime, we have $|E|p<n^2p^2\ll S_{n,p}$. Hence from \eqref{eq-chernoff-bound-upper},
	\begin{equation*}
	\begin{split}
	&\Pb\left[\mathbf{B}\Big(\binom{n}{2},p^2\Big)\ge (1+\varepsilon)S_{n,p}+E_{n,p}\right]\\
	{\le}&\exp \left(-\big(E_{n,p}+(1+\varepsilon)S_{n,p}\big)\log \left(1+\frac{(1+\varepsilon)S_{n,p}}{n^2p^2}\right)+(1+\varepsilon)S_{n,p}+E_{n,p}\right)\\
	\le&\exp \left(-\big(1+\varepsilon+o(1)\big)S_{n,p}\log \frac{S_{n,p}}{n^2p^2}\right)=\exp\left(-S_{n,p}\cdot(1+\varepsilon+o(1)\log \left(\frac{\log n}{np^2}\right)\right)\\
 =&\exp\Big(-\big(1+\varepsilon+o(1)\big)n\log n\Big)\,,
    \end{split}
    \end{equation*}
    which is much less than $1/n!$, as desired. When $p$ is in the dense regime, %we have $|E|p-E_{n,p}\ll D_{n,p}\ll E_{n,p}$. Therefore,
    we get from \eqref{eq-chernoff-bound} that
	\begin{equation*}
	\begin{aligned}
	&\ \Pb\left[\mathbf{B}\Big(\binom{n}{2},p^2\Big)\ge (1+\varepsilon)D_{n,p}+E_{n,p}\right]
\le 2
	\exp \left(-\frac{\big(1+\varepsilon+o(1)\big)^2D_{n,p}^2}{2E_{n,p}+2(1+\varepsilon)D_{n,p}}\right)\,,
	\end{aligned}
    \end{equation*}
    which equals to $\exp\Big(-\big[(1+\varepsilon)^2+o(1)\big]n\log n\Big)\ll 1/n!$, as desired.
\end{proof}

\subsection{Informational positive results}\label{subsec-info-lower}
 We now focus on the positive results stated in Theorem~\ref{thm-info}. The result of the sparse regime will be proved in Section~\ref{subsec-analysis-sparse}, and this subsection is dedicated to proving the informational positive result for $p$ in the dense regime. So throughout this subsection, we always assume that $p_c\ll p\ll 1$.
 %Interestingly, for the scenario where $p$ falls within the sparse regime, we have not found any \emph{existence proof} of the lower bounds. Instead, we give a \emph{constructive proof} in Section~\ref{subsec-analysis-sparse} by presenting specific (polynomial-time) algorithms that provide solutions achieving the corresponding lower bounds. In this way we establish the informational threshold in Theorem~\ref{thm-info} for the sparse regime, thereby concluding Theorem~\ref{thm-PTAS}. The remainder of this section is dedicated to proving the informational lower bound for $p$ in the dense regime. For this purpose, we will employ the truncated second moment method with a combination of concentration inequality, which is a highly non-constructive proof technique. Throughout this subsection, we assume that $p_c\ll p\ll 1$.

 A frequently employed strategy in this paper involves conditioning on a suitable realization of either graph $G$ or $\Gs$, while relying solely on the randomness of the other graph. In simpler terms, we will treat either $G$ or $\Gs$ as a deterministic graph possessing a desired property. To this end, we will introduce a concept named as \emph{admissibility}, which is related to the regularity of a graph across various aspects. This property will play a crucial role in establishing the informational positive result and will also contribute to the computational hardness analysis discussed in Section~\ref{sec-2-OGP}.
 
 Before introducing the definition of admissibility, we emphasize that this property is specifically designed for \ER graphs $\mathbf G(n,p)$ when $p$ falls into the dense regime. We begin with several notations and observations. Denote $\operatorname{U}$ as the set of unordered pairs $\{(i,j):i, j \in [n], i\neq j\}$.
Fix $G$ with edge set $E\subset \operatorname{U}$. For any $\pi\in \operatorname{S}_n$, consider the set  
\[
\operatorname{OL}(G,\pi)=\{(i,j)\in \operatorname{U}:G_{i,j}=G_{\pi(i),\pi(j)}=1\}\,.
\]
then for any $\pi_1,\pi_2\in \operatorname{S}_n$, let $\pi_{1,2}=\pi_1^{-1}\circ \pi_2$, we claim that it holds
\[
\operatorname{Cov}\big(\widetilde{\operatorname{O}}(\pi_1),\widetilde{\operatorname{O}}(\pi_2)\big)=p(1-p)\times |\operatorname{OL}(G,\pi_{1,2})|\,,
\]
where the covariance is taken with respect to $\Gs$.
This is because we can express the covariance $\operatorname{Cov}\big(\widetilde{\operatorname{O}}(\pi_1),\widetilde{\operatorname{O}}(\pi_2)\big)=\operatorname{Cov}\big(\operatorname{O}(\pi_1),\operatorname{O}(\pi_2)\big)$ as
\begin{align*}
\sum_{\substack{(i_1,j_1)\in E\\{(i_2,j_2)\in E}}}\operatorname{Cov}\left(\Gs_{\pi_{1}(i_1),\pi_{1}(j_1)},\Gs_{\pi_{2}(i_2),\pi_{2}(j_2)}\right)=\sum_{\substack{(i_1,j_1)\in E\\(i_2,j_2)\in E}}p(1-p)\mathbf{1}_{(\pi_1(i_1),\pi_1(j_1))=(\pi_2(i_2),\pi_2(j_2))}
\end{align*}
and the number of pairs $(i_1,j_1),(i_2,j_2)$ such that $(\pi^{-1}_{1}(i_1),\pi_{1}^{-1}(j_1))=(\pi_{2}^{-1}(i_2),\pi_{2}^{-1}(j_2))$ is exactly $|\operatorname{OL}(G,\pi_{1,2})|.$

For each permutation $\pi\in \operatorname{S}_n$, let ${F}(\pi)$ denote the number of fixed points and $T(\pi)$ denote the number of transpositions in $\pi$. It is worth noting that in the case of $G\sim \mathbf{G}(n,p)$, an unordered pair $(i,j)\in \operatorname{U}$ appears in $\operatorname{OL}(G,\pi)$ with a probability of either $p$ or $p^2$, depending on whether $(i,j)=(\pi(i),\pi(j))$ or not. In the former case, both $i$ and $j$ are fixed points, or $(i,j)$ represents a transposition. This leads to the following relation
\begin{equation}\label{eq-E-OL}
    \mathbb{E}|\operatorname{OL}(G,\pi)|=\left[\binom{F(\pi)}{2}+T(\pi)\right](p-p^2)+\binom{n}{2}p^2\,.
\end{equation}

\begin{definition}\label{def-admissible}
    For any $p$ in the dense regime, we say a graph $G$ on $n$ vertices is $p$-admissible (we simply call it admissible when the parameter $p$ is clear in the context), if the following items hold:
    \begin{itemize}
        \item The total number of edges in $G$ satisfies
		\begin{equation}
			\label{eq-fixed-graph-edge-concentration}
			\left||E(G)|-\binom{n}{2}p\right|\le 2\sqrt{n^2p\log n}\,.
		\end{equation}
  \item For any induced subgraph $H$ of $G$ with $k$ vertices, it holds that 
  \begin{equation}\label{eq-control-of-E(H)}
      \left||E(H)|-\binom{k}{2}p\right|\le \frac{n^2p}{\log n^{1/4}}\,.
  \end{equation}
	\item  for any $\pi\in \operatorname{S}_n$ it holds
	\begin{equation}
		\label{eq-OL-concentration}
\big||\operatorname{OL}(G,\pi)|-\mathbb{E}|\operatorname{OL}(G,\pi)|\big|\le 2\sqrt{|\operatorname{F(\pi)}|np\log n}+3\sqrt{2n^3p^2\log n}\,.
\end{equation}
In particular, $|\operatorname{OL}(G,\pi)|/\mathbb E|\operatorname{OL}(G,\pi)|=1+o(1)$ uniformly for all $\pi\in \operatorname{S}_n$.
    \end{itemize}
\end{definition}
\begin{lemma}\label{lem-good-event-on-G}
For any $p$ in the dense regime, an \ER graph $G\sim \mathbf G(n,p)$ is $p$-admissible with probability $1-o(1/n^2)$.%, where the probability is taken over the random graph $G\sim \mathbf{G}(n,p)$. 
\end{lemma}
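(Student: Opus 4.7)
My plan is to verify each of the three properties in Definition~\ref{def-admissible} separately with failure probability $o(1/n^2)$, and conclude by a union bound. Property \eqref{eq-fixed-graph-edge-concentration} is an immediate application of the Chernoff bound \eqref{eq-chernoff-bound} to $|E(G)| \sim \mathbf{B}(\binom{n}{2}, p)$ with $K = 2\sqrt{n^2 p \log n}$: since $K \ll \binom{n}{2} p$ in the dense regime, the exponent $K^2/(2(NP + K))$ is of order $4\log n$, and the failure probability is $O(n^{-4})$. Property \eqref{eq-control-of-E(H)} follows by applying \eqref{eq-chernoff-bound} to each $|E(G[S])| \sim \mathbf{B}(\binom{|S|}{2}, p)$ with the large \emph{absolute} deviation $K = n^2 p / (\log n)^{1/4}$: the inequality is automatic when $|S|$ is small (the deviation exceeds the mean), and for $|S| = \Theta(n)$ the Chernoff exponent $\Omega(n^2 p / \sqrt{\log n})$ beats the $n \log 2$ cost of the $2^n$-fold union bound, because $np \gg \sqrt{n \log n}$ in the dense regime.

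The bulk of the work is property \eqref{eq-OL-concentration}, where the per-permutation failure probability must be at most $o(1/(n!\, n^2))$ to survive the union bound over $\operatorname{S}_n$. I will decompose $|\operatorname{OL}(G,\pi)| = A(\pi) + B(\pi)$ according to whether $\{i,j\}$ is fixed by $\pi_\ast$ (i.e.\ $\{\pi(i),\pi(j)\} = \{i,j\}$, which accounts for $N_A := \binom{F(\pi)}{2} + T(\pi)$ pairs) or not. Then $A(\pi) \sim \mathbf{B}(N_A, p)$, and the summands of $B(\pi)$ can be grouped by the orbits of $\pi_\ast$ on $\operatorname{U}$: each orbit of length $L \ge 2$ contributes an independent cyclic sum $\sum_k G_{e_k} G_{e_{k+1}}$ of variance $\Theta(L p^2)$, so $\operatorname{Var}(B(\pi)) = O(n^2 p^2)$. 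Bernstein's inequality then gives $|B(\pi) - \mathbb{E}B(\pi)| \le 3\sqrt{2 n^3 p^2 \log n}$ with probability $1 - \exp(-\Omega(n \log n))$, which accounts for the second term in \eqref{eq-OL-concentration} and closes comfortably under the union bound over $\operatorname{S}_n$. For the first term, a Chernoff bound on $A(\pi)$ together with the crude estimate $N_A \le F(\pi) \cdot n$ gives the deviation $2\sqrt{F(\pi)\, n p \log n}$.

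I expect the main obstacle to be closing the union bound on the $A(\pi)$-piece, since a naive per-permutation Chernoff on a $\mathbf{B}(N_A, p)$ variable at deviation $\sqrt{F n p \log n}$ only yields failure probability $\exp(-\Omega(\log n))$, far too weak to beat the $n!$ choices of $\pi$. The remedy is to stratify by $F(\pi) = k$, using $\#\{\pi : F(\pi) = k\} \le n!/k!$ so that permutations with large $F$ are combinatorially rare: for $k$ small the slack from the $B$-term (already of order $\sqrt{n^3 p^2 \log n}$) easily absorbs any deviation of $A$, while for $k$ close to $n$ I will invoke property \eqref{eq-control-of-E(H)}, already established above, to control the ``both-endpoints-fixed'' portion of $A(\pi)$ through the induced subgraph $G[\operatorname{fix}(\pi)]$, leaving only the small transposition contribution to treat directly by Chernoff. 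The delicate counting lies in verifying that in each regime of $k$ the combined concentration exponent beats $\log(n!/k!) + 2\log n$; once this is carried out, the union bound completes the proof.
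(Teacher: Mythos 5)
Your treatment of \eqref{eq-fixed-graph-edge-concentration} and \eqref{eq-control-of-E(H)} is correct and coincides with the paper's (Chernoff plus a union bound over the $2^n$ induced subgraphs). The problems are in your handling of \eqref{eq-OL-concentration}, and there are two genuine gaps. First, the Bernstein step for $B(\pi)$ is not justified as stated: the orbit-wise cyclic sums are indeed independent, but they are not uniformly bounded — a single orbit of $\Pi$ can have length of order $n^2$, and with only the variance bound $O(n^2p^2)$ plus the trivial range bound, Bernstein's correction term destroys the exponent (you get roughly $\exp\bigl(-\Theta(n^3p^2\log n / (n^2\cdot n^{3/2}p\sqrt{\log n}))\bigr)$, nowhere near $\exp(-\Omega(n\log n))$). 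To make this rigorous you must show each orbit sum is sub-exponential with $O(1)$ scale, and the natural way to do that is to split each cycle into two or three classes of non-consecutive pairs so that within a class the products $G_{e}G_{\Pi(e)}$ are i.i.d.\ $\mathbf B(1,p^2)$ — which is exactly the paper's argument: it partitions $\operatorname U\setminus\operatorname F$ into three sets $\operatorname A,\operatorname B,\operatorname C$ with no pair and its $\Pi$-image in the same set, so each $\sigma(\star)$ is an honest binomial and a plain Chernoff bound at deviation $\sqrt{2n^3p^2\log n}$ gives exponent $\approx 2n\log n$, enough for the union bound over $\operatorname S_n$. So your route, once repaired, collapses back to the paper's coloring idea (or to an off-the-shelf concentration inequality for dependency graphs of bounded chromatic number, which you did not invoke).

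Second, your plan for the fixed-pair part $A(\pi)$ does not close. The tolerance in Definition~\ref{def-admissible} is $2\sqrt{|\operatorname F(\pi)|np\log n}$ where $|\operatorname F(\pi)|$ is the number of pairs fixed by $\Pi$, i.e.\ $\binom{F(\pi)}{2}+T(\pi)$ — this is how the paper uses it both in its appendix proof and in Proposition~\ref{prop-two-point-est} (where it yields $L/E=(F(\pi_{1,2})/n)^2+o(1)$). With that tolerance, a single application of \eqref{eq-chernoff-bound} to $\sigma(\operatorname F)\sim\mathbf B(|\operatorname F|,p)$ gives exponent about $2n\log n$ \emph{uniformly} in $\pi$, and no stratification by $F(\pi)$ is needed at all. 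You instead work with the weakened deviation $2\sqrt{F(\pi)np\log n}$ (via $N_A\le F(\pi)n$), and under that reading your own worry is well founded but your rescue does not work in the intermediate range: for, say, $F(\pi)=n/2$ there are $\exp(\Theta(n\log n))$ permutations, so the affordable deviation of $A(\pi)$ is of order $\sqrt{N_Ap\cdot n\log n}=\Theta(n^{3/2}\sqrt{p\log n})$, which is larger by a factor $1/\sqrt p$ than the $B$-slack $3\sqrt{2n^3p^2\log n}=\Theta(n^{3/2}p\sqrt{\log n})$, so it cannot be absorbed there; and invoking \eqref{eq-control-of-E(H)} for $G[\operatorname{fix}(\pi)]$ only controls the both-endpoints-fixed part to within $n^2p/(\log n)^{1/4}$, which exceeds the \emph{entire} right-hand side of \eqref{eq-OL-concentration} by a factor of order $\sqrt{np}/(\log n)^{3/4}\gg 1$ throughout the dense regime, so it cannot certify \eqref{eq-OL-concentration} for $F(\pi)$ close to $n$ either. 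In short, the "delicate counting" you defer is precisely where the argument breaks; the fix is to use the fixed-\emph{pair} count in the deviation and run the uniform Chernoff bound, as the paper does.
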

The lemma can be essentially derived using a union bound, and we will provide its proof in the appendix. From this point forward in this section, we will consider a fixed value of $p$ within the dense regime, along with a $p$-admissible graph $G$. Therefore, throughout the remaining portion of this section, the notation $\Pb$ will refer to the probability distribution of a single graph $\Gs$ drawn from $\mathbf G(n,p)$.

The following proposition establishes a concentration result for the maximum of $\widetilde{\operatorname{O}}(\pi)$, which will serve as a key component in proving the lower bound stated in Theorem~\ref{thm-info}.

\begin{proposition}\label{prop-concentration-of-max-O-pi}
For any constant $\varepsilon>0$, there exists a constant $\gamma=\gamma(\varepsilon)>0$, such that for any admissible graph $G$,
	\[
\Pb\left[\big|\max_{\pi\in \operatorname{S}_n}\widetilde{\operatorname{O}}(\pi)-\mathbb{E}\max_{\pi\in \operatorname{S}_n}\widetilde{\operatorname{O}}(\pi)\big|\ge \varepsilon D_{n,p}\right]\le \exp\left(-\gamma n\log n\right)\,,
	\]
 where the probability and expectation is taken over $\Gs\sim \mathbf G(n,p)$.
\end{proposition}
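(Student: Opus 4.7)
The plan is to regard $Z := \max_{\pi \in \operatorname{S}_n}\widetilde{\operatorname{O}}(\pi)$ as the supremum of a centered Bernoulli process (indexed by $\operatorname{S}_n$, in the randomness of $\Gs$) and to apply Talagrand's concentration inequality for suprema of empirical processes, in the Bousquet--Klein--Rio form. The point is to obtain sub-Gaussian concentration at the \emph{variance} scale of each linear form, rather than at the cruder \emph{range} scale one would get from McDiarmid.

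First, for the fixed admissible $G$ and any $\pi \in \operatorname{S}_n$, write $O(\pi) = \langle \alpha_\pi, \Gs\rangle$, where $\alpha_\pi \in \{0,1\}^{\operatorname{U}}$ is the indicator of the image edge set $\pi(E(G))$. Since $\pi$ is a bijection, $\|\alpha_\pi\|_1 = |E(G)|$ and $O(\pi)$ is a sum of $|E(G)|$ distinct (hence independent) Bernoulli$(p)$ coordinates of $\Gs$, each of variance $p(1-p)$. Hence the common variance satisfies
\[
\sigma^2 := \sup_{\pi}\operatorname{Var}(O(\pi)) = |E(G)|\,p(1-p),
\]
and the admissibility bound \eqref{eq-fixed-graph-edge-concentration} gives $\sigma^2 = (1+o(1))\binom{n}{2}p(1-p) \asymp n^2p^2$. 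Since $p|E(G)|$ is deterministic for this fixed $G$, the concentration of $Z$ about $\Eb Z$ reduces to the concentration of the centered supremum $\sup_{\pi}\langle \alpha_\pi, \Gs - p\mathbf{1}\rangle$ about its mean.

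Next, the Talagrand--Bousquet--Klein--Rio inequality for suprema of empirical Bernoulli processes yields a universal constant $c>0$ such that for all $t>0$,
\[
\Pb\bigl[|Z - \Eb Z| \ge t\bigr] \le 2\exp\!\left(-\frac{c\,t^2}{\sigma^2 + \Eb Z + t}\right).
\]
Setting $t = \varepsilon D_{n,p}$, the dense-regime hypothesis $p \gg p_c$ rephrases as $np^2 \gg \log n$, which immediately gives $\sigma^2 \asymp n^2p^2 \gg \sqrt{n^3p^2\log n} = D_{n,p} \ge t$. Combined with the bound $\Eb Z \le (1+o(1))\,D_{n,p}$, obtained by integrating the exponentially small upper tail already proved in Section~\ref{subsec-info-upper} (using the trivial worst-case cap $Z\le\binom{n}{2}$ to handle the deep tail), this shows $\sigma^2$ dominates the denominator, so the exponent simplifies to $\Theta(t^2/\sigma^2) = \Theta(\varepsilon^2 n \log n)$. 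Taking $\gamma = \gamma(\varepsilon)$ proportional to $\varepsilon^2$ then completes the proof.

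The main obstacle is really to avoid the naive route, since both McDiarmid's bounded-differences inequality and the convex-Lipschitz form of Talagrand's inequality see only the range-based scale $L^2 \sim |E(G)| \sim n^2p$ and deliver at best $\exp(-\Omega(np\log n))$, which is too weak as soon as $p \ll 1$. The sharpening exploited here is that each summand of $O(\pi)$ is Bernoulli$(p)$ with individual variance $p(1-p) \ll 1$; the empirical-process form of Talagrand's inequality captures exactly this smaller variance scale and reproduces the correct Gaussian-type fluctuations of order $D_{n,p}$.
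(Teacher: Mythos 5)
Your proof is correct, but it runs through a different form of Talagrand's theory than the paper does. The paper applies the convex-distance inequality for Lipschitz, $f$-certifiable functions (Proposition~\ref{prop-talagrand-concentration-ineq}, with certificate size $f(s)=\lceil s\rceil$), which is a \emph{median}-based bound: the effective scale is $\sqrt{\operatorname{Median}(X)}\asymp np$, and the argument then needs the extra step of converting median to mean by integrating the two tails. You instead invoke the Bousquet--Klein--Rio suprema-of-empirical-processes inequality, which is \emph{mean}-based with variance proxy $\sigma^2+\Theta(\E Z)$; the same scale $\sigma^2=|E(G)|p(1-p)\asymp n^2p^2$ appears, so both routes land on the exponent $\Theta(\varepsilon^2 t^2/(n^2p^2))=\Theta(\varepsilon^2 n\log n)$, and your observation that McDiarmid or the plain convex-Lipschitz bound only sees the coarser scales $n^2$ resp.\ $|E(G)|\asymp n^2p$ (too weak once $p=o(1)$) is exactly the reason the paper resorts to certifiability. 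What your route buys is that no median-to-mean comparison is needed; what it costs is (a) the empirical-process inequality as a black box, in the slightly more general form allowing a different centered summand $\alpha_\pi(e)(\Gs_e-p)$ per coordinate (standard, but worth stating), and (b) an a priori bound $\E Z=O(D_{n,p})$, which you correctly obtain from the first-moment upper tail plus the crude cap $Z\le\binom{n}{2}$ --- with the small caveat that the Section~\ref{subsec-info-upper} computation must be redone under the conditional law given a fixed admissible $G$, i.e.\ with $O(\pi)\sim\mathbf B(|E(G)|,p)$ and \eqref{eq-fixed-graph-edge-concentration} controlling $|E(G)|p$; this is the same verbatim Chernoff-plus-union-bound argument, and the paper performs the identical conditional step when locating $\operatorname{Median}(X)$. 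One cosmetic slip: the constant $L^2\sim|E(G)|$ you attribute to McDiarmid is really the Euclidean Lipschitz constant of the convex-distance form (McDiarmid sees all $\binom{n}{2}$ coordinates), but since both are too weak this does not affect anything.
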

The result stated in Proposition~\ref{prop-concentration-of-max-O-pi} can be obtained through a standard application of Talagrand's concentration inequality. The crucial observation is that the maximum $\max_{\pi\in \operatorname{S}_n}{\operatorname{O}}(\pi)$ is Lipschitz and $s$-certifiable, with its median asymptotically equalling to $n^2p^2/2$. In order to provide a comprehensive explanation, we will include a summary of the relevant terminology and the results of Talagrand's concentration inequality in the appendix. Additionally, we will present a complete proof of Proposition~\ref{prop-concentration-of-max-O-pi} therein.

For any $\varepsilon>0$, we define $X_\varepsilon$ as the number of permutations $\pi\in \operatorname{S}_n$ such that $\widetilde{\operatorname{O}}(\pi)\ge (1-\varepsilon)D_{n,p}$. Another main ingredient for the proof is the following estimate.
\begin{proposition}\label{prop-second-moment-est}
 For any admissible graph $G$ and any constant $\varepsilon\in (0,1)$, it holds that $$\mathbb{E}X_\varepsilon^2=\exp\big(o(n\log n)\big)(\mathbb{E}X_\varepsilon)^2\,,$$
 where the expectation is taken with respect to $\Gs\sim \mathbf G(n,p)$.
\end{proposition}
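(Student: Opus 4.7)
The plan is to reduce the second moment to a sum over a single permutation using invariance, then bound each joint tail sharply via a midpoint Chernoff estimate, and finally control the resulting sum by stratifying permutations according to their number of fixed points. Exploiting the permutation-invariance of $\Gs\sim\mathbf G(n,p)$, the joint law of $(\widetilde{\operatorname O}(\pi_1),\widetilde{\operatorname O}(\pi_2))$ depends only on $\pi:=\pi_1^{-1}\circ\pi_2$, so
\[
\mathbb E X_\varepsilon^2=n!\sum_{\pi\in\operatorname S_n}q(\pi),\qquad q(\pi):=\mathbb P\bigl[\widetilde{\operatorname O}(\operatorname{id})\ge(1-\varepsilon)D_{n,p},\,\widetilde{\operatorname O}(\pi)\ge(1-\varepsilon)D_{n,p}\bigr],
\]
while $\mathbb E X_\varepsilon=n!\,p_1$ with $p_1:=\mathbb P[\widetilde{\operatorname O}(\operatorname{id})\ge(1-\varepsilon)D_{n,p}]$. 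Applying Bernstein's inequality to $\widetilde{\operatorname O}(\operatorname{id})=\sum_{(i,j)\in E(G)}(\Gs_{i,j}-p)$, using admissibility to get $|E(G)|=(1+o(1))\binom n2 p$ and the condition $p\gg p_c$ to stay in the Gaussian regime of Bernstein, will yield $p_1=\exp\!\bigl(-(1+o(1))(1-\varepsilon)^2 n\log n\bigr)$; thus it suffices to show $\sum_\pi q(\pi)\le n!\,p_1^2\,\exp(o(n\log n))$.

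For a sharp bound on $q(\pi)$, I would decompose $E(G)=B\sqcup D$ with $B:=\{(i,j)\in E(G):(\pi(i),\pi(j))\in E(G)\}$ so that $|B|=|\operatorname{OL}(G,\pi)|$, and set $D':=\pi(E(G))\setminus E(G)$; the three sets $B,D,D'$ are then pairwise disjoint. The independent centered Bernoulli sums $W:=\sum_B(\Gs-p),\,U:=\sum_D(\Gs-p),\,V:=\sum_{D'}(\Gs-p)$ satisfy $\widetilde{\operatorname O}(\operatorname{id})=W+U$ and $\widetilde{\operatorname O}(\pi)=W+V$, so the joint event implies the midpoint bound $W+\tfrac12(U+V)\ge(1-\varepsilon)D_{n,p}$. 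The LHS is a weighted centered Bernoulli sum with maximum weight $\le1$ and variance $\tfrac12(|E(G)|+|B|)p(1-p)$, and since $D_{n,p}\ll n^2p^2\asymp$ variance (precisely the dense-regime condition), Bernstein's inequality gives
\[
q(\pi)\le\exp\!\left(-\frac{(1+o(1))(1-\varepsilon)^2 D_{n,p}^2}{(|E(G)|+|B|)p(1-p)}\right)=\exp\!\left(-\frac{2(1-\varepsilon)^2 n\log n}{1+|B|/|E(G)|}(1+o(1))\right).
\]

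Next, using \eqref{eq-E-OL}--\eqref{eq-OL-concentration} together with the crude bound $T(\pi)\le n/2$, admissibility gives $|B|/|E(G)|\le(k/n)^2+o(1)$ with $k:=F(\pi)$. Combining with $|\{\pi:F(\pi)=k\}|\le n!/k!$ and dividing by $n!\,p_1^2$, the contribution from $k=cn$, $c\in[0,1]$, is at most
\[
\exp\!\left(n\log n\cdot\Bigl(-c+\tfrac{2(1-\varepsilon)^2 c^2}{1+c^2}\Bigr)+o(n\log n)\right)=\exp\!\left(-\tfrac{c\bigl(c^2-2(1-\varepsilon)^2 c+1\bigr)}{1+c^2}\,n\log n+o(n\log n)\right).
\]
The quadratic $c^2-2(1-\varepsilon)^2 c+1$ has discriminant $4\bigl((1-\varepsilon)^4-1\bigr)<0$ for every $\varepsilon\in(0,1)$ and is therefore strictly positive, so this exponent is nonpositive uniformly for $c\in[0,1]$. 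Summing over the at most $n+1$ values of $k$ introduces only an $e^{o(n\log n)}$ factor, closing the estimate.

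The main difficulty will be extracting the Chernoff coefficient in its sharp form $2(1-\varepsilon)^2/(1+c^2)$: any cruder ``near-independence'' bound of the shape $q(\pi)\le p_1^2\exp(O(|B|/|E|)\cdot n\log n)$ would be insufficient in the regime $k=\Theta(n)$, whose count $n!/k!$ is too large to be absorbed by a weaker saving. The midpoint decomposition $W+\tfrac12(U+V)$ is the natural device that exposes the sharp coefficient via Bernstein, and it is essential that $D_{n,p}\ll n^2p^2$ (equivalently, $p\gg p_c$) so that the cubic correction term of Bernstein can be folded into the $(1+o(1))$ factor.
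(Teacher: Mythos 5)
Your proposal is correct and arrives at exactly the paper's exponent, but the central two-point estimate is derived by a genuinely different device. The paper (Proposition~\ref{prop-two-point-est}) writes $\operatorname{O}(\pi_1)=S_0+S_1$, $\operatorname{O}(\pi_2)=S_0+S_2$ with $S_0$ the binomial supported on $\operatorname{OL}(G,\pi_{1,2})$, conditions on the value of $S_0$, splits the joint probability into three terms, and applies the Chernoff bound \eqref{eq-chernoff-bound} together with a Cauchy--Schwarz step to produce the denominator $1+L/E$. You instead average the two constraints into the single event $W+\tfrac12(U+V)\ge(1-\varepsilon)D_{n,p}$ and apply Bernstein to this weighted centered Bernoulli sum, whose variance $\tfrac12\big(|E(G)|+|B|\big)p(1-p)$ yields the same denominator $1+|B|/|E(G)|$ in one stroke; since for equal thresholds the midpoint event is tight at the exponential scale, nothing is lost, and you avoid the conditioning and the maximization over the shared value. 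The opening reduction via exchangeability of $\Gs$ to pairs $(\operatorname{id},\pi)$ is a clean packaging of what the paper does implicitly (its bound depends on $(\pi_1,\pi_2)$ only through $F(\pi_{1,2})$), and the rest --- the bound $|B|/|E(G)|\le (k/n)^2+o(1)$ from admissibility, the count $n!/k!$, and the positivity of $c^2-2(1-\varepsilon)^2c+1$ --- is the same algebra as in the paper's summation step.

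Two small repairs are needed. First, you cite Bernstein for the asymptotics of $p_1$, but Bernstein only bounds the tail from above, whereas the direction you actually use (since $p_1^2$ sits in the denominator) is a \emph{lower} bound on $p_1$, i.e.\ a lower bound on the upper tail of $\mathbf B(|E(G)|,p)$ at level $(1-\varepsilon)D_{n,p}$. This is exactly what the paper supplies via Slud's inequality in Lemma~\ref{lem:Upper bound for binomial tail} (a direct one-point binomial estimate would also do); without it the comparison $\sum_\pi q(\pi)\le n!\,p_1^2 e^{o(n\log n)}$ is not closed. Second, $\widetilde{\operatorname O}(\operatorname{id})$ is centered at $\binom n2 p^2$, not at $|E(G)|p$; by \eqref{eq-fixed-graph-edge-concentration} the discrepancy is $O(\sqrt{n^2p^3\log n})=o(D_{n,p})$, so your identification is harmless, but the shift should be acknowledged when applying the tail bounds.
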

With Proposition~\ref{prop-second-moment-est} in hand, we can establish the lower bound of Theorem~\ref{thm-info} by combining the Paley-Zygmund inequality with the aforementioned concentration result.

\begin{cor}
	For any $\varepsilon>0$ we have as $n\to \infty$,
	\[
	\Pb\left[\max_{\pi\in \operatorname{S}_n}\widetilde{\operatorname{O}}(\pi)\ge (1-3\varepsilon)D_{n,p}\right]=1-o(1)\,,
	\]
  where the probability is taken over $(G,\Gs)\sim \mathbf G(n,p)^{\otimes 2}$.
\end{cor}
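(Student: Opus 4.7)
The plan is to combine the three ingredients already established, in the style of the truncated second-moment argument the authors mentioned in the proof overview: the admissibility lemma (Lemma~\ref{lem-good-event-on-G}), the concentration estimate (Proposition~\ref{prop-concentration-of-max-O-pi}), and the second-moment estimate (Proposition~\ref{prop-second-moment-est}). Since admissibility holds with probability $1-o(1/n^2)$, for the rest of the proof I would condition on a fixed admissible graph $G$ and work only with the randomness of $\Gs\sim\mathbf G(n,p)$; at the very end, a union bound over the admissibility event costs only $o(1)$.

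The first step is to use the Paley--Zygmund inequality on the nonnegative integer-valued random variable $X_{\varepsilon}$. This gives
\[
\Pb\!\left[\max_{\pi\in\operatorname{S}_n}\widetilde{\operatorname{O}}(\pi)\ge(1-\varepsilon)D_{n,p}\right]
=\Pb[X_\varepsilon\ge 1]\ge \frac{(\mathbb{E}X_\varepsilon)^2}{\mathbb{E}X_\varepsilon^2}\ge \exp\bigl(-o(n\log n)\bigr),
\]
where the last inequality is exactly Proposition~\ref{prop-second-moment-est}. By itself this lower bound tends to $0$, so Paley--Zygmund alone does not finish the job — this is where the main idea enters.

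The crucial observation is that the bound $\exp(-o(n\log n))$ is superpolynomially larger than the tail $\exp(-\gamma n\log n)$ supplied by Proposition~\ref{prop-concentration-of-max-O-pi}. Consequently, on an event of probability at least $\exp(-o(n\log n))-\exp(-\gamma n\log n)>0$, both $\max_\pi\widetilde{\operatorname{O}}(\pi)\ge(1-\varepsilon)D_{n,p}$ and $\big|\max_\pi\widetilde{\operatorname{O}}(\pi)-\mathbb{E}\max_\pi\widetilde{\operatorname{O}}(\pi)\big|\le\varepsilon D_{n,p}$ hold simultaneously. Pointwise on this event we get
\[
\mathbb{E}\max_{\pi\in\operatorname{S}_n}\widetilde{\operatorname{O}}(\pi)\ge (1-2\varepsilon)D_{n,p}.
\]
This bootstraps the weak Paley--Zygmund bound into a lower bound on the mean that is not weakened by any probability factor.

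Finally, I apply Proposition~\ref{prop-concentration-of-max-O-pi} a second time, now in the opposite direction: with probability $1-\exp(-\gamma n\log n)$ over $\Gs$,
\[
\max_{\pi\in\operatorname{S}_n}\widetilde{\operatorname{O}}(\pi)\ge \mathbb{E}\max_{\pi\in\operatorname{S}_n}\widetilde{\operatorname{O}}(\pi)-\varepsilon D_{n,p}\ge (1-3\varepsilon)D_{n,p}.
\]
Averaging over the admissibility event for $G$ (which has probability $1-o(1)$) completes the proof. The main conceptual obstacle — and the reason the argument works — is the gap in scale between the Paley--Zygmund lower bound $\exp(-o(n\log n))$ and the concentration tail $\exp(-\gamma n\log n)$; the rest is routine bookkeeping. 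This is precisely the trick inspired by \cite{CG22} that the authors alluded to in the proof overview.
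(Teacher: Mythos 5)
Your proposal is correct and follows essentially the same route as the paper's proof: Paley--Zygmund applied to $X_\varepsilon$ via Proposition~\ref{prop-second-moment-est} gives the $\exp(-o(n\log n))$ lower bound, which combined with the much smaller $\exp(-\gamma n\log n)$ tail from Proposition~\ref{prop-concentration-of-max-O-pi} forces $\mathbb{E}\max_\pi\widetilde{\operatorname{O}}(\pi)\ge(1-2\varepsilon)D_{n,p}$, and a second application of the concentration bound (plus conditioning on admissibility of $G$) finishes the argument exactly as in the paper.
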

\begin{proof}
Since $\Pb[G\text{ is admissible}]=1-o(1)$ we may condition on the realization of $G$ and assume it is admissible. From Proposition~\ref{prop-second-moment-est} and the Paley-Zygmund inequality, we have for any $\varepsilon
>0$, it holds that%(here the probability is only taken over $\Gs\sim \mathbf G(n,p)$)
\[
\Pb\left[\max_{\pi\in \operatorname{S}_n}\widetilde{\operatorname{O}}(\pi)\ge (1-\varepsilon)D_{n,p}\right]=\Pb[X_{\varepsilon}>0]\ge \frac{(\mathbb{E}X_{\varepsilon})^2}{\mathbb{E}X_{\varepsilon}^2}=\exp\big(-o(n\log n)\big).
\]
Combining with Proposition~\ref{prop-concentration-of-max-O-pi}, this implies $\mathbb{E}\max_{\pi\in \operatorname{S}_n}\widetilde{\operatorname{O}}(\pi)\ge (1-2\varepsilon)D_{n,p}$ holds for $n$ large enough. Then by applying the result of Proposition~\ref{prop-concentration-of-max-O-pi} again we get as $n\to \infty$,
$$
\Pb\left[\max_{\pi\in \operatorname{S}_n}\widetilde{\operatorname{O}}(\pi)\ge (1-3\varepsilon)D_{n,p}\right]=1-o(1)\,.
$$
Since this is true for any $\varepsilon>0$, we reach the desired conclusion.
\end{proof}
Now we turn to the proof of Proposition~\ref{prop-second-moment-est}. We start by the following two-point estimation.
\begin{proposition}\label{prop-two-point-est}
	For any admissible graph $G$, any $\varepsilon\in (0,1)$ and any $\pi_1,\pi_2\in \operatorname{S}_n$, write $\pi_{1,2}=\pi_1^{-1}\circ \pi_2$, then it holds
	\begin{equation}\label{eq-two-point-fun}
		\begin{aligned}
&\Pb\big[\widetilde{\operatorname{O}}(\pi_1)\ge (1-\varepsilon)D_{n,p},\widetilde{\operatorname{O}}(\pi_2)\ge (1-\varepsilon)D_{n,p}\big]\\
		\le& \exp\left(-\frac{2(1-\varepsilon)^2n\log n}{1+(F(\pi_{1,2})/n)^2}+o(n\log n)\right)\,.
		\end{aligned}
	\end{equation}
\end{proposition}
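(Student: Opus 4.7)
The plan is to reduce the joint tail event to the one-sided tail of $S := \widetilde{\operatorname{O}}(\pi_1) + \widetilde{\operatorname{O}}(\pi_2)$ and apply a sharp Bernstein-type inequality, exploiting that $S$ is a linear combination of the independent Bernoulli variables $\{\Gs_{u,v}\}_{(u,v) \in \operatorname{U}}$ with integer coefficients bounded by $2$. The joint-tail rate will then emerge from a precise estimate of $\operatorname{Var}(S)$ in terms of $F(\pi_{1,2})$, with the admissibility of $G$ as the essential tool.

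Setting $c_{u,v} := \mathbf{1}\{(\pi_1^{-1}(u), \pi_1^{-1}(v)) \in E(G)\} + \mathbf{1}\{(\pi_2^{-1}(u), \pi_2^{-1}(v)) \in E(G)\} \in \{0,1,2\}$, I can write $S = \sum_{(u,v) \in \operatorname{U}} c_{u,v}(\Gs_{u,v} - p)$. A direct count of pairs with $c_{u,v}=2$ yields $\sum c_{u,v}^2 = 2|E(G)| + 2|\operatorname{OL}(G, \pi_{1,2})|$, so
$$\operatorname{Var}(S) = 2p(1-p)\bigl(|E(G)| + |\operatorname{OL}(G, \pi_{1,2})|\bigr).$$
Invoking admissibility (equation \eqref{eq-fixed-graph-edge-concentration} for $|E(G)|$ and \eqref{eq-OL-concentration} combined with \eqref{eq-E-OL} for $|\operatorname{OL}(G, \pi_{1,2})|$), and using that $T(\pi_{1,2}) = O(n)$ and $p \to 0$ make the secondary contributions negligible, one obtains uniformly in $\pi_{1,2}$ that
$$\operatorname{Var}(S) = (1+o(1))\, n^2 p^2 \bigl(1 + (F(\pi_{1,2})/n)^2\bigr).$$

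With this in hand, I would apply Bernstein's inequality to $S$ at level $t = 2(1-\varepsilon) D_{n,p}$. Each summand has $L^\infty$-norm at most $2$, and $t/\operatorname{Var}(S) = O\bigl(\sqrt{\log n/(np^2)}\bigr) = o(1)$ in the dense regime, so the linear-correction term in Bernstein is absorbed into a $1+o(1)$ factor in the exponent. Substituting $D_{n,p}^2 = n^3 p^2 \log n$ and the variance estimate above into
$$\mathbb{P}\bigl[\widetilde{\operatorname{O}}(\pi_1), \widetilde{\operatorname{O}}(\pi_2) \ge (1-\varepsilon)D_{n,p}\bigr] \le \mathbb{P}[S \ge 2(1-\varepsilon)D_{n,p}] \le \exp\!\left(-\frac{(2(1-\varepsilon) D_{n,p})^2}{2\operatorname{Var}(S)}(1+o(1))\right)$$
immediately produces the claimed bound.

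The main delicate point is the uniform variance estimate: one must confirm that \eqref{eq-OL-concentration} yields a multiplicative $(1+o(1))$ approximation to $|\operatorname{OL}(G, \pi)|$ across the full range of $F(\pi_{1,2})$, including the ``degenerate'' regime $F(\pi_{1,2}) = o(n\sqrt{p})$ where the deterministic piece $\binom{n}{2}p^2$ dominates $\binom{F(\pi_{1,2})}{2} p$ in $\mathbb{E}|\operatorname{OL}|$. This reduces to checking that the fluctuation bound in \eqref{eq-OL-concentration} is of order $o(n^2 p^2)$, which follows from $np^2 \gg \log n$ in the dense regime. Once this uniform estimate is secured, the rest of the argument is a routine Bernstein tail bound.
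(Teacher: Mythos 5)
Your proposal is correct, but it takes a genuinely different route from the paper's. The paper writes $\operatorname{O}(\pi_1)=S_0+S_1$, $\operatorname{O}(\pi_2)=S_0+S_2$ with $S_0$ the shared binomial supported on $\operatorname{OL}(G,\pi_{1,2})$ and $S_1,S_2$ independent, then conditions on the value of $S_0$ and runs a three-term Chernoff analysis (with a Cauchy--Schwarz step) to produce the factor $1+L/E=1+(F(\pi_{1,2})/n)^2+o(1)$. You instead bound the joint tail by the tail of the sum $\widetilde{\operatorname{O}}(\pi_1)+\widetilde{\operatorname{O}}(\pi_2)$, which is a single weighted sum of the independent $\Gs_{u,v}$ with weights $c_{u,v}\in\{0,1,2\}$, compute $\operatorname{Var}=2p(1-p)\big(|E(G)|+|\operatorname{OL}(G,\pi_{1,2})|\big)=(1+o(1))\,n^2p^2\big(1+(F(\pi_{1,2})/n)^2\big)$ uniformly from admissibility (your check that the fluctuation terms in \eqref{eq-OL-concentration} are $o(n^2p^2)$ because $np^2\gg\log n$ is the right one, and is in fact already asserted as the uniform $1+o(1)$ statement in Definition~\ref{def-admissible}), and apply Bernstein; since $D_{n,p}\ll n^2p^2$ in the dense regime the Bernstein correction term is indeed absorbed into a $1+o(1)$ in the exponent, giving exactly the claimed bound. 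Passing to the sum is lossless at the $n\log n$ scale precisely because the covariance is nonnegative, so the dominant contribution to the joint event sits at the corner where both overlaps are at the threshold; this is the same effect the paper extracts via its conditional/Cauchy--Schwarz term. Your route is shorter; the paper's decomposition is more hands-on but makes the role of the shared part $S_0$ explicit and serves as the template reused verbatim for Proposition~\ref{prop-two-point-est-correlated}. One small point to patch in your write-up: $\sum_{(u,v)}c_{u,v}(\Gs_{u,v}-p)$ equals $\operatorname{O}(\pi_1)+\operatorname{O}(\pi_2)-2|E(G)|p$, not $\widetilde{\operatorname{O}}(\pi_1)+\widetilde{\operatorname{O}}(\pi_2)$, whose centering is $2\binom{n}{2}p^2$; by \eqref{eq-fixed-graph-edge-concentration} the two centerings differ by $O(\sqrt{n^2p^3\log n})=o(D_{n,p})$, so the threshold shifts only by $o(D_{n,p})$ and the discrepancy is absorbed into the $o(n\log n)$ term, but this step should be stated explicitly.
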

\begin{proof}
    Recall that $\operatorname{U}$ is the set of unordered pairs and $\operatorname{OL}(G,\pi_{1,2})$ is the subset of $\operatorname{U}$ that 
	\[
    \{(i,j)\in U:G_{i,j}=G_{\pi_{1,2}(i),\pi_{1,2}(j)}=1\}=\{(i,j)\in \operatorname{U}:G_{\pi^{-1}_1(i),\pi^{-1}_1(j)}=G_{\pi^{-1}_2(i),\pi_2^{-1}(j)}=1\}\,.
    \]
    We write $L=|\operatorname{OL}(G,\pi_{1,2})|$ for simplicity, and we denote 
	\begin{equation*}
		\begin{aligned}
	    S_0=\sum_{(i,j)\in \operatorname{OL}(G,\pi_{1,2})}G_{\pi^{-1}_1(i),\pi^{-1}_1(j)}\Gs_{i,j},\\
	     S_1=\sum_{(i,j)\in \operatorname{U}\setminus\operatorname{OL}(G,\pi_{1,2})}G_{\pi^{-1}_1(i),\pi^{-1}_1(j)}\Gs_{i,j},\\
	     S_2=\sum_{(i,j)\in \operatorname{U}\setminus\operatorname{OL}(G,\pi_{1,2})}G_{\pi^{-1}_2(i),\pi^{-1}_2(j)}\Gs_{i,j}.\\
        \end{aligned}
    \end{equation*}
	It is clear that $S_0\sim \mathbf B(L,p)$ and $S_1,S_2\sim \mathbf B(E-L,p)$ are independent binomial variables, and 
	\begin{equation*}
		\begin{aligned}
	       \operatorname{O}(\pi_1)=\sum_{(i,j)\in \operatorname{U}}G_{\pi^{-1}_1(i),\pi^{-1}_1(j)}\Gs_{i,j}=S_0+S_1,\\
		   \operatorname{O}(\pi_2)=\sum_{(i,j)\in \operatorname{U}}G_{\pi^{-1}_2(i),\pi^{-1}_2(j)}\Gs_{i,j}=S_0+S_2\,.
        \end{aligned}
    \end{equation*}
Denote $M=\lceil(1-\varepsilon)D_{n,p}\rceil$, then the probability $\Pb\big[\widetilde{\operatorname{O}}(\pi_1)\ge M,\widetilde{\operatorname{O}}(\pi_2)\ge M]$ equals to
\begin{equation}\label{eq-three-terms}
	\begin{aligned}
&\ \Pb\big[S_0+S_1 \ge Ep+M,S_0+S_2\ge Ep+M \big]\\
		=&\ \sum_k\Pb[S_0=Lp+k]\cdot\big(\Pb[S_1\ge(E-L)p+M-k)]\big)^2\\
		\le&\ \Pb[S_0\ge Lp+M]+\big(\Pb[S_1\ge (E-L)p+M]\big)^2\\
  &\ +(M+1)\max_{0\le k\le M}\Pb[S_0\ge Lp+k]\cdot\big(\Pb[S_1\ge (E-L)p+M-k]\big)^2\,.
  \end{aligned}
\end{equation}
We consider the three terms in \eqref{eq-three-terms} separately and claim that each one of them is bounded by the right-hand side of \eqref{eq-two-point-fun}. By the third item in admissibility, we have $L/E = (F(\pi_{1,2})/n)^2 + o(1)$. Applying the Chernoff bound \eqref{eq-chernoff-bound}, we see that the first term of the expression is bounded by
\[
\exp\left(-\frac{(1-\varepsilon)^2D_{n,p}^2}{2(Lp+D_{n,p})}\right)\le \begin{cases}
    \exp\left(-2(1-\varepsilon)^2n\log n+o(n\log n)\right),\quad&\text{if }F(\pi_{1,2})/n\le 1/2\,,\\
    \exp\left(-\frac{(1-\varepsilon)^2 n\log n}{(F(\pi_{1,2})/n)^2}+o(n\log n)\right),\quad&\text{if }F(\pi_{1,2})/n>1/2\,,
\end{cases}
\]
which is always bounded by $\exp\left(-\frac{2(1-\varepsilon)^2n\log n}{1+(F(\pi_{1,2})/n)^2}+o(n\log n)\right)$, as desired. Similar arguments suggest that this is true for the second term in \eqref{eq-three-terms}.
Regarding the last term, by applying \eqref{eq-chernoff-bound} again, we see that it is bounded by $M+1=\exp\big(o(n\log n)\big)$ times
\begin{equation*}
\begin{aligned}
\max_{0\le k\le M}&\ \exp\left(-\frac{k^2}{2(Lp+k)}-\frac{(M-k)^2}{(E-L)p+M-k}\right)
		\le\max_{0\le k\le M}\exp\left(-\frac{M^2}{Ep+Lp+M+k}\right)\\
  		=&\ \exp\left(-\frac{2(1-\varepsilon)^2n^3p^2 \log n}{Ep+Lp+2M}\right)=\exp\left(-\frac{2(1-\varepsilon)^2 n\log n}{1+{L}/{E}+o(1)}\right)\\
    =&\ \exp\left(-\frac{2(1-\varepsilon)^2n\log n}{1+(F(\pi_{1,2})/n)^2}+o(n\log n)\right)\,,
	\end{aligned}
\end{equation*}
where in the first inequality we used Cauchy-Schwarz inequality. 
This completes the proof of the claim and thus \eqref{eq-two-point-fun} follows.
\end{proof}
\begin{proof}[Proof of Proposition~\ref{prop-second-moment-est}]
 By definition we have 
 \[
 \mathbb E X_{\varepsilon}^2=\sum_{\pi_1,\pi_2\in \operatorname{S}_n}\Pb\big[\widetilde{\operatorname{O}}(\pi_1)\ge (1-\varepsilon)D_{n,p},\widetilde{\operatorname{O}}(\pi_2)\ge (1-\varepsilon)D_{n,p}\big]\,.
 \]
 From Proposition~\ref{prop-two-point-est} we get this is bounded by 
 \begin{align*}
&\ \ \ \ \exp\big(o(n\log n)\big)\times\sum_{\pi_1,\pi_2\in \operatorname{S}_n}\exp\left(-\frac{2(1-\varepsilon)^2n\log n}{1+(F(\pi_{1,2})/n)^2}\right)\\
&=\exp\big([1+o(1)]n\log n\big)\sum_{\pi\in\operatorname{S}_n}\exp\left(-\frac{2(1-\varepsilon)^2n\log n}{1+(F(\pi)/n)^2)}\right)\\
&=\exp\big([1+o(1)]n\log n\big)\times\sum_{k=0}^n\exp\left(-\frac{2(1-\varepsilon)^2n\log n}{1+(k/n)^2}\right)\times |\{\pi\in \operatorname{S}_n:F(\pi)=k\}|\\
&\le\exp\big([2+o(1)]n\log n\big)\times\sum_{k=0}^{n}\exp\left(-\left[\frac{2(1-\varepsilon)^2}{1+(k/n)^2}+k/n\right]n\log n\right)\\
&\le\exp\left([2-2(1-\varepsilon)^2+o(1)]n\log n\right)\,,
 \end{align*}
 where in the first inequality we used a well-known fact that $|\{\pi\in \operatorname{S}_n:F(\pi)=k\}|$ is bounded by $\exp\big((n-k)\log n+o(n\log n)\big)$ and the second inequality follows from the observation that for any $\gamma\in [0,1]$, it holds
 $$\gamma[1-2(1-\varepsilon)^2\gamma+\gamma^2]\ge 0\iff \frac{2(1-\varepsilon)^2}{1+\gamma^2}+\gamma\ge 2(1-\varepsilon)^2\,.$$ Finally, we conclude the proof of Proposition~\ref{prop-second-moment-est} by noting that from Lemma~\ref{lem:Upper bound for binomial tail},
\[\mathbb{E}X_\varepsilon=n!\times\Pb[\widetilde{\operatorname{O}}(\pi)\ge (1-\varepsilon)D_{n,p}]\ge\exp\big([1-(1-\varepsilon)^2+o(1)]n\log n\big)\,.\qedhere\]
\end{proof}

\section{Algorithmic positive results via greedy algorithms}\label{sec-PTAS}\label{sec-proof-PTAS}
	
We now focus on the algorithmic aspect of the random GAP. This section develops matching algorithms satisfying the properties in Theorem~\ref{thm-PTAS} and Theorem~\ref{thm-algo}-(i). While \cite{DDG22} provides a polynomial-time approximation scheme for random GAP when \( p = n^{-\alpha+o(1)}, \alpha \in (1/2, 1] \) with $p\ge \log n/n$ (see Remark~\ref{rmk-connection-to-DDG24}), we address the remaining cases: \( p = n^{-1/2+o(1)} \) with \( p \ll p_c \), and \( p_c \ll p \leq 1/(\log n)^4 \).

%\textcolor{blue}{(I moved the blue-colored text to Remark 2.)It is important to note that when $p=n^{-\alpha+o(1)}$, where $1/2<\alpha<1$ is a constant, the value of $S_{n,p}$ is asymptotically  $n/(2\alpha-1)$, and a polynomial-time approximation scheme has already been presented in \cite{DDG22}. Therefore, our focus will be on the remaining cases, specifically when $p=n^{-1/2+o(1)}$ with $p\ll p_c$, and $p_c\ll p\leq 1/(\log n)^4$.}
As we will see, the algorithms for the sparse and dense regimes share the same framework, and their analyses are similar in structure but differ in detail. Section~\ref{subsec-algo-framework} outlines the general algorithmic framework and analysis structure. Sections~\ref{subsec-analysis-sparse} and \ref{subsec-algo-analysis-dense} focus on \( p = n^{-1/2+o(1)} \) with \( p \ll p_c \) and \( p_c \ll p \leq 1/(\log n)^4 \), respectively, completing the proofs of Theorem~\ref{thm-PTAS} and Theorem~\ref{thm-algo}-(i).

	\subsection{Framework of the algorithm}\label{subsec-algo-framework}
Fix a constant $\eta\in (0,1/2)$. We present the following greedy algorithm $\mathcal A_\eta\in \operatorname{GAA}$.

    \begin{algorithm}
		\label{Greedy Matching Algo}
		\caption{Greedy Matching Algorithm $\mathcal A_\eta$}
		\begin{algorithmic}[1]
			\STATE \textbf{Input}: $G,\Gs\in \operatorname{G}_n$ with adjacency matrices $\{G_{i,j}\},\{\mathsf G_{i,j}\}$.
			\STATE \textbf{Initialize}: $\pi^*(i)=i$ for $1\leq i\leq \eta n$, $\Rs_{\lfloor \eta n\rfloor}=\{\lfloor \eta n\rfloor +1,\dots, n\}$.
			\FOR{$\lfloor \eta n\rfloor+1\le s\le \lfloor(1-\eta)n\rfloor$}
   \STATE Uniformly sample $\pi^*(s)$ from $\arg\max_{r\in \mathsf R_{s-1}}\sum_{j<s}G_{j,s}\Gs_{\pi^*(j),r}$. %\COMMENT{Note that $\pi^*(s)$ is almost surely unique; this is the reason why we introduce the perturbations.}
   \STATE Set $\Rs_s=\Rs_{s-1}\setminus \{\pi^*(s)\}$.
			\ENDFOR
      \STATE Complete $\pi^*$ to a permutation such that $\pi^*(\lfloor(1-\eta)n\rfloor+1)<\cdots<\pi^*(n)$.
			\STATE \textbf{Output}: $\pi^*$. 
		\end{algorithmic}
\end{algorithm}

It is clear that $\mathcal A_\eta$ is an online algorithm which runs in $O(n^3)$ times for any $\eta>0$. The primary goal of this section is to show that for sufficiently small constant $\eta>0$, the algorithm $\mathcal A_\eta$ indeed outputs near-optimal solutions in the case $p=n^{-1/2+o(1)}$ with $p\ll p_c$, and near $\beta_c$-optimal solutions in the case $p_c\ll p\le 1/(\log n)^4$. Precisely, we prove the following two propositions regarding the sparse and dense regimes.

\begin{proposition}\label{prop-sparse-PTAS}
    For $p=n^{-1/2+o(1)}$, $p\ll p_c$ and any sufficiently small constant $\eta>0$, it holds that as $n\to \infty$,
    \[
    \Pb[\mathcal A_\eta(G,\Gs)\in \operatorname{S}_{1-4\eta}(G,\Gs)]=1-o(1)\,,
    \]
    where $\Pb$ is taken over $(G,\Gs)\sim \mathbf G(n,p)$ together with the internal randomness of $\mathcal A_\eta$.
\end{proposition}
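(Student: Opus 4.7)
The plan is to obtain a step-by-step lower bound on the overlap gained by $\mathcal{A}_\eta$ at each greedy step and sum these contributions. Write $k^* := \log n / \log(\log n/(np^2))$, so that $S_{n,p} = n\,k^*$, and let $c_s^{(\pi^*)} := \sum_{j<s} G_{j,s}\mathsf G_{\pi^*(j), \pi^*(s)}$ denote the overlap accumulated at step $s$ of $\mathcal A_\eta$. The goal is to show $\sum_s c_s^{(\pi^*)} \ge (1-3\eta) S_{n,p}$ with probability $1-o(1)$; since $E_{n,p} = O(n^2p^2) = o(S_{n,p})$ in the sparse regime, this implies $\widetilde{\operatorname{O}}(\pi^*) \ge (1-4\eta) S_{n,p}$ as required.

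First I would condition on a typicality event for $G$: by standard Chernoff bounds, with probability $1-o(1)$ the incoming degree $d_s := |\{j<s : G_{j,s}=1\}|$ satisfies $d_s = (1+o(1))(s-1)p$ uniformly for $s \in [\eta n, n]$. The core estimate, which I would establish for each bulk step $s \in (\eta n, (1-\eta)n]$, is that conditional on the algorithm's history $\mathcal H_{s-1}$,
\[
\mathbb{P}_{\mathsf G}\Bigl[\max_{r \in \mathsf R_{s-1}} T_r^{(s)} \ge (1-\delta)\,k^*\;\Big|\;\mathcal H_{s-1}\Bigr] \;\ge\; 1 - n^{-3},
\]
where $T_r^{(s)} := \sum_{j<s} G_{j,s}\mathsf G_{\pi^*(j),r}$ and $\delta = \delta(\eta) \in (0,\eta)$ is chosen so that $(1-2\eta)(1-\delta) \ge 1-3\eta$. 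The heuristic is that, conditional on $\mathcal H_{s-1}$, the unrevealed entries of $\mathsf G$ remain i.i.d.\ $\mathrm{Bernoulli}(p)$, so each $T_r^{(s)}$ behaves like a sum of $d_s \approx np$ independent Bernoullis. The calibration of $k^*$ via the Chernoff tail gives $\mathbb P[\mathrm{Bin}(d_s,p) \ge (1-\delta)k^*] \gg 1/n$, and since $|\mathsf R_{s-1}| = \Theta(n)$, the maximum exceeds $(1-\delta)k^*$ except with probability $n^{-3}$.

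A union bound over the $O(n)$ bulk steps then gives $\sum_{s \in (\eta n,(1-\eta)n]} c_s^{(\pi^*)} \ge (1-2\eta)(1-\delta)\,S_{n,p}$ with probability $1-o(1)$. The boundary contributions from steps $s \le \eta n$ (where $\pi^*(s)=s$) and $s > (1-\eta)n$ (completion phase) together contribute at most $O(n^2p^2)$ by Chernoff concentration, which is $o(S_{n,p})$ in the sparse regime. Combining these and subtracting $E_{n,p}$ yields $\widetilde{\operatorname{O}}(\pi^*) \ge (1-4\eta)\,S_{n,p}$ for all $n$ large enough.

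The main obstacle is the step-wise estimate displayed above. The subtlety is that the online nature of $\mathcal A_\eta$ induces correlations among the summands $\mathsf G_{\pi^*(j),r}$ across different $r$ and different steps: entries $\mathsf G_{\pi^*(j),r}$ with $j < s-1$ may have already been revealed in earlier iterations and may even have biased the subsequent choices $\pi^*(j')$ for $j < j' < s$. The natural remedy is a lazy-reveal coupling in which only the row $\mathsf G_{\pi^*(s-1),\,\cdot}$ of fresh entries is exposed immediately before step $s$; one then verifies via a second-moment count of $|\{r \in \mathsf R_{s-1} : T_r^{(s)} \ge (1-\delta)k^*\}|$ that the bias introduced by the already-revealed entries is negligible. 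This closely parallels the analysis of \cite{DDG22} but requires mild adaptation because $np^2$ here is $n^{o(1)}$ rather than polynomially small in $n$, so the Chernoff-based tail estimates must be tracked to sharper precision when solving for $k^*$.
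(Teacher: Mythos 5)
Your high-level skeleton matches the paper's: condition on degree regularity of $G$, show each bulk step $\eta n<s\le(1-\eta)n$ gains roughly $k^*=\log n/\log(\log n/np^2)$ (the paper uses the threshold $M_\eta=(1-\eta)k^*$ and maintains the event that all previous steps reached it), take a union bound, drop the boundary steps, and use $E_{n,p}=o(S_{n,p})$. But the entire difficulty of the proof lives inside your displayed "core estimate," and the mechanism you propose for it does not work as stated. First, a diagnosis: given the algorithm's history $\mathcal F_{s-1}$, the scores $T^{(s)}_r$ for the different candidates $r\in\mathsf R_{s-1}$ are in fact \emph{conditionally independent} (each depends only on the column of $\mathsf G$ indexed by $r$), so the problem is not correlation across $r$; the problem is the conditional \emph{marginal} law of each $T^{(s)}_r$. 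Knowing the trajectory $\pi^*(k)$, $k<s$, is equivalent to conditioning each surviving candidate $r$ on the decreasing event $\mathcal D_{r,s}=\bigcap_{k<s}\{E_{r,k}<O_k\}$, i.e.\ on having lost every previous round, and this conditioning biases the column of $r$ downward. Your proposed remedy — a "lazy-reveal coupling in which only the row $\mathsf G_{\pi^*(s-1),\cdot}$ is exposed before step $s$" — cannot reproduce the algorithm's law: at step $k$ the argmax is computed from the entries $\mathsf G_{\pi^*(j),r}$ for \emph{all} $j\in\operatorname{N}_k$ and \emph{all} remaining $r$, so one fresh row per step is not enough information to drive the dynamics, and deferring the reveal changes the process. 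Likewise, a second-moment count of $|\{r:T^{(s)}_r\ge(1-\delta)k^*\}|$ computed under the unconditional product measure says nothing about the conditioned measure, while under the conditioned measure you are back to needing exactly the marginal-tail comparison you have not supplied.

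What is actually needed (and what the paper proves as its Proposition on the tail $1-F_s(M_\eta)\gg\log n/n$) is the statement that conditioning on $\mathcal D_{r,s}$ does not deflate the upper tail of $E_{r,s}$: $\Pb[E_{r,s}\ge M_\eta\mid\mathcal D_{r,s}]\ge(1-o(1))\,\Pb[M_\eta\le E'_{r,s}\le 6M_\eta]$. The paper's proof of this uses ingredients none of which appear in your sketch: (i) the second regularity condition on $G$, $|\operatorname{N}_k\cap\operatorname{N}_s|\le(\log n)^3$, so that the step-$s$ summands interact with each past constraint only through a few coordinates; (ii) a decomposition of the past steps into the sets $\operatorname{R}^\ell_s$ of rounds in which $r$ has exactly $\ell$ edges into $\pi^*(\operatorname{N}_k\cap\operatorname{N}_s)$, with hypergeometric estimates showing $|\operatorname{R}^1_s|+|\operatorname{R}^2_s|\le n^{0.6}$ and $\operatorname{R}^\ell_s=\emptyset$ for $\ell\ge3$ typically (this is where $p=n^{-1/2+o(1)}$ enters); (iii) the FKG inequality to compare the conditioned and unconditioned probabilities of the resulting increasing "bad" event; and (iv) an induction maintaining $O_k\ge M_\eta$ at all earlier steps, which is what makes the residual bad probability $n^{0.6}\cdot n^{-1+\eta+o(1)}=o(1)$ small (and is also why $\eta$ must be small). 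Without an argument of this kind, your step-wise bound with failure probability $n^{-3}$ is an assertion, not a proof, so the proposal has a genuine gap precisely at the point where the paper does its work.
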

\begin{proposition}\label{prop-A-eta-is-ideal-online}
    For $p_c\ll p\le 1/(\log n)^4$ and any sufficiently small constant $\eta>0$, it holds that as $n\to \infty$,
    \[
    \Pb[\mathcal A_\eta(G,\Gs)\in \operatorname{S}_{\beta_c-20\eta}(G,\Gs)]=1-o(1)\,,
    \]
    where $\Pb$ is taken over $(G,\Gs)\sim \mathbf G(n,p)$ and the internal randomness of $\mathcal A_\eta$.
\end{proposition}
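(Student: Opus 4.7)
The plan is to decompose the overlap as $\operatorname O(\pi^*) = \sum_{s=2}^n T_s$ with $T_s := \sum_{j<s} G_{j,s} \mathsf G_{\pi^*(j), \pi^*(s)}$, and to analyze the algorithm's decisions step by step. For $s$ in the middle range $\lfloor \eta n\rfloor < s \le \lfloor (1-\eta) n\rfloor$ the algorithm chooses $\pi^*(s)$ to maximize the gain, so $T_s = \max_{r \in \mathsf R_{s-1}} X_{s,r}$ with $X_{s,r} := \sum_{j<s} G_{j,s} \mathsf G_{\pi^*(j), r}$. I would first condition on $\mathsf G$ being $p$-admissible (Lemma~\ref{lem-good-event-on-G}) and reveal $G$ column by column, so that given the filtration $\mathcal F_{s-1} := \sigma(\{G_{ij}\}_{i<j<s}, \mathsf G)$ the variables $\{G_{j,s}\}_{j<s}$ are fresh independent $\mathrm{Bernoulli}(p)$. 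Then each $X_{s,r}$ is conditionally binomial with mean $\mu_{s,r} := p\,d_r^{(s)}$, where $d_r^{(s)} := |\{j<s : \mathsf G_{\pi^*(j), r} = 1\}|$; admissibility forces $\mu_{s,r} = (1+o(1))(s-1)p^2 =: \mu_s$ uniformly in $r$, matching the heuristic behind $\beta_c$ from Section~\ref{subsec-algo-framework}.

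The heart of the argument is the conditional first-moment lower bound
\[
\mathbb E[T_s \mid \mathcal F_{s-1}] \ge \mu_s + (1-2\eta)\sqrt{2\mu_s \log(n-s)}
\]
for middle $s$ and any admissible history. I would prove this via a truncated second-moment argument on $N_s := |\{r \in \mathsf R_{s-1} : X_{s,r} \ge \tau_s\}|$ with threshold $\tau_s := \mu_s + (1-\eta)\sqrt{2\mu_s \log(n-s)}$. The Chernoff bound (Proposition~\ref{prop-chernoff}) yields $\mathbb E[N_s\mid \mathcal F_{s-1}] \gtrsim (n-s)^{2\eta - \eta^2}$. The crucial input for the second moment is that for distinct $r_1, r_2$ the pairwise codegree $|\{j<s : \mathsf G_{\pi^*(j), r_1} = \mathsf G_{\pi^*(j), r_2} = 1\}|$ equals $(1+o(1))(s-1)p^2$ by admissibility, which is much smaller than the individual degrees $(s-1)p$; thus $X_{s,r_1}$ and $X_{s,r_2}$ are nearly independent and $\mathbb E[N_s^2 \mid \mathcal F_{s-1}] \le (1+o(1))(\mathbb E[N_s\mid\mathcal F_{s-1}])^2$. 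Paley--Zygmund gives $\Pb[N_s > 0\mid \mathcal F_{s-1}] \ge 1-o(1)$, from which $\mathbb E[T_s\mid\mathcal F_{s-1}] \ge (1-o(1))\tau_s$ and hence the displayed bound follows.

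Summing over middle $s$ and applying the Riemann sum
\[
\sum_{\text{middle } s} \sqrt{2(s-1) p^2 \log(n-s)} = (1+o(1))\, D_{n,p} \int_\eta^{1-\eta} \sqrt{2x}\,dx \ge (\beta_c - C\eta)\, D_{n,p}
\]
for a universal $C$, together with the trivial boundary contributions $\mathbb E[T_s\mid \mathcal F_{s-1}] = (1+o(1))(s-1)p^2$ for $s\notin$ middle range (which sum to the remaining share of $E_{n,p}$ plus $o(D_{n,p})$), gives $\mathbb E[\operatorname O(\pi^*)\mid \mathsf G] \ge E_{n,p} + (\beta_c - O(\eta))\, D_{n,p}$. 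To pass from expectation to realization, I would apply a sub-Gaussian Azuma--Hoeffding bound to the martingale $M := \sum_s (T_s - \mathbb E[T_s\mid \mathcal F_{s-1}])$: each increment is $1$-Lipschitz in its $s-1$ fresh Bernoulli inputs, hence conditionally sub-Gaussian with parameter $O(s)$, so $M$ is sub-Gaussian with parameter $O(n^2)$ and $|M| = O(n\sqrt{\log n}) = o(D_{n,p})$ with high probability, using $np^2 \gg \log n$ in the dense regime. This yields $\operatorname O(\pi^*) - E_{n,p} \ge (\beta_c - 20\eta)\, D_{n,p}$ with probability $1-o(1)$ for all sufficiently small $\eta$.

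The main obstacle is controlling the accumulated $(1+o(1))$ errors against the finer scale $D_{n,p}$: since $E_{n,p}/D_{n,p} = \sqrt{np^2/\log n}$ is diverging in the dense regime, even small relative errors on the leading terms $\mu_s$ can overwhelm the $\beta_c D_{n,p}$ correction. Definition~\ref{def-admissible}(ii) controls relative fluctuations in $\mathsf G$-subgraph densities only at scale $(\log n)^{-1/4}$, and the technical restriction $p \le (\log n)^{-4}$ is likely imposed precisely to ensure the accumulated error in $\sum_s \mu_s$ stays within $o(D_{n,p})$; extending the argument to the full dense regime would require either sharper admissibility control of degrees and codegrees or a more refined step-by-step error analysis, as alluded to in Remark~\ref{rmk-thm-algo}(i).
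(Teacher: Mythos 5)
Your overall skeleton (per-step maximum of roughly $n-s$ near-binomial variables, a Riemann sum producing $\beta_c$, boundary steps contributing $O(\eta)D_{n,p}$) matches the paper's heuristic, and fixing $\Gs$ while revealing $G$ column by column is a legitimate reversal of the paper's bookkeeping (the paper fixes $G\in\Gc_0$ and uses the randomness of $\Gs$ plus perturbations). But there is a genuine gap at the step where you declare that admissibility forces $\mu_{s,r}=p\,d_r^{(s)}=(1+o(1))(s-1)p^2$ uniformly over $r\in\Rs_{s-1}$. In your formulation $\Gs$ is fully revealed, so $d_r^{(s)}=|\{j<s:\Gs_{\pi^*(j),r}=1\}|$ is the degree of $r$ into an image set $\pi^*([s-1])$ that the algorithm builds adaptively with complete knowledge of $\Gs$; no property of the fixed graph $\Gs$ quantified over subsets can give two-sided $(1+o(1))$ control of a single vertex's degree into all reachable sets (for arbitrary sets of size $\eta n$ the statement is simply false, e.g.\ take a set disjoint from the neighborhood of $r$), and Definition~\ref{def-admissible} only controls global edge counts, induced-subgraph counts at the additive scale $n^2p/(\log n)^{1/4}$, and $\operatorname{OL}$, none of which touch individual degrees at the needed precision. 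The real content hiding in that sentence is the survival bias: a vertex $r$ still unmatched at step $s$ has lost every comparison at steps $a_\eta<k<s$ — the event the paper calls $\mathcal D_{r,s}$ — and this biases its edges into the matched image set downward. Showing that this bias is negligible is exactly the core of the paper's proof (the comparison $1-F_s(\cdot)\ge(1-o(1))(1-F_s^*(\cdot))$ via FKG, the truncation sets $\Omega_s^*$, the exponentially dominated $c_k$, and Lemma~\ref{lem-asmptotic-of-tail}, which is also where the hypothesis $p\le 1/(\log n)^4$ actually enters); your proposal assumes this away rather than proving it.

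Two quantitative points would also need repair even granting the degree control. First, Paley--Zygmund with $\mathbb E[N_s^2\mid\mathcal F_{s-1}]\le(1+o(1))(\mathbb E[N_s\mid\mathcal F_{s-1}])^2$ yields only an unquantified $\Pb[N_s>0\mid\mathcal F_{s-1}]\ge 1-o(1)$, while your expectation bound loses $\tau_s\Pb[N_s=0]\approx sp^2\,\Pb[N_s=0]$ per step; since $sp^2\big/\sqrt{sp^2\log n}\to\infty$ in the dense regime, an unquantified $o(1)$ can swamp the entire $\sqrt{2\mu_s\log(n-s)}$ gain — this is the issue you flag in your last paragraph, but it requires a rate, not an acknowledgment (the paper sidesteps it by proving the pointwise high-probability bound $O_s\ge sp^2+\sqrt{2(1-2\eta)sp^2\log n}$ simultaneously for all middle $s$, rather than passing through conditional expectations plus Azuma). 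Second, the $(1+o(1))$-precision second moment is itself delicate: the tails being compared are polynomially small, so shifting the threshold by the fluctuation of the shared codegree sum changes the exponent by order $\sqrt{p\log n}$, which is $o(1)$ only thanks to $p\le(\log n)^{-4}$, and atypical values of the shared sum need separate care when $np^2$ exceeds $\log n$ only marginally. Finally, the same unproved degree claim is invoked again for the last $\eta n$ steps, where the survival bias is strongest; the paper treats these separately and needs only the one-sided bound $O_s\ge sp^2-\sqrt{10np^2\log n}$, obtained from the uniform lower bound $\Pb[\mathcal D_{r,b_\eta}]\ge e^{-4/\eta}$.
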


 The boundary \( p_c = \sqrt{{\log n}/{n}} \) separating the sparse and dense regimes, as well as the appearance of \( \beta_c \) in the dense regime, may not be immediately apparent to readers.
Therefore, we present a heuristic derivation of Propositions~\ref{prop-sparse-PTAS} and \ref{prop-A-eta-is-ideal-online} to illustrate the key intuitions behind these results. To simplify the discussion, we focus on the greedy algorithm $\mathcal A_0$ and ignore the correlations across different iterations. 
 
For $1\le s\le n$, suppose that at the $s$-th step, $\pi^*(i), 1\leq i \leq s-1$ have been determined. We then choose $\pi^*(s)\in \mathsf{R}_{s-1}=[n]\setminus\{\pi^*(1),\dots,\pi^*(s-1)\}$ to maximize 
 \[
O_s(\ell)=\sum_{j<s}G_{j,s}\mathsf{G}_{\pi^*(j),\ell}=\sum_{j<s:G_{j,s}=1}\mathsf{G}_{\pi^*(j),\ell}\,.
 \]
 Conditioning on the realization of $G$ and denoting $N_s=\#\{j<s:G_{j,s}=1\}$, we note that $O_s(\ell)\sim \mathbf{B}(N_s,p)$ for each $\ell\in \mathsf{R}_{s-1}$, and these random variables are conditionally independent of each other. Consequently, we can view $O_s=\max_{\ell\in \mathsf R_{s-1}}O_s(\ell)$ as the maximum of $n-s+1$ independent Binomial variables that distribute as $\mathbf{B}(N_s,p)$.

Meanwhile, we have $N_s\sim \mathbf{B}(s-1,p)$, and thus typically we expect $N_s\approx sp$. The key difference between the sparse and dense regimes lies in the Poisson and Gaussian natures of $\mathbf{B}(sp,p)$:
 \begin{itemize}
     \item When $p\ll p_c$, using Poisson approximation we expect that $\mathbf{B}(N_s,p)\approx \mathbf B(sp,p)\approx \operatorname{Poi}(sp^2).$ Hence,  $O_k$ is close to the maximum of $n-s+1$ independent $\operatorname{Poi}(sp^2)$ variables, which equals with high probability $$(1+o(1))\cdot\frac{\log(n-s+1)}{\log(\log (n-s+1)/sp^2)}\,.$$
     \item When $p_c\ll p\ll 1$, using Gaussian approximation we expect $\mathbf B(N_s,p)\approx \mathbf{B}(sp,p)\approx \mathcal N(sp^2,sp^2(1-p)),$ and thus $O_k$ is close to the maximum of $n-s+1$ independent $\mathcal N(sp^2,sp^2(1-p))$ variables, which equals with high probability $$sp^2+(1+o(1))\cdot\sqrt{2sp^2\log(n-s+1)}\,.$$
 \end{itemize}

 Using the above heuristics, we conclude that with high probability, the output $\pi^*$ of $\mathcal A_0$ satisfies that, in the sparse regime $p=n^{-1/2+o(1)}, p\ll p_c$,
 \[
 \operatorname{O}(\pi^*)=O_1+\cdots+O_n=(1+o(1))\sum_{s=1}^n\frac{\log (n-s+1)}{\log(\log (n-s+1)/sp^2)}=(1+o(1))\frac{n
\log n}{\log(\log n/np^2)}\,,
 \]
 while in the dense regime $p_c\ll p\ll 1$, 
  \begin{equation*}
 \begin{split}
 \operatorname{O}(\pi^*) =&\ O_1+\cdots+O_n=
    \sum_{s=1}^{n}sp^2+(1+o(1))\sqrt{2sp^2\log (n-s+1)}\\
    =&\ \binom{n}{2}p^2+(1+o(1))\sqrt{p^2\log n} \sum_{s=1}^n \sqrt{2s}
    = \binom{n}{2}p^2 +(1+o(1))\beta_c\sqrt{n^3p^2\log n}\,,
 \end{split}
 \end{equation*}
 where the last equality follows from the fact that $$n^{-3/2}\sum_{s=1}^n \sqrt{2s}=(1+o(1))\int_0^1\sqrt{2x}\operatorname{d}\!x=(1+o(1))\beta_c\,.$$
 This completes our heuristic derivation.

Formalizing the above heuristics requires a significant amount of work, and the most challenging part is handling correlations within the iterations (which we have ignored when discussing the heuristics). The remainder of this subsection introduces the framework we will use to manage these correlations.

To begin, we introduce an equivalent form of $\mathcal A_\eta$ that decouples the correlation posed by the uniform sampling in the algorithm. We sample i.i.d. perturbations $X_{i,j}=X_{j,i}\sim \operatorname{U}(0,1/n^2)$ (the uniform distribution on $(0,1/n^2)$), and define $$\Gs^*_{i,j}=\Gs_{i,j}+X_{i,j},\forall 1\le i<j\le n\,.$$ We make the following observation: since the total sum of $X_{i,j}$'s is no more than $1$, by symmetry, uniformly sampling $\pi^*(s)$ from $\arg\max_{r\in \Rs_{s-1}}\sum_{j<s}G_{j,s}\Gs_{\pi^*(j),r}$ is equivalent to setting $\pi^*(s)=\arg\max_{r\in \Rs_{s-1}}\sum_{j<s}G_{j,s}\Gs^*_{\pi^*(j),r}$, which is almost surely uniquely determined. Henceforth we will work on this version of $\mathcal A_\eta$.

We proceed with the analysis of the algorithm and introduce some notations. First, let us keep in mind that throughout the analysis in this section, we always fix some realization of $G$ and $\Pb$ denotes the probability measure on the sole graph $\Gs\sim \mathbf G(n,p)$ as well as the random perturbations $X_{i,j},1\le i<j\le n$. We will work with appropriate realizations of $G$ which possess some nice properties incorporated in the event $\Gc_0$ (which has different meanings for the two regimes, see Definition~\ref{def-good-events-sparse} and Definition~\ref{def-good-event-dense}, respectively). Given $G\in \mathcal G_0$, let $\eta>0$ be a small constant, we write $a_\eta=\lfloor \eta n\rfloor$ and $b_\eta=\lfloor (1-\eta)n\rfloor$ for simplicity. For $a_\eta<s\le b_\eta+1$, we use $\mathcal{F}_{s-1}$ to denote the $\sigma$-field generated by the entire graph $G$, the matching indices $\pi^*(i)$ for $i<s$, the matched parts of $\Gs$ before the $s$-th step ($\mathsf G_{\pi^*(i),\pi^*(j)}$ for $i<j<s$), and the corresponding perturbations $X_{i,j}$ for $i<j<s$. Moreover, we define $\operatorname N_s=\{j<s:G_{j,s}=1\}$ for $a_\eta<s\le b_\eta$. %We use $\mathcal N_s$ to denote the $\sigma$-field generated by $\{\operatorname N_k:[\eta n]+1\leq k\leq s\}$. Note that the only information containing in $\mathcal N_s$ but not in $\mathcal M_{s-1}$ is the set $\operatorname{N}_s$.
	
Recall that $\Gs^*_{i,j}=\Gs_{i,j}+X_{i,j}$. For each $1\le s\le n$ we let 
 $$
 O_s=\sum_{j<s}G_{j,s}\Gs^*_{\pi^*(j),\pi^*(s)}=\sum_{j\in \operatorname{N}_s}\Gs^*_{\pi^*(j),\pi^*(s)}.
 $$
 Additionally, for $a_\eta<s\le b_\eta$ and $r\in \Rs_{s-1}$, let $$E_{r,s}=\sum_{j<s}G_{j,s}\mathsf G^*_{\pi^*(j),r}=\sum_{j\in\operatorname{N}_{s}}\mathsf G^*_{\pi^*(j),r}.$$ 
	Furthermore, conditioned on any realization of $\mathcal F_{s-1}$, we denote $\mathcal N_s$ as the $\sigma$-field generated by the random variables $\Gs_{i,j}$'s and $X_{i,j}$'s, where at least one of $i$ or $j$ belongs to $\Rs_{s-1}$.% and let $\mathcal N_s$ be the $\sigma$-field generated by variables in $\Es_s$. %We remark that despite the similarity between the notations $\mathcal N_s$ and $\widetilde{\mathcal N}_s$, they represent totally different information in the two graphs $G$ and $\Gs$.

 Now we fix an index $s$ with $a_\eta<s\le b_\eta+1$ and investigate the impact of the $s$-th step posed by conditioning on $\mathcal F_{s-1}$. For any $r\in \Rs_{s-1}$, it must hold that $E_{r,k}<O_k$ for any $a_\eta<k<s$, since $\pi^*(k)$ is chosen to maximize $E_{\pi^*(k),k}=O_k$. Define the events
 \[
 \mathcal D_{r,s}=\bigcap_{a_\eta<k<s}\{E_{r,k}<O_k\}\,,\forall r\in \Rs_{s-1}\,,\text{ and }\mathcal D_s=\bigcap_{r\in \Rs_{s-1}}\mathcal D_{r,s}\,.
 \]
The crucial observation is that given $\mathcal F_{s-1}$ (where the sets $\operatorname{N}_k,a_\eta<k<s$ and $\Rs_{s-1}$ are deterministic), for any event measurable with respect to $\mathcal N_s$, conditioning on $\mathcal F_{s-1}$ is equivalent to conditioning on $\mathcal D_s$. Moreover, each event $\mathcal D_{r,s}$ is measurable with respect to the variables ${\Gs^*_{r,\pi^*(j)}, j<s}$, and these events are conditionally independent for different indices $r\in \Rs_{s-1}$. Given $\mathcal F_{s-1}$, we define $F_s(\cdot)$ as the distribution function of $E_{r,s}$ conditioned on $\mathcal F_{s-1}$, i.e.,
\begin{equation}\label{eq-distribution-F-s}
    F_s(x)=\Pb[E_{r,s}\le x\mid \mathcal F_{s-1}]=\Pb[E_{r,s}\le x\mid \mathcal D_{s}]=\Pb[E_{r,s}\le x\mid \mathcal D_{r,s}]\,.
\end{equation}
The last equality holds because the random variable $E_{r,s}$ and the event $\mathcal D_{r,s}$ are both independent of the events $\mathcal D_{r',s}$ for $r' \in \operatorname{R}_{s-1}\setminus\{r\}$. Here and henceforth in this section whenever $\mathcal F_{s-1}$ is given, with a slight abuse of notation we use $\Pb$ to denote the product measure on $\mathbf B(1,p)$ variables $\Gs_{i,j}$'s and $\operatorname{U}[0,1/n^2]$ variables $X_{i,j}$'s with either of $i$ or $j$ in $\Rs_{s-1}$. We emphasize that under such conventions, the sets like $\operatorname{N}_k,a_\eta<k\le b_\eta$ and $\Rs_{s-1}$ are considered to be deterministic and known. 

It is evident that the distribution function $F_s$ does not depend on the specific choice of $r\in \Rs_{s-1}$. In the $s$-th step, the quantity $O_s$ is sampled as the maximum of $|\Rs_{s-1}|=n-s+1$ i.i.d random variables with the distribution function $F_s$.

Let $F_{s}^*(\cdot)$ denote the distribution function of $E_{r,s}$ without conditioning on $\mathcal D_{r,s}$, i.e. the distribution function of the sum of a binomial variable $\mathbf B(|\operatorname{N}_s|,p)$ and $|\operatorname{N}_s|$ independent $\operatorname{U}[0,1/n^2]$ variables. As hinted earlier in the heuristic derivation, our goal is to show that under typical realizations of $\mathcal F_{s-1}$, $F_s$ is approximately equal to $F_s^*$. In other words, conditioning on the event $\mathcal D_{r,s}$ does not significantly alter the distribution of $E_{r,s}$, indicating that $O_s$ behaves similarly to the maximum of $n-s+1$ i.i.d binomial variables $\mathbf{B}(|\operatorname{N}_s|,p)$.
 %which is near to $\log n/\log\big(\log n/np^2\big)$ in the sparse regime.
%\begin{proposition}\label{prop-good-each-step}
%    Denote $M_\eta=(1-\eta)\log n/\log \big(\log n/np^2\big)$, then for sufficiently small $\eta>0$, it holds that $
 %   \Pb[O_s\ge M_\eta,\forall s\in \operatorname{S}(\eta)]=1-o(1)$.
%\end{proposition}
%\begin{proof}[Proof of Proposition~\ref{prop-sparse-PTAS} assuming Proposition~\ref{prop-good-each-step}]
    
%\end{proof}

\subsection{Analysis of greedy algorithm: the sparse regime}\label{subsec-analysis-sparse}

Assume $p=n^{-1/2+o(1)}$ and $p\ll p_c$, we now turn to prove Proposition~\ref{prop-sparse-PTAS}, thereby completing the proof of Theorem~\ref{thm-PTAS}. 

We denote $$M_\eta=\frac{(1-\eta)\log n}{\log \big(\log n/np^2\big)}\,.$$ 
The assumption of $p$ implies that $1\ll M_\eta\ll \log n$. We start by defining several good events which we will work with. Recall that $$\operatorname{N}_s=\{j<s:G_{j,s}=1\},\forall a_\eta<s\le b_\eta\,.$$

%Our idea is to define certain good events $\mathcal G_0$ and $\mathcal G_{s-1},s\in \operatorname{S}(\eta)$, where $\mathcal G_0$ is measurable with respect to $G$ and $\mathcal G_{s-1}$ is measurable with respect to $\mathcal F_{s-1}$ for any $s\in \operatorname{S}(\eta)$. We require that on the one hand, all these events happens with high probability; on the other hand, for each $s\in \operatorname{S}(\eta)$, whenever $\mathcal F_{s-1}$ satisfies $\Gc_0\cap \Gc_{s-1}$, it holds that 
%\begin{equation}\label{eq-tail-probability>>1/n}
%    \Pb[E_{r,s}\ge M_\eta\mid \mathcal F_{s-1}]=1-F_s(M_\eta)\gg 1/n\,.
%\end{equation}
%Once this can be done, we get that 
%which is $o(1)$ by \eqref{eq-tail-probability>>1/n}, as desired.

%Now we present the definitions of good events $\mathcal G_0,\mathcal G_{s-1},s\in \operatorname{S}(\eta)$ as follow. 

	\begin{definition}[Good events in the sparse regime]
		\label{def-good-events-sparse}
		Denote $\mathcal G_0$ for the event that
		\begin{itemize}
		    \item 
			$(1-\eta)sp\leq |\operatorname N_s|\leq (1+\eta)sp,\forall a_\eta<s\le b_\eta$;%
   %\notag
			\item$|\operatorname N_r\cap\operatorname N_s|\leq (\log n)^3\,,\forall a_\eta<r<s\le b_\eta$.
   \end{itemize}
In addition, for each $a_\eta<s\le b_\eta$, we denote $\Gc_{s}$ as the event that \begin{equation}
    O_k\ge M_\eta,\forall a_\eta<k\le s\,.
\end{equation}
\end{definition}
Note that $\Gc_0$ is measurable with respect to $G$, and for any $a_\eta<s\le b_\eta$, $\Gc_{s}$ is measurable with respect to $\mathcal F_{s}$. We first show that $\Gc_0$ is a typical event. %Throughout the remaining part of this section, we assume that the information regarding $\{\operatorname{N}_k\vert\, a_\eta\leq k\leq s\}$ is fixed under the conditional probability $\Pb$. For simplicity, we denote this conditional law as $\Pb$ as well.
\begin{lemma}
\label{lem-G_0-sparse}
    For $G\sim G(n,p)$, $\Pb[G\in \Gc_0]=1-o(1)$.
\end{lemma}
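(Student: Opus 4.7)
The plan is to verify the two conditions defining $\Gc_0$ separately, each via a Chernoff tail bound followed by a union bound. Observe that since $\operatorname N_s = \{j < s : G_{j,s} = 1\}$, the variable $|\operatorname N_s|$ distributes as $\mathbf B(s-1, p)$. For $a_\eta < r < s \le b_\eta$, the intersection $|\operatorname N_r \cap \operatorname N_s| = \sum_{j < r} G_{j,r} G_{j,s}$ is a sum of independent Bernoulli$(p^2)$ variables, hence $\mathbf B(r-1, p^2)$. Both estimates will rely directly on Proposition~\ref{prop-chernoff}.

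For the first item, since $s > a_\eta = \lfloor \eta n \rfloor$ and $p = n^{-1/2 + o(1)}$, the mean $(s-1)p$ is of order at least $\eta n p = n^{1/2 + o(1)}$, which grows super-logarithmically. Applying the two-sided Chernoff bound \eqref{eq-chernoff-bound} with deviation of size $\Theta(\eta s p)$ yields
\[
\Pb\Bigl[\bigl||\operatorname N_s| - sp\bigr| \ge \tfrac{1}{2}\eta s p\Bigr] \le 2\exp\bigl(-c \eta^2 n p\bigr)\,,
\]
which is $\exp(-n^{1/2+o(1)})$ and in particular $o(1/n)$. A union bound over the $O(n)$ indices $s \in (a_\eta, b_\eta]$ then handles the first condition.

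For the second item, the mean of $|\operatorname N_r \cap \operatorname N_s|$ is at most $np^2$, which satisfies $np^2 \ll \log n$ in the sparse regime. Hence the target value $(\log n)^3$ is polylogarithmically larger than the mean. I would apply the upper-tail Chernoff bound \eqref{eq-chernoff-bound-upper} with $1 + \delta = (\log n)^3/((r-1)p^2) \ge (\log n)^2$, so that $(1+\delta)\log(1+\delta) - \delta = (1+o(1))(1+\delta)\log(1+\delta)$, which produces
\[
\Pb\bigl[|\operatorname N_r \cap \operatorname N_s| \ge (\log n)^3\bigr] \le \exp\bigl(-(1+o(1))\cdot 2 (\log n)^3 \log \log n\bigr)\,,
\]
much smaller than $n^{-3}$. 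A union bound over the $\binom{n}{2}$ pairs $(r,s)$ then yields the second condition with probability $1 - o(1)$.

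No step here is genuinely difficult. The only mild technical point is ensuring that the Chernoff upper-tail bound in the second item survives the $O(n^2)$ union bound, which is guaranteed precisely because $\log\!\bigl((\log n)^3/(np^2)\bigr) \gg 1$ in the sparse regime; the super-polylogarithmic decay coming from the factor $\log \log n$ in the exponent is what provides the safety margin. Combining the two items gives $\Pb[G \in \Gc_0] = 1 - o(1)$, as desired.
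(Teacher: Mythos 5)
Your proof is correct and follows essentially the same route as the paper: identify $|\operatorname N_s|$ and $|\operatorname N_r\cap\operatorname N_s|$ as binomial variables, apply the Chernoff bounds of Proposition~\ref{prop-chernoff}, and conclude with a union bound over the $O(n)$ indices and $O(n^2)$ pairs. The only cosmetic difference is that you use the multiplicative upper-tail form \eqref{eq-chernoff-bound-upper} for the intersection bound (gaining an inessential $\log\log n$ factor) where the paper uses the simplified bound \eqref{eq-chernoff-bound}; this does not change the argument.
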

 \begin{proof}
    Note that for each $a_\eta<s\le b_\eta$, $|\operatorname{N}_s|$ is distributed as a binomial variable $\mathbf B(s,p)$, and for any $a_\eta<r<s\le b_\eta$, the distribution of $|\operatorname{N}_r\cap \operatorname{N}_s|$ is given by $\mathbf B(r,p^2)$. Using the Chernoff bound \eqref{eq-chernoff-bound}, we can show that for any $a_\eta<s\le b_\eta$,
    \[
\Pb[||\operatorname{N}_s|-sp|\ge \eta sp]\le 2\exp\left(-\frac{(\eta sp)^2}{2(1+\eta)\eta sp}\right)=o(1/n),
    \]
    and (recall that $np^2\ll \log n$ since we are in the sparse regime)
    \[
\Pb[\mathbf B(r,p^2)\ge (\log n)^3]\le \exp\left(-\frac{(\log n)^6}{2\big((1+\eta)rp^2+(\log n)^3\big)}\right)=o\left(1/n^2\right),
    \]
    Thus the lemma follows from a simple union bound.
 \end{proof}
We claim that now it remains to show the following proposition.
\begin{proposition}\label{prop-tail-prob>logn/n}
    For each $a_\eta<s\le b_\eta$ and any realization of $\mathcal F_{s-1}$ that satisfies $\Gc_0\cap \Gc_{s-1}$, it holds that for any $r\in \Rs_{s-1}$
    \begin{equation}\label{eq-tail-prob>logn/n}
    \Pb[E_{r,s}\ge M_\eta\mid \mathcal F_{s-1},\Gc_0\cap\Gc_{s-1}]=1-F_s(M_\eta)\gg \log n/{n}\,.
    \end{equation}
\end{proposition}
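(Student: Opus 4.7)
The plan is to compare the conditional distribution $F_s$ of $E_{r,s}$ given $\mathcal D_{r,s}$ with its unconditional version $F_s^*$, and show that the conditioning scarcely affects the upper tail at level $M_\eta$. Setting $Y_j = \Gs^*_{\pi^*(j),r}$ for $j < s$, under $\Pb[\cdot\mid \mathcal F_{s-1}]$ the $Y_j$'s are i.i.d.\ Bernoulli($p$) (plus the negligible perturbation) conditioned on $\mathcal D_{r,s} = \bigcap_{a_\eta<k<s}\{T_k < O_k\}$, where $T_k := E_{r,k} = \sum_{j\in\operatorname N_k}Y_j$ and each $O_k$ is a deterministic number satisfying $O_k \ge M_\eta$ (by $\Gc_{s-1}$). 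Writing the target as
\[
\Pb[T_s \ge M_\eta,\,\mathcal D_{r,s}] \;\gg\; \frac{\log n}{n}\cdot\Pb[\mathcal D_{r,s}],
\]
I would split the argument into an unconditional tail estimate and a ``near-independence'' comparison.

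First I would establish the unconditional lower bound $\Pb[T_s\ge M_\eta]\ge n^{-(1-\eta)+o(1)}$ via a direct binomial computation: Stirling applied to $\binom{|\operatorname N_s|}{M_\eta}p^{M_\eta}(1-p)^{|\operatorname N_s|-M_\eta}$, together with the size bound $|\operatorname N_s|\in[(1-\eta)sp,(1+\eta)sp]$ from $\Gc_0$ and the definition $M_\eta = (1-\eta)\log n/\log(\log n/np^2)$. Since $\eta>0$ this already exceeds $\log n/n$ by the polynomial factor $n^\eta$, which provides considerable slack for the subsequent comparison.

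Second I would show that conditioning on $T_s \ge M_\eta$ changes the probability of $\mathcal D_{r,s}$ by at most a constant factor, i.e.\ $\Pb[\mathcal D_{r,s}\mid T_s \ge M_\eta] \gtrsim \Pb[\mathcal D_{r,s}]$. Conditional on $T_s = m$, the $1$-set in $\operatorname N_s$ is a uniformly random size-$m$ subset of $\operatorname N_s$, while the $Y_j$'s for $j\notin\operatorname N_s$ are unchanged. Decomposing $T_k = H_k + B_k$ with $H_k = \sum_{j\in\operatorname N_s\cap\operatorname N_k}Y_j$ hypergeometric and $B_k\sim\mathbf B(|\operatorname N_k\setminus\operatorname N_s|,p)$ independent, the overlap bound $|\operatorname N_s\cap\operatorname N_k|\le(\log n)^3$ from $\Gc_0$, combined with $|\operatorname N_s|\asymp sp = n^{1/2+o(1)}$ and $m\lesssim M_\eta$, gives $\Pb[H_k \ge 1\mid T_s = m] \le m(\log n)^3/|\operatorname N_s| = o(1)$. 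Hence $H_k$ is typically zero and the conditioning barely perturbs each individual constraint $\{T_k < O_k\}$.

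The main obstacle is converting these per-$k$ estimates into a uniform bound on $\Pb[\mathcal D_{r,s}\mid T_s\ge M_\eta]/\Pb[\mathcal D_{r,s}]$: a naive union bound over $a_\eta < k < s$ loses a factor polynomial in $n$, eating up all the $n^\eta$ slack. The remedy is likely twofold: (i) exploit that $O_k$ is the maximum of $|\Rs_{k-1}|\asymp n$ i.i.d.\ copies of $E_{\cdot,k}$, so by symmetry $\Pb[T_k\ge O_k]\approx 1/|\Rs_{k-1}|$ and $\sum_{a_\eta<k<s}\Pb[T_k\ge O_k] = O(\log(1/\eta))$ is bounded by a constant depending only on $\eta$; (ii) if necessary, strengthen $\Gc_{s-1}$ to force $O_k$ to lie a suitable margin above $M_\eta$, making the constraints $T_k<O_k$ stable under the tiny hypergeometric shifts introduced by the conditioning. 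Once both ingredients are in place, one concludes $\Pb[T_s\ge M_\eta\mid\mathcal D_{r,s}]\ge c(\eta)\,n^{-(1-\eta)+o(1)}\gg\log n/n$, as required.
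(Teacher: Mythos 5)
Your first half follows the paper's route exactly: lower bound the unconditional tail $\Pb[E'_{r,s}\ge M_\eta]\ge n^{-(1-\eta)+o(1)}\gg \log n/n$ (this is Lemma~\ref{lem:poisson_tail_approx}), then show the conditioning on $\mathcal D_{r,s}$ barely moves this tail, using that conditionally on the value of $E'_{r,s}$ the $1$-set inside $\operatorname N_s$ is a uniform subset, so its intersection with $\operatorname N_k\cap\operatorname N_s$ is hypergeometric and is nonzero with probability $O\!\big(M_\eta(\log n)^3/|\operatorname N_s|\big)=n^{-1/2+o(1)}$ per $k$ (the paper's $\operatorname R^\ell_s$ bookkeeping). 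The genuine gap is in your final ``main obstacle'' paragraph. Remedy (i) asserts $\Pb[T_k\ge O_k]\approx 1/|\Rs_{k-1}|$ ``by symmetry,'' but the proposition must hold for \emph{every} realization of $\mathcal F_{s-1}$ in $\Gc_0\cap\Gc_{s-1}$, and these good events only guarantee $O_k\ge M_\eta$; for a fixed realization there is no reason the realized $O_k$ sits at the $1/(n-k+1)$-quantile of a fresh $E_{r,k}$ (in the paper's dense-regime analysis this is exactly what the auxiliary variables $c_k$ quantify, and they can be as large as $\Theta(\log n)$). Making (i) rigorous would force you to add such quantile control to $\Gc_{s-1}$ and prove it propagates — a separate induction you do not outline — and even then you would need the probability of $T_k\in(O_k-\ell,O_k]$, not of $T_k\ge O_k$, which in this Poissonian regime can exceed the latter by large factors. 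Remedy (ii) is left as a vague possibility. You also never address how to decouple the perturbation events (which live on the $\operatorname N_s$-variables) from the constraints $\{E_{r,k}<O_k\}$ under the conditioning on $\mathcal D_{r,s}$; the paper does this by conditioning on the full realization $\omega_1$ of the $\operatorname N_s$-coordinates (so the shifts are deterministic and independent of the remaining variables) and then applying the FKG inequality, since the bad event is increasing and $\mathcal D_{r,s}$ is decreasing.

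Moreover, the obstacle you identify is resolved in the paper more cheaply than either remedy: the naive union bound does \emph{not} eat the slack once you multiply the two small factors. First truncate the tail window to $M_\eta\le E'_{r,s}\le 6M_\eta$ (so the hypergeometric parameter is $O(M_\eta)\ll\log n$); by Markov, with probability $1-o(1)$ at most $n^{0.6}$ indices $k$ have a nonzero overlap shift and no shift exceeds $2$. Then, after FKG removes the conditioning on $\mathcal D_{r,s}$, union bound only over these $\le n^{0.6}$ indices, bounding each term by $\Pb[\mathbf B(2np,p)\ge M_\eta-3]=n^{-1+\eta+o(1)}$ using just $O_k\ge M_\eta$ (from $\Gc_{s-1}$) and $|\operatorname N_k|\le 2np$ (from $\Gc_0$). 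The product $n^{0.6}\cdot n^{-1+\eta+o(1)}=o(1)$ for small $\eta$, and this is where the standing assumption $p=n^{-1/2+o(1)}$ is used (it keeps the number of perturbed constraints at $n^{1/2+o(1)}$). So the correct closing argument is Markov $+$ FKG $+$ the crude Poisson tail with margin $3$, not the symmetry/quantile estimate you propose; as written, your proof of the comparison $\Pb[\mathcal D_{r,s}\mid E'_{r,s}\ge M_\eta]\gtrsim\Pb[\mathcal D_{r,s}]$ is incomplete.
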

\begin{proof}[Proof of Proposition~\ref{prop-sparse-PTAS}]
    Write $\Gc_{a_\eta}$ for the trivial event. Note that for each $a_\eta<s\le b_\eta$, we have $O_s=\max_{r\in \Rs_{s-1}}E_{r,s}$.  Thus
    \begin{equation*}
        \begin{aligned}
            \Pb[\Gc_{s-1}\cap \Gc_0]-\Pb[\Gc_s\cap \Gc_0]=&\ \Pb[O_s\le M_\eta,\Gc_0,\Gc_{s-1}]\le \Pb[O_s\le M_\eta\mid \Gc_0\cap \Gc_{s-1}]\\
            =&\ \mathbb{E}\big[E_{r,s}\le M_\eta,\forall r\in \Rs_{s-1}\mid \mathcal F_{s-1},\Gc_0\cap \Gc_{s-1}\big]\\
            =&\ \mathbb{E}\big[F_{s}(M_\eta)^{n-s+1}\mid \mathcal F_{s-1},\Gc_0\cap \Gc_{s-1}\big]\\
            =&\ o(1/n)\,,
        \end{aligned}
    \end{equation*}
where the second-to-last equality is due to conditional independence, and the last relation follows from \eqref{eq-tail-prob>logn/n} and the fact that $n-s+1\ge \eta n$ for any $s\le b_\eta$. This implies that $$\Pb[\Gs_{b_\eta}\cap \Gc_0]=\Pb[\Gc_0]+\sum_{a_\eta<s\le b_\eta}\big(\Pb[\Gc_{s}\cap \Gc_0]-\Pb[\Gc_{s-1}\cap \Gc_0]\big)\ge 1-o(1)\,,$$
and in particular, $\Pb[\mathcal G_{b_\eta}]=1-o(1)$. Finally we note that under this event, we have
\begin{equation*}
    \begin{aligned}
        \operatorname{O}(\pi^*)=&\ \sum_{s=1}^n O_s-\sum_{i<j}G_{i,j}X_{\pi^*(i),\pi^*(j)}
        \ge \sum_{s=a_\eta+1}^{ b_\eta}O_s-1\ge (1-2\eta)nM_\eta-1\,,%\ge (1-4\eta)S_{n,p}\,,
    \end{aligned}
\end{equation*}
which implies $\widetilde{\operatorname{O}}(\pi^*)\ge (1-4\eta)S_{n,p}$, and thus $\pi^*\in \operatorname{S}_{1-4\eta}(G,\Gs)$, completing the proof.
\end{proof}
The remaining of this section is devoted to the proof of Proposition~\ref{prop-tail-prob>logn/n}. 
We fix a realization of $G$ that satisfies $\Gc_0$ and then a realization of $\mathcal F_{s-1}$ that satisfy $\mathcal{G}_{s-1}$. %Thus the information regarding $\{\operatorname{N}_k, a_\eta<k<s\}$ and $\Rs_{s-1}$ is revealed, and with slight abuse of notation, we still use $\Pb$ to denote the product measure of $\Gs_{i,j}\sim \mathbf B(1,p),X_{i,j}\sim\operatorname{U}[0,1/n^2]$ with at least one of $i,j$ lies in $\Rs_{s-1}$ given the information in $\mathcal F_{s-1}$. 
Note that we may apply the following relaxation: $$E_{r,s}=\sum_{j\in \operatorname{N}_s}\Gs^*_{r,\pi^*(j)}\ge \sum_{j\in \operatorname{N}_s}\Gs_{r,\pi^*(j)}\stackrel{\text{def}}{=}E'_{r,s}\,,$$
and $E'_{r,s}$ is measurable with respect to $\Gs_{r,\pi^*(j)},j\in \operatorname{N}_s$. According to previous discussions, it holds that
	\begin{equation}
		\begin{split}
			 \Pb[E_{r,s}\geq M_\eta\,|\,\mathcal D_{r,s}]
			\geq&\ \Pb[M_\eta\le E'_{r,s}\le 6 M_\eta\mid \mathcal D_{r,s}]=\frac{\mathbb P[\{M_\eta\leq E'_{r,s}\leq 6M_\eta\}\cap \mathcal D_{r,s}]}{\mathbb P[\mathcal D_{r,s}]}\\
	=&\ \mathbb P[M_\eta\leq E'_{r,s}\leq 6M_\eta]\times\frac{\mathbb P[\mathcal D_{r,s}\,|\,M_\eta\leq E'_{r,s}\leq 6M_\eta]}{\mathbb P[\mathcal D_{r,s}]}\,.
			%=&\ (1-o(1))\widehat{\mathbb P}_s[E_{i,s}\geq F_{\varepsilon,s}]\times\frac{\widehat{\mathbb P}_s[E_{i,r}< F_{\varepsilon,r}\mbox{ for }r\in\mathcal A_{i,s}\,|\,F_{\varepsilon,s}\leq E_{i,s}\leq \widetilde F_n]}{\widehat{\mathbb P}_s[E_{i,r}< F_{\varepsilon,r}\mbox{ for }r\in\mathcal A_{i,s}]},
		\end{split}\notag
	\end{equation}
It is clear that $E'_{r,s}\sim \mathbf B(|\operatorname{N}_s|,p)$, so we can conclude from Lemma~\ref{lem:poisson_tail_approx} that $$\Pb[M_\eta\le E'_{r,s}\le 6M_\eta]=\Pb[E'_{r,s}\geq M_\eta]-\Pb[E'_{r,s}\ge 6M_\eta]\gg \log n/n\,,$$ hence it suffices to show that
    \begin{equation}
      \label{eq-cond.on.N}
\Pb[\mathcal D_{r,s}\mid M_\eta\le E'_{r,s}\le 6M_\eta]\geq [1-o(1)]\Pb[\mathcal D_{r,s}].
    \end{equation}

To obtain \eqref{eq-cond.on.N}, for each $r\in\Rs_{s-1}$ we define for $\ell \in \mathbb N$ the random set of indices
$$\operatorname{R}^\ell_{s}=\Big\{a_\eta <k< s:\sum_{j\in \operatorname{N}_k\cap \operatorname{N}_s}\Gs_{r,\pi^*(j)}=\ell\Big\}\,,$$
 namely, the set of step indices $k$ in which $r$ connect exactly $\ell$ edges in $\Gs$ to the set $\pi^*(\operatorname{N}_k)\cap \pi^*(\operatorname{N}_s)$.
 Let $\mathcal{R}_s$ denote the collection $(\operatorname{R}^\ell_{s},\ell\in \mathbb N)$. We refer to a collection $\mathcal{R}_s$ as typical if it satisfies the conditions $|\operatorname{R}^1_{s}|+|\operatorname{R}^2_s|<n^{0.6}$ and $\operatorname{R}^\ell_{s}=\emptyset$ for all $\ell\geq 3$. The following lemma justifies the use of the term ``typical" for such collections.
\begin{lemma}
\label{lem-typical-sets-are-typical}
Whenever $\mathcal F_{s-1}$ satisfies $\mathcal G_{s-1}\cap \mathcal G_0$, we have
    	\begin{equation}
	\label{eq:good.R}
		\mathbb P[\mathcal R_s\text{ is typical}\,|\,M_\eta\leq E'_{r,s}\leq 6 M_\eta]\geq 1-o(1).
	\end{equation}
\end{lemma}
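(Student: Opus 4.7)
The key observation is that conditional on $\mathcal F_{s-1}$, none of the edges $\mathsf G_{r,\pi^*(j)}$ for $j\in \operatorname N_s$ is incident to a matched vertex (since $r\in \Rs_{s-1}$), so the Bernoulli$(p)$ variables $\{\mathsf G_{r,\pi^*(j)}\}_{j\in \operatorname N_s}$ are i.i.d. Consequently, conditional further on $E'_{r,s}=m$, the random set $J:=\{j\in \operatorname N_s : \mathsf G_{r,\pi^*(j)}=1\}$ is uniformly distributed on size-$m$ subsets of $\operatorname N_s$, and for each $k$ the count $|\operatorname N_k\cap J|$ defining $\operatorname R_s^\ell$ follows a hypergeometric law with parameters $(N,A_k,m)$, where $N:=|\operatorname N_s|$ and $A_k:=|\operatorname N_k\cap \operatorname N_s|$. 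By $\mathcal G_0$ we have $N\geq (1-\eta)\eta np$ and $A_k\leq (\log n)^3$ uniformly, and by the conditioning we have $m\leq 6M_\eta\leq 6\log n$. Since it suffices to prove the estimates uniformly in $m\in [M_\eta,6M_\eta]$ and then integrate over $m$, I would treat $m$ as fixed from now on.

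Next I would control $|\operatorname R_s^1|$ and $|\operatorname R_s^2|$ by first moments. The elementary hypergeometric inequality gives $\Pb[|\operatorname N_k\cap J|\geq \ell]\leq (A_km/N)^\ell/\ell!$, and combining with the global bound $\sum_{a_\eta<k<s}A_k\leq n(\log n)^3$ yields $\mathbb E|\operatorname R_s^1|\leq nm(\log n)^3/N$ and $\mathbb E|\operatorname R_s^2|\leq n(\log n)^6(m/N)^2/2$. Plugging in $m=O(\log n)$ and $N=\Omega(np)=n^{1/2+o(1)}$ (from $p=n^{-1/2+o(1)}$, $p\ll p_c$), both expectations are polylogarithmic times $n^{1/2+o(1)}$, hence $o(n^{0.6})$. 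Markov's inequality then gives $|\operatorname R_s^1|+|\operatorname R_s^2|<n^{0.6}$ with probability $1-o(1)$.

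Finally, for the claim that $\operatorname R_s^\ell=\emptyset$ for all $\ell\geq 3$, I would apply a union bound over $k$:
\[
\Pb\big[\exists\, a_\eta<k<s:\, |\operatorname N_k\cap J|\geq 3\big]\leq \sum_{a_\eta<k<s}(A_km/N)^3/6=O\big(n(\log n)^{12}/(np)^3\big)=o(1),
\]
again using $A_k\leq (\log n)^3$, $m=O(\log n)$, and $np=n^{1/2+o(1)}$. The main obstacle is simply to set up the conditioning correctly so that summing on $E'_{r,s}=m$ genuinely reduces $J$ to a uniform random subset; once this is recognized, the remaining argument is a routine hypergeometric moment computation, and the polylogarithmic factors are comfortably dominated by the power of $n$ afforded by the sparse regime assumption $p=n^{-1/2+o(1)}$.
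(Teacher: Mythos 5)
Your proof is correct and follows essentially the same route as the paper's: condition on the exact value of $E'_{r,s}$ so that the set of indices with $\Gs_{r,\pi^*(j)}=1$ is a uniform subset of $\operatorname{N}_s$, bound the resulting hypergeometric tails using $|\operatorname{N}_k\cap\operatorname{N}_s|\le(\log n)^3$ and $|\operatorname{N}_s|\asymp sp$ from $\Gc_0$, and conclude via first moments and Markov (the paper bounds $\sum_{\ell\ge1}|\operatorname{R}^\ell_s|$ and $\sum_{\ell\ge3}|\operatorname{R}^\ell_s|$ in one shot, which is the same computation). Two cosmetic points: your hypergeometric bound should carry a denominator $N-\ell$ (or $N-m$) rather than $N$, a harmless $1+o(1)$ correction, and the i.i.d.\ claim holds because the lemma's $\Pb$ is, by the paper's convention, the product measure on the unexplored variables (each edge $(r,\pi^*(j))$ does touch a matched vertex, but was never revealed since $r\in\Rs_{s-1}$).
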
 
\begin{proof}
%According to our conditions in ``typical" $\mathcal R_s$, we divide our proof into three parts. 

We fix an arbitrary integer $K$ in the range $[M_\eta,6M_\eta]$. We observe that %$K\le E_{r,s}<K+1$ implies $\sum_{j\in \operatorname{N}_s}\Gs_{r,\pi^*(j)}=K$ since the perturbations satisfy $0\le X_{i,j}\leq 1/n^2$. Therefore, 
conditioned on the event $\{E'_{r,s}=K\}$, the set of indices $i\in \operatorname{N}_s$ such that $\Gs_{r,\pi^*(i)}=1$ forms a uniform subset of $\operatorname{N}_s$ with $K$ elements. Consequently, for each $K$, the quantity $
\sum_{j\in \operatorname{N}_k\cap \operatorname{N}_s}\Gs_{r,\pi^*(j)}
$  follows a hypergeometric distribution $\mathbf{HG}(|\operatorname{N}_s|,|\operatorname{N}_k\cap\operatorname{N}_s|,K)$, which represents the size of the intersection of $\operatorname{N}_k\cap\operatorname{N}_s$ with a uniform subset of $\operatorname{N}_s$ of cardinality $K$. Recall that under the good event $\Gc_0$, we have $(1-\eta)sp\le |\operatorname{N}_s|\le (1+\eta)sp$ and $|\operatorname{N}_k\cap \operatorname{N}_s|\le (\log n)^3$ for each $\lfloor\eta n\rfloor\le k\le s-1$. 

For any triple $(N,M,K)$ with $(1-\eta)sp\le N\le (1+\eta)sp,\ 0\le M\le (\log n)^3$ and $M_\eta\le K\le 6M_\eta\ll \log n$, we have
 \begin{equation*}
	         \begin{aligned}	\Pb\big[\mathbf{HG}(N,M,K)\ge1\big]&=\sum_{k\ge1}\frac{\binom{M}{k}\binom{N-M}{K-k}}{\binom{N}{K}}\leq\sum_{k\ge 1} \binom{M}{k}\cdot\frac{N^{K-k}}{(K-k)!}\cdot\frac{K!}{(N-K)^K}\\
            &\le\sum_{k\ge 1}M^k\left(1-\frac K N\right)^{-K+k}\cdot\frac{K^k}{k!(N-K)^k}\\
            &\le \exp\left(\frac{K^2}{N}\right)\sum_{k\ge 1}\frac{1}{k!}\left(\frac{KM}{N-K}\right)^k\le \frac{eKM}{N-K}\le \frac{(\log n)^4}{np}\,.
	         \end{aligned}
	         \end{equation*}
    As a result, 
\begin{equation}\label{eq-expectation-of-R-1-sparse}
              \mathbb E\Big[\sum_{\ell\ge 1}|\operatorname{R}^\ell_s|\mid M_\eta\le E'_{r,s}\le 6M_\eta\Big]\le C(\log n)^4/p\ll n^{0.6}\,,
          \end{equation}
          where the last inequality follows since we assume $p=n^{-1/2+o(1)}$. 
          Similarly we have
          \begin{equation*}
          \begin{aligned}
          \Pb[\mathbf{HG}(N,M,K)\ge 3]=&\  \sum_{k\ge3}\frac{\binom{M}{k}\binom{N-M}{K-k}}{\binom{N}{K}}\le \sum_{k\ge3}\binom{M}{k}\left(1-\frac{K}{N}\right)^{-K+k}\left(\frac K {N-K}\right)^k\\
          \le&\ \left(1-\frac K N\right)^{-K}\sum_{k\ge3}\frac 1{k!}\left(\frac{MK}{N-K}\right)^k\le \frac{(KM)^3}{(N-K)^3}\le \frac{(\log n)^{12}}{(np)^3}\,,
          \end{aligned}
          \end{equation*}
and hence it follows
\begin{equation}\label{eq-expectation-of-R-2-sparse}
            \mathbb E\Big[\sum_{\ell\ge 3}|\operatorname{R}^\ell_s|\mid M_\eta\le E'_{r,s}\le 6M_\eta\Big]=o(1)\,.
          \end{equation}
          Therefore, we conclude from Markov inequality that $|\operatorname{R}^1_s|+|\operatorname{R}^2_s|\le n^{0.6}$ and $\operatorname{R}^\ell_s=\emptyset$ for any $\ell\ge 3$ with high probability, which completes the proof of the lemma.
\end{proof}
Recall that $$\mathcal D_{r,s}=\bigcap_{a_\eta<k<s}\{E_{r,k}\leq O_k\}=\bigcap_{a_\eta<k<s}\{\sum_{j\in \operatorname{N}_k}\Gs^*_{r,\pi^*(j)}\leq O_k\}\,.$$
By introducing the random sets $\operatorname{R}^\ell_s,\ell\in \mathbb N$, we obtain that $\Pb[\mathcal D_{r,s}\mid M_\eta\le E'_{r,s}\le 6M_\eta]$ equals to
\begin{equation}
%\begin{split}
\mathbb P\Big[E_{r,k}-\!\sum_{j\in \operatorname{N}_k\cap\operatorname{N}_s}\Gs_{r,\pi^*(j)}\leq O_{k}-\ell,\forall \ell\ge 0\text, k\in\operatorname{R}^\ell_{s}\,|\,M_\eta\leq E'_{r,s}\leq  6M_\eta\Big]\,.
%\end{split}
\notag
\end{equation}
From the total probability formula,  we can express $\Pb[\mathcal D_{r,s}\mid M_\eta\le E'_{r,s}\le 6M_\eta]$ as
\begin{equation*}
    \begin{aligned}
&\sum_{(R^0_s,R^1_s,\dots)}\Pb[\mathcal R_s=(R^0_s,R^1_s,\dots)\mid M_\eta\le E'_{r,s}\le 6M_\eta]\times \\
&\Pb\Big[E_{r,k}-\!\sum_{j\in \operatorname{N}_k\cap\operatorname{N}_s}\Gs_{r,\pi^*(j)}\leq O_{k}-\ell,\forall \ell\ge 0\text, r\in R^\ell_{s}\mid M_\eta\leq E'_{r,s}\leq  6M_\eta,\mathcal R_s=(R^0_s,R^1_s,\dots)\Big]\,,
    \end{aligned}
\end{equation*}
where the summation is taken over all possible realizations of $\mathcal R=(\operatorname{ R^0_s,\operatorname{R}^1_s,\dots)}$. 
Note that for any $r\in \Rs_{s-1}$, 
$$E_{r,k}-\sum_{j\in \operatorname{N}_k\cap \operatorname{N}_s}\Gs_{r,\pi^*(j)}=\sum_{j\in\operatorname{N}_k\setminus\operatorname{N}_{s}}\mathsf G_{r,\pi^*(j)}+\sum_{j\in \operatorname{N}_k}X_{r,\pi^*(j)}\,,$$
which is independent with the random variables $\Gs_{r,\pi^*(j)},j\in \operatorname{N}_s$. Conversely, 
the event $\{M_\eta\le E'_{r,s}\le 6M_\eta\}$ together with the random sets $\operatorname{R}^\ell_s,\ell\in \mathbb N$ are all measurable with respect to these variables, suggesting that the conditioning in the second probability term can be removed.
By combining this with Lemma~\ref{lem-typical-sets-are-typical}, it suffices to show that for any realization of $\mathcal R_s=(\operatorname{R}^0_s,\operatorname{R}^1_s,\dots)$ that is typical (denoted as $(R^0_s,R^1_s,\dots)$) the following inequality holds:

\[
\Pb\Big[E_{r,k}-\sum_{j\in\operatorname{N}_k\cap\operatorname{N}_{s}}\mathsf G_{r,\pi^*(j)}\leq O_{k}-\ell,\forall \ell\ge 0, k\in{R}^\ell_{s}\Big]\ge [1-o(1)]\Pb[\mathcal D_{r,s}]\,.
\]
To tackle this relation, a trivial lower bound simplifies our goal to show
   \begin{equation*}
   \label{eq:finalgoal_sparse}
       \Pb\Big[E_{r,k}\leq O_{k}-\ell,\forall \ell\ge 0, k\in R^{\ell}_{s}\,\mid\mathcal D_{r,s}\Big]\geq 1-o(1)\,,
   \end{equation*}
   or equivalently (recall that $R^\ell_s=\emptyset$ for $\ell\ge 3$ since $(R^0_s,R^1_s,\dots)$ is typical),
    \begin{equation}
\label{eq:finalfinal_goal_sparse}
        \Pb\Big[\exists \ell\in\{1,2\},k\in R^\ell_{s}\mbox{ s.t.}E_{r,k}> O_{k}-\ell\mid\mathcal D_{r,s}\Big]\leq o(1).
    \end{equation}
   Since the event in the probability is increasing and $\mathcal D_{r,s}$ is decreasing, the FKG inequality and a simple union bound imply that the left hand side of \eqref{eq:finalfinal_goal_sparse} is bounded by
	\begin{equation*}
 \begin{aligned}
		&\ \Pb\Big[\exists \ell\in\{1,2\},k\in R^\ell_{s}\mbox{ s.t. }E_{r,k}> O_{k}-\ell\Big]\\
  \leq&\  \big(|R^1_s|+|R^2_s|\big)\max_{a_\eta<k<s}\Pb[{E'_{r,k}}>O_k-3]\\
  \le&\ n^{0.6}\times \Pb[\mathbf B(2np,p)\ge M_\eta-3]\,,
  \end{aligned}
	\end{equation*}
 where in the second inequality we used the facts that $|R^1_s|+|R^2_s|\le n^{0.6}$ by the assumption that $(R^0_s,R^1_s,\dots)$ is typical, $|\operatorname{N}_k|\le 2np$ by $\mathcal G_0$ and $O_k\ge M_\eta$ by $\Gc_{s-1}$. From Lemma~\ref{lem:poisson_tail_approx} we see the probability term is $n^{-1+\eta+o(1)}$, thus \eqref{eq:finalfinal_goal_sparse} holds for sufficiently small $\eta$. This concludes Proposition~\ref{prop-tail-prob>logn/n} and completes the analysis for $p$ in the sparse regime.    

 %\section{Algorithmic lower bounds in the dense regime}\label{sec-dense-algo}
%This section serves for proving
%Theorem~\ref{thm-algo}, where we divide the proof of (i) and (ii) into separated parts.

\subsection{Analysis of greedy algorithm: the dense regime}\label{subsec-algo-analysis-dense}
Now we focus on the regime $p_c\ll p\le 1/(\log n)^4$ and prove Proposition~\ref{prop-A-eta-is-ideal-online}, thereby concluding the first item in Theorem~\ref{thm-algo}.

%\noindent Note that $\mathcal{A}_\eta$ is an online algorithm that runs in $O(n^3)$ time for any $\eta>0$. Therefore, Proposition~\ref{prop-A-eta-is-ideal-online} implies statement (i) in Theorem~\ref{thm-algo} (e.g. by choosing $\mathcal{A}^*=\mathcal{A}_{\varepsilon/20}$). 

To this end, we will continue to work with the notations and conventions developed in Section~\ref{subsec-algo-framework}.
%Write $a_\eta=\lfloor \eta n\rfloor$ and $b_\eta=\lfloor (1-\eta) n\rfloor$ for simplicity. 
Note that 
\[
\operatorname{O}(\pi^*)=\sum_{s=1}^nO_s-\sum_{i<j}G_{i,j}X_{\pi^*(i),\pi^*(j)}\ge\sum_{s=1}^n O_s-1\,,
\]
We claim that it is sufficient to establish the following lower bounds with high probability.
\begin{proposition}\label{prop-lower-bounds-dense}
    Denote $\Gc_{\operatorname{first}}$ for the event that 
    \begin{equation}\label{eq-first-steps-good}
    \sum_{s=1}^{a_\eta}O_s\ge \sum_{1\le s\le a_\eta}sp^2-\eta D_{n,p}\,,
    \end{equation}
    $\Gc_{\operatorname{middle}}$ for the event that
    \begin{equation}\label{eq-middle-steps-good}
       O_s\ge sp^2+\sqrt{2(1-2\eta)sp^2\log n},\forall a_\eta<s\le b_\eta\,,
    \end{equation}
    and $\Gc_{\operatorname{last}}$ for the event that %$G$ satisfies $\sum_{b_\eta<s\le n}|\operatorname{N}_s|\ge \sum_{b_\eta<s\le n}sp-\sqrt{n^3\log n}$ and
    \begin{equation}\label{eq-last-steps-good}
        O_s\ge sp^2-\sqrt{10np^2\log n},\forall b_\eta<s\le n\,.
    \end{equation}
    Then $\Pb[\Gc_{\operatorname{first}}\cap \Gc_{\operatorname{middle}}\cap \Gc_{\operatorname{last}}]=1-o(1)$.
\end{proposition}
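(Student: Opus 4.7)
My plan is to treat the three events separately: a direct Chernoff estimate handles $\Gc_{\operatorname{first}}$, while $\Gc_{\operatorname{middle}}$ and $\Gc_{\operatorname{last}}$ require variants of the Section~\ref{subsec-analysis-sparse} framework adapted to Gaussian tails. In parallel with Definition~\ref{def-good-events-sparse}, I first introduce a graph-level good event $\Gc_0$ ensuring $|\operatorname{N}_s|=(1+o(1))sp$ uniformly for $a_\eta<s\le n$ and $|\operatorname{N}_k\cap\operatorname{N}_s|\le 2kp^2$ for all $a_\eta<k<s$; such a $\Gc_0$ holds with probability $1-o(1)$ by Chernoff as in Lemma~\ref{lem-G_0-sparse}. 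From now on I condition on $G\in\Gc_0$, so $\Pb$ refers to the law of $\Gs$ together with the perturbations $X_{i,j}$.

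The event $\Gc_{\operatorname{first}}$ is the easiest: since $\pi^*(s)=s$ for $s\le a_\eta$, we have $\sum_{s\le a_\eta}O_s=\sum_{1\le i<j\le a_\eta}G_{i,j}\Gs_{i,j}+O(1)$, a sum of $\binom{a_\eta}{2}$ independent $\mathbf B(1,p^2)$ variables with mean $\binom{a_\eta}{2}p^2=\sum_{s\le a_\eta}sp^2-O(a_\eta p^2)$. Since $a_\eta p^2\ll D_{n,p}$, the Chernoff bound \eqref{eq-chernoff-bound-lower} with deviation $\tfrac12\eta D_{n,p}$ gives a failure probability $\exp(-\Omega(n\log n))$ because $(\eta D_{n,p})^2/(a_\eta^2 p^2)\asymp n\log n$, establishing $\Gc_{\operatorname{first}}$.

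For $\Gc_{\operatorname{middle}}$ (the main obstacle), fix $a_\eta<s\le b_\eta$ and set $M_s^+=sp^2+\sqrt{2(1-2\eta)sp^2\log n}$. On a good filtration event, $O_s$ is the maximum of $|\Rs_{s-1}|\ge\eta n$ conditionally i.i.d.\ variables with distribution $F_s$, so it suffices to show $1-F_s(M_s^+)\gg\log n/n$, from which $\Pb[O_s<M_s^+\mid\mathcal F_{s-1}]=F_s(M_s^+)^{n-s+1}=o(1/n^2)$ and a union bound finishes. The unconditional tail $1-F_s^*(M_s^+)$ is bounded below by $n^{-(1-2\eta)+o(1)}/\sqrt{\log n}$ by a local CLT for $\mathbf B(|\operatorname{N}_s|,p)$. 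To transfer this to the conditional tail I mimic Section~\ref{subsec-analysis-sparse}: decompose $E_{r,k}=A_{r,k}+B_{r,k,s}$ with $A_{r,k}=\sum_{j\in\operatorname{N}_k\setminus\operatorname{N}_s}\Gs^*_{r,\pi^*(j)}$ independent of $E_{r,s}$ and $B_{r,k,s}=\sum_{j\in\operatorname{N}_k\cap\operatorname{N}_s}\Gs^*_{r,\pi^*(j)}$ following $\mathbf{HG}(|\operatorname{N}_s|,|\operatorname{N}_k\cap\operatorname{N}_s|,K)$ given $E_{r,s}=K$. On $\Gc_0$ the conditional mean of $B_{r,k,s}$ differs from its unconditional mean by $O(p\sqrt{sp^2\log n})$, which is $p\sqrt{s/k}\le p/\sqrt{\eta}=o(1)$ times the Gaussian slack $\sqrt{kp^2\log n}$ separating $O_k$ from $\E[E_{r,k}]$. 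A Hoeffding-type hypergeometric concentration plus an FKG union bound, paralleling \eqref{eq:finalfinal_goal_sparse}, then gives $\Pb[\mathcal D_{r,s}\mid E_{r,s}\in[M_s^+,M_s^++\operatorname{polylog}(n)]]\ge(1-o(1))\Pb[\mathcal D_{r,s}]$, so $1-F_s(M_s^+)\ge(1-o(1))(1-F_s^*(M_s^+))\gg\log n/n$ as required. Quantitatively controlling the conditional bias of $B_{r,k,s}$ against the relatively small Gaussian slack is the main technical point; the dense-regime assumption $p\gg p_c$ is what makes the shift-to-slack ratio $o(1)$, and $p\le 1/(\log n)^4$ is used to absorb the polylog losses.

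For $\Gc_{\operatorname{last}}$, since $|\Rs_{s-1}|$ may be as small as $1$, the max structure is not useful; instead I pick any $r_0\in\Rs_{s-1}$ and use $O_s\ge E_{r_0,s}$. By exchangeability of the unmatched vertices under the greedy dynamics (which is symmetric in the labels of $\{a_\eta+1,\dots,n\}\subset\Vs$), one has $\Pb[\mathcal D_{r_0,s}]=\Pb[r_0\in\Rs_{s-1}]=(n-s+1)/(n-a_\eta)=\Theta((n-s+1)/n)$. A Chernoff lower tail on $E_{r_0,s}\sim\mathbf B(|\operatorname{N}_s|,p)$ gives $\Pb[E_{r_0,s}\le sp^2-\sqrt{10np^2\log n}]\le\exp(-5(n/s)\log n)\le n^{-5}$, so
\[
\Pb[E_{r_0,s}\le sp^2-\sqrt{10np^2\log n}\mid\mathcal D_{r_0,s}]\le\frac{n^{-5}}{\Pb[\mathcal D_{r_0,s}]}\le\frac{n^{-4}}{n-s+1}.
\]
Summing this bound over $b_\eta<s\le n$ yields $o(1)$, establishing $\Gc_{\operatorname{last}}$. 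A final union bound over $\Gc_0$ and the three events completes the proof.
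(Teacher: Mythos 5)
Your treatment of $\Gc_{\operatorname{first}}$ matches the paper and is fine, but the other two parts have genuine gaps. For $\Gc_{\operatorname{middle}}$, the outline (decompose $E_{r,k}$ into a part independent of $E_{r,s}$ plus a hypergeometric overlap term, then compare $1-F_s$ with $1-F^*_s$) is in the right spirit, but the step you summarize as ``hypergeometric concentration plus an FKG union bound, paralleling \eqref{eq:finalfinal_goal_sparse}'' does not transfer from the sparse regime. There the conditioning only disturbed at most $n^{0.6}$ of the constraints $\{E_{r,k}\le O_k\}$ and each disturbed constraint was violated with probability $n^{-1+\eta+o(1)}$, so an absolute union bound sufficed. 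In the dense regime $|\operatorname{N}_k\cap\operatorname{N}_s|\asymp np^2\gg 1$ for essentially \emph{all} of the $\sim n$ values of $k$, and $\Pb[\mathcal D_{r,s}]=\prod_k\big(1-\tfrac{c_k}{n-k+1}\big)$ is only of constant order; what must be shown is that shifting each threshold from $O_k$ to $O_k-T_k$ changes each factor by at most $o(1)\cdot\tfrac{c_k+1}{n-k+1}$ so that the aggregate multiplicative loss is $1-o(1)$. This is a \emph{relative} window-versus-tail estimate, not a ``shift is $o(1)$ times the Gaussian slack'' statement, and it is exactly the content of the paper's Lemma~\ref{lem-asmptotic-of-tail} (proved via local-CLT one-point estimates, with the regime-dependent windows \eqref{eq-HG-upper-bound-control} and the factor $\delta(n)=2/\sqrt{\log n}$, which is where $p\le 1/(\log n)^4$ enters). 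Moreover, summing these per-step losses requires control of the realized tails $c_k$ (defined by $1-F_k(O_k)=c_k/(n-k+1)$): the paper gets $c_k\le 2\log n$, $\sum_k c_k\le 2n$ from the stochastic domination by exponentials (Lemma~\ref{lem-c-s}) and runs an induction through the events $\Gc_s$, which also carry the upper sandwich $O_k\le |\operatorname{N}_k|p+\sqrt{10np^2\log n}$ and the earlier-step comparison \eqref{eq-comparison-to-F*} that the estimate at step $s$ relies on. None of this bookkeeping appears in your sketch, and without it the claimed bound $\Pb[\mathcal D_{r,s}\mid E_{r,s}\in I]\ge (1-o(1))\Pb[\mathcal D_{r,s}]$ is not justified.

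For $\Gc_{\operatorname{last}}$ the argument as written is invalid because it mixes two different measures. The bound $\Pb[E_{r_0,s}\le sp^2-\sqrt{10np^2\log n}]\le n^{-5}$ treats $E_{r_0,s}$ as an unconditioned $\mathbf B(|\operatorname{N}_s|,p)$ variable, which is only correct under the quenched reference measure in which the history $\mathcal F_{s-1}$ (the sets $\operatorname{N}_k$, the values $O_k$, and $\pi^*$) is fixed; under that same measure $\Pb[\mathcal D_{r_0,s}]$ equals $\prod_k(1-c_k/(n-k+1))$ and is \emph{not} the annealed survival probability $(n-s+1)/(n-a_\eta)$ obtained from exchangeability. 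Conversely, if you interpret everything annealed, then $\mathcal D_{r_0,s}=\{r_0\in\Rs_{s-1}\}$ is indeed the survival event, but the edges $\Gs_{r_0,\pi^*(j)}$ are then correlated with the construction of $\pi^*$ (the algorithm has already compared them against the $O_k$'s), so the numerator is no longer a plain binomial tail — this downward bias is precisely the difficulty the conditioning framework is designed to handle. The paper resolves it by proving a history-dependent but uniform (whp) constant lower bound $\Pb[\mathcal D_{r,b_\eta}]\ge \exp(-4/\eta)$ on the event $\Gc^*=\{\sum_s c_s/(n-s+1)\le 2/\eta\}$, again via Lemma~\ref{lem-c-s} and \eqref{eq-prob-of-D-r-s}, and then applies $\Pb[\mathcal A_s\mid\mathcal F_{b_\eta}]\le \Pb[\mathcal A_s]/\Pb[\mathcal D_{\pi^*(s),b_\eta}]$ with both sides under the quenched measure. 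So the $c_k$ machinery you omitted is needed for the last steps as well, not only for the middle ones.
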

We claim that these lower bounds are adequate for proving Proposition~\ref{prop-A-eta-is-ideal-online}.
\begin{proof}[Proof of Proposition~\ref{prop-A-eta-is-ideal-online} assuming Proposition~\ref{prop-lower-bounds-dense}] 
Recall that $D_{n,p}=\sqrt{n^3p^2\log n}$.
    Under $\Gc_{\operatorname{first}}\cap \Gc_{\operatorname{middle}}\cap \Gc_{\operatorname{last}}$, we have $\operatorname{O}(\pi^*)+1$ is bounded below by
\begin{equation*}
    \begin{aligned}
   &\ \sum_{s\le a_\eta}sp^2-\eta D_{n,p}+\sum_{a_\eta<s\le b_\eta}\big(sp^2+\sqrt{2(1-2\eta)sp^2\log n}\big)+\sum_{b_\eta<s\le n}\big(sp^2-\sqrt{10np^2\log n}\big)\\
    	\geq&\ \sum_{s\le n}sp^2-\eta D_{n,p} +(\beta_c-2\eta)D_{n,p}-\eta n\cdot \sqrt{10np^2\log n}
   	\ge \binom{n}{2}p^2+(\beta_c-20\eta) D_{n,p}+1\,,
    \end{aligned}
\end{equation*}
where the first inequality follows from the fact that
\begin{equation}
   \sum_{a_\eta<s\le b_\eta}\sqrt{2s}\ge \int_{a_\eta }^{b_\eta}\sqrt{2x}\operatorname{d} x=\sqrt{\frac{8x^3}{9}}\ \bigg|^{b_\eta}_{a_\eta}\ge (\beta_c-\eta)n\sqrt n\,.\notag
 \end{equation}
This suggests that under $\Gc_{\operatorname{first}}\cap \Gc_{\operatorname{middle}}\cap \Gc_{\operatorname{last}}$ it holds $\pi^*\in \operatorname{S}_{\beta_c-20\eta}(G,\Gs)$. Since $\Gc_{\operatorname{first}}\cap \Gc_{\operatorname{middle}}\cap \Gc_{\operatorname{last}}$ occurs with high probability, the proof is completed.
\end{proof}

The remainder of this section is dedicated to proving Proposition~\ref{prop-lower-bounds-dense}. It is straightforward to deal with the first event, since
$$
\sum_{1\le s\le a_\eta}O_s\ge \sum_{1\le i<j\le a_\eta}G_{i,j}\Gs_{i,j}\,,
$$
which is a binomial variable $\mathbf{B}\big(\binom{a_\eta}{2},p^2\big)$ and $\Pb[\mathcal{G}_{\text{first}}]=1-o(1)$ follows from \eqref{eq-chernoff-bound-lower}.

The proof demonstrating that both $\Gc_{\text{middle}}$ and $\Gc_{\text{last}}$ occur with high probability is much more involved. We provide the detailed explanation in the following two sub-subsections, respectively. However, before delving into the details of these two events, we introduce the good event $\Gc_0$ in the dense regime concerning $G$, which will serve as the basis for our analysis.
\begin{definition}[Good event in the dense regime]\label{def-good-event-dense}
        Denote $\Gc_0$ for the event that $G$ satisfies 
        \begin{itemize}
            \item $\big||\operatorname{N}_s|-sp\big|\le \sqrt{4sp\log n},\forall a_\eta<s\le b_\eta$;
            \item  $|\operatorname N_r\cap \operatorname N_s|\le 2np^2,\forall a_\eta<r< s\le b_\eta$.
        \end{itemize}
\end{definition}
Similar to Lemma~\ref{lem-G_0-sparse}, we can show, using a simple union bound, that when $p$ falls within the dense regime, it holds $\Pb[G\in \Gc_0] = 1 - o(1)$. Hence, it suffices to prove that under any realization of $G$ that satisfies $\Gc_0$, both $\Gc_{\operatorname{middle}}$ and $\Gc_{\operatorname{last}}$ are typical events. For the remainder of this section, we will fix $G \in \Gc_0$ and treat it as a deterministic graph. %In particular, the sets $\operatorname{N}_k$, $a_\eta < k \leq b_\eta$, are given. To simplify the notation, we will continue to use $\Pb$ to denote the product measure of $\Gs_{i,j}\sim \mathbf B(1,p)$ and $X_{i,j}\sim \operatorname{U}[0,1/n^2]$.

%Therefore in the following we should keep in mind that $\Pb$ is actually a conditional law.

\subsubsection{Controlling the last steps}\label{subsubsec-last}

%and we need to introduce several new notations and definitions. 

For the convenience of analysis, we use another way to encode the random variables $\{O_s\}_{a_\eta<s\le b_\eta}$. 
Recall that by definition, for any $a_\eta<s\le b_\eta$ and any $r\in \Rs_{s-1}$,
\[
F_s(x)=\Pb[E_{r,s}\le x\mid \mathcal F_{s-1}]=\Pb[E_{r,s}\le x\mid \mathcal D_{r,s}]=\Pb[E_{r,s}\le x\mid E_{r,k}\le O_k,\forall a_\eta<k<s]\,.
\]
We define a sequence of random variables $\{c_s\}_{a_\eta<s\le b_\eta}$ such that
\begin{equation}\label{eq-def-c-s}
    1-F_s(O_s)=\frac{c_s}{n-s+1},\forall a_\eta<s\le b_\eta\,.
\end{equation}
It is evident that for any $a_\eta<s\le b_{\eta}+1$, there is a one-to-one correspondence between the two sequences $\{O_k\}_{a_\eta<k<s}$ and $\{c_k\}_{a_\eta<k<s}$. Hence all the $c_k$'s with $k<s$ are measurable with respect to $\mathcal{F}_{s-1}$. We provide further properties of the sequence $\{c_s\}_{a_\eta<s\le b_\eta}$ in the following lemma. %Recall that for any $a_\eta<s\le b_\eta+1$ and $r\in \Rs_{s-1}$,$\mathcal {D}_{r,s}$ is the event that $E_{r,k}\leq O_k$ for all $a_\eta < k < s$.
\begin{lemma}\label{lem-c-s}
For any $a_\eta<s\le b_\eta+1$, given the sequence $\{O_k\}_{a_\eta<k<s}$ (and hence also $\{c_k\}_{a_\eta<k<s}$), the following identity holds for any $r\in \Rs_{s-1}$,
\begin{equation}\label{eq-prob-of-D-r-s}
    \Pb[\mathcal D_{r,s}]=\prod_{a_\eta<k<s}\left(1-\frac{c_k}{n-k+1}\right)\,.
\end{equation}
Furthermore, given any realization of $\{O_k\}_{a_\eta<k<s},\{c_k\}_{a_\eta<k<s}$, the conditional distribution of $c_s$ is stochastically dominated by an exponential variable with rate $1$.
\end{lemma}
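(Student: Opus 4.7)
I would prove the two claims in succession. For the product identity, the approach is a straightforward induction on $s$, using the observation that $\mathcal D_{r,s} = \mathcal D_{r, s-1} \cap \{E_{r, s-1} \le O_{s-1}\}$ together with the chain rule
\[
\Pb[\mathcal D_{r,s}] = \Pb[\mathcal D_{r, s-1}] \cdot \Pb[E_{r, s-1} \le O_{s-1} \mid \mathcal D_{r, s-1}].
\]
By the definition of $F_{s-1}$ in \eqref{eq-distribution-F-s}, the conditional probability on the right equals $F_{s-1}(O_{s-1}) = 1 - c_{s-1}/(n-s+2)$ by the very definition of $c_{s-1}$. Combined with the inductive hypothesis applied to $\Pb[\mathcal D_{r, s-1}]$, this yields the desired product formula. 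The base case $s = a_\eta + 1$ is trivial: $\mathcal D_{r, a_\eta+1}$ is an empty intersection (the full event), and the right-hand side is the empty product, so both equal $1$.

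For the stochastic domination claim, the key observation is that under the product measure $\Pb$ with $\mathcal F_{s-1}$ fixed, the random variables $\{E_{r, s}\}_{r \in \Rs_{s-1}}$ are i.i.d.\ with continuous distribution function $F_s$, where continuity follows from the fact that each $E_{r,s}$ contains the independent $\mathrm{U}[0,1/n^2]$ perturbations $X_{\pi^*(j),r}$. By the probability integral transform, $\{F_s(E_{r,s})\}_{r \in \Rs_{s-1}}$ are i.i.d.\ $\mathrm{U}(0,1)$, and hence $F_s(O_s) = \max_{r \in \Rs_{s-1}} F_s(E_{r,s})$ has CDF $u^{n-s+1}$. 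It follows that $c_s = (n-s+1)(1 - F_s(O_s))$ satisfies
\[
\Pb[c_s \le y \mid \mathcal F_{s-1}] = 1 - \left(1 - \frac{y}{n-s+1}\right)^{n-s+1}, \quad 0 \le y \le n-s+1.
\]
Applying the elementary inequality $(1 - y/N)^N \le e^{-y}$ (which follows from $\log(1-x) \le -x$ for $x \in [0,1)$), we conclude that $\Pb[c_s \le y \mid \mathcal F_{s-1}] \ge 1 - e^{-y}$ for all $y \ge 0$, establishing stochastic domination of $c_s$ by an $\mathrm{Exp}(1)$ variable conditionally on $\mathcal F_{s-1}$, and therefore also conditionally on the coarser information $\{O_k, c_k\}_{a_\eta<k<s}$.

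The main potential pitfall is the abuse-of-notation for $\Pb$: once $\mathcal F_{s-1}$ is fixed, $\Pb$ denotes the product measure on the $\mathcal N_s$ variables (as set up at the end of Section~\ref{subsec-algo-framework}), rather than the original algorithmic randomness under which $\mathcal D_{r,s}$ trivially holds for any $r\in \Rs_{s-1}$. The conditional independence of $\{\mathcal D_{r,s}\}_{r \in \Rs_{s-1}}$ under this product measure, already noted in Section~\ref{subsec-algo-framework}, is exactly what legitimizes the identification $\Pb[E_{r,s}\le x \mid \mathcal D_{r,s}] = \Pb[E_{r,s}\le x \mid \mathcal D_s] = F_s(x)$ used in both parts. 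Once this framework is granted, both claims reduce to the definitions of $F_s$, $c_s$, and a standard change-of-variable argument, with no substantial combinatorial obstacle.
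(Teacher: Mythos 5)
Your proof is correct and essentially identical to the paper's: the product identity is the same chain-rule/telescoping of conditional probabilities (merely phrased as an induction), and your probability-integral-transform computation of $\Pb[c_s\ge x\mid\mathcal F_{s-1}]=(1-x/(n-s+1))^{n-s+1}\le e^{-x}$ matches the paper's argument via $F_s^{-1}$ and conditional independence over $r\in\Rs_{s-1}$. The subtlety you flag about $\Pb$ being the product measure with conditioning on $\mathcal F_{s-1}$ equivalent to conditioning on $\mathcal D_s$ is exactly how the paper handles it as well.
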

\begin{proof}
    By the multiplicative rule and the definition of $F_k,a_\eta<k\le b_\eta$ we have
    \begin{equation}
        \begin{aligned}
            \Pb[\mathcal D_{r,s}]=&\ \Pb[E_{r,k}\le O_k,\forall a_\eta<k<s]\\
            =&\ \prod_{a_\eta<k<s}\Pb[E_{r,k}\le O_k\mid E_{r,l}\le O_l.\forall a_\eta<l<k]\\
            =&\ \prod_{a_\eta<k<s}F_k(O_k)=\prod_{a_\eta <k<s}\left(1-\frac{c_k}{n-k+1}\right)\,,
        \end{aligned}
        \notag
    \end{equation}
    concluding \eqref{eq-prob-of-D-r-s}. Furthermore, given any realization of $\mathcal F_{s-1}$ (including the realization of $\{O_k\}_{a_\eta<k<s},\{c_k\}_{a_\eta<k<s}$), we see for any $x\ge 0$, it holds that
    \begin{equation*}
    \begin{aligned}
        \Pb[c_s\ge x\mid \mathcal F_{s-1}]=&\ \Pb\left[1-F_{s}(O_s)\ge \frac{x}{n-s+1}\mid \mathcal F_{s-1}\right]\\
        =&\ \Pb\left[F_s\left(\max_{r\in \Rs_{s-1}}E_{r,s}\right)\le 1-\frac{x}{n-s+1}\mid \mathcal F_{s-1}\right]
        \\=&\ \Pb\left[E_{r,s}\le F_s^{-1}\left(1-\frac{x}{n-s+1}\right),\forall r\in \Rs_{s-1}\mid\mathcal F_{s-1} \right]\\
        =&\ \prod_{r\in \Rs_{s-1}}\Pb\left[E_{r,s}\le F_s^{-1}\left(1-\frac{x}{n-s+1}\right)\mid \mathcal F_{s-1}\right]\\
        =&\ \left(1-\frac{x}{n-s+1}\right)^{n-s+1}\le e^{-x}\,,
    \end{aligned}
\end{equation*}
where the second-to-last equality follows from conditional independence, and the last equality arises from the definition of $F_s$. This indicates that for any $a_\eta < s \leq b_\eta$, the conditional distribution of $c_s$ given $\{c_k\}_{a_\eta<k<s}$ is always dominated by an exponential variable with a rate of 1, concluding the lemma.
\end{proof}
With Lemma~\ref{lem-c-s} in hand, we now prove $\Pb[\mathcal G_{\operatorname{last}}]=1-o(1)$. %It is easy to show that 
%\[
%\sum_{s=b_\eta+1}^n |\operatorname{N}_s|\ge \sum_{s=b_\eta+1}^n sp-\eta D_{n,p}
%\]
%holds with high probability. For the remaining, 
Define $\Gc^*$ as the event that $$\sum_{s=a_{\eta}+1}^{b_\eta}\frac{c_s}{n-s+1}\le {2}/{\eta}\,.$$
Note that $\Gc^*$ is measurable with respect to $\mathcal F_{b_\eta}$. We will show that
$\Pb[\Gc^*]=1-o(1)$, and for any realization of $\mathcal F_{b_\eta}$ that satisfies $\Gc^*$, it holds for any $b_\eta+1\le s\le n$, 
\begin{equation}\label{eq-each-last-step-is-not-bad}
    \Pb[O_s\le sp^2-\sqrt{10np^2\log n}\mid \mathcal F_{s-1},\Gc^*]=o(1/n)\,.
\end{equation}
Once this is established, by applying a union bound we obtain
\begin{align*}
&\ \Pb\big[\exists b_\eta<s\le n,O_s\le |\operatorname{N}_s|p-\sqrt{10np^2\log n}\big]\\
\le&\  \Pb[(\Gc^*)^c]+\sum_{s=b_\eta+1}^n\Pb\big[O_s\le |\operatorname{N}_s|p-\sqrt{10np^2\log n}\mid \Gc^*\big]=o(1)\,,
\end{align*}
and the main result follows.

From Lemma~\ref{lem-c-s}, we observe that the sequence $\{c_s\}_{a_\eta<s\le b_\eta}$ is stochastically dominated by i.i.d. exponential variables with mean $1$. Therefore, by the law of large numbers, it follows that with high probability,
\[
\sum_{s=a_\eta+1}^{b_\eta}\frac{c_s}{n-s+1}\le \frac{1}{\eta n}\sum_{s=a_\eta+1}^{b_\eta}c_s\le 2/\eta\,.
\]
This implies that $\Pb[\Gc^*]=1-o(1)$. The main purpose of $\Gc^*$ is as follows: whenever $\mathcal F_{b_\eta}$ satisfies $\Gc^*$, we get from equation \eqref{eq-prob-of-D-r-s} that for any $r\in \Rs_{b_\eta}$,
\begin{equation}\label{eq-lower-bound-of-Prob-D-s}
    \Pb[\mathcal D_{r,b_\eta}]=\prod_{s=a_\eta+1}^{b_\eta}\left(1-\frac{c_s}{n-s+1}\right)\ge \exp\Bigg(-\sum_{s=a_\eta+1}^{b_\eta}\frac{2c_s}{n-s+1}\Bigg)\ge \exp(-4/\eta)\,.
\end{equation}

Now we fix a realization of $\mathcal F_{b_\eta}$ that satisfies $\Gc^*$. Recall that in the last $n-b_\eta$ steps, $\mathcal A_\eta$ completes $\pi^*$ to a permutation such that $\pi^*(b_\eta+1)<\cdots<\pi^*(n)$ and hence the values of $\pi^*(b_\eta+1),\cdots,\pi^*(n)$ are measurable with respect to $\mathcal{F}_{b_\eta}$. Conditioned on $\mathcal F_{b_\eta}$,  for each $b_\eta+1\le s\le n$, we have $O_s=\sum_{j<s}G_{j,s}\Gs^*_{\pi^*(j),\pi^*(s)}$ which is measurable with respect to $\Gs^*_{j,\pi^*(s)},1\le j\le n$. Therefore, $O_s$ is conditionally independent with the events $\mathcal D_{r,b_\eta}$ for $r\in \Rs_{b_\eta}\setminus \{\pi^*(s)\}$. By combining this observation with \eqref{eq-lower-bound-of-Prob-D-s} (let us denote the event $\{O_s \leq sp^2 - \sqrt{10np^2\log n}\}$ as $\mathcal{A}_s$), we can conclude that
    \begin{align*}\label{eq-prob-upper-A-s}
        &\ \Pb[\mathcal A_s\mid \mathcal F_{b_\eta}]=\Pb[\mathcal A_s\mid \mathcal D_{b_\eta}]=\Pb[\mathcal A_s \mid \mathcal D_{\pi^*(s),b_{\eta}}]\le \frac{\Pb[\mathcal A_s]}{\Pb[\mathcal D_{\pi^*(s),b_\eta}]}\le \exp(4/\eta)\Pb[\mathcal A_s]\,.
        %\\
        %\stackrel{\eqref{eq-lower-bound-of-Prob-D-s}}{\le}&\ \exp(4/\eta)\cdot\Pb[\mathcal A_s]\le \exp(4/\eta)\cdot \Pb\Big[\sum_{j<s}G_{j,s}\Gs_{\pi^*(j),\pi^*(s)}\le sp^2-\sqrt{10np^2\log n}\Big]\\
        %{\le}&\ \exp(4/\eta-10\log n)=o(1/n)\,,
    \end{align*}
    
Finally, Note that $$O_s\ge \sum_{j<s}G_{j,s}\Gs_{\pi^*(j),\pi^*(s)}=\sum_{j\in \operatorname{N}_s}\Gs_{\pi^*(j),\pi^*(s)}\,,$$
which is a binomial variable $\mathbf B(|\operatorname{N}_s|,p)$. And from the definition of $\Gc_0$ we see $$sp^2-\sqrt{10np^2\log n}\le |\operatorname{N}_s|p-\sqrt{4|\operatorname{N}_s|p\log n}\,.$$ Therefore, by applying \eqref{eq-chernoff-bound-lower} we get
$$\Pb[\mathcal A_s]\le \Pb\big[\mathbf B(|\operatorname{N}_s|,p)\le |\operatorname{N}_s|p-\sqrt{4|\operatorname{N}_s|p\log n}\big]{\le} \exp(-2\log n)\,.$$ %by \eqref{eq-chernoff-bound-lower}. 
As a result, $\Pb[\mathcal A_{s}\mid \mathcal F_{b_\eta}]\le \exp(4/\eta-2\log n)=o(1/n)$, which establishes \eqref{eq-each-last-step-is-not-bad} and thus completes the proof of $\Pb[\Gc_{\operatorname{last}}]=1-o(1)$.

\subsubsection{Controlling the middle steps}

Finally, we prove that $\Pb[\Gc_{\operatorname{middle}}] = 1 - o(1)$. Similar as before, we begin by defining some good events $\Gc_s,a_\eta<s\le b_\eta$ in the dense regime.% and this is where the technical assumption $p\le 1/(\log n)^4$ is used. Similar as before, we present several good events that we will work on. Recall that $\operatorname{N}_s=\{j<s:G_{j,s}=1\}$ for any $s\in\operatorname{S}(\eta)$.
\begin{definition}
   % Denote $\Gc_0$ for the event that $G$ satisfies
    %\begin{itemize}
     %   \item $(1-\eta)sp\le |\operatorname{N}_s|\le (1+\eta)sp,\forall s\in \operatorname{S}(\eta)$; 
      %  \item $|N_r\cap N_s|\le 2np^2,\forall r\neq s\in \operatorname{S}(\eta)$.
    %\end{itemize}
    For each $a_\eta<s\le b_\eta$, define $\Gc_s$ as the event that 
    \begin{itemize}
        \item $c_k\le 2\log n,\forall a_\eta<k\le s$ and $\sum_{a_\eta<k\le s}{c_k}\le 2n$;
        \item $\sqrt{2(1-\eta)|\operatorname{N}_k|p\log n}\le O_k-|\operatorname{N}_k|p\le \sqrt{10np^2\log n}, \forall a_\eta\leq k\leq s$;
        \item For each $a_\eta<k\le s$, it holds that
    \begin{equation}\label{eq-comparison-to-F*}
            1-F_k(O_k)\ge \big(1-o(1)\big)\big(1-F_k^*(O_k)\big)\,,%=1-\frac{c_k+o(c_k)}{n-k+1}\,,
        \end{equation}
        (recall that $F^*_k$ is the distribution function of $E_{r,k}$ without conditioning on $\mathcal D_{r,k}$) where the $o(1)$ terms are uniform for each $a_\eta<k\le s$.
    \end{itemize}
\end{definition}
%\begin{proposition}\label{prop-good-event-high-prob-dense}
%    $\Pb[\Gc_0]=1-o(1)$ and  $\Pb[\Gc_{b_\eta}\mid \Gc_0]=1-o(1/n)$.
%\end{proposition}
\noindent Note that by the first item in $\Gc_0$ (Definition~\ref{def-good-event-dense}) it holds
\[
|\operatorname{N}_s|p+\sqrt{2(1-\eta)|\operatorname{N}_s|p\log n}\ge sp^2+\sqrt{2(1-2\eta)sp^2\log n}\,,
\]
so $\mathcal G_{b_\eta}\subset \mathcal G_{\operatorname{middle}}$. Now it remains to show that under any realization of $G\in \Gc_0$, $\Gc_{b_\eta}$ holds with high probability.
%Proposition~\ref{prop-good-event-high-prob-dense} implies $\Pb[\Gc_{\operatorname{middle}}]\ge \Pb[\operatorname{G}_{b_\eta}]\ge \Pb[\operatorname{G}_0\cap \Gc_{b_\eta}]\ge 1-o(1)$.

%Now we prove this final proposition. It is easy to show $\Pb[\Gc_0]=1-o(1)$ by a simple union bound as in Lemma~\ref{lem-G_0-sparse} and we omit the details. To show the remaining we fix a realization of $G$ that satisfies $\Gc_0$ and henceforth we assume $G$ is deterministic. 
Write $\Gc_{a_\eta}$ for the trivial event. Our goal is to show that for any $a_\eta<s\le b_\eta$, it holds the following inequality
$$\Pb[\Gc_{s-1}]-\Pb[\Gc_{s}]\le \Pb[\Gc_s^c\cap \Gc_{s-1}]=o(1/n)\,.$$
Therefore, for the rest part we fix $a_\eta < s \leq b_\eta$ and focus on establishing this relation.

It can be deduced from Lemma~\ref{lem-c-s} that the events $c_s\le 2\log n$ and $\sum_{a_\eta<s\le s}c_s\le 2n$ fail with probability $o(1/n)$.
Additionally, utilizing the FKG inequality, we can show that (note that $\{O_s\ge |\operatorname{N}_s|p+\sqrt{10|\operatorname{N}_s|p\log n}\big\}$ is an increasing event while $\mathcal D_{s}$ is a decreasing event)
\begin{align*}
&\ \Pb\big[O_s\ge |\operatorname{N}_s|p+\sqrt{10|\operatorname{N}_s|p\log n}\mid \mathcal F_{s-1}\big]=\Pb\big[O_s\ge |\operatorname{N}_s|p+\sqrt{10|\operatorname{N}_s|p\log n}\mid \mathcal D_{s}\big]\\
\le&\ \Pb\big[O_s\ge |\operatorname{N}_s|p+\sqrt{10|\operatorname{N}_s|p\log n}\big]
\le n\Pb\big[\mathbf B(|\operatorname{N}_s|,p)\ge |\operatorname{N}_s|p+\sqrt{10|\operatorname{N}_s|p\log n}-1\big]\,, 
\end{align*}
which is $o(1/n)$ by \eqref{eq-chernoff-bound}. Now we claim that the events $\{c_s\le 2\log n\}$, $\{\sum_{a_\eta<k\le s}c_k\le 2n\}$ together with $\Gc_{s-1}$ indeed imply that $O_s\ge |\operatorname{N}_s|p+\sqrt{2(1-\eta)|\operatorname{N}_s|p\log n}$ as well as \eqref{eq-comparison-to-F*} holding for $k=s$. This would complete the proof. 

To verify this claim, we take any $r^*\in \Rs_{s-1}$ and define $\Omega_s=\Omega_s^1\times \Omega_s^2=\{0,1\}^{|\operatorname{N}_s|}\times [0,1/n^2]^{|\operatorname{N}_s|}$, denoted by
\[
\big\{(g_{j},\dots,x_{j},\dots)_{j\in \operatorname{N}_s}:g_{j}\in \{0,1\}, 0\le x_{j}\le 1/n^2,\forall j\in \operatorname{N}_s\big\}\,,
\]
as the set of realizations of $(\Gs_{r^*,\pi^*(j)})_{j\in \operatorname{N}_s}$ and $(X_{r^*,\pi^*(j)})_{j\in \operatorname{N}_s}$. We see $\Pb$ restricts as a product measure of $|\operatorname{N}_s|$ Bernoulli variables with parameter $p$ and $|\operatorname{N}_s|$ uniform variables in $[0,1/n^2]$. Denote 
\begin{equation}\label{eq-HG-upper-bound-control}
    T_k=
\begin{cases}
    |\operatorname{N}_k\cap \operatorname{N}_s|p+\sqrt{10|\operatorname{N}_k\cap\operatorname{N}_s|p\log n}\,,&p\le 1/(\log n)^4\text{ and }np^3\ge (\log n)^3\,,\\
    2(\log n)^3\,,&n^{-0.1}\leq np^3<(\log n)^3\,,\\
    20\,,&p\gg p_c\text{ and }np^3< n^{-0.1}\,.
\end{cases}
\end{equation}
we define $\Omega_s^*\subset \Omega_s^1$ as the set of coordinates $\omega=(g_j,\dots)_{j\in \operatorname{N}_s}$ that satisfies
\begin{equation}\label{eq-good-omega}
    \sum_{j\in \operatorname{N}_k\cap \operatorname{N}_s}g_{j}\le T_k,\forall a_\eta<k<s\,.
\end{equation}
Furthermore, denote $I_s$ for the interval $$\Big[|\operatorname{N}_s|p+\sqrt{2(1-\eta)|\operatorname{N}_s|p\log n},|\operatorname{N}_s|p+\sqrt{10|\operatorname{N}_s|p\log n}\Big]$$ and for any $x\in I_s$ we define $\Pb_{s,x}$ as the conditional distribution on $\Omega_s$ given that 
\[
E_{r^*,s}=\sum_{j\in \operatorname{N}_s}\Gs^*_{r^*,\pi^*(j)}=\sum_{j\in \operatorname{N}_s}(\Gs_{r^*,\pi^*(j)}+X_{r^*,\pi^*(j)})\ge x\,.
\]
We will utilize the following two lemmas, the proofs of which have been postponed to the appendix.
\begin{lemma}\label{lem-HG-control}
    For any $x\in I_s$ it holds $\Pb_{s,x}[\omega_1\in \Omega_s^*]=1-o(1/n)$.
\end{lemma}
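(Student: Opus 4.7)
The plan is to reduce Lemma~\ref{lem-HG-control} to a concentration estimate for hypergeometric tails conditioned on a one-sided bound on a binomial sum, then to handle the three regimes of $T_k$ in \eqref{eq-HG-upper-bound-control} separately. Write $S:=\sum_{j\in \operatorname{N}_s}g_j\sim \mathbf{B}(|\operatorname{N}_s|,p)$. Since the perturbation $\sum_{j\in \operatorname{N}_s}X_{r^*,\pi^*(j)}$ lies in $[0,|\operatorname{N}_s|/n^2]\subset [0,1/n]$, the event $\{E_{r^*,s}\ge x\}$ coincides with $\{S\ge \lceil x\rceil\}$ up to a harmless boundary adjustment; and conditional on $S=s'$, the vector $(g_j)_{j\in \operatorname{N}_s}$ is uniform over size-$s'$ subsets of $\operatorname{N}_s$, so for each fixed $k$ the partial sum $\sum_{j\in \operatorname{N}_k\cap \operatorname{N}_s}g_j$ follows the hypergeometric law $\mathbf{HG}(|\operatorname{N}_s|,|\operatorname{N}_k\cap \operatorname{N}_s|,s')$. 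A union bound over the at most $n$ indices $k\in(a_\eta,s)$ reduces the lemma to showing, for every such $k$, that
\[
\Pb_{s,x}\Big[\textstyle\sum_{j\in \operatorname{N}_k\cap \operatorname{N}_s}g_j>T_k\Big]=o(1/n^2).
\]

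In the principal regime $p\le 1/(\log n)^4$ with $np^3\ge (\log n)^3$, I would first truncate $S$. Set $M:=\lceil x\rceil+C\sqrt{|\operatorname{N}_s|p\log n}$ for a large constant $C$; standard binomial Chernoff shows $\Pb[S\ge M]/\Pb[S\ge \lceil x\rceil]$ decays faster than any polynomial, so only $\lceil x\rceil\le s'\le M$ contribute. For such $s'$, the hypergeometric conditional mean $\mu_{k,s'}:=s'|\operatorname{N}_k\cap \operatorname{N}_s|/|\operatorname{N}_s|$ exceeds $|\operatorname{N}_k\cap \operatorname{N}_s|p$ by at most $(\sqrt{10}+C)|\operatorname{N}_k\cap \operatorname{N}_s|\sqrt{p\log n/|\operatorname{N}_s|}$. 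The structural fact from $\Gc_0$ that $|\operatorname{N}_k\cap \operatorname{N}_s|/|\operatorname{N}_s|\le 2np^2/(\eta np/2)=O(p)=o(1)$ then says this inflation is only a $o(1)$-fraction of the Chernoff width $\sqrt{10|\operatorname{N}_k\cap \operatorname{N}_s|p\log n}$ hard-wired into $T_k$, so a hypergeometric Chernoff/Hoeffding bound with the remaining tolerance $(1-o(1))\sqrt{10|\operatorname{N}_k\cap \operatorname{N}_s|p\log n}$ yields $n^{-5+o(1)}$ per $k$; summing produces $n^{-4+o(1)}=o(1/n)$.

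The other two ranges of \eqref{eq-HG-upper-bound-control} are dispatched by coarser estimates. In the intermediate range $n^{-0.1}\le np^3<(\log n)^3$, $|\operatorname{N}_k\cap \operatorname{N}_s|p\le 2np^3<2(\log n)^3$, and after absorbing the conditional inflation (again using $p\le 1/(\log n)^4$), the conditional mean remains $\ll(\log n)^3$, so Chernoff against $T_k=2(\log n)^3$ produces $\exp(-\Omega((\log n)^3))$. In the ultra-sparse range $np^3<n^{-0.1}$, $\mu_{k,s'}=o(1)$ and a crude moment estimate $\Pb[\cdot\ge 20]\lesssim \mu_{k,s'}^{20}/20!$ beats $1/n^2$ easily. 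The main technical obstacle is the delicate cancellation in the principal regime: because $x$ can lie as high as $|\operatorname{N}_s|p+\sqrt{10|\operatorname{N}_s|p\log n}$ above the mean of $S$, the naive size of the conditional inflation of $\mu_{k,s'}$ matches the Chernoff tolerance built into $T_k$; extracting the $(1-o(1))$-multiplicative saving in the exponent that gives $n^{-5+o(1)}$ per $k$ requires carefully tracking $\sqrt{|\operatorname{N}_k\cap \operatorname{N}_s|/|\operatorname{N}_s|}$ under $\Gc_0$, after which the union bound over $k$ closes the lemma.
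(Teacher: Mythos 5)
Your proposal is correct in substance and follows the same overall strategy as the paper's proof (reduce to per-$k$ tail bounds at the thresholds $T_k$ after controlling the effect of conditioning on $\{E_{r^*,s}\ge x\}$, then union bound over the at most $n$ indices $k$), but the technical device differs. The paper does not condition on the total and work with hypergeometric laws directly; instead it introduces the inflated product measure $\Pb^*=\mathbf{B}(1,p+p^*)^{\otimes|\operatorname{N}_s|}$ with $p^*=\sqrt{100p\log n/|\operatorname{N}_s|}$ and builds a monotone coupling showing that $\Pb_{s,x}$ is coordinatewise dominated by $\Pb^*$ except on an event of probability $o(1/n)$ (the coupling is constructed exactly via your observation that, given the total, the configuration is a uniform subset, so it suffices to order the totals, which is done by a tail-ratio estimate at level $|\operatorname{N}_s|p+\sqrt{16|\operatorname{N}_s|p\log n}$ — the analogue of your truncation at $M$). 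Since $\{\omega_1\notin\Omega_s^*\}$ is increasing, the bound transfers to $\Pb^*$, under which the partial sums are honest binomials $\mathbf{B}(|\operatorname{N}_k\cap\operatorname{N}_s|,p+p^*)$, and the paper finishes with (omitted) binomial Chernoff bounds plus a union bound. What the paper's route buys is that all conditioning is disposed of in one one-dimensional comparison of totals and only standard binomial tails are quoted; what your route buys is that no auxiliary measure or coupling is needed, at the price of invoking hypergeometric concentration (Hoeffding's domination of the hypergeometric by the binomial with the same mean makes the two implementations interchangeable), together with your correct key estimate that the conditional tilt of the mean is an $O(\sqrt{p})$-fraction of the slack $\sqrt{10|\operatorname{N}_k\cap\operatorname{N}_s|p\log n}$ built into $T_k$. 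Two small caveats, both shared with the paper's omitted computation rather than specific to your route: the claimed per-$k$ rate $n^{-5+o(1)}$ in the principal regime tacitly uses $|\operatorname{N}_k\cap\operatorname{N}_s|p\gtrsim\log n$ (only the upper bound $|\operatorname{N}_k\cap\operatorname{N}_s|\le 2np^2$ is recorded in $\Gc_0$), and in the ultra-sparse case the event is $\{\text{sum}>20\}$, i.e.\ $\ge 21$, which is what makes the bound $o(1/n^2)$ per $k$ rather than merely $O(1/n^2)$.
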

\begin{lemma}\label{lem-asmptotic-of-tail}
    Denote $\delta(n)=2/\sqrt{\log n}$, then when $p_c\ll p\le 1/(\log n)^4$, for each $a_\eta<k<s$ it holds 
    \begin{equation}\label{eq-asymptotic-tail}
            \Pb\Big[\sum_{j\in \operatorname{N}_k\setminus \operatorname{N}_s}G_{r^*,\pi^*(j)}\le 
            O_k-T_k-1\Big]\ge F_k^*(O_k)-\frac{2\delta(n)(c_k+1)}{n-k+1}\,.
    \end{equation}
\end{lemma}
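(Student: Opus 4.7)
The strategy is to rewrite the target inequality as a comparison of two tail probabilities and control their difference via a local Gaussian tail estimate, closing with the tail comparison \eqref{eq-comparison-to-F*} from the good event $\Gc_{s-1}$.

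Decompose $E_{r^*,k}=A+B+Y$ with $A=\sum_{j\in \operatorname{N}_k\cap\operatorname{N}_s}\Gs_{r^*,\pi^*(j)}\sim\mathbf B(|\operatorname{N}_k\cap\operatorname{N}_s|,p)$, $Y=\sum_{j\in \operatorname{N}_k}X_{r^*,\pi^*(j)}\in[0,1/n]$, and the three summands mutually independent since $r^*\in\Rs_{s-1}$ is unmatched. Since $F_k^*(O_k)=1-\Pb[E_{r^*,k}>O_k]$ and $\Pb[B\le O_k-T_k-1]=1-\Pb[B>O_k-T_k-1]$, the target gap identifies as
\[
F_k^*(O_k)-\Pb[B\le O_k-T_k-1]=\Pb[B>O_k-T_k-1]-\Pb[E_{r^*,k}>O_k]\,,
\]
so it is enough to show this difference is at most $2\delta(n)(c_k+1)/(n-k+1)$.

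To compare the two tails, set $\sigma_E=\sqrt{|\operatorname{N}_k|p(1-p)}$, $\sigma_B=\sqrt{|\operatorname{N}_k\setminus\operatorname{N}_s|p(1-p)}$, $z_E=(O_k-|\operatorname{N}_k|p)/\sigma_E$ and $z_{B,-}=(O_k-T_k-1-|\operatorname{N}_k\setminus\operatorname{N}_s|p)/\sigma_B$. Using $\sigma_E/\sigma_B=1+O(p)$ (from $\Gc_0$), a direct computation yields
\[
z_E-z_{B,-}=\frac{T_k+1-|\operatorname{N}_k\cap\operatorname{N}_s|p}{\sigma_B}\,(1+o(1))\,,
\]
so the key residual is $T_k-|\operatorname{N}_k\cap\operatorname{N}_s|p$. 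The definition of $T_k$ in \eqref{eq-HG-upper-bound-control} is designed precisely so that this residual is a lower-order concentration correction: a case-by-case verification with $\Gc_0$ and the hypotheses $p\le 1/(\log n)^4$, $p\gg p_c$ shows $z_E(z_E-z_{B,-})=O(\delta(n)^2)$. In Case~1 ($np^3\ge(\log n)^3$), $T_k-|\operatorname{N}_k\cap\operatorname{N}_s|p=\sqrt{10|\operatorname{N}_k\cap\operatorname{N}_s|p\log n}$ gives $z_E-z_{B,-}\lesssim \sqrt{p\log n}=O(\delta(n)^3)$, while in Cases~2 and~3 the constant or polylogarithmic $T_k$ is absorbed by $\sigma_B\gg (\log n)^{3/2}$, which follows from the separation $p\gg p_c$. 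A Stirling/Berry--Esseen local CLT for the binomial, valid in the moderate-deviation regime $z_E=O(\sqrt{\log n})$, then furnishes the Mills-ratio comparison
\[
\Pb[B>O_k-T_k-1]\le \bigl(1+O(\delta(n)^2)\bigr)\,\Pb[E_{r^*,k}>O_k]\,.
\]

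Finally, applying \eqref{eq-comparison-to-F*} at index $k\le s-1$ (provided by $\Gc_{s-1}$) gives $\Pb[E_{r^*,k}>O_k]=1-F_k^*(O_k)\le (1+o(1))c_k/(n-k+1)$, and combining with the previous display yields
\[
F_k^*(O_k)-\Pb[B\le O_k-T_k-1]\le O(\delta(n)^2)\cdot\frac{c_k}{n-k+1}\le \frac{2\delta(n)(c_k+1)}{n-k+1}
\]
for $n$ sufficiently large, completing the argument. The main technical obstacle is the uniform bound $z_E(z_E-z_{B,-})=O(\delta(n)^2)$ across the three regimes of $T_k$; in particular Case~3, where $T_k$ is a constant and $p$ is permitted near the lower boundary $p_c$, requires $\sigma_B\gg (\log n)^{3/2}$ and hence a somewhat quantitative reading of $p\gg p_c$. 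Transferring the discrete binomial point masses to their Gaussian counterparts without losing the $1+o(1)$ factors is a secondary, routine obstacle handled by the local CLT.
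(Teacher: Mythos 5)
Your reduction of the gap to the tail comparison
\[
F_k^*(O_k)-\Pb\Big[\textstyle\sum_{j\in \operatorname{N}_k\setminus\operatorname{N}_s}\Gs_{r^*,\pi^*(j)}\le O_k-T_k-1\Big]
=\Pb[B>O_k-T_k-1]-\Pb[E_{r^*,k}>O_k]
\]
and your final use of \eqref{eq-comparison-to-F*} (via $\Gc_{s-1}$) are fine, but the quantitative heart of your argument fails in Case~3 of \eqref{eq-HG-upper-bound-control}. There you assert $\sigma_B\gg(\log n)^{3/2}$ ``from the separation $p\gg p_c$''; in fact $p\gg p_c$ only gives $\sigma_B\asymp\sqrt{|\operatorname{N}_k|p}\gtrsim\sqrt{\eta}\,\sqrt{n}\,p\gg\sqrt{\log n}$, with no control on the rate. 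Since $z_E\asymp\sqrt{\log n}$ (forced by $\Gc_{s-1}$) and the threshold shift is the constant $T_k+1=21$, the Mills-ratio error is $z_E(z_E-z_{B,-})\asymp\sqrt{\log n}/(\sqrt{n}\,p)$, which for, say, $p=p_c\log\log n$ is of order $1/\log\log n$: it is $o(1)$ but neither $O(\delta(n)^2)$ nor even $O(\delta(n))$. This matters downstream: your last display needs the multiplicative error to be $O(\delta(n))$ uniformly, because under $\Gc_{s-1}$ one can have $c_k$ as large as $2\log n$, and an error of order $1/\log\log n$ times $c_k/(n-k+1)$ exceeds $2\delta(n)(c_k+1)/(n-k+1)$. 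So as written the proposal does not establish \eqref{eq-asymptotic-tail} throughout the range $p_c\ll p\le 1/(\log n)^4$; the delicate sub-regime is exactly $p$ between $p_c$ and a few powers of $\log n$ times $1/\sqrt n$. (A smaller point: in Case~2 the correct source of the polynomially large $\sigma_B$ is the constraint $np^3\ge n^{-0.1}$, not $p\gg p_c$; the conclusion there is still fine.)

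For comparison, the paper does not attempt a direct two-tail Gaussian comparison. It adds an independent auxiliary variable $Y\sim\mathbf B(|\operatorname{N}_k\cap\operatorname{N}_s|,p)$ to restore a single binomial $\mathbf B(|\operatorname{N}_k|,p)$ (paying only a high-probability truncation $S_k$ as in \eqref{eq-Sk}), and then bounds the window probability $\Pb[O_k-T_k-1+S_k\le \mathbf B(|\operatorname{N}_k|,p)\le O_k-2]$ \emph{relative to the tail at $O_k-1$} by uniform Stirling one-point estimates over a window of half-width $R_k$ chosen regime by regime, yielding a ratio $\le (1+o(1))(T_k-S_k)/R_k\lesssim 1/\sqrt{\log n}=\delta(n)/2$; the additive slack $(c_k+1)$ in the lemma then absorbs the remaining boundary terms. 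The essential extra ingredient compared to your route is this regime-dependent window-counting normalization, which is what produces a $1/\sqrt{\log n}$ factor rather than the factor $z_E T_k/\sigma_B$ your Mills-ratio comparison gives. To repair your argument you would need either this device or some other mechanism that exploits the specific window widths $T_k$, $R_k$; the bound $z_E(z_E-z_{B,-})=O(\delta(n)^2)$ cannot be obtained from $p\gg p_c$ alone in Case~3.
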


We will first show that \eqref{eq-comparison-to-F*} holds when replacing $O_s$ with any $x \in I_s$. For any $x \in I_s$, by definition we have
\[
1-F_s(x)=\Pb[E_{r^*,s}\ge x\mid \mathcal D_{r^*,s}]=\frac{\Pb[E_{r^*,s}\ge x,\mathcal D_{r^*,s}]}{\Pb[\mathcal D_{r^*,s}]}\,.
\]
%The denominator equals to $\prod_{a_\eta<k<s}\big(1-\frac{c_k}{n-k+1}\big)$ by \eqref{eq-prob-of-D-r-s} in Lemma~\ref{lem-c-s}. 
For the numerator, it can be rewritten as
\[
\Pb[E_{r^*,s}\ge x,\mathcal D_{r^*,s}]=\Pb[E_{r^*,s}\ge x]\cdot \Pb_{s,x}[\mathcal D_{r^*,s}]=\big(1-F_s^*(x)\big)\Pb_{s,x}[\mathcal D_{r^*,s}]\,.
\]
We expand $\Pb_{s,x}[\mathcal D_{r^*,s}]$ according to the realization of $\omega_1=(g_j,\dots)_{j\in \operatorname{N}_s}$:
\begin{equation}\label{eq-expansion-omega-1}
        \Pb_{s,x}[\mathcal D_{r^*,s}]=\sum_{\omega_1\in \{0,1\}^{|\operatorname{N}|_s}}\Pb_{s,x}[\omega_1]\cdot \Pb_{s,x}[\mathcal D_{r^*,s}\mid \omega_1]
        \ge\sum_{\omega_1\in \Omega_s^*}\Pb_{s,x}[\omega_1]\cdot\Pb_{s,x}[\mathcal D_{r^*,s}\mid \omega_1]\,.
\end{equation}
For any realization $\omega_1\in \Omega_s^*$, we have
\begin{equation*}
    \begin{aligned}
        &\ \Pb_{s,x}[\mathcal D_{r^*,s}\mid \omega_1]=\Pb[E_{r^*,k}\le O_k,\forall a_\eta<k<s\mid \omega_1,E_{r^*,s}\ge x]\\
        \ge &\ \Pb\Big[\sum_{j\in \operatorname{N}_k}\Gs_{r^*,\pi^*(j)}\le O_k-1,\forall a_\eta<k<s\,\big\vert\, \omega_1,E_{r^*,s}\ge x\Big]\\
        =&\ \Pb\Big[\sum_{j\in \operatorname{N}_k\setminus \operatorname{N}_s}\Gs_{r^*,\pi^*(j)}\le O_k-1-\sum_{j\in \operatorname{N}_s\cap \operatorname{N}_k}g_j\,\big\vert\, \omega_1,\sum_{j\in \operatorname{N}_s}X_{r^*,\pi^*(j)}\ge x-\sum_{j\in \operatorname{N}_s}g_j\Big]\\
        {=}&\ \Pb\Big[\sum_{j\in \operatorname{N}_k\setminus \operatorname{N}_s}\Gs_{r^*,\pi^*(j)}\le O_k-1-\sum_{j\in \operatorname{N}_k\cap \operatorname{N}_s}g_j,\forall a_\eta<k<s\Big]\,,
        \end{aligned}
    \end{equation*}
    where the last equality follows from independence. Note that $\omega_1\in \Omega_s^*$ means $$\sum_{j\in \operatorname{N}_k\cap \operatorname{N}_s}g_j\le T_k,\forall a_\eta<k<s\,,$$ Hence, the above expression is lower-bounded by
    \begin{equation*}
    \begin{aligned}
        \Pb\Big[\sum_{j\in \operatorname{N}_k\setminus \operatorname{N}_s}\Gs_{r^*,\pi^*(j)}\le O_k-T_k-1,\forall a_\eta<k<s\Big]\,,
        \end{aligned}
        \end{equation*}
        which is bounded below by
        \[\prod_{a_\eta<k<s}\Pb\Big[\sum_{j\in \operatorname{N}_k\setminus \operatorname{N}_s}\Gs_{r^*,\pi^*(j)}\le O_k-T_k-1\Big]
        \]
        from FKG inequality since all these events are decreasing.
        From Lemma~\ref{lem-asmptotic-of-tail} and the third item in $\Gc_{s-1}$, we see for any $a_\eta<k<s$, 
        \[
        \Pb\Big[\sum_{j\in \operatorname{N}_k\setminus \operatorname{N}_s}\Gs_{r^*,\pi^*(j)}\le O_k-T_k-1\Big]\ge F_k^*(O_k)-\frac{2\delta(n)(c_k+1)}{(n-k+1)}\ge 1-\frac{c_k+o(c_k+1)}{n-k+1}\,,
        \]
        where the $o(\cdot)$ terms are uniform for all $a_\eta<k<s$. Plugging this into the above estimation, we see for any $\omega_1\in \Omega_s^*$, $\Pb_{s,x}[\mathcal D_{r^*,s}\mid \omega_1]$ is bounded from below by
        \begin{equation*}
        \begin{aligned}
        \prod_{a_\eta<k<s}\left(1-\frac{c_k+o(c_k+1)}{n-k+1}\right)
        \ge\big(1-o(1)\big)\prod_{a_\eta<k<s}\left(1-\frac{c_k}{n-k+1}\right)=\big(1-o(1)\big)\Pb[\mathcal D_{r^*,s}]\,,
    \end{aligned}
\end{equation*}
where in the first inequality we used the fact that $\sum_{a_\eta<s\le b_\eta}c_s\le 2n$ and the second equality follows from \eqref{eq-prob-of-D-r-s}. 
As a result, we see from \eqref{eq-expansion-omega-1} that 
\[
\Pb_{s,x}[\mathcal D_{r^*,s}]\ge \Pb_{s,x}[\omega_1\in \Omega_s^*]\times \big(1-o(1)\big)\Pb[\mathcal D_{r^*,s}]\ge \big(1-o(1)\big)\Pb[\mathcal D_{r,s}]\,,
\]
where the last inequality follows from Lemma~\ref{lem-HG-control}. Therefore, we get for any $x\in I_s$,
\begin{equation}\label{eq-comparison-of-tail}
    1-F_s(x)\ge \big(1-o(1)\big)(1-F_s^*(x))\,,
\end{equation}
which is equivalent to \eqref{eq-comparison-to-F*} with $O_k$ replaced by $x\in I_s$. It is also worth noting that the $o(1)$ term can be taken uniformly for $a_\eta < s \leq b_\eta$.

Finally, it can be deduced from Lemma~\ref{lem:Upper bound for binomial tail} that $$1-F_s^*\left(|\operatorname{N}_s|p+\sqrt{2(1-\eta)|\operatorname{N}_s|p\log n}\right )=n^{-1+\eta+o(1)}\gg \frac{\log n}{n-s+1}\,,$$
and thus $c_s\le 2\log n$ together with $\eqref{eq-comparison-of-tail}$ implies $O_s\ge |\operatorname{N}_s|p+\sqrt{2(1-\eta)|\operatorname{N}_s|p\log n}$. In particular, we conclude that %from \eqref{eq-comparison-of-tail} that
%\[
%F_s^*(O_s)=1-\big(1-o(1)\big)\big(1-F_s(O_s)\big)=1-\frac{c_s+o(c_s)}{n-s+1}\,,
%\]
%which verifies that 
\eqref{eq-comparison-to-F*} holds for $k=s$, and this completes the proof.

\begin{remark}\label{rmk-p=o(1)}
As mentioned in Remark~\ref{rmk-thm-algo}, we anticipate that the assumption $p\le 1/(\log n)^4$ is primarily for technical reasons and can potentially be removed. The proof presented above relies on the property that under this assumption, one has $$1-F_s(O_s)\ge\big(1-o(1)\big)(1-F_s^*(O_s))$$ with overwhelming probability. Therefore, a simple application of the FKG inequality is sufficient for our purposes. However, we believe that for any $p=o(1)$, it should hold that 
    $$
    1-F_s(O_s)\ge n^{-o(1)}(1-F_s^*(O_s))\,,
    $$
    which would also be enough to establish $O_s\ge sp^2+\sqrt{2(1-\eta)sp^2\log n}$. Unfortunately, deriving such estimates seems challenging, as the FKG inequality is not particularly tight when $p$ exceeds some poly-log factor threshold. Detailed calculations involving multiple binomial variables are required to obtain precise results.
\end{remark}

\section{Hardness results via the overlap-gap property}\label{sec-2-OGP}
   
In this section, we present evidence supporting the existence of statistical-computational gap in the dense regime based on various types of the overlap-gap property. Recall the property of admissibility defined as in Definition~\ref{def-admissible}. We fix an arbitrary $p$ in the dense regime, and in most part of this section, we will work with some specific realization of $\Gs$ which is $p$-admissible.
\subsection{Failure of stable algorithms via 2-OGP}\label{subsec-two-OGP}
This subsection is dedicated to proving the failure of stable algorithms in finding near-optimal solutions.
To formally state Theorem~\ref{thm-OGP}, we introduce several definitions and notations. Specifically, we use $\operatorname{d}(\cdot)$ to denote the Hamming distance on the symmetric group. We also define stable algorithms in $\operatorname{GAA}$ as follows.
   
   	\begin{definition}[stable algorithms]       
		We say an algorithm $\mathcal{A}\in\operatorname{GAA}$ is \emph{$(\Delta,\kappa)-$stable}, if\begin{equation}\label{eq-kappa-stable}
			\operatorname{d}(\mathcal{A}(G,\mathsf G),\mathcal{A}(G',\mathsf G))\leq \kappa
		\end{equation}
		holds for $G,G',\Gs\in \mathfrak{G}_n$ whenever $G$ and $G'$ differ at most $\Delta$ edges.% $\left((G,\Gs),(G',\Gs)\right)$  with $$ \sum_{i<j}|G_{i,j}-G'_{i,j}|\leq \Delta\,,$$ 
  
Furthermore, we say $\mathcal A\in \operatorname{GAA}$ is \emph{almost $(\Delta,\kappa)$-stable}, if \eqref{eq-kappa-stable} holds with probability $1-o(1/n^2)$ for $G,G',\Gs\in \mathfrak{G}_n$, where $(G,\Gs)$ is sampled from $\mathbf G(n,p)^{\otimes 2}$ and $G'$ is any graph that differs from $G$ at most $\Delta$ edges.
	\end{definition}
Write $\rho_0=1/3$. Fix any $\beta_0\in (\sqrt{25/27},1)$ and take $\eta>0$ such that
\begin{equation}\label{eq-choice-of-eta}
    2-\rho_0+\eta-\frac{2\beta_0^2}{1+(\rho_0+\eta)^2}<0\,.
\end{equation}
Note that this can be accomplished since \eqref{eq-choice-of-eta} holds when $\eta=0$. The following theorem presents our main evidence of algorithmic barriers in the dense regime.

\begin{thm}\label{thm-stable-hardness}
    When $p$ is in the dense regime, for any almost $(1,\eta n)$-stable algorithm $\mathcal A\in \operatorname{GAA}$, it holds
    \begin{equation}\label{eq-stable-cannot-succeed-with-high-prob}
        \Pb[\mathcal A(G,\Gs)\in \operatorname{S}_{\beta_0}(G,\Gs)]\le 1-1/n^2\,,
    \end{equation}
    where the probability is taken over $(G,\Gs)\sim \mathbf G(n,p)^{\otimes 2}$.
\end{thm}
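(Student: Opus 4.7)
The plan is to prove the theorem by contradiction using the classical 2-OGP hardness framework. Throughout I condition on $\Gs$ being $p$-admissible (which happens with probability $1-o(1)$ by Lemma~\ref{lem-good-event-on-G}), so that the two-point estimate of Proposition~\ref{prop-two-point-est}, and its symmetric version where $\Gs$ is the fixed admissible graph and $G$ is the random one, can both be invoked.

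The first main step is two OGP-type lemmas. \emph{Cross 2-OGP}: let $G,G'\sim \mathbf G(n,p)^{\otimes 2}$ be independent, enumerate the edge slots $e_1,\ldots,e_T$ with $T=\binom{n}{2}$, and build the interpolation $G_0=G$, $G_T=G'$ by setting $G_k(e_i)=G'(e_i)$ for $i\le k$ and $G_k(e_i)=G(e_i)$ otherwise. I claim that with probability $1-o(1)$, for every $0\le k\le T$ there is no pair $(\pi_1,\pi_2)\in\operatorname{S}_{\beta_0}(G_0,\Gs)\times\operatorname{S}_{\beta_0}(G_k,\Gs)$ with $F(\pi_{1,2})/n\in[\rho_0,\rho_0+\eta]$. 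The key step is the bound
\[\Pb\big[\pi_1\in\operatorname{S}_{\beta_0}(G_0,\Gs),\ \pi_2\in\operatorname{S}_{\beta_0}(G_k,\Gs)\big]\le \Pb\big[\pi_1,\pi_2\in\operatorname{S}_{\beta_0}(G_0,\Gs)\big].\]
Indeed, after conditioning on $G_0$ restricted to $\{e_{k+1},\ldots,e_T\}$, the cross event splits as a product over two disjoint independent families of Bernoulli variables (the fresh coordinates of $G_0$ versus those of $G_k$), whereas the same-graph event couples two increasing events on a common product-Bernoulli space and therefore has no smaller probability by FKG; the single-permutation marginals coincide in the two settings, giving the comparison. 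Proposition~\ref{prop-two-point-est} then supplies a per-pair bound of order $\exp(-2\beta_0^2n\log n/(1+(F/n)^2)+o(n\log n))$. Summing over pairs with $F/n\in[\rho_0,\rho_0+\eta]$ (a count of at most $\exp((2-\rho_0+\eta)n\log n+o(n\log n))$) and then over $k\le T=O(n^2)$, condition \eqref{eq-choice-of-eta} forces the total expected count to decay as $\exp(-\Omega(n\log n))$.

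\emph{Typically Disjoint}: for independent $G,G'$ and admissible $\Gs$, with probability $1-o(1)$ there is no $(\pi,\pi')\in\operatorname{S}_{\beta_0}(G,\Gs)\times\operatorname{S}_{\beta_0}(G',\Gs)$ with $F(\pi^{-1}\pi')/n\ge \rho_0$. Here independence factors the joint probability into two single-permutation tails, each bounded by $\exp(-\beta_0^2n\log n+o(n\log n))$ via the Chernoff estimate of Section~\ref{subsec-info-upper}. The count of pairs with $F/n\ge\rho_0$ is $\exp((2-\rho_0)n\log n+o(n\log n))$, and because $\beta_0^2>25/27>5/6=(2-\rho_0)/2$, the total expected count decays exponentially in $n\log n$.

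Now assume for contradiction that $\Pb[\mathcal A(G,\Gs)\in\operatorname{S}_{\beta_0}(G,\Gs)]>1-1/n^2$, and set $\pi_k=\mathcal A(G_k,\Gs)$. Almost $(1,\eta n)$-stability combined with a union bound over the $T=O(n^2)$ single-edge interpolation steps yields $\operatorname{d}(\pi_k,\pi_{k+1})\le \eta n$ for every $k$ with probability $1-o(1)$. The success assumption, unioned over the $T+1$ marginally $\mathbf G(n,p)$-distributed inputs, yields $\pi_k\in\operatorname{S}_{\beta_0}(G_k,\Gs)$ for every $k$ with probability at least $1-(T+1)/n^2\ge 1/2-o(1)$. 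Intersecting with admissibility of $\Gs$, Cross 2-OGP, and Typically Disjoint (each of probability $1-o(1)$) produces a good event of probability at least $1/3$ for $n$ large. On it, Typically Disjoint gives $\operatorname{d}(\pi_0,\pi_T)>(1-\rho_0)n$; the $\eta n$-bounded increments then force, by the discrete intermediate value theorem, some $k^*$ with $F(\pi_0^{-1}\pi_{k^*})/n\in[\rho_0,\rho_0+\eta]$. But $\pi_0\in\operatorname{S}_{\beta_0}(G_0,\Gs)$ and $\pi_{k^*}\in\operatorname{S}_{\beta_0}(G_{k^*},\Gs)$ together directly violate Cross 2-OGP, a contradiction. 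The main technical obstacle is precisely the Cross 2-OGP: one must extend the two-point estimate of Proposition~\ref{prop-two-point-est} uniformly to all $O(n^2)$ pairs of coupled interpolated inputs, and the sharp form of \eqref{eq-choice-of-eta} is tailored exactly to absorb the logarithmic losses from summing over $F$-values in the narrow forbidden band and from the union bound over $k\le\binom{n}{2}$.
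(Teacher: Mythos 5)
Your proof is correct and follows the paper's global architecture quite closely: the same single-edge interpolation between independent copies $G$ and $G'$, the same four ingredients (success via a union bound over the $O(n^2)$ marginally $\mathbf G(n,p)$ inputs, stability via the almost-$(1,\eta n)$ definition, an OGP statement along the path, and a disjointness statement at the endpoints), and the same contradiction via a discrete intermediate-value step. Where you genuinely diverge is in how the path OGP is established. The paper proves a dedicated two-point estimate for $(2,\alpha)$-correlated instances (Proposition~\ref{prop-two-point-est-correlated}), decomposing the overlap into shared and fresh binomial pieces and using \emph{both} regularity items of admissibility (including the induced-subgraph bound \eqref{eq-control-of-E(H)} together with the lexicographic labelling of edge slots) to control the shared variance by $(\rho^2\wedge\alpha)Np^2$; Proposition~\ref{prop-OGP} then follows by a union bound. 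You instead condition on the shared coordinates, use conditional independence of the fresh coordinates to factor the cross event, and use Harris/FKG for the two increasing same-graph events to dominate the cross probability by the same-graph two-point probability, which is exactly the $\alpha=1$ case (equivalently the $G\leftrightarrow\Gs$-symmetric version of Proposition~\ref{prop-two-point-est}, valid since $\pi\mapsto\pi^{-1}$ preserves fixed-point counts and the third admissibility item is stated for all $\pi$). This buys you a more elementary argument that needs no new correlated estimate and no $\alpha$-dependence along the path; it only yields the $\rho^2$-factor bound, but that is all that is needed in the narrow band $F/n\in[\rho_0,\rho_0+\eta]$ under \eqref{eq-choice-of-eta} (the paper's Proposition~\ref{prop-OGP} likewise uses only the $\rho^2$ part), while at the endpoints you recover the $\alpha=0$ case by plain independence factorization, which is where the paper's $\rho^2\wedge\alpha$ refinement is actually indispensable. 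A further small difference in your favor: your ``Typically Disjoint'' event is phrased as the nonexistence of any high-overlap pair of $\beta_0$-optimal solutions for the two independent endpoint instances, which is cleaner than the paper's $\mathcal E_{\operatorname{ends}}$ (stated directly on the algorithm outputs) and avoids the implicit reliance on $\mathcal E_{\operatorname{suc}}$ when bounding $\Pb[\mathcal E_{\operatorname{ends}}^c]$. In a full write-up you should state and justify the symmetric two-point estimate explicitly (or simply cite Proposition~\ref{prop-two-point-est-correlated} with $\alpha=1$ and $\alpha=0$), but this is presentational rather than a gap.
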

   We offer several comments on Theorem~\ref{thm-stable-hardness}. Our result excludes the possibility of achieving near-optimal solutions with high enough probability for a broad class of algorithms. Moreover, although Theorem~\ref{thm-stable-hardness} states only for deterministic algorithms, similar hardness result can be obtained for randomized stable algorithms with verbal changes. While an upper bound of $1-1/n^2$ for the success probability may seem less than ideal, we still consider it as an indication of computational barriers for the following reasons:
\\\noindent (i) Efficient algorithms that can find near-optimal solutions (like the algorithms constructed for Theorem~\ref{thm-PTAS}) typically achieve this with extremely high probability, often super-polynomially close to 1 (though this is not always the case; some counterexamples were recently found in \cite{LS24}).
\\\noindent
(ii) Additionally, through further analysis, we can demonstrate that for sufficiently stable algorithms (e.g., $(\Delta,\kappa)$-stable algorithms with $\kappa^2\ll \Delta$), none of them can find near-optimal solutions with any non-vanishing probability.
\\\noindent
Based on these considerations, Theorem~\ref{thm-stable-hardness} provides convincing evidence for the computational hardness in the dense regime.

To prove Theorem~\ref{thm-stable-hardness}, we will make use of the overlap-gap property (2-OGP). Denote $N=\binom{n}{2}$. Recall that $\operatorname{U}$ represents the set of unordered pairs. We label the elements in $\operatorname{U}$ as $e_1,\dots,e_N$ according to lexicographical order. For illustration, $e_1=(1,2)$, $e_2=(1,3)$, $e_3=(2,3)$, and so on, up to $e_N=(n-1,n)$. This labeling possesses a specific property: for any $\alpha\in [0,1]$, the set of pairs $\operatorname{E}_\alpha=\{e_k:k\le \alpha N\}$ is contained within the first $\big(\sqrt{2\alpha}+o(1)\big)n$ integers. 

We begin by defining the $(2,\alpha)$-correlated instance.

    \begin{definition}[$(2,\alpha)$-correlated instance]
For any $\alpha\in [0,1]$, we call a pair of graphs $(G^1,G^2)$ is a $(2,\alpha)$-correlated instance, if 
  \begin{equation}
      \begin{split}
G^{1}_{e_k}=G^{2}_{e_k},1\le k\le \lfloor\alpha N\rfloor, 
      G^{i}_{e_k}, \lfloor\alpha N\rfloor+1\le k\le N, i=1,2\,,
      \end{split}\notag
  \end{equation}
  are independent $\mathbf B(1,p)$ variables.%and $\Gs$ is sampled independently from $\mathbf G(n,p)$.
\end{definition}
Recall the property of admissibility in Definition~\ref{def-admissible}. For any admissible graph $\Gs$, we provide a useful estimate for $(2,\alpha)$-correlated instances analogous to Proposition~\ref{prop-two-point-est}.
\begin{proposition}\label{prop-two-point-est-correlated}
 For any admissible graph $\Gs$, any $\alpha,\rho\in [0,1]$ and $\pi_1,\pi_2\in \operatorname{S}_n$ with $\operatorname{overlap}(\pi_1,\pi_2)=\rho n$, it holds that
\begin{align}\label{eq-two-point-est-correlated}
\Pb\left[\widetilde{\operatorname{O}}(\pi_1)\geq \beta_0D_{n,p}\,,\widetilde{\operatorname{O}}(\pi_2)\geq \beta_0 D_{n,p}\right]\leq\exp\left(-\frac{2\beta_0^2n\log n}{1+\rho^2\wedge \alpha}+o(n\log n)\right)\,,
\end{align}
where $\widetilde{\operatorname{O}}(\pi_i)$ is the centered overlap of $(G^i,\Gs)$ through $\pi_i$, $i=1,2$, and $\Pb$ is taken over $(2,\alpha)$-correlated instance $(G^1,G^2)$.
\end{proposition}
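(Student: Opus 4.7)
The strategy will follow the template of Proposition~\ref{prop-two-point-est}, but now decompose edges along two axes simultaneously: the correlation pattern between $G^1$ and $G^2$, and the vertex overlap of $(\pi_1,\pi_2)$. Let $A=\{e_1,\ldots,e_{\lfloor\alpha N\rfloor}\}$ be the pairs on which $G^1$ and $G^2$ coincide; from the lexicographic labeling, the first $\binom{K}{2}$ pairs are exactly the edge set of the complete graph on $\{1,\ldots,K\}$, so $A$ is (essentially) the complete graph on the first $\sqrt{\alpha}\,n$ vertices, up to a negligible $O(n)$ correction. Since $\Gs$ is fixed, the pullbacks $\Gs^{(i)}_{k,l}:=\Gs_{\pi_i(k),\pi_i(l)}$ are deterministic. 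I would partition $\operatorname U$ according to whether $(k,l)\in A$ and by the pair $(\Gs^{(1)}_{k,l},\Gs^{(2)}_{k,l})\in\{0,1\}^2$, and let $m_0,m_1,m_2$ be the counts in $A$ with $(\Gs^{(1)},\Gs^{(2)})$ equal to $(1,1),(1,0),(0,1)$, and $n_1,n_2$ the counts outside $A$ with $\Gs^{(1)}=1$, $\Gs^{(2)}=1$. Since $G^1=G^2$ on $A$ and $G^1,G^2$ are independent off $A$, this gives
\begin{equation*}
\operatorname O(\pi_1)=T_0+T_1+U_1,\qquad \operatorname O(\pi_2)=T_0+T_2+U_2,
\end{equation*}
with the five pieces jointly independent, $T_j\sim\mathbf B(m_j,p)$ and $U_i\sim\mathbf B(n_i,p)$.

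The combinatorial heart of the argument will be the two-sided estimate $m_0\le(\rho^2\wedge\alpha)\,|E(\Gs)|\,(1+o(1))$. For the $\alpha$-side, $m_0+m_1$ counts edges of $\Gs$ landing inside the $\pi_1$-image of the $\sqrt{\alpha}\,n$ vertices spanning $A$, so Item~(ii) of Definition~\ref{def-admissible} yields $m_0+m_1\le\alpha Np\,(1+o(1))$. For the $\rho$-side, set $\pi_{1,2}=\pi_1^{-1}\circ\pi_2$ so that $F(\pi_{1,2})=\rho n$; the map $(k,l)\mapsto(\pi_1(k),\pi_1(l))$ bijects $\{(k,l)\in\operatorname U:\Gs^{(1)}_{k,l}=\Gs^{(2)}_{k,l}=1\}$ with $\operatorname{OL}(\Gs,\pi_{1,2})$, and Item~(iii) together with \eqref{eq-E-OL} gives $m_0\le|\operatorname{OL}(\Gs,\pi_{1,2})|\le\rho^2 Np\,(1+o(1))$ in the dense regime (the $(1-\rho^2)Np^2$ contribution being negligible). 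Taking the minimum yields the claim.

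With $m_0$ under control, I will apply a symmetric joint Chernoff bound: for $\lambda>0$,
\begin{equation*}
\Pb\bigl[\widetilde{\operatorname O}(\pi_1)\ge\beta_0D_{n,p},\widetilde{\operatorname O}(\pi_2)\ge\beta_0D_{n,p}\bigr]\le e^{-2\lambda\beta_0D_{n,p}+o(\lambda D_{n,p})}\,\Eb\bigl[e^{\lambda((\operatorname O(\pi_1)-\Eb\operatorname O(\pi_1))+(\operatorname O(\pi_2)-\Eb\operatorname O(\pi_2)))}\bigr],
\end{equation*}
where the $o(\lambda D_{n,p})$ absorbs $\Eb\widetilde{\operatorname O}(\pi_i)=o(D_{n,p})$ coming from admissibility of $|E(\Gs)|$. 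Independence factors the MGF into a $2\lambda$-MGF of $T_0$ (since $T_0$ appears in both sums) and $\lambda$-MGFs of $T_1,T_2,U_1,U_2$. The optimal $\lambda^\ast$ turns out to be of order $\sqrt{\log n/(np^2)}=o(1)$ in the dense regime, so the sub-Gaussian expansion $\log\Eb[e^{\lambda(X-\Eb X)}]=Np(1-p)\lambda^2/2+O(Np\lambda^3)$ for $X\sim\mathbf B(N,p)$ is valid with $o(n\log n)$ remainder. Using the identities $m_1+n_1=m_2+n_2=|E(\Gs)|-m_0$, the effective variance collapses cleanly to $p(1-p)(m_0+|E(\Gs)|)$, and optimizing in $\lambda$ gives
\begin{equation*}
\log\Pb\le-\frac{\beta_0^2 D_{n,p}^2}{(m_0+|E(\Gs)|)\,p(1-p)}(1+o(1)).
\end{equation*}
Inserting $|E(\Gs)|p(1-p)=E_{n,p}(1+o(1))$, $D_{n,p}^2/E_{n,p}=2n\log n$, and the bound on $m_0$ delivers the target exponent $-2\beta_0^2 n\log n/(1+\rho^2\wedge\alpha)+o(n\log n)$.

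The main obstacle will be the double estimate on $m_0$: aligning Items~(ii) and~(iii) of admissibility so that \emph{both} bounds are simultaneously tight, and cleanly identifying the $(1,1)$-cell inside $\operatorname U$ with $\operatorname{OL}(\Gs,\pi_{1,2})$ via the permutation relabeling. Once this combinatorial input is in place, the joint MGF computation is symmetric and in fact avoids the three-term Cauchy-Schwarz split of the proof of Proposition~\ref{prop-two-point-est}.
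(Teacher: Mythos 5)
Your proposal is correct, and its backbone is the same as the paper's: you decompose the two overlaps into a shared part plus independent parts (your $T_0$ is exactly the paper's $S_0=\sum_{(i,j)\in\operatorname{E}_\alpha}G^1_{i,j}\mathbf 1_{\Gs_{\pi_1(i),\pi_1(j)}=\Gs_{\pi_2(i),\pi_2(j)}=1}$, and $S_i=T_i+U_i$), and you control the size $m_0$ of the shared block by exactly the two admissibility inputs the paper uses — item (ii) through the lexicographic labeling (so that $\operatorname{E}_\alpha$ sits inside a clique on roughly $\sqrt\alpha\,n$ vertices) and item (iii) through the identification of the $(1,1)$-cell with $\operatorname{OL}(\Gs,\cdot)$ for the relative permutation, yielding $\operatorname{Var}(S_0)\le(\rho^2\wedge\alpha+o(1))Np^2$. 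Where you genuinely diverge is the final tail estimate: the paper (following Proposition~\ref{prop-two-point-est}) conditions on the shared variable and splits the probability into three terms handled by Chernoff plus Cauchy--Schwarz, whereas you bound $\Pb[\text{both}\ge\beta_0D_{n,p}]\le\Pb[\widetilde{\operatorname O}(\pi_1)+\widetilde{\operatorname O}(\pi_2)\ge2\beta_0D_{n,p}]$ and run a single exponential-moment bound, using that $T_0$ enters the sum with weight $2$ so the effective variance is $2p(1-p)(m_0+|E(\Gs)|)$; since $\lambda^*\asymp\sqrt{\log n/(np^2)}=o(1)$ in the dense regime, the quadratic expansion of the Bernoulli log-MGF is valid with $o(n\log n)$ remainder, and the resulting exponent $\beta_0^2D_{n,p}^2/\big((1+\rho^2\wedge\alpha)E_{n,p}\big)=2\beta_0^2n\log n/(1+\rho^2\wedge\alpha)$ matches the target exactly, so nothing is lost relative to the paper's finer split. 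Two cosmetic cautions: state the bounds on $m_0$ additively, i.e. $m_0\le(\rho^2\wedge\alpha+o(1))Np$, rather than multiplicatively, since for small $\rho$ the $Np^2$ contribution from non-fixed pairs is not dominated by $\rho^2Np$ (the additive form is what admissibility gives and is all the exponent needs, as $x\mapsto 2\beta_0^2/(1+x)$ is continuous); and your bijection sends the $(1,1)$-cell to $\operatorname{OL}(\Gs,\pi_2\circ\pi_1^{-1})$ rather than $\operatorname{OL}(\Gs,\pi_1^{-1}\circ\pi_2)$, which is immaterial because the admissibility bound \eqref{eq-OL-concentration} depends only on $F(\cdot)$ and $T(\cdot)$, invariant under conjugation and inversion.
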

\begin{proof}
The proof of Proposition~\ref{prop-two-point-est-correlated} is very similar to that of Proposition~\ref{prop-two-point-est} so we only give a sketch. Since $\Gs$ is a deterministic admissible graph, we have
\[
{\operatorname{O}}(\pi_1)=\sum_{(i,j)\in \operatorname{U}}G^1_{i,j}\Gs_{\pi_1(i),\pi_1(j)}=S_0+S_1\,,\quad {\operatorname{O}}(\pi_2)=\sum_{(i,j)\in \operatorname{U}}G^2_{i,j}\Gs_{\pi_2(i),\pi_2(j)}=S_0+S_2\,,
\]
where $S_0,S_1,S_2$ are independent binomial variables given by 
\[
S_0=\sum_{(i,j)\in \operatorname{E}_\alpha}G^1_{i,j}\mathbf{1}_{\Gs_{\pi_1(i),\pi_1(j)}=\Gs_{\pi_2(i),\pi_2(j)}=1}\,,\quad S_1={\operatorname{O}}(\pi_1)-S_0\,,\quad S_2={\operatorname{O}}(\pi_2)-S_0.
\]

As in the proof of Proposition~\ref{prop-two-point-est}, the core lies in controlling the variance of $S_0$. On the one hand, we have
\begin{align*}
\operatorname{Var}(S_0)=&\ p(1-p)\sum_{(i,j)\in \mathbb{E}_\alpha}\mathbf{1}_{\Gs_{\pi_1(i),\pi_1(j)}=\Gs_{\pi_2(i),\pi_2(j)}=1}\\
\le&\ p\sum_{(i,j)\in \operatorname{U}}\mathbf{1}_{\Gs_{\pi_1(i),\pi_1(j)}=\Gs_{\pi_2(i),\pi_2(j)}=1}=p|\operatorname{OL}(\Gs,\pi_1\circ \pi_2^{-1})|\,,
\end{align*}
which is bounded by {$\big(\rho^2+o(1)\big)Np^2$} from the third item in admissibility. On the other hand, considering the specific property enjoyed by the labelling, we have
\[
\operatorname{Var}(S_0)\le p\sum_{(i,j)\in \operatorname{E}_\alpha}\mathbf{1}_{\Gs_{\pi_1(i),\pi_1(j)}=1}\le \sum_{i<j<(\sqrt{2\alpha}+o(1))n}\Gs_{\pi_1(i),\pi_1(j)}\,,
\]
which is bounded by $\big(\alpha+o(1)\big)Np^2$ from the second item of admissibility. As a result, $\operatorname{Var}(S_0)$ is bounded by $\big(\rho^2\wedge \alpha+o(1)\big)Np^2$ and this gives rise to the factor $\rho^2\wedge \alpha$ in the expression. We omit the detailed calculations.
\end{proof}

The main input for proving Theorem~\ref{thm-stable-hardness} is the following geometric property of $\beta_0$-optimal solution spaces concerning the $(2,\alpha)$-correlated instance. This property suggests that certain forbidden structures are highly unlikely to exist.
\begin{proposition}\label{prop-OGP}
    For a couple of graph pairs $(G^1,\Gs),(G^2,\Gs)$, denote $$\mathcal E_{\operatorname{FS}}=\mathcal E_{\operatorname{FS}}\big((G^1,\Gs),(G^2,\Gs)\big)$$ 
    as the event that there exists $\pi_i\in \operatorname{S}_{\beta_0}(G^i,\Gs),i=1,2$ such that \begin{equation}\label{eq-overlap-in-forbidden-region}
        \operatorname{overlap}(\pi_1,\pi_2)\in \big((\rho_0-\eta)n,(\rho_0+\eta)n\big)\,.
    \end{equation}
    Then for any $\alpha\in [0,1]$, it holds that  \begin{equation}\label{eq-OGP-high-prob}
        \Pb[\mathcal E_{\operatorname{FS}}\text{ happens for }(G^1,\Gs),(G^2,\Gs)]=o(1/n^2)\,,
    \end{equation}
    where the probability is taken over a $(2,\alpha)$-correlated instance $(G^1,G^2)$ together with another independent $\Gs\sim \mathbf G(n,p)$.
\end{proposition}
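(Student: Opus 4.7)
The plan is to prove Proposition~\ref{prop-OGP} via a straightforward first-moment (union bound) argument, combining the two-point estimate of Proposition~\ref{prop-two-point-est-correlated} with a count of permutation pairs at each fixed overlap value, and finally invoking the condition \eqref{eq-choice-of-eta} on $\eta$ to close the exponent.

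First I would reduce to the case of fixed admissible $\Gs$. By Lemma~\ref{lem-good-event-on-G}, an \ER graph $\Gs\sim \mathbf G(n,p)$ is $p$-admissible with probability $1-o(1/n^2)$, and this tail already lives within the target bound. So it suffices to show that for every fixed admissible $\Gs$, the probability of $\mathcal E_{\operatorname{FS}}$ under the $(2,\alpha)$-correlated pair $(G^1,G^2)$ is $o(1/n^2)$.

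Next, define $Z$ as the number of pairs $(\pi_1,\pi_2)\in \operatorname{S}_n\times \operatorname{S}_n$ such that $\widetilde{\operatorname{O}}(\pi_i)\ge \beta_0 D_{n,p}$ for $i=1,2$ and $\operatorname{overlap}(\pi_1,\pi_2)\in ((\rho_0-\eta)n,(\rho_0+\eta)n)$. Markov's inequality gives $\Pb[\mathcal E_{\operatorname{FS}}]\le \Eb[Z]$, so it suffices to control $\Eb[Z]$. For each $k$ with $k/n\in (\rho_0-\eta,\rho_0+\eta)$, the number of pairs with $\operatorname{overlap}(\pi_1,\pi_2)=k$ is at most $n!\cdot |\{\sigma\in \operatorname{S}_n:F(\sigma)=k\}|\le n!\cdot n!/k!$, which by Stirling equals $\exp\bigl((2-k/n)n\log n+o(n\log n)\bigr)$. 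Proposition~\ref{prop-two-point-est-correlated} bounds the probability that any such fixed pair contributes to $Z$ by $\exp\bigl(-\frac{2\beta_0^2 n\log n}{1+(k/n)^2\wedge \alpha}+o(n\log n)\bigr)$.

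Summing over the $O(n)$ admissible values of $k$, and observing that both $2-k/n$ and $(k/n)^2\wedge \alpha$ are monotone in $k/n$ so the worst case is $k/n=\rho_0+\eta$ (which in particular maximizes $(k/n)^2\wedge \alpha$ regardless of $\alpha$), I obtain
\[
\Eb[Z]\le n\cdot \exp\!\left(\left[2-\rho_0+\eta-\frac{2\beta_0^2}{1+(\rho_0+\eta)^2}\right]n\log n+o(n\log n)\right).
\]
The bracketed constant is strictly negative by \eqref{eq-choice-of-eta}, giving $\Eb[Z]=\exp(-\Omega(n\log n))=o(1/n^2)$, as desired. I do not foresee any real obstacle here, since Proposition~\ref{prop-two-point-est-correlated} already encapsulates the only genuinely probabilistic content; the only subtle point is that $1+\rho^2\wedge \alpha$ is maximized at the top of the forbidden overlap window, which is exactly why \eqref{eq-choice-of-eta} is formulated with $(\rho_0+\eta)^2$ rather than $(\rho_0-\eta)^2$.
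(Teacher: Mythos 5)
Your proposal is correct and follows essentially the same route as the paper: condition on an admissible $\Gs$ via Lemma~\ref{lem-good-event-on-G}, take a first moment/union bound over permutation pairs with overlap in the forbidden window (counted as $\exp\big((2-\rho_0+\eta)n\log n+o(n\log n)\big)$), apply Proposition~\ref{prop-two-point-est-correlated} to each pair, and close the exponent with \eqref{eq-choice-of-eta}. One small remark: your sentence claiming the worst case is uniformly at $k/n=\rho_0+\eta$ is imprecise, since the entropy factor $2-k/n$ is maximized at the lower endpoint $\rho_0-\eta$; however, your displayed bound bounds each factor by its own worst case over the window (yielding $2-\rho_0+\eta$ and $\tfrac{2\beta_0^2}{1+(\rho_0+\eta)^2}$), which is valid and is exactly the paper's estimate.
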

\begin{proof}
    Since $\Pb[\Gs\text{ is admissible}]= 1 - o(1/n^2)$ by Lemma~\ref{lem-good-event-on-G}, we can fix a realization of $\Gs$ which is admissible and work with this specific graph. The next step is to prove the result by applying a union bound. 
    
    On the one hand, we consider the enumeration of permutations pairs $(\pi_1, \pi_2)$ that satisfy equation \eqref{eq-overlap-in-forbidden-region}. It is easy to see that there are at most $$\sum_{\rho\in (\rho-\eta,\rho+\eta),\rho n\in \mathbb N}n!\times \binom{n}{\rho n}((1-\rho)n)!\le \exp\left((2-\rho_0+\eta)n\log n+O(n)\right)$$
many of such pairs. On the other hand, according to Proposition~\ref{prop-two-point-est-correlated}, for each of these pairs and any $\alpha \in [0,1]$, 
    \[
\Pb\left[\pi_1\in \operatorname{S}_{\beta_0}(G^1,\Gs),\pi_2\in \operatorname{S}_{\beta_0}(G^2,\Gs)\right]\le \exp\left(-\frac{2\beta_0^2n\log n}{1+(\rho_0+\eta)^2}+o(n\log n)\right)\,.
    \]
    As a result, the probability of $\mathcal E_{\operatorname{FS}}$ conditioned on $\Gs$ is bounded by 
    \[
    \exp\left(\Big[2-\rho_0+\eta-\frac{2\beta_0^2}{1+(\rho_0+\eta)^2}+o(1)\Big]n\log n \right)\stackrel{\eqref{eq-choice-of-eta}}{=}\exp\big(-\Omega(n\log n)\big)=o(1/n^2)\,,
    \]
    completing the proof.
\end{proof}

Now we proceed with the proof of Theorem~\ref{thm-stable-hardness}. We begin by sampling three graphs $G$, $G'$ and $\Gs$, independently from $\mathbf{G}(n,p)$. We will construct an interpolation path between $G$ and $G'$ as follows: for each $0 \leq k \leq N$, let $G^k$ be the graph defined by
    \[
    G^k_{e_i}=G_{e_i}\text{ for } 1\le i\le k\text{ and }G_{e_i}^k=G'_{e_i} \text{ for }i=k+1,\dots,N.
    \]
Then it is straightforward to verify that $(G^0,G^k)$ forms a $(2, k/N)$-correlated instance.
Define
    \begin{equation}\label{eq-E-OGP}
    \mathcal E_{\operatorname{OGP}}=\bigcap_{k=0}^N\mathcal E_{\operatorname{FS}}^c\big((G^0,\Gs),(G^k,\Gs)\big)
    \end{equation}
    as the event that the forbidden structure does not exist on the entire interpolation path. By combining Proposition~\ref{prop-OGP} with a union bound, we can conclude that $\Pb[\mathcal{E}_{\operatorname{OGP}}] = 1 - o(1)$.

    Now take any almost $(1,\eta n)$-stable algorithm $\mathcal A\in \operatorname{GAA}$, we define the following three events
    \begin{equation}\label{eq-E-suc}
        \mathcal E_{\operatorname{suc}}=\{\mathcal A(G^k,\Gs)\in \operatorname{S}_{\beta_0}(G^k,\Gs),\forall 0\le k\le N\}\,,
    \end{equation}
    \begin{equation}
        \mathcal E_{\operatorname{stable}}=\{\operatorname{d}\big(\mathcal A(G^k,\Gs),\mathcal A(G^{k+1},\Gs)\big)\le \eta n, \forall 0\le k\le N-1\}\,,
    \end{equation}
    \begin{equation}\label{eq-E-end}
    \mathcal E_{\operatorname{ends}}=\{\operatorname{overlap}\big(\mathcal A(G^0,\Gs),\mathcal A(G^N,\Gs)\big)\le (\rho-\eta)n\}\,.
    \end{equation}
\begin{proposition}\label{prop-intersection-is-empty}
    The intersection of $\mathcal E_{\operatorname{OGP}},\mathcal E_{\operatorname{suc}},\mathcal E_{\operatorname{stable}}$ and $\mathcal E_{\operatorname{ends}}$ is empty.
\end{proposition}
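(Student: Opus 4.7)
The plan is to argue by contradiction: suppose that $\mathcal E_{\operatorname{OGP}}$, $\mathcal E_{\operatorname{suc}}$, $\mathcal E_{\operatorname{stable}}$, and $\mathcal E_{\operatorname{ends}}$ all occur simultaneously, and derive a forbidden structure somewhere along the interpolation path $\{G^k\}_{k=0}^N$. Writing $\pi_k = \mathcal A(G^k, \Gs)$, I would examine the integer-valued function $f(k) = \operatorname{overlap}(\pi_0, \pi_k)$ for $0 \le k \le N$. The three events $\mathcal E_{\operatorname{stable}}$, $\mathcal E_{\operatorname{ends}}$ and $\mathcal E_{\operatorname{suc}}$ control, respectively, the increments of $f$, its value at $k = N$, and the $\beta_0$-optimality of the $\pi_k$'s. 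The goal is to use the first two to force $f(k^\ast)$ into the forbidden window $\big((\rho_0 - \eta)n,\,(\rho_0 + \eta)n\big)$ for some $k^\ast$, and then invoke the third to exhibit the event $\mathcal E_{\operatorname{FS}}\big((G^0, \Gs), (G^{k^\ast}, \Gs)\big)$, contradicting $\mathcal E_{\operatorname{OGP}}$.

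For the increment bound, the triangle inequality for Hamming distance together with $\operatorname{overlap}(\cdot,\cdot) = n - \operatorname d(\cdot,\cdot)$ yields $|f(k+1) - f(k)| \le \operatorname d(\pi_k, \pi_{k+1}) \le \eta n$ under $\mathcal E_{\operatorname{stable}}$. For the endpoints, $f(0) = n$ is trivial and $\mathcal E_{\operatorname{ends}}$ gives $f(N) \le (\rho_0 - \eta)n$. I would then let $k^\ast$ be the smallest index with $f(k^\ast) < (\rho_0 + \eta)n$; this is well-defined and strictly positive provided $\rho_0 + \eta < 1$, which is immediate from $\rho_0 = 1/3$ and the smallness of $\eta$. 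Minimality yields $f(k^\ast - 1) \ge (\rho_0 + \eta)n$, so by the step bound
\[
f(k^\ast) \ge f(k^\ast - 1) - \eta n \ge \rho_0 n > (\rho_0 - \eta)n.
\]
Combined with $f(k^\ast) < (\rho_0 + \eta)n$, this places $\operatorname{overlap}(\pi_0, \pi_{k^\ast})$ in the forbidden window, while $\mathcal E_{\operatorname{suc}}$ certifies $\pi_0 \in \operatorname{S}_{\beta_0}(G^0, \Gs)$ and $\pi_{k^\ast} \in \operatorname{S}_{\beta_0}(G^{k^\ast}, \Gs)$. The pair $(\pi_0, \pi_{k^\ast})$ then witnesses $\mathcal E_{\operatorname{FS}}\big((G^0, \Gs), (G^{k^\ast}, \Gs)\big)$, contradicting $\mathcal E_{\operatorname{OGP}}$.

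The argument is essentially routine; the only step deserving care is the crossing claim. What makes it work is that the width $2\eta n$ of the forbidden window is precisely \emph{twice} the maximal increment $\eta n$ of $f$, so the discrete trajectory cannot leap across the window in a single step. Had the two quantities been mismatched (for example, if the OGP half-width were smaller than the stability radius), the path could in principle jump from above the window directly below it, and one would need to refine the interpolation (e.g.\ flipping a single edge per sub-step in many finer interpolations) in order to force a landing inside the forbidden region. Here the constants are chosen to line up exactly, and no further work is needed.
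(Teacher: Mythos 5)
Your proof is correct and follows essentially the same route as the paper: track $O_k=\operatorname{overlap}(\pi_0,\pi_k)$ along the interpolation path, bound its increments using $\mathcal E_{\operatorname{stable}}$, use the endpoint values from $\mathcal E_{\operatorname{ends}}$ to force a first crossing of the window $\big((\rho_0-\eta)n,(\rho_0+\eta)n\big)$, and then invoke $\mathcal E_{\operatorname{suc}}$ to exhibit the forbidden structure contradicting $\mathcal E_{\operatorname{OGP}}$. Your increment bound $|f(k+1)-f(k)|\le \eta n$ via the reverse triangle inequality is in fact slightly sharper than the $2\eta n$ stated in the paper's proof, which makes the window-crossing step airtight.
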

\begin{proof}
    We denote $O_k = \operatorname{overlap}\big(\mathcal{A}(G^0, \Gs), \mathcal{A}(G^{k}, \Gs)\big)$ for $0 \leq k \leq N$. Based on $\mathcal{E}_{\operatorname{stable}}$ and the triangle inequality, we obtain that $|O_{k+1} - O_k| \leq 2\eta n$ holds for any $0 \leq k \leq N-1$. Considering that $O_0 = n$ and $O_N \leq (\rho - \eta)n$ based on $\mathcal{E}_{\operatorname{ends}}$, we can conclude that there exists $1 \leq k \leq N-1$ such that $O_k \in \big((\rho - \eta)n, (\rho + \eta)n\big)$. Assuming that $\mathcal{E}_{\operatorname{suc}}$ also holds, we have $\mathcal{A}(G^0, \Gs) \in \operatorname{S}_{\beta_0}(G^0, \Gs)$ and $\mathcal{A}(G^k, \Gs) \in \operatorname{S}_{\beta_0}(G^k, \Gs)$. However, this contradicts with $\mathcal{E}_{\operatorname{OGP}}$, which means the intersection of the four events is empty.
\end{proof}

We are now ready to prove the main result of this section.

\begin{proof}[Proof of Theorem~\ref{thm-stable-hardness}]
   
    We will argue by contradiction. Assume that for some $\mathcal A$ it holds $$\Pb[\mathcal{A}(G, \Gs) \in \operatorname{S}_{\beta_0}(G, \Gs)] \geq 1 - 1/n^2\,.$$ {Then it holds $\Pb[\mathcal{E}_{\operatorname{suc}}] \geq 1/2$} from a union bound. Similarly, from Proposition~\ref{prop-OGP} and the definition of almost $(1, \eta n)$-stable algorithms, we can conclude that $\Pb[\mathcal{E}_{\operatorname{OGP}}] = 1 - o(1)$ and $\Pb[\mathcal{E}_{\operatorname{stable}}] = 1 - o(1)$.
    
Furthermore, we note that $(G^0, \Gs)$ and $(G^N, \Gs)$ form a pair of $(2, 0)$-correlated instance. Therefore, by utilizing Lemma~\ref{lem-good-event-on-G}, Proposition~\ref{prop-two-point-est-correlated}, and a similar argument as in the proof of Proposition~\ref{prop-OGP}, we get the probability $\Pb[\mathcal{E}_{\operatorname{ends}}^c]$ is bounded by
    \begin{align*}
    &\ \Pb[\Gs\text{ is not admissible}]+\Pb[\mathcal E_{\operatorname{ends}}^c\mid \Gs\text{ is admissible}]\\
    \le&\ o(1)+\exp\left([2-\rho_0+\eta-2\beta_0^2+o(1)]n\log n\right)=o(1)\,.
    \end{align*}
    This shows $\Pb[\mathcal E_{\operatorname{ends}}]=1-o(1)$ and thus 
    \[
    \Pb[\mathcal E_{\operatorname{OGP}}\cap \mathcal E_{\operatorname{suc}}\cap \mathcal E_{\operatorname{stable}}\cap\mathcal E_{\operatorname{ends}}]\ge 1/2-o(1)-o(1)-o(1)>0\,,
    \]
    contradicting Proposition~\ref{prop-intersection-is-empty}. This concludes the desired result.
\end{proof}

\subsection{Failure of online algorithms via branching-OGP}\label{subsec-branching-OGP}

In this subsection, we establish Theorem~\ref{thm-algo}-(ii), the hardness result for online algorithms. The proof relies on a technique known as the branching-OGP and is carried out in a similar spirit to the previous subsection. At a high level, we first argue that a particular forbidden structure is highly unlikely to occur. Then, assuming the existence of an online algorithm capable of finding near-optimal solutions, we leverage this algorithm to construct the forbidden structure, leading to a contradiction. This establishes that any online algorithm must be powerless.

\subsubsection{The branching overlap-gap property}

We begin with a brief high-level overview of the key ideas in the branching-OGP framework. First, and most importantly, while the $2$-OGP applies to a \emph{pair} of correlated instances, the branching-OGP generalizes this setup by considering a set of correlated instances indexed by the leaves of a large \emph{branching tree}, and they way the correlated to each other is captured by the positions of their corresponding leaves (see Definition~\ref{def-correlated-instance} below for details). Additionally, akin to the $2$-OGP, the branching-OGP forbidden structure comprises near-optimal solutions on these correlated instances under certain overlap constraints, while in our setting the constraints are tailored to address online algorithms. Finally, using a Jensen’s inequality trick developed in \cite{HS21}, the branching-OGP provides a strong upper bound on the probability that online algorithms can find near-optimal solutions.
% This is because given an online algorithm capable for finding near-optimal solutions, one can obtain a product lower bound for the probability of the existence of the forbidden structure, instead of a trivial lower bound given by the union bound.  }

%which generalizes the $2$-OGP discussed in the previous subsection. The key generalization is that the branching-OGP incorporates more intricate forbidden structures. On the one hand, while the $2$-OGP applies to a \emph{pair} of correlated instances (see Definition~\ref{def-correlated-instance}), the branching-OGP considers a set of correlated instances indexed by the leaves of a large \emph{ultrametric tree}. The correlation between any two instances is determined by the locations of their corresponding leaves in the ultrametric tree. On the other hand, akin to the $2$-OGP, the branching-OGP forbidden structure consists of near-optimal solutions over the correlated instances with certain overlap constraints. However, the overlap constraints are modified to address online algorithms. Crucially, this modification enables precise estimation of the enumeration of the combinatorial structure involved in the forbidden structure, allowing us to design a structure that remains forbidden down to the critical threshold $\beta_c = \sqrt{8/9}$.}

Let us elaborate further on the ideas discussed above through the relatively simple one-layer branching-OGP, which also coincides with the multi-OGP found in the literature. Fix a large integer $m$ and a parameter $\alpha\in (0,1)$. We define a set of $(m,\alpha)$-correlated instances $G^1,\dots,G^m$ as follows: the entries $G_{i,j}^k,1\le i<j\le n,1\le k\le m$ satisfy
\[
G_{i,j}^1=\cdots=G_{i,j}^{m},j\le \lfloor\alpha n\rfloor\,, G_{i,j}^1,\dots,G_{i,j}^m,j>\lfloor \alpha n\rfloor\,,
\]
and are i.i.d. $\mathbf{B}(1,p)$ indicators. For an admissible graph $\mathsf G$ and $\beta\in (0,1)$, we define the forbidden structure as follows: there exist $\pi_1,\dots,\pi_m\in \operatorname{S}_n$ such that\\
\noindent (i) $\pi_1(i)=\cdots=\pi_m(i)$ for all $1\le i\le \lfloor \alpha n\rfloor$;\\
\noindent (ii) $\pi_k\in \mathcal S_\beta(G^k,\mathsf G)$ for all $1\le k\le m$.

We assume that for some appropriate choices of $m = O(1)$, $\alpha$, and $\beta$, the probability of the above forbidden structure occurring is at most $p_{\operatorname{forbid}} = o(1)$ for any admissible $\mathsf{G}$. We now explain how this implies the failure of online algorithms. 

Suppose that for some admissible $\Gs$, there exists an online algorithm $\mathcal{A}$ such that 
$$
\mathbb{P}[\mathcal{A}(G, \mathsf{G}) \in \mathcal{S}_{\beta}(G, \mathsf{G})] \geq p_{\operatorname{suc}}\,,
$$
where the probability is taken over $G \sim \mathbf{G}(n, p)$ and the internal randomness of $\mathcal{A}$.
We generate $(m, \alpha)$-correlated instances $G^1, \dots, G^m$ and run $\mathcal{A}$ on $(G^1, \mathsf{G}), \dots, (G^m, \mathsf{G})$ simultaneously, using the same source of internal randomness for $\mathcal{A}$. Let $\pi_1, \dots, \pi_m$ represent the outputs of $\mathcal{A}(G^1, \mathsf{G}), \dots, \mathcal{A}(G^m, \mathsf{G})$. Since $G^1, \dots, G^m$ agree on the first $\lfloor \alpha n \rfloor$ vertices, the rules of the online algorithm imply that $\pi_1(i) = \cdots = \pi_m(i)$ for all $1 \leq i \leq \lfloor \alpha n \rfloor$. Hence, $(\pi_1, \dots, \pi_m)$ always satisfies item (i) of the forbidden structure.

Meanwhile, we claim that $\pi_k\in \mathcal S_\beta(G^k,\Gs)$ happens simultaneously for $1\le k\le m$ with probability at least $p_{\operatorname{suc}}^m$. To see this, denoting $\mathcal F$ as the $\sigma$-field generated by $G_{i,j}^1=\cdots=G_{i,j}^m,1\le i<j\le \lfloor \alpha n\rfloor$, we have 
\begin{align*}
    \mathbb{P}[\pi_k\in \mathcal S_{\beta}(G^k,\mathsf G),\forall 1\le k\le m]=&\ \mathbb{E}\big[\mathbb{P}[\pi_k\in \mathcal S_{\beta}(G^k,\mathsf G),\forall 1\le k\le m\mid \mathcal F]\big]\\
    (\text{by independence and symmetry})=&\ \mathbb{E}\big[\mathbb{P}[\pi_1\in \mathcal S_\beta(G^1,\mathsf G)\mid \mathcal F]^m\big]\\
    (\text{by Jensen's inequality})\ge &\ \big(\mathbb{E}\big[\mathbb{P}[\pi_1\in \mathcal S_\beta(G^1,\mathsf G)\mid \mathcal F]\big]\big)^m\\
    (\text{by the iterative expectation theorem})=&\ \big(\mathbb{P}[\pi_1\in \mathcal S_\beta(G^1,\mathsf G)]\big)^m\ge p_{\operatorname{suc}}^m\,,
\end{align*}
as claimed.
Consequently, we obtain 
\[
p_{\operatorname{suc}}^m\le \mathbb{P}[\text{forbidden structure exists}]\le p_{\operatorname{forbid}}\Rightarrow p_{\operatorname{suc}}\le p_{\operatorname{forbid}}^{1/m}=o(1)\,.
\]
This completes the derivation of online hardness from the one-layer branching OGP.

In what follows, we will extend the above strategy to the multi-layer tree case. By leveraging the multi-layer branching trees, we will be able to show that for any $\beta>\beta_c$, over typical instances of $\mathsf G$, a certain forbidden structure of scale $m=O(1)$ regarding $\beta$-optimal solutions exists with probability at most $p_{\operatorname{forbid}}=\exp(-\Theta(n\log n))$. Combined with an iterative application of Jensen's inequality, we conclude as above that for typical $\Gs$, any online algorithm fails to find $\beta$-optimal solutions with probability at least $p_{\operatorname{forbid}}^{1/m}=\exp(-\Theta(n\log n))$. The details appear in the following two subsections.

\subsubsection{The forbidden structure}

Let $\varepsilon>0$ be fixed. The objective of this subsection is to design a specific forbidden structure and demonstrate its highly improbable existence. By the definition of the Riemann integral, we have that for a sequence $0=\alpha_0<\alpha_1<\cdots<\alpha_N=1$ with $\Delta=\max_{1\le i\le n}(\alpha_i-\alpha_{i-1})$, 
\[
\lim_{\Delta\to 0}\sum_{i=1}^N(\alpha_i-\alpha_{i-1})\sqrt{\alpha_i+\alpha_{i-1}}=\int_{0}^1\sqrt{2x}\operatorname{d} x=\frac{2\sqrt{2x^3}}{3}\Big|_0^1=\beta_c.
\]
Therefore, we can pick some integer $N$ together with $\overrightarrow\alpha=(\alpha_0,\dots,\alpha_N)$, $0=\alpha_0<\alpha_1<\cdots<\alpha_N=1$ such that
\begin{equation}\label{eq-choice-of-alpha}
\sum_{i=1}^N(\alpha_i-\alpha_{i-1})\sqrt{\alpha_i+\alpha_{i-1}}<\beta_c+\varepsilon/3\,.
\end{equation}
Denote $\frac{\beta_c+2\varepsilon/3}{\beta_c+\varepsilon/3}$ as $1+\delta$. Fix a large integer $D$ satisfying
\begin{equation}\label{eq-def-D}
D>\max_{1\le k\le N}\frac{\alpha_{k-1}}{2\delta(\alpha_k-\alpha_{k-1})}\,.
\end{equation}

Consider a $D$-regular tree $\mathcal{T}$ with $(N+1)$ generations. We denote the set of leaves in $\mathcal{T}$ by $\mathcal{L}$, which has size $L = |\mathcal{L}| = D^N$. For a vertex $v$ of $\mathcal{T}$, we use $|v|$ to denote its depth in $\mathcal{T}$. If $v \in \mathcal{L}$, we use $v(k)$ to denote the ancestor of $v$ with depth $k$. For two vertices $v, w \in \mathcal{L}$, we define $v \wedge w$ as the common ancestor of $v$ and $w$ with the largest depth.

We now define the $(\mathcal{T},\overrightarrow{\alpha})$-correlated instance as follows:
\begin{definition}[$(\mathcal{T},\overrightarrow{\alpha})$-correlated instance]
\label{def-correlated-instance}
 For each vertex $v$ of $\mathcal T$, we assign it with independent $\mathbf B(1,p)$ variables $\{E^v_{i,j}\}_{1\le i< j\le \alpha_{|v|}n}$.  A $(\mathcal{T},\overrightarrow{\alpha})$-\emph{correlated instance} is a set of graphs $\{G^v\}_{v\in \mathcal L}$ indexed by $\mathcal L$, where for each $v\in \mathcal L$, the adjacency matrix of $G^v$ is defined by
 \[
 G_{i,j}^v=E_{i,j}^{v(k)}\,,\text{ if }i<j\text{ and }\alpha_{k-1}n<j\le \alpha_k n\,.
 \]
\end{definition}
Now we are able to state the forbidden structure concerning $(\mathcal{T},\overrightarrow{\alpha})$-correlated instance.
\begin{definition}[Forbidden structure for correlated instance]
 For a $(\mathcal{T},\overrightarrow{\alpha})$-correlated instance $\{G^v\}_{v\in \mathcal L}$ as well as a graph $\Gs\in \mathfrak{G}_n$, we say a set of permutations $\{\pi_v\}_{v\in \mathcal L}$ has the forbidden structure, if
 \begin{enumerate}
  \item [(i)] For each $v\in \mathcal L$, $\pi_v\in \operatorname{S}_{\beta_c+\varepsilon}(G^v,\Gs)$.
  \item [(ii)] For each pair of $u,v\in \mathcal L$, it holds that $\pi_u(i)=\pi_v(i)$ for any $1\le i\le \alpha_{|u\wedge v|}n$.
 \end{enumerate}
\end{definition}
    In the next proposition, we will show that for any admissible graph $\Gs$, the forbidden structure exists with a probability no greater than $\exp\big(-\Omega(n\log n)\big)$. This result will serve as a crucial ingredient for deriving the hardness results for online algorithms.
\begin{proposition}\label{prop-forbidden-structure}
There exists $c>0$ such that for any admissible graph $\Gs$,
 \begin{equation}
  \Pb[\text{the forbidden structure exists}]\le \exp(-cn\log n)\,,
 \end{equation}
where the probability is taken over $(\mathcal{T},\overrightarrow{\alpha})$-correlated instance.
\end{proposition}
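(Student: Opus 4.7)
The plan is a hierarchical first-moment argument. By Lemma~\ref{lem-good-event-on-G}, $\mathsf G$ is $p$-admissible with probability $1-o(1/n^2)$, so we may fix $\mathsf G$ admissible for the remainder of the proof. The crucial structural input is the level-wise decomposition
\[
\widetilde{\operatorname{O}}(\pi_v) \;=\; \sum_{k=1}^N \widetilde S_{v(k)}^k, \qquad \widetilde S_w^k \;:=\; \sum_{\substack{i<j\\ \alpha_{k-1}n<j\le \alpha_k n}} (E^w_{i,j}-p)\,\mathsf G_{\pi_v(i),\pi_v(j)},
\]
where $E^w$ is the edge data attached to $w$ in Definition~\ref{def-correlated-instance}. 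Since $\widetilde S_w^k$ depends only on $E^w$ and on the permutation values at $w$ together with its ancestors, the family $\{\widetilde S_w^k\}_{(k,w)}$ is mutually independent across distinct internal nodes, and admissibility of $\mathsf G$ gives uniform control $\mathrm{Var}(\widetilde S_w^k) = \tfrac12(\alpha_k-\alpha_{k-1})(\alpha_k+\alpha_{k-1})\,n^2 p^2\,(1+o(1))$ over valid collections.

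Define the balanced level-$k$ deviation $d_k := (\alpha_k-\alpha_{k-1})\sqrt{\alpha_k+\alpha_{k-1}}\,D_{n,p}$; a Chernoff-type tail bound then gives $\Pb[\widetilde S_w^k \ge (1+\delta)d_k] \le \exp(-(1+\delta)^2(\alpha_k-\alpha_{k-1})\,n\log n + o(n\log n))$. By \eqref{eq-choice-of-alpha} and the definition $1+\delta = (\beta_c+2\varepsilon/3)/(\beta_c+\varepsilon/3)$, one has $(1+\delta)\sum_k d_k \le (\beta_c+2\varepsilon/3)\,D_{n,p} < (\beta_c+\varepsilon)\,D_{n,p}$. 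Hence pigeonhole applied to any leaf $v$ with $\widetilde{\operatorname{O}}(\pi_v)\ge(\beta_c+\varepsilon)D_{n,p}$ forces some ancestor $v(k^*)$ with $\widetilde S_{v(k^*)}^{k^*} > (1+\delta)\,d_{k^*}$. Consequently, the occurrence of the forbidden structure implies the existence of a collection of \emph{excess} ancestors covering every root-to-leaf path of $\mathcal T$.

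The main quantitative step is then to upper-bound the probability of such an excess-cover. A direct union bound over valid collections is insufficient, because the entropy $\sum_k D^k(\alpha_k-\alpha_{k-1})\,n\log n$ of valid collections can absorb the per-collection Chernoff savings. Following the branching-OGP framework of \cite{HS21} combined with the resampling technique inspired by \cite{GKPX23+}, the plan is a level-wise first-moment estimate: at each candidate depth-$k$ node $w$ one resamples $E^w$ independently to decouple the excess event at $w$ from the descendant-side randomness, and then bounds the expected number of ancestor-plus-new-segment configurations producing excess at $w$. The branching condition \eqref{eq-def-D} is exactly calibrated so that, at every level $k$, this expected count---after incorporating the $D^k$ depth-$k$ nodes---is dominated by the Chernoff cost $(1+\delta)^2(\alpha_k-\alpha_{k-1})\,n\log n$ with a positive gap uniform in $k$; summing the level-wise bounds yields $\Pb[\text{forbidden structure exists}]\le \exp(-c n\log n)$ for some $c=c(\varepsilon,\overrightarrow\alpha,D)>0$. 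The main obstacle is managing the hierarchical coupling between excess events at different tree depths, since the excess event at an ancestor depends through the permutation on the data attached to its descendants; the resampling step is what decouples them at a controllable $o(n\log n)$ multiplicative cost, and both the partition condition \eqref{eq-choice-of-alpha} (opening a gap between the balanced total and $\beta_c+\varepsilon$) and the branching condition \eqref{eq-def-D} (closing the per-level entropy-versus-Chernoff balance uniformly in $k$) are essential for the resulting bound to be quantitative.
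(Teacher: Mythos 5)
Your setup matches the paper's: fix an admissible $\Gs$, decompose $\widetilde{\operatorname{O}}(\pi_v)$ into block contributions attached to the ancestors of $v$, and record the per-node Chernoff rate $\exp(-(1+\delta)^2(\alpha_k-\alpha_{k-1})n\log n+o(n\log n))$ together with the roles of \eqref{eq-choice-of-alpha} and \eqref{eq-def-D}. But the heart of the proposition --- converting ``excess deviations must exist somewhere in the tree'' into a probability bound that beats the entropy of the embeddings --- is only announced as a ``plan,'' and the plan as stated does not work. First, resampling $E^w$ ``to decouple the excess event at $w$ from the descendant-side randomness'' addresses a non-issue: once the embeddings $\{\sigma_u\}$ are fixed, the block variables at distinct tree nodes are already mutually independent (each depends only on its own $E^w$); the genuine difficulty is that the permutations are chosen after seeing all the randomness, which is exactly what the union bound over embeddings must pay for, and resampling does nothing to reduce that entropy. (The resampling/Jensen idea in the paper belongs to the next step, where the online algorithm is run on the correlated instances with shared internal randomness; it plays no role inside Proposition~\ref{prop-forbidden-structure}.) Second, your claimed level-$k$ bookkeeping --- that the configuration count is ``dominated by the Chernoff cost $(1+\delta)^2(\alpha_k-\alpha_{k-1})n\log n$'' --- is false for a single excess node: specifying the embedding along the path to a depth-$k$ node already costs about $\alpha_{k}n\log n\gg(\alpha_k-\alpha_{k-1})n\log n$. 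The inequality \eqref{eq-def-D} only closes the gap after the Chernoff saving has been aggregated over essentially all $D^{k}$ nodes of level $k$, so that it scales like $D^{k-1}(1+\delta)^2(\alpha_k-\alpha_{k-1})n\log n$ against an embedding entropy $\sum_{i\le k}D^{i-1}(\alpha_i-\alpha_{i-1})n\log n\le D^{k-1}(\alpha_k-\alpha_{k-1})n\log n+D^{k-2}\alpha_{k-1}n\log n$. Your per-leaf pigeonhole yields only an ``excess cutset,'' which does not by itself provide this aggregation; making it work would require an additional amortized counting over cutsets that you neither state nor prove.

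The paper's route avoids all of this and is, contrary to your claim, a direct first-moment/union bound: it sums the per-leaf inequality over all $D^N$ leaves, so that by \eqref{eq-choice-of-alpha} and pigeonhole over levels there is some $k$ for which the normalized sum of the deviations $\gamma^*_v$ over all level-$k$ nodes exceeds $(1+\delta)(\alpha_k-\alpha_{k-1})\sqrt{\alpha_k+\alpha_{k-1}}$ (the events $\mathcal E_k$). For each $k$ it then union-bounds over the embeddings $\{\sigma_v\}_{|v|\le k}$ (entropy $\sum_{i\le k}D^{i-1}(\alpha_i-\alpha_{i-1})n\log n$), uses admissibility \eqref{eq-control-of-E(H)} to pin down the block sizes, truncates the individual deviations at a level $M$, and applies independence plus Cauchy--Schwarz to bound the probability of the aggregate deviation by $\exp(-D^{k-1}(1+\delta)^2(\alpha_k-\alpha_{k-1})n\log n+o(n\log n))$; the choice of $D$ in \eqref{eq-def-D} then makes the exponent negative uniformly in $k$. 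To repair your proof you would either adopt this averaging step or supply a complete cutset-counting argument; as written, the key estimate is missing.
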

\begin{proof}
	Keep in mind that $\Gs$ is a deterministic graph which is admissible. Let us assume the existence of a forbidden structure $\{\pi_v\}_{v\in \mathcal L}$. We define a set of embeddings $\{\sigma_v\}_{v\in \mathcal T}$ indexed by the vertices of $\mathcal T$ as follows: for each vertex $v$ of $\mathcal T$, $\sigma_v$ is an embedding from $\{i:\alpha_{|v|-1}n<i\le \alpha_{|v|}n\}$ to $[n]$ with $\sigma_v(i)=\pi_{v'}(i)$ for each $i$, where $v'$ is any descendant of $v$ in $\mathcal L$. Note that condition (ii) guarantees the well-definedness of $\{\sigma_v\}_{v\in \mathcal{T}}$, and the mapping $\{\pi_v\}_{v\in \mathcal L}\mapsto \{\sigma_v\}_{v\in \mathcal T}$ is clearly injective. 
	
	For each $v\in \mathcal T$, denote $\gamma_v^*$ as the random variable such that
	\begin{equation}\label{eq-def-gamma}
	\sum_{\substack{i<j,\\1\le i\le \alpha_{|v|}n,\\\alpha_{|v|-1}n<j\le\alpha_{|v|}n}}G^v_{i,j}\Gs_{\sigma_v(i),\sigma_v(j)}=	\sum_{\substack{i<j,\\1\le i\le \alpha_{|v|}n,\\\alpha_{|v|-1}n<j\le \alpha_{|v|}n}}\Gs_{\sigma_v(i),\sigma_v(j)}p+\gamma^*_v D_{n,p}\,.
	\end{equation}
    For any $v\in \mathcal L$, summing \eqref{eq-def-gamma} along the path connecting from the root to $v$, we get
    \begin{equation}\label{eq-sum-gamma}
        \sum_{i<j}G^v_{i,j}\Gs_{\pi_v(i),\pi_v(j)}= |E(\Gs)|p+\sum_{k=0}^N\gamma^*_{v(k)}D_{n,p}\,.
    \end{equation}
    Combining with condition (i), we have that the left hand side of \eqref{eq-sum-gamma} is lower bounded by $\binom{n}{2}p^2+(\beta_c+\varepsilon)D_{n,p}$, which implies
	\[
\sum_{k=0}^N\gamma^*_{v(k)}\ge\beta_c+\varepsilon-\frac{|\binom{n}{2}p-|E(\Gs)||p}{D_{n,p}}\stackrel{\eqref{eq-fixed-graph-edge-concentration}}{\ge}  \beta_c+2\varepsilon/3,\ \forall\ v\in \mathcal L\,.
	\]
 Summing over the above inequality for all $v\in \mathcal L$, we get
	\[
	\sum_{k=0}^N\frac{1}{D^k}\sum_{|v|=k}\gamma^*_v\ge \beta_c+2\varepsilon/3\,.
	\]
	Combining with \eqref{eq-choice-of-alpha}, we see there must exists some $0\le k\le N$ such that
	\begin{equation}\label{eq-the-bad-event-E-k}
	\frac{1}{D^k}\sum_{|v|=k}\gamma^*_v\ge \frac{\beta_c+2\varepsilon/3}{\beta_c+\varepsilon/3}\cdot(\alpha_k-\alpha_{k-1})\sqrt{\alpha_k+\alpha_{k-1}}=(1+\delta)(\alpha_k-\alpha_{k-1})\sqrt{\alpha_k+\alpha_{k-1}}\,.
	\end{equation}
	We denote the corresponding events as $\mathcal E_k,k=0,1,\dots,N$, and thus conclude that
	\[
	\Pb[\text{forbidden structure exists}]\le \sum_{k=0}^N\Pb[\mathcal E_k]\,.
	\]
	%\paragraph{Upper bounds on $\mathcal E_k$}
	Now, we need to bound the probability of $\mathcal{E}_k$, and we will do this using the first moment method. Note that $\mathcal{E}_k$ only depends on those $\sigma_v$ with $|v|\leq k$. Hence, we can bound $\Pb[\mathcal{E}_k]$ by considering the number of possible realizations of $\{\sigma_v\}_{|v|\leq k}$ as well as the probability that \eqref{eq-the-bad-event-E-k} occurs for each fixed $\{\sigma\}_{|v|\leq k}$.
 It is not hard to see that the number of possible realizations of such $\{\sigma_v\}_{|v|\le k}$ is bounded by
	\[
 \exp\left(\sum_{i=1}^{k}D^{i-1}(\alpha_i-\alpha_{i-1})n\log n+O(n)\right)\,.
	\]
 In order to control the probability term, for each realization $\{\sigma_v\}_{|v|\leq k}$,
we note that by our constructions, the family of random variables 
	\[
X_v^\sigma\stackrel{\operatorname{def}}{=}\sum_{\substack{i<j,\\1\le i\le \alpha_{|v|}n,\\\alpha_{|v|-1}n<j\le \alpha_{|v|}n}}G_{i,j}^v\Gs_{\sigma_v(i),\sigma_v(j)},v\in \mathcal T
	\]
are mutually independent with each other (and so do the variables $\gamma_v^*,v\in \mathcal T$), while each $X_v^\sigma$ has the distribution of $\mathbf{B}(N_v^\sigma,p)$, where
	\[
N_v^\sigma\stackrel{\operatorname{def}}{=}\sum_{\substack{i<j,\\1\le i\le \alpha_{|v|}n,\\\alpha_{|v|-1}n<j\le\alpha_{|v|}n}}\Gs_{\sigma_v(i),\sigma_v(j)}\,.
	\]
	From \eqref{eq-control-of-E(H)} in admissibility we have for any $v\in \mathcal T$
	\[
N_v^\sigma=\frac{\alpha_{|v|}^2-\alpha_{|v|-1}^2}{2}n^2p+o(n^2p)\,.
	\]
	Hence, by \eqref{eq-chernoff-bound} we see for each vertex
     $v\in \mathcal T$ and any $\gamma$ such that $|\gamma|D_{n,p}\ll n^2p^2$, the probability $\Pb[\gamma_v^*\ge \gamma]=\Pb[X_v^\sigma\ge N_v^\sigma p+\gamma D_{n,p}]$ is bounded by
	\begin{align*}
\exp\left(-\frac{(\gamma\vee 0)^2 n^3p^2\log n}{2(N_v^\sigma p+(\gamma\vee 0)D_{n,p})}\right)
= \exp\left(-\frac{(\gamma\vee 0)^2}{\alpha_{|v|}^2-\alpha_{|v|-1}^2}n\log n+o(n\log n)\right)\,.
	\end{align*}

Write $M=\sqrt{D^{k-1}(1+\delta)^2(\alpha_k-\alpha_{k-1})}$, and we denote 
 \[
 \Gamma_k=\left\{(\gamma_v)_{|v|\le k}:\sum_{|v|=k}\gamma_v\ge (1+\delta)(\alpha_k-\alpha_{k-1})\sqrt{\alpha_k+\alpha_{k-1}},\gamma_v\le M,\forall |v|\le k\right\}\,.
 \]
Firstly, based on the previous estimation, we obtain that the probability that $X_v^\sigma \geq N_v^\sigma + MD_{n,p}$ for some $v\in \mathcal{T}$ is at most $\exp\big(-M^2n\log n+o(n\log n)\big)$. Therefore, by considering the two cases that whether there exists $v$ with $|v|\le k$ such that $\gamma_v^*\ge M$, we conclude that the probability term can be bounded by $\exp\big(-M^2n\log n+o(n\log n)\big)$ plus
$$\exp\big(O(\log n)\big)\times\sup_{(\gamma_v)_{|v|\le k}\in \Gamma_k}\Pb[\gamma_v^*\ge \gamma_v,\forall v \text{ s.t. } |v|\le k]\,,$$
 where the first factor represents the number of possible realizations of $\gamma^*$ in $\Gamma_k$. As a result of independence, the supremum of the probability term given above is bounded by
	\begin{align*}
	&\ \sup_{(\gamma_v)_{|v|\le k}\in \Gamma_k}\exp\left(-\sum_{i=1}^{k}\sum_{|v|=i}\frac{(\gamma_v\vee 0)^2}{\alpha_i^2-\alpha_{i-1}^2}n\log n+o(n\log n)\right)\\
 \le&\ \sup_{(\gamma_v)_{|v|\le k}\in \Gamma_k}\exp\left(-\sum_{i=1}^kD^{i-1}\frac{(\sum_{|v|=i}\gamma_v\vee 0)^2}{\alpha_i^2-\alpha_{i-1}^2}n\log n+o(n\log n)\right)\\
 \le &\sup_{(\gamma_v)_{|v|\le k}\in \Gamma_k}\exp\left(-D^{k-1}\frac{(\sum_{|v|=k}\gamma_v\vee 0)^2}{\alpha_k^2-\alpha_{k-1}^2}n\log n+o(n\log n)\right)\\
 \le &\ \exp\left(-D^{k-1}(1+\delta)^2(\alpha_k-\alpha_{k-1})n\log n+o(n\log n)\right)\,.
	\end{align*}
	Here, the first inequality follows from Cauchy-Schwartz inequality, and the last inequality follows from the definition of $\Gamma_k$. With these estimates in hand, we obtain that $\Pb[\mathcal{E}_k]$ is bounded by
	\begin{equation}
		\begin{aligned}		&\exp\left(\sum_{i=1}^kD^{i-1}(\alpha_i-\alpha_{i-1})n\log n+o(n\log n)\right)\times \\
  &\ \left[\exp\left(-M^2n\log n+o(n\log n)\right)+\exp\left(-D^{k-1}(1+\delta)^2(\alpha_k-\alpha_{k-1})n\log n+o(n\log n\right)\right]\\
					\le&\ 2\exp\left([D^{k-2}\alpha_{k-1}-2\delta D^{k-1}(\alpha_k-\alpha_{k-1})]n\log n+o(n\log n)\right)=\exp(-cn\log n)\,,
		\end{aligned}
  \notag
	\end{equation}
 for some constant $c>0$ (which follows from the choice of $D$ in \eqref{eq-def-D}), as desired. 
\end{proof}

\subsubsection{Computational hardness of online algorithms}

Let $\mathcal A$ be an arbitrary online algorithm. According to Definition~\ref{def-online-algo}, we can assume that there exist probability spaces $\Omega_1, \dots, \Omega_n$ and deterministic functions $$f_k:\{G_{i,j}\}_{1\le i<j\le k}\times \{\Gs_{i,j}\}_{1\le i<j\le n}\times\Omega_k\to [n]\,,$$ such that $\mathcal A$ operates as follows: for each $1\le k\le n$, assuming that $\pi^*(1),\dots,\pi^*(k-1)$ are determined, $\mathcal A$ samples $\omega_k\in \Omega_k$ according to some probability measure $\Pb_k$ determined by $\{G_{ij}\}_{1\le i<j\le k},\{\Gs_{ij}\}_{1\le i<j\le n}$ and $\pi^*(1),\dots,\pi^*(k-1)$, and sets $$\pi^*(k)=f_k(\{G_{i,j}\}_{1\le i<j\le k},\{\Gs_{i,j}\}_{1\le i<j\le n},\omega_k)\,.$$ 
The mechanism should also ensure that the final output $\pi^*$ is always a permutation in $\operatorname{S}_n$.

We fix an arbitrary admissible graph $\Gs$. For a $(\mathcal{T},\overrightarrow{\alpha})$-correlated instance of graphs $\{G^v\}_{v\in \mathcal L}$ as defined in Definition~\ref{def-correlated-instance}, we simultaneously run $\Ac$  on the pairs  $(G^v,\Gs)$ for all $v\in \mathcal L$ with outputs $\{\pi_v^*\}_{v\in \mathcal L}$. The sampling procedure follows the following rule: for any two vertices $u,v\in \mathcal L$, the internal randomness $\omega_k$ used for determining $\pi^*_u(k)$ and $\pi_v^*(k)$ are sampled identically if $k\le \alpha_{|u\wedge v|}n$, while they are independently sampled if $k>\alpha_{|u\wedge v|}n$. It is important to note that due to the construction of $(\mathcal{T},\overrightarrow{\alpha})$-correlated instance and the definition of online algorithms, one can prove by induction that the law $\Pb_k$ of $\pi_u^*(k)$ and $\pi_v^*(k)$ are the same as long as $1\le k\le \alpha_{|u\wedge v|}n$. This demonstrates that the sampling rule mentioned above is self-consistent, and the running procedure is valid.

We have the following lower bound for the probability of $\pi_v^*\in\operatorname{S}_{\beta_c+\varepsilon}(G,\Gs)$ for all $v\in \mathcal L$.

\begin{proposition}
	Fix an admissible graph $\Gs$. Assume that the output $\pi^*$ of $\Ac$ satisfies $$\Pb[\pi^*\in \operatorname{S}_{\beta_c+\varepsilon}(G,\Gs)]=\mathbb{E}_{G\sim \mathbf G(n,p)}\mathbb Q\left[\pi^*\in \operatorname{S}_{\beta_c+\varepsilon}(G,\Gs)\right]\ge p_{\operatorname{suc}}\,,$$ where $\mathbb Q$ is the internal randomness of $\mathcal A$. Then through the aforementioned procedure, we have 
	\begin{equation}\label{eq-jensen-lower-bound}
		\Pb[\pi_v^*\in \operatorname{S}_{\beta_c+\varepsilon}(G^v,\Gs),\forall v\in \mathcal L]\ge p_{\operatorname{suc}}^L\,,
	\end{equation}
\end{proposition}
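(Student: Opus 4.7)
The plan is to propagate the single-leaf success probability up the branching tree $\mathcal T$ by iteratively applying Jensen's inequality to the convex map $x\mapsto x^D$, exploiting the conditional independence of sibling subtrees that is built into the $(\mathcal T,\overrightarrow\alpha)$-correlated instance together with the coupling rule for $\mathcal A$'s internal randomness. For each vertex $w\in\mathcal T$ I would introduce the $\sigma$-field $\mathcal F_w$ generated by the edge variables $E^{w'}$ for all ancestors $w'\preceq w$ (including $w$ itself) together with the internal randomness $\omega_1,\ldots,\omega_{\alpha_{|w|}n}$ used by $\mathcal A$ through step $\alpha_{|w|}n$. By Definition~\ref{def-correlated-instance} and the stated sampling convention for $\omega_k$, this $\sigma$-field captures exactly the information shared by every leaf descendant of $w$; in particular $\pi_v^*(1),\ldots,\pi_v^*(\alpha_{|w|}n)$ agree across all leaves $v$ in the subtree rooted at $w$ and are $\mathcal F_w$-measurable. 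Writing $A_w$ for the event that $\pi_v^*\in\operatorname{S}_{\beta_c+\varepsilon}(G^v,\Gs)$ holds for every leaf descendant $v$ of $w$, I set $Q_w=\mathbb E[\mathbf 1_{A_w}\mid\mathcal F_w]$.

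The next step is the structural observation. For an internal $w$ with children $w_1,\ldots,w_D$, once $\mathcal F_w$ is fixed the events $A_{w_1},\ldots,A_{w_D}$ depend only on disjoint fresh sources of randomness: the edges $E^{w'}$ for $w'$ strictly below $w$ (which are i.i.d.\ across branches by Definition~\ref{def-correlated-instance}) and the steps $\omega_k$ for $k>\alpha_{|w|}n$ (which by the coupling rule split into independent copies across the $D$ branches because any pair $u,v$ with $u\wedge v=w$ uses independent $\omega_k$ for $k>\alpha_{|w|}n$). Hence $A_{w_1},\ldots,A_{w_D}$ are conditionally independent given $\mathcal F_w$, and by the exchangeability of the branches they are conditionally i.i.d., yielding
\begin{equation*}
Q_w \;=\; \prod_{i=1}^{D}\mathbb E[\mathbf 1_{A_{w_i}}\mid\mathcal F_w] \;=\; R_w^D,\qquad R_w:=\mathbb E[Q_{w_1}\mid\mathcal F_w].
\end{equation*}

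Taking unconditional expectations and applying Jensen's inequality to $x\mapsto x^D$ gives
\begin{equation*}
\mathbb E[Q_w] \;=\; \mathbb E[R_w^D] \;\ge\; (\mathbb E[R_w])^D \;=\; (\mathbb E[Q_{w_1}])^D,
\end{equation*}
where the last equality is the tower property. I would now induct on depth from $N$ down to $0$ to conclude $\mathbb E[Q_w]\ge p_{\operatorname{suc}}^{D^{N-|w|}}$. The base case at a leaf $v$ is immediate because $\mathcal F_v$ is the full relevant $\sigma$-field, so $Q_v=\mathbf 1_{A_v}$ and the marginal law of $(G^v,\pi_v^*)$ coincides with $(G,\pi^*)$, giving $\mathbb E[Q_v]=\Pb[A_v]\ge p_{\operatorname{suc}}$ by hypothesis. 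At the root $w_0$, $\alpha_0=0$ forces $\mathcal F_{w_0}$ to be trivial, so $Q_{w_0}$ is deterministic and equals the left-hand side of \eqref{eq-jensen-lower-bound}; the induction then delivers the desired bound $p_{\operatorname{suc}}^{D^N}=p_{\operatorname{suc}}^L$.

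The main obstacle is not the Jensen step, which is essentially a one-line application once the structure is in place, but the careful bookkeeping needed to justify the conditional independence and exchangeability of sibling subtrees given $\mathcal F_w$. Concretely, one must verify that for $k>\alpha_{|w|}n$ the information $\mathcal A$ sees when determining $\pi_v^*(k)$ in branch $w_i$ is a deterministic function of $\mathcal F_w$, the fresh edges in the subtree of $w_i$, and the independent-across-branches internal randomness, with no leakage between branches. This is routine given Definition~\ref{def-online-algo} and Definition~\ref{def-correlated-instance}, but it is the place where the online hypothesis on $\mathcal A$ is essential: without it, $\pi_v^*(k)$ for small $k$ could depend on edges introduced deep in the tree, breaking the symmetry that makes the iterated Jensen argument close.
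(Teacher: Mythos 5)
Your proposal is correct and follows essentially the same route as the paper: condition on the information shared at each branch point, use the conditional independence and exchangeability of the sibling subtrees to write the conditional success probability as a $D$-th power, apply Jensen's inequality to $x\mapsto x^D$, and induct over the depth of $\mathcal T$. The only difference is bookkeeping — you use per-vertex $\sigma$-fields $\mathcal F_w$ and quantities $Q_w$, while the paper conditions on level-wise $\sigma$-fields $\mathcal F_i$ and proves $\Pb[\mathcal S(v_o)]\ge\prod_{|v|=i}\Pb[\mathcal S(v)]$ — which does not change the substance of the argument.
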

\begin{proof}
	Let $S_v$ denote the event that $\pi_v^*\in \operatorname{S}_{\beta_c+\varepsilon}(G^v,\Gs)$. For $i=0,1,\dots,N$, we define $\mathcal F_i$ as the $\sigma$-field generated by $\{G^v_{i,j}\}_{1\le i<j\le \alpha_in},\forall v\in \mathcal L$ and $\pi^*(k),1\le k\le \alpha_in$. For each $v\in \mathcal T$, we use $\mathcal L(v)$ to denote the offspring of $v$ in $\mathcal L$, and $\mathcal{S}(v)$ represents the event that $\pi_u^*\in \operatorname{S}_{\beta_c+\varepsilon}(G^u,\Gs)$ for each $u\in \mathcal L(v)$. Note that we are interested in the event that $\Pb[\mathcal S(v_o)]$ for the root $v_o$, and we will show by induction that  
	\begin{equation}\label{eq-iterative-jensen}
	\Pb[\mathcal S(v_o)]\ge \prod_{|v|=i}\Pb[\mathcal S(v)]\,,\forall 0\le i\le N\,.
	\end{equation}
 
The case $i=0$ trivially holds. Assuming that \eqref{eq-iterative-jensen} holds for some $0\le i\le N-1$. For each $v$ with $|v|=i$, we denote its $D$ descendants as $v_1,\dots,v_D$, then it follows that $\mathcal{S}(v)=\bigcap_{k=1}^D\mathcal{S}(v_k)$, and the events $\mathcal{S}(v_k),\ k=1,2,\dots,D$ are conditionally independent given $\mathcal{F}_{i}$. By conditioning on $\mathcal{F}_{i}$ and utilizing the iterated expectation formula, we deduce that
\begin{align*}
\Pb[\mathcal S(v)]=&\ \mathbb{E}\Big[\prod_{k=1}^D\Pb[\mathcal S(v_k)\mid \mathcal F_i]\Big]=\mathbb{E}\Big[\Pb[\mathcal S(v_1)\mid \mathcal F_i]^D\Big]\\
\ge&\ \big(\mathbb{E}\big[\Pb[\mathcal S(v_1)\mid \mathcal F_i]\big]\Big)^D=\big(\Pb[\mathcal S(v_1)]\big)^D=\prod_{k=1}^D\Pb[\mathcal S(v_k)]\,,
\end{align*}
where we employed the symmetry between $v_1,\dots,v_D$ twice, and the inequality follows directly from Jensen's inequality. Consequently, by considering the product of all $v$ with $|v|=i$, we observe that \eqref{eq-iterative-jensen} holds for $i+1$. This establishes the validity of \eqref{eq-iterative-jensen} for each $0\le i\le N$. In particular, for $i=N$, we can deduce that
\[
\Pb[\mathcal S(v_o)]\ge \prod_{|v|=N}\Pb[\mathcal S(v)]=(\Pb[\pi^*\in \operatorname{S}_{\beta+c}(G,\Gs)]\big)^{L}\ge p_{\operatorname{suc}}^L\,,
\]
completing the proof.
\end{proof}
Note that if $\pi_v^*\in \operatorname{S}_{\beta_c+\varepsilon}(G^v,\Gs)$ for each $v\in \mathcal L$, then $(\pi_v^*)_{v\in \mathcal L}$ constitutes the forbidden structure. By combining this observation with Proposition~\ref{prop-forbidden-structure}, we conclude that whenever $\Gs$ is admissible, the following holds for any $\mathcal A\in \operatorname{OGAA}$:
\[
\Pb[\mathcal A(G,\Gs)\in \operatorname{S}_{\beta_c+\varepsilon}(G,\Gs)]\le \exp(-cn\log n/L)=\exp(-c_0n\log n)\,.
\]
So, by the Markov inequality, we see 
\[
\Pb\big[G:\mathbb{Q}[\mathcal A(G,\Gs)\in \operatorname{S}_{\beta_c+\varepsilon}(G,\Gs)\ge \exp(-c_0n\log n/2)]\big]\le \exp(-c_0n\log n/2)\,.
\]
Note that the above statement holds for any admissible graph $\Gs$ and any online algorithm $\mathcal A\in \operatorname{OGAA}$. Since $\Gs\sim \mathbf G(n,p)$ is admissible with high probability by Lemma~\ref{lem-good-event-on-G}, the proof of the second item in Theorem~\ref{thm-algo} is now completed.

We conclude this section by highlighting that our method for establishing the hardness result of online algorithms can be extended to demonstrate limitations of a more general class of algorithms, which we refer as the \emph{greedy local iterative algorithms}, as defined below.
\begin{definition}[(Greedy) Local Iterative Algorithms]\label{def-GLIA}
We say an algorithm $\mathcal A\in \operatorname{GAA}$ is a \emph{local iterative algorithm}, if $\mathcal A$ operates as follows: 
\begin{itemize}
	\item Initially, $\mathcal A$ sets $\operatorname{M}_0=\pi^*(\operatorname{M}_0)=\emptyset$.
	\item For any $s\ge 1$, whenever $\operatorname{M}_{s-1}\neq [n]$, there is a probability measure $\Pb_s$ determined by the random variables $\{G_{i,j},\Gs_{i,j}\}_{(i,j)\in \operatorname{U}}$ as well as the values $\pi^*(i),i\in \operatorname{M}_{s-1}$ (via some given rule), from which $\mathcal A$ samples a random set $\Delta_s\subset [n]\setminus \operatorname{M}_{s-1}$ as well as values $\pi^*(i)\in [n]\setminus \pi^*(\operatorname{M}_{s-1})$, $i\in \Delta_s$. 
	\item It should also satisfies that almost surely, $|\Delta_s|=o(n)$ uniformly for all $s$, and $\pi^*(i),i\in \Delta_s$ are distinct. 
	\item $\mathcal A$ sets $\operatorname{M}_s=\operatorname{M}_{s-1}\cup \Delta_s$ and $\pi^*(\operatorname{M}_s)=\pi^*(\operatorname{M}_{s-1})\cup\{\pi^*(i):i\in \Delta_s\}$, then proceeds with the next $s$.
	\item Until $\operatorname{M}_s=[n]$ for some $s$, $\mathcal A$ outputs $\pi^*=(\pi^*(1),\dots,\pi^*(n))$ and terminates.
\end{itemize}%set  For any $s\ge 1$, if $\operatorname{M}_{s-1}=[n]$, then $\mathcal A$ outputs $\pi^*=(\pi^*(1),\dots,\pi^*(n))$. Else, there is a probability measure $\Pb_s$ determined by the random variables $\{G_{i,j},\Gs_{i,j}\}_{(i,j)\in \operatorname{U}}$ as well as the values $\pi^*(i),i\in \operatorname{M}_0$ (via some given rule), and $\mathcal A$ samples according to $\Pb_s$ a random set $\Delta_s\subset [n]\setminus \operatorname{M}_s$ with $|\Delta_s|=o(n)$, as well as distinct values $\pi^*(i)\in [n]\setminus \pi^*(\operatorname{M}_0)$, $i\in \Delta_s$, then set $$\operatorname{M}_s=\operatorname{M}_{s-1}\cup \Delta_s,\pi^*(\operatorname{M}_s)=\pi^*(\operatorname{M}_{s-1})\cup\{\pi^*(i):i\in \Delta_s\}\,.$$
Furthermore, we say a local iterative algorithm $\mathcal A$ is \emph{greedy}, if it satisfies the following property: for independent \ER graphs $G,\Gs\sim \mathbf G(n,p)$ and any $s\ge 1$, conditioned on any realization of $\Delta_t,1\le t\le s-1$ as well as the values $\pi^*(i),i\in \operatorname{M}_{s-1}$, the unmatched parts of these two graphs are stochastically dominated by two independent \ER graphs on the corresponding parts. In other words, under any such conditioning, it holds that the joint distribution of the random variables
$$
\{G_{i,j}\}_{i,j\text{ not all in }\operatorname{M}_{s-1}},\{\Gs_{i',j'}\}_{i',j'\text{ not all in }\pi^*(\operatorname{M}_{s-1})}
$$
is dominated by the product measure of $\mathbf B(1,p)$ variables. 
\end{definition}
Note that the algorithms presented in \cite{DDG22} are all greedy local iterative algorithms (see Lemma~3.11 therein). One can show that any greedy local iterative algorithm is unlikely to find solutions beyond the threshold $\beta_c$ under typical instances of $(G,\Gs)\sim \mathbf G(n,p)^{\otimes 2}$, as stated in the next proposition.
\begin{proposition}\label{prop-extended-hardness}
	For any $\varepsilon>0$, there exists $c=c(\varepsilon)>0$, such that for any greedy local iterative algorithm $\mathcal A\in \operatorname{GAA}$, it holds
	\[
	\Pb\big[(G,\Gs):\mathbb Q[\mathcal A(G,\Gs)\in \operatorname{S}_{\beta_c+\varepsilon}(G,\Gs)]\ge \exp(-cn\log n)\big]=o(1)\,,
	\]
	where $\Pb$ is taken over $(G,\Gs)\sim \mathbf G(n,p)^{\otimes 2}$ and $\mathbb Q$ is the internal randomness of $\mathcal A$.
\end{proposition}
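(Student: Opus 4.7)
The plan is to adapt the branching-OGP argument of Theorem~\ref{thm-algo}-(ii) from the online setting to greedy local iterative algorithms via a dynamic resampling coupling. Fix an admissible $\Gs$ and a greedy local iterative algorithm $\mathcal A$ with single-run success probability $p_{\operatorname{suc}}$ on $(G,\Gs) \sim \mathbf G(n,p)^{\otimes 2}$. First run $\mathcal A$ on $(G,\Gs)$ to record the matched sets $\operatorname{M}_s$, and set $\tau_k = \min\{s: |\operatorname{M}_s| \ge \alpha_k n\}$ and $\mathcal V_k = \operatorname{M}_{\tau_k}$, so that $|\mathcal V_k| = \alpha_k n + o(n)$ by $|\Delta_s| = o(n)$. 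Build a $D$-ary branching tree of $L = D^N$ coupled algorithmic runs as follows: for each node at depth $k$, after the algorithm's local run reaches its level-$k$ stopping time, spawn $D$ child executions whose graphs agree with their parent on the edges among the matched vertices but resample the remaining edges independently on each child. By the greedy condition, conditioned on the trajectory up through the spawning moment, the unmatched edges are stochastically dominated by an independent $\mathbf B(1,p)$ product measure, so each resampling can be coupled with a fresh independent $\mathbf B(1,p)$ sample that dominates the original unmatched part coordinatewise. This produces $L$ coupled pairs $(G^v, \Gs)_{v \in \mathcal L}$, each pointwise dominated by a genuine $(\mathcal T, \overrightarrow \alpha)$-correlated instance $(\widehat G^v)_{v \in \mathcal L}$ defined relative to the random nested sets $\mathcal V_0 \subset \cdots \subset \mathcal V_N$.

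Let $\pi_v^*$ be the output of $\mathcal A$ on $(G^v, \Gs)$. Since each branch only diverges from its parent at the spawning time, and all descendants of a common ancestor inherit its entire pre-branching trajectory, we have $\pi_u^*(i) = \pi_v^*(i)$ for every $i \in \mathcal V_{|u \wedge v|}$. By monotonicity of $\operatorname{O}(\pi)$ in the graph argument, the success event $\pi_v^* \in \operatorname{S}_{\beta_c+\varepsilon}(G^v, \Gs)$ implies $\pi_v^* \in \operatorname{S}_{\beta_c+\varepsilon}(\widehat G^v, \Gs)$. Then apply the iterated Jensen's inequality argument of Section~\ref{subsec-branching-OGP}, using conditional independence and exchangeability of the $D$ sub-branches given the common past, to obtain $\Pb[\pi_v^* \in \operatorname{S}_{\beta_c+\varepsilon}(\widehat G^v, \Gs), \forall v \in \mathcal L] \ge p_{\operatorname{suc}}^L$.

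On this event, $(\pi_v^*)_{v \in \mathcal L}$ realizes a forbidden structure for $(\widehat G^v)_{v \in \mathcal L}$ with the shared vertex sets $\mathcal V_k$ in place of $\{1, \ldots, \alpha_k n\}$. Generalize Proposition~\ref{prop-forbidden-structure} to arbitrary nested vertex sets: by exchangeability of $\mathbf G(n,p)$, the per-choice bound is unchanged, and a union bound over the $\exp(O(n))$ possible nested-set sequences is absorbed in the $\exp(-\Omega(n\log n))$ main term. Thus $p_{\operatorname{suc}}^L \le \exp(-\Omega(n\log n))$, giving $p_{\operatorname{suc}} \le \exp(-cn\log n)$ for some $c = c(\varepsilon) > 0$; a final Markov step over $(G,\Gs) \sim \mathbf G(n,p)^{\otimes 2}$ completes the proof. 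The main obstacle lies in making the coupling rigorous: the greedy condition supplies only one-sided stochastic domination, and since the decision law $\Pb_s$ in Definition~\ref{def-GLIA} may depend on the unmatched edges of $(G,\Gs)$, one needs to argue carefully that the algorithm's pre-branching trajectory is genuinely preserved across branches despite branches seeing different unmatched edges post-divergence, which is automatic for online algorithms but delicate here.
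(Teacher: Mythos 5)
Your skeleton is the one the paper intends (the paper only says the proof is "essentially the same" as Theorem~\ref{thm-algo}-(ii), with the greedy property feeding an analogue of Proposition~\ref{prop-forbidden-structure}), but as written your proposal has a genuine gap at precisely the step that distinguishes greedy local iterative algorithms from online ones, and you flag it without resolving it. In your construction a child execution inherits the parent's pre-branching trajectory but is paired with a graph whose not-yet-matched edges have been resampled. Since the decision law $\Pb_s$ in Definition~\ref{def-GLIA} may depend on the unmatched edges, that inherited trajectory is \emph{not} what $\mathcal A$ would produce on the child's graph, so the pair (child graph, child run) does not have the law of a genuine run of $\mathcal A$; hence the marginal leaf success probability cannot be identified with $p_{\operatorname{suc}}$, and the iterated Jensen argument of Section~\ref{subsec-branching-OGP} — which needs each sibling's success event to be a function of its own fresh block variables together with the shared past, with identical conditional laws across siblings — does not apply as stated. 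The alternative reading, in which $\pi_v^*$ is a from-scratch run of $\mathcal A$ on $(G^v,\Gs)$, restores the Jensen bound but destroys the prefix agreement required by condition (ii) of the forbidden structure. This tension is not a routine technicality; it is exactly where the stochastic-domination hypothesis must do work, and the paper's hint is that it enters through a bespoke analogue of Proposition~\ref{prop-forbidden-structure} (bounding the overlap increments along the algorithm's own stopped schedule using the domination, step by step), not through the coupling/Jensen step.

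A second gap is the claim that the coupled family is "pointwise dominated by a genuine $(\mathcal T,\overrightarrow\alpha)$-correlated instance" relative to the random nested sets $\mathcal V_k$. The greedy property dominates only the \emph{unmatched} part conditioned on the trajectory; the shared blocks (edges inside $\mathcal V_k$) are kept equal to the true edges, and conditionally on the trajectory their law can be arbitrarily biased (the algorithm may match vertices precisely because certain edges are present), so they are neither product $\mathbf B(1,p)$ nor dominated by it. But the first-moment computation in Proposition~\ref{prop-forbidden-structure} uses that \emph{every} block variable $X_v^\sigma$ is an independent $\mathbf B(N_v^\sigma,p)$; your appeal to exchangeability plus a union bound over the $\exp(O(n))$ nested-set sequences handles the randomness of the sets only if the forbidden-structure event is still evaluated under that product law, which is exactly what fails for the shared blocks once the sets and matchings are chosen adaptively. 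So the reduction "success $\Rightarrow$ forbidden structure for a genuine correlated instance $\Rightarrow$ apply Proposition~\ref{prop-forbidden-structure}" does not go through as stated; one must instead reprove the forbidden-structure bound for the adaptively built object, using the greedy domination inside the Chernoff estimates for the incremental overlaps (the correct direction of monotonicity is available there, since the relevant events are increasing and the true unmatched edges are dominated by product $\mathbf B(1,p)$). The remaining ingredients of your outline — branching when $|\operatorname{M}_s|$ crosses $\alpha_k n$, the $\exp(O(n))$ union bound over nested sets being absorbed by $\exp(-\Omega(n\log n))$, and the final Markov step over $(G,\Gs)$ — are fine.
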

The result follows in an essentially same manner as the proof of Theorem~\ref{thm-algo} (ii), where the greedy property in these algorithms is dedicated to an analogue of Proposition~\ref{prop-forbidden-structure}. We omit the details.

\begin{appendix}
\section{Tail estimates}
In this section, we provide various tail estimates for binomial variables that are useful for the proof. These estimates are categorized into two types based on the range of the parameter $p$, and the two regimes correspond to Poisson approximation and normal approximation, respectively. For simplicity, we denote the binomial distribution with parameters $N$ and $p$ as $\mathbf B(N,p)$, which also represents a random variable following this distribution. %In the subsequent lemmas, we will frequently use the following easy-checked fact about binomial numbers:
	%\begin{equation}\label{eq:est_binom_number}
%		\frac{n^k}{k!}\exp\left(-\frac{k^2}{n}\right)\leq\binom{n}{k}\leq \frac{n^k}{k!}\,.
%	\end{equation}
%\begin{lemma}[Estimate for $\binom{n}{k}$]
%\label{lem:est_binom_number}
%	We have the following one point estimate for $\binom{n}{k}$,
%	\begin{equation}
%		\frac{n^k}{k!}\exp\left(-\frac{k^2}{n}\right)\leq\binom{n}{k}\leq \frac{n^k}{k!}.\notag
%	\end{equation}
%\end{lemma}
\begin{lemma}
\label{lem:poisson_tail_approx}
When $p=n^{-1/2+o(1)}$ and $p\ll p_c$, write $M_0=\log n/\log\big(\log n/np^2\big)$, then for any integer $N\asymp np$ and any constant $\alpha>0$,
	\begin{equation}
\Pb\big[\mathbf B(N,p)\geq\alpha M_0\big]= n^{-\alpha+o(1)}.\notag
	\end{equation}
\end{lemma}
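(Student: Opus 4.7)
The plan is to evaluate the point probability $\Pb[\mathbf{B}(N,p)=\lceil\alpha M_0\rceil]$ directly via Stirling's formula, then deduce that this single term dominates the full tail because consecutive ratios decay geometrically in the regime $k\gg Np$. The matching lower bound is immediate from the single term.

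First I would set $\lambda := \log n/(np^2)$, so $\lambda\to\infty$ (since $p\ll p_c$ forces $np^2\ll\log n$) and $M_0=\log n/\log\lambda$. Since $np^2\le p\cdot np\to 0$ but $\log\lambda\ll\log n$, we have $M_0\to\infty$ and $M_0=o(\log n)$. Also $N\asymp np=n^{1/2+o(1)}$, so in particular $k:=\lceil\alpha M_0\rceil$ satisfies $k=o(\sqrt N)$ and $k\gg Np\asymp np^2$.

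Next I would write
\[
\Pb[\mathbf{B}(N,p)=k]=\binom{N}{k}p^k(1-p)^{N-k},
\]
and simplify each factor. Since $k=o(\sqrt N)$, one has $\binom{N}{k}=\frac{N^k}{k!}(1+o(1))$; since $Np=o(\log n)$ and $p=o(1)$, the factor $(1-p)^{N-k}=\exp\bigl(-(N-k)p+O(Np^2)\bigr)=e^{o(\log n)}$; and Stirling gives $k!=\sqrt{2\pi k}(k/e)^k(1+o(1))$. Combining,
\[
\log\Pb[\mathbf{B}(N,p)=k]=-k\log\!\Bigl(\tfrac{k}{Np}\Bigr)+k+o(\log n).
\]
Now $k/(Np)\asymp \alpha\lambda/\log\lambda$, so $\log(k/(Np))=\log\lambda-\log\log\lambda+O(1)=(1+o(1))\log\lambda$. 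Multiplying by $k=\alpha M_0(1+o(1))$ and using $M_0\log\lambda=\log n$, I get
\[
k\log(k/(Np))=\alpha\log n\cdot(1+o(1)),
\]
while the remaining terms $k$ and the $o(\log n)$ correction are negligible because $k=o(\log n)$. Hence $\Pb[\mathbf{B}(N,p)=\lceil\alpha M_0\rceil]=n^{-\alpha+o(1)}$, which already gives the lower bound for the tail.

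For the upper bound on the tail, I would observe that for every $k'\ge k$,
\[
\frac{\Pb[\mathbf{B}(N,p)=k'+1]}{\Pb[\mathbf{B}(N,p)=k']}=\frac{(N-k')p}{(k'+1)(1-p)}\le\frac{Np}{\alpha M_0}(1+o(1))=O(\log\lambda/\lambda)=o(1),
\]
so the tail is a geometric-type sum bounded by $2\Pb[\mathbf{B}(N,p)=k]=n^{-\alpha+o(1)}$. Combined with the lower bound this proves the lemma.

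The only delicate point is the bookkeeping in Step 2: I have to verify that all of the error contributions ($M_0\log\log\lambda$ from the logarithm expansion, $\log k$ from the Stirling prefactor, $Np$ from the $(1-p)^{N-k}$ factor, the $k$ from Stirling) are each $o(\log n)$ so that none of them spoils the leading $\alpha\log n$ term. Since $M_0=\log n/\log\lambda$ and $\log\log\lambda/\log\lambda\to 0$, we have $M_0\log\log\lambda=o(\log n)$; the other terms are even smaller. This is the main (though routine) obstacle.
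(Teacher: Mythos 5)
Your argument is correct, and it overlaps with the paper's proof on the lower bound: like the paper, you obtain $n^{-\alpha+o(1)}$ from a one-point estimate at $k=\lceil \alpha M_0\rceil$ via Stirling, using $\log\big(k/(Np)\big)=(1+o(1))\log\big(\log n/np^2\big)$ and $M_0\log\big(\log n/np^2\big)=\log n$. Where you differ is the upper bound: the paper invokes the Chernoff bound \eqref{eq-chernoff-bound-upper}, whose exponent $Np\big[\tfrac{\alpha M_0}{Np}\log\tfrac{\alpha M_0}{Np}-\tfrac{\alpha M_0}{Np}+1\big]$ evaluates to $(\alpha+o(1))\log n$ directly, whereas you bound the whole tail by the single point mass times a geometric series, using that the consecutive-term ratio $\frac{(N-k')p}{(k'+1)(1-p)}$ is $O(Np/M_0)=o(1)$ for $k'\ge k$. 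Both routes are valid; the Chernoff route is shorter and does not require a two-sided point estimate, while your route is more self-contained (no large-deviation inequality needed) and makes transparent that the tail is dominated by its first term. One minor slip in your write-up: the claim $np^2\le p\cdot np\to 0$ is not implied by the hypotheses (e.g.\ $p=n^{-1/2}(\log n)^{1/4}$ satisfies $p=n^{-1/2+o(1)}$, $p\ll p_c$ but has $np^2=\sqrt{\log n}\to\infty$); what you actually need, and what does hold, is $\log n/np^2\to\infty$ (so $M_0=o(\log n)$ and $k\gg Np$) together with $\log\big(\log n/np^2\big)=o(\log n)$ (so $M_0\to\infty$), the latter following from $np^2=n^{o(1)}$. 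With that justification repaired, the bookkeeping you list ($M_0\log\log\lambda$, $\log k$, $Np$, and the $+k$ term all being $o(\log n)$) goes through exactly as you describe.
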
 
\begin{proof}%[Proof of lemma \ref{lem:poisson_tail_approx}]
	For the upper bound, by applying \eqref{eq-chernoff-bound-upper}, we have the deviation probability is bounded by
 \[
 \exp\left(-Np\left[\frac{\alpha M_0}{Np}\log\left(\frac{\alpha M_0}{Np}\right)-\frac{\alpha M_0}{Np}+1\right]\right)\,.
 \]
 Now by the assumption we have $M_0\log\left(\alpha M_0/Np\right)=\log n+o(\log n)$ and $M_0=o(\log n)$, thus the expression above reduces to $\exp\big(-\alpha \log n+o(\log n)\big)=n^{-\alpha+o(1)}$, as desired.
 
	For the lower bound, it suffices to derive a one-point estimation. Write $k=\lceil \alpha M_0\rceil$ for short. Note that by our assumption $M_0\gg 1$ so $k=\big(\alpha+o(1)\big) M_0$. As a result, we have
	\begin{equation}
		\begin{split}
  \Pb\left[\mathbf B(N,p)=k\right]&=\binom{N}{k}p^k(1-p)^{N-k}\ge \frac{(Np)^k}{k!}\exp\left(-\frac{k^2}{N}\right)(1-p)^{N}\\
			&=\big(1+o(1)\big)\frac{(Np)^k}{k!}\exp(-Np)\\
			&=\big(1+o(1)\big)\exp\left(k\log(Np)-k\log k-k+\log(\sqrt{2\pi k})-Np\right)\\
			&=\big(1+o(1)\big)\exp\left(-k\log\left(\frac{k}{Np}\right)+o(\log n)\right)\\
			&=\big(1+o(1)\big)\exp\big(-\alpha\log n+o(\log n)\big)=n^{-\alpha+o(1)}\,,
		\end{split}\notag
	\end{equation}
	which completes the proof.
\end{proof}

\begin{lemma}
\label{lem:Upper bound for binomial tail}
When $p$ is in the dense regime,  for any integer $N=\Omega(np)$ and any constant $\alpha>0$, it holds that
 \begin{equation}
		\Pb\left[\mathbf B(N,p)\ge Np+\sqrt{2\alpha Np\log n}\right]=n^{-\alpha +o(1)}\,.\notag
	\end{equation}
 %hold under the assumption that either $p\gg p_c, N\ge \eta np/2$ or $np^3\ge (\log n)^2, N\ge \eta np^2/2$. 
\end{lemma}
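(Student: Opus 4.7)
The plan is to establish matching upper and lower bounds, both equal to $n^{-\alpha+o(1)}$.

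For the upper bound, I would apply the Chernoff bound \eqref{eq-chernoff-bound} with $K=\sqrt{2\alpha Np\log n}$. Since $p\gg p_c$ and $N=\Omega(np)$, we have $Np=\Omega(np^2)\gg\log n$, which forces $K=o(Np)$, so $Np+K=Np(1+o(1))$. The Chernoff bound then yields
\[
\Pb[\mathbf B(N,p)\ge Np+K]\le 2\exp\!\left(-\frac{K^2}{2(Np+K)}\right)=\exp\bigl(-(1-o(1))\alpha\log n\bigr)=n^{-\alpha+o(1)}.
\]

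For the lower bound, I plan to mimic the Stirling-based one-point estimate in the proof of Lemma~\ref{lem:poisson_tail_approx}, now adapted to the Gaussian regime. Let $k_0=\lceil Np+K\rceil$ and $\hat q_k=k/N$. For each $k_0\le k\le k_0+\lfloor\sqrt{Np}\rfloor$, Stirling's formula gives
\[
\Pb[\mathbf B(N,p)=k]=\frac{1+o(1)}{\sqrt{2\pi N\hat q_k(1-\hat q_k)}}\exp\bigl(-N D(\hat q_k\,\|\,p)\bigr),
\]
where $D$ denotes binary KL divergence. A Taylor expansion around $p$ produces
\[
N D(\hat q_k\|p)=\frac{(k-Np)^2}{2Np(1-p)}+O\!\left(\frac{(k-Np)^3}{(Np)^2}\right),
\]
and the hypothesis $Np\gg\log n$ makes the cubic correction $o(\log n)$ uniformly for $|k-Np|=O(K)$. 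Over the summation range the leading term equals $\alpha\log n+O(\sqrt{\log n})=\alpha\log n+o(\log n)$, and $N\hat q_k(1-\hat q_k)=(1+o(1))Np(1-p)$. Summing $\lfloor\sqrt{Np}\rfloor$ such lower bounds produces
\[
\Pb[\mathbf B(N,p)\ge k_0]\ge \frac{\lfloor\sqrt{Np}\rfloor}{\sqrt{2\pi Np(1-p)}}\exp\bigl(-\alpha\log n-o(\log n)\bigr)=\Theta(1)\exp\bigl(-\alpha\log n-o(\log n)\bigr),
\]
which matches the upper bound.

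The main obstacle I anticipate is that a pure one-point Stirling estimate carries a $1/\sqrt{Np}$ prefactor, which is \emph{not} automatically $n^{o(1)}$ since $N$ may be polynomial in $n$. The compensating trick is to sum over a window of length $\sqrt{Np}$, which absorbs this prefactor at the cost of an $e^{O(\sqrt{\log n})}=n^{o(1)}$ multiplicative loss in the exponent. The other technical point is verifying that the cubic (and higher) terms in the KL expansion are subleading, which hinges precisely on the condition $Np\gg\log n$ inherited from $p\gg p_c$ and $N=\Omega(np)$.
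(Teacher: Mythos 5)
Your proposal is correct, and the upper bound is exactly the paper's argument: apply the Chernoff bound \eqref{eq-chernoff-bound} with $K=\sqrt{2\alpha Np\log n}$ and use $Np\gg\log n$ (which follows from $p\gg p_c$ and $N=\Omega(np)$) to see $K=o(Np)$. For the lower bound, however, you take a genuinely different route. The paper does not do any Stirling computation: it invokes Slud's inequality (\cite[Theorem 2.1]{Slud77}), which states that for $p\le 1/4$ and $Np\le k\le N(1-p)$ one has $\Pb[\mathbf B(N,p)\ge k]\ge 1-\Phi\bigl((k-Np)/\sqrt{Np(1-p)}\bigr)$, and then concludes via the classical Gaussian tail lower bound, using $p=o(1)$ to ensure the hypothesis $p\le 1/4$ and to keep the variance correction harmless. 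Your argument instead sums Stirling-type one-point estimates over a window of length $\sqrt{Np}$ above the threshold, controlling the KL expansion so that the cubic correction is $O((\log n)^{3/2}/(Np)^{1/2})=o(\log n)$ and absorbing the $1/\sqrt{Np}$ prefactor by the window length; this is self-contained (no appeal to Slud's comparison theorem) but more computational, and it quietly uses $p=o(1)$ in two places — the prefactor $N\hat q_k(1-\hat q_k)=(1+o(1))Np(1-p)$ and the $1/(1-p)$ factor in the quadratic term, whose contribution is $O(p\log n)=o(\log n)$ rather than $O(\sqrt{\log n})$ as you wrote, a harmless imprecision since only $o(\log n)$ is needed. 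In short: the paper buys brevity by black-boxing a binomial-versus-Gaussian comparison inequality, while your route buys elementariness at the cost of tracking error terms, and both hinge on the same mechanism $Np\gg\log n$ in the dense regime.
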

\begin{proof}%[Proof of lemma \ref{lem:Upper bound for binomial tail}]

Under the given assumptions, we have $Np\gg \sqrt{Np\log n}$, which allows us to obtain the upper bound using \eqref{eq-chernoff-bound}. On the other hand, we recall a well-known result presented in \cite[Theorem 2.1]{Slud77}. This theorem states that for a binomial variable $\mathbf B(N,p)$ with $p\le 1/4$ and any $k$ with $Np\le k\le N(1-p)$, it always holds that
\[
\Pb[\mathbf B(N,p)\ge k]\ge 1-\Phi\left(\frac{k-Np}{\sqrt{Np(1-p)}}\right)\,,
\]
where $\Phi$ is the distribution function of a standard normal variable.
Since $p=o(1)$ based on our assumption, the lower bound can be derived from this result and classic estimations for normal tails readily.
	%By Theorem $1$ in \cite{AG89} we get
	%\begin{equation}
	%\label{eq:entropy binom}
	%\Pb\left[\mathbf B(N,p)\geq aN\right]\leq \exp\left(-N\mathcal H(a\,\vert\, p)\right).
	%\end{equation}
	%where $\mathcal H(a\,\vert\, p)=a\log(a/p)+(1-a)\log\left((1-a)/(1-p)\right)$. Therefore we only need to estimate the value of $\mathcal H\left(p+\sqrt{2\alpha p(1-p)\log n/N}\,\vert\,p\right)$. By Taylor expansion, this equals to
	%\begin{equation*}
	%\label{eq:est_entropy}
	%	\begin{split}
	%		&\ \left(p+\sqrt{2\alpha \frac{p(1-p)\log n}{N}}\right)\log\left(1+\sqrt{2\alpha \frac{(1-p)\log n}{Np}}\right)\\
	%		&\qquad+\left(1-p-\sqrt{2\alpha \frac{p(1-p)\log n}{N}}\right)\log\left(1-\sqrt{2\alpha\frac{p\log n}{N(1-p)}}\right)\\
	%		=&\ \left(p+\sqrt{2\alpha \frac{p(1-p)\log n}{N}}\right)\left(\sqrt{\frac{2\alpha (1-p)\log n}{Np}}-\frac{\alpha(1-p)\log n}{Np}+o\left(\frac{\log n}{Np}\right)\right)\\
	%		&\qquad +\left(1-p-\sqrt{2\alpha \frac{p(1-p)\log n}{N}}\right)\left(-\sqrt{\frac{2\alpha p\log n}{N(1-p)}}-\frac{\alpha p\log n}{N(1-p)}+o\left(\frac{p\log n}{N(1-p)}\right)\right)\\
	%		=&\ \frac{\alpha\log n}{N}\,,
	%	\end{split}
	%\end{equation*}
	%which proves the upper tail bounds. The proof for lower tail bound is similar.
\end{proof}

\section{Complimentary proof}
\subsection{Proof of Lemma~\ref{lem-good-event-on-G}}
\begin{proof}
It suffices to prove that all the items are true with probability $1-o(1/n^2)$. For the first one, we have $|E(G)|\sim \mathbf{B}(\binom{n}{2},p)$ and thus by \eqref{eq-chernoff-bound} we get
	\[
	\Pb\left[\Big||E(G)|-\binom{n}{2}p\Big|\ge 2\sqrt{n^2p\log n}\right]\le 2\exp\left(-\frac{4n^2p\log n}{n^2p+4\sqrt{n^2p\log n}}\right)=o(1/n^2)\,.
	\]
%This shows that \eqref{eq-fixed-graph-edge-concentration} happens with high probability.

To establish the second item, which requires \eqref{eq-control-of-E(H)} to be true for every graph induced subgraph $H$, we observe that for each induced subgraph $H$ with $k$ vertices, $|E(H)|\sim \mathbf{B}(\binom{k}{2}, p)$. By applying \eqref{eq-chernoff-bound} again we obtain that for each induced subgraph $H$,
	\[
	\Pb\left[\Big||E(H)|-\binom{k}{2}p\Big|\ge \frac{n^2p}{{\log n}^{1/4}}\right]\le2 \exp\left(-\frac{n^4p^2/\log n^{1/2}}{k^2p\vee (2n^2p/\log n^{1/4})}\right)\le\exp\left(-\frac{n^{3/2}}{\log n^{1/4}}\right)\,,
	\]
	where the last inequality follows from $p\gg p_c=\sqrt{\log n/n}$. Therefore, the result follows by applying a union bound since there are $2^n=\exp(O(n))$ induced subgraphs ${H}$ in total.

For the third item, we recall that $\operatorname{U}$ represents the set of unordered pairs. Given a fixed permutation $\pi\in \operatorname{S}_n$, we define the mapping $\Pi: \operatorname{U} \to \operatorname{U}$ as $\Pi(i,j) = (\pi(i),\pi(j))$. It is evident that $\Pi$ is a bijection on $\operatorname{U}$. Now consider a pair $(i,j)\in \operatorname{U}$ and let $X_{i,j}=G_{i,j}G_{\pi(i),\pi(j)}$. We will make use of the following observations: for each $(i,j)\in \operatorname{U}$,
\begin{enumerate}
\item[(i)] $X_{i,j}\sim \mathbf{B}(1,p)$ if $\Pi(i,j)=(i,j)$, and $X_{i,j}\sim\mathbf{B}(1,p^2)$ otherwise.
\item[(ii)] $X_{i,j}$ is independent with the variables $X_{k,l},{(k,l)\in \operatorname{U}\setminus \{\Pi(i,j),\Pi^{-1}(i,j)}\}$.
\end{enumerate}
Motivated by these observations, we can partition $\operatorname{U}$ into the disjoint union of four sets: $\operatorname{F}\sqcup \operatorname{A}\sqcup \operatorname{B}\sqcup \operatorname{C}$. Here, $\operatorname{F}$ represents the set of fixed points of $\Pi$, while $\operatorname{A}$, $\operatorname{B}$, and $\operatorname{C}$ are chosen in such a way that no pair $(i,j)\in \operatorname{U}$ exists where both $(i,j)$ and $\Pi(i,j)$ belong to the same set. The existence of such partitioning can be proven by partitioning each orbit in the cycle decomposition of $\Pi$ into three parts, ensuring that no two edges within the same part are connected.
%	We draw a directed graph (denoted as $G_{\pi}$) with those elements in \,$U$ as vertices,
%	in which each $(i,j)$ has a unique directed edge towards $f_{\pi }(i,j)$.
%	Therefore $G_{\pi}$ is a combination of some points and directed circles. We denote $X$ as the set of independent points in $G_{\pi}$.
%	Then, $$|X|=\binom{F(\pi)}{2}+T(\pi).$$ For the other vertex of $G_{\pi}$, we can assign each vertex one of the colors red, blue and yellow. We can prove that there is a coloring with the following property : for every $(i,j)$ , the color of it is different from the color of $f_{\pi}(i,j)$. Such coloring is called \emph{good coloring}.
%	In fact, we only need to give an \emph{good coloring} for each directed circle. 
%	If it's an even circle of length $2k$ ($k$ a positive integer), we give each vertex a label: $v_1,v_2,\cdots, v_{2k}$ that $v_{i+1}=f_{\pi}(v_i)$ for $1\le i \le 2k$ (here $v_{2k+1}=v_1$). Then we color $v_{2i-1}$ red and $v_{2i}$ blue for $1\le i \le k$. 
%	If it's an odd circle of length $2k+1$ ($k$ a positive integer), we give each vertex a label $v_{i}(1\le i\le 2k+1)$ like above. Then we color $v_{2i-1}$ red and $v_{2i}$ blue for $1\le i \le k$, and color $v_{2k+1}$ yellow.

Denote 
\[
\sigma(\star)=\sum_{(i,j)\in \star}X_{i,j},\ \forall \star\in \{\operatorname{F},\operatorname{A},\operatorname{B},\operatorname{C}\}\,,
	\]
	then $\operatorname{OL}(G,\pi)=\sum_{\star\in\{\operatorname{F},\operatorname{A},\operatorname{B},\operatorname{C}\}}\sigma(\star)$. Additionally, by combining the previous observations with the choices we made for $\operatorname{F}$, $\operatorname{A}$, $\operatorname{B}$, and $\operatorname{C}$, we obtain the following result:
 \[
\sigma(\operatorname{F})\sim\mathbf{B}(|\operatorname{F}|,p),\textbf{ and }\sigma(\star)\sim\mathbf{B}(|\star|,p^2),\forall\ \star\in \{\operatorname{A},\operatorname{B},\operatorname{C}\}\,.
 \]
%	With the definition of $X,R,B,Y$, we notice that $\{G_{ij}G_{\pi(i)\pi(j)}\}_{(i,j\in S)}$ is a sequence of independent random variables for all $S\in\{X,R,B,Y\}$. Thus we have
%	\[
%	\sigma(X)\sim\mathbf{B}(|X|,p), \, \sigma(R) \sim\mathbf{B}(|R|,p^2)
%	\]\\
%	\[
%    \sigma(B)\sim\mathbf{B}%(|B|,p^2), \,\sigma(Y)\sim\mathbf{B}(|Y|,p^2)
%	\]\\
%Note that $4\sqrt{n^3p\log n}\ge 2\sqrt{|\operatorname{F}|np\log n}+3\sqrt{2n^3p^2\log n}$. 
As a result, the probability that $|\operatorname{OL}(G,\pi)|$ deviates from its expectation by at least $2\sqrt{|\operatorname{F}|np\log n}+3\sqrt{2n^3p^2\log n}$ is bounded by
\begin{equation*}
    \begin{aligned}
    &\  \Pb\left[|\sigma(\operatorname{F})-\mathbb{E}\sigma(\operatorname{F})|\ge 2\sqrt{|\operatorname{F}|np\log n}\right]+\sum_{\star\in\{\operatorname{A},\operatorname{B},\operatorname{C}\}}\Pb\Big[|\sigma(\star)-\mathbb{E}\sigma(\star)|\ge \sqrt{2n^3p^2\log n}\Big]\\
\stackrel{\eqref{eq-chernoff-bound}}{\le}&\ \exp\left(-\frac{4|\operatorname{F}|np\log n}{2|\operatorname{F}|p+4\sqrt{|\operatorname{F}|np\log n}}\right)+\sum_{\star\in\{\operatorname{A},\operatorname{B},\operatorname{C}\}}\exp\left(-\frac{2n^3p^2\log n}{2|\star|p^2+2\sqrt{2n^3p^2\log n}}\right)\,.
    \end{aligned}
\end{equation*}
Note that it trivially holds $|\star|\le n^2/2$ for all $\star\in{\operatorname{A},\operatorname{B},\operatorname{C}}$. Consequently, the expression mentioned above is much smaller than $\exp(-n\log n)$. As a result, \eqref{eq-OL-concentration} holds for all permutation $\pi\in \operatorname{S}_n$ with probability of $1-o(1/n^2)$, as indicated by a union bound. 

Additionally, since $\mathbb E|\operatorname{OL}(G,\pi)|=|\operatorname{F}|(p-p^2)+\binom{n}{2}p^2\ge (|\operatorname{F}|p\vee n^2p^2)/3$, and it can be easily checked that under the assumption $p\gg p_c$, $\sqrt{n^3p^2\log n}\ll n^2p^2$ and $\sqrt{|\operatorname{F}|np\log n}\ll |\operatorname{F}|p\vee n^2p^2$. Therefore, \eqref{eq-OL-concentration} implies $|\operatorname{OL}(G,\pi)|/\mathbb{E}|\operatorname{OL}(G,\pi)|=1+o(1)$, which concludes the final statement in the third item. The proof is completed.
\end{proof}

\subsection{Proof of Proposition~\ref{prop-concentration-of-max-O-pi}}
 Here in this subsection we provide a complete proof of Proposition~\ref{prop-concentration-of-max-O-pi}. Let us begin by recalling some definitions and notations concerning to Talagrand's concentration inequality.
%related to the application of Talagrand's inequality as stated in Section～7.7 in \cite{AlonProbabilistic} :
\begin{definition}
	Let $\Omega = \prod_{i=1}^{N} \Omega_i$ be a product space. %where each $\Omega_i$ is a probability space associated to a $\mathbf{B}(1,p)$ random variable and $\Omega$ has the product measure. 
 For a function $h : \Omega \to \mathbb{R}$, we call $h$ to be Lipschitz, if $|h(x)- h(y)|\leq 1 $ whenever $x,y$ differ in at most one coordinate.
	Furthermore, let $f: \mathbb{R}_{\ge 0} \to \mathbb{R}_{\ge 0}$ be an increasing function, we say that $h$ is $f$-certifiable if, whenever $h(x) \geq s$, there exists $I \subset \{ 1,...,\binom{n}{2} \}$ with $|I| \leq f(s) $ so that all $y \in \Omega$  that agree with $x$ on the coordinates in $I$ have $h(y) \geq s$.
\end{definition}
A special case of Talagrand's concentration inequality can be stated as follow.

\begin{proposition}\label{prop-talagrand-concentration-ineq}
	For any product probability measure $\Pb$ on $\Omega$ and any Lipschitz function $h$ that is $f$-certifiable, let $X=h(\omega)$, we have for any $b,t>0$:
	\[
	\Pb [X \leq b - t\sqrt{f(b)}]\cdot\Pb\left[ X \geq b \right] \leq e^{-{t^2}/{4}}\,. 
    \]
\end{proposition}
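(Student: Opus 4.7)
The plan is to derive this two-sided tail bound from Talagrand's \emph{convex distance} inequality, the master tool underlying concentration estimates of this form. Recall that for $x\in \Omega$ and a nonempty measurable $A\subset \Omega$, the convex distance is
$$
d_T(x,A)\stackrel{\operatorname{def}}{=}\sup_{\alpha\in \mathbb R^N_{\ge 0},\,\|\alpha\|_2\le 1}\ \inf_{y\in A}\ \sum_{i:\,x_i\ne y_i}\alpha_i,
$$
and Talagrand's inequality asserts that $\int_\Omega e^{d_T(x,A)^2/4}\,\mathrm d\Pb(x)\le 1/\Pb(A)$. I would take this as a black box and apply it with $A=\{y\in\Omega:h(y)\le b-t\sqrt{f(b)}\}$; both $\Pb(A)$ and $\Pb[h\ge b]$ may be assumed positive, else the claim is trivial.

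The core step is to show $d_T(x,A)\ge t$ whenever $h(x)\ge b$. For such $x$, the $f$-certifiability hypothesis yields an index set $I=I(x)\subset\{1,\dots,N\}$ with $|I|\le f(b)$ such that every $z$ agreeing with $x$ on $I$ satisfies $h(z)\ge b$. Take the unit test vector $\alpha_i=|I|^{-1/2}\mathbf 1_{i\in I}$. Given any $y\in A$, define $y'\in\Omega$ by overwriting the coordinates of $y$ indexed by $I$ with those of $x$. Then $y'$ agrees with $x$ on $I$, so $h(y')\ge b$, while $y'$ and $y$ differ exactly on $\{i\in I:x_i\ne y_i\}$. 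The Lipschitz hypothesis therefore gives
$$
t\sqrt{f(b)}\le h(y')-h(y)\le \big|\{i\in I:x_i\ne y_i\}\big|,
$$
so $\sum_{i:\,x_i\ne y_i}\alpha_i\ge |I|^{-1/2}\cdot t\sqrt{f(b)}\ge t$. Taking the infimum over $y\in A$ and the supremum over admissible $\alpha$ yields $d_T(x,A)\ge t$, as claimed.

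Combining the two ingredients, restrict the integral in Talagrand's inequality to $\{x:h(x)\ge b\}$:
$$
\Pb[h\ge b]\cdot e^{t^2/4}\le \int_{\{h\ge b\}}e^{d_T(x,A)^2/4}\,\mathrm d\Pb(x)\le \frac{1}{\Pb[h\le b-t\sqrt{f(b)}]},
$$
which rearranges into the claimed bound. The only conceptual obstacle is the convex distance inequality itself; its standard proof proceeds by induction on the number of product coordinates together with an optimization over $\alpha$, and I would cite it (e.g.\ Talagrand 1995 or Alon--Spencer, Chapter 7) rather than reproduce it here. Once it is granted, the argument above is a short bookkeeping exercise translating the certification and Lipschitz hypotheses into the required lower bound on $d_T$.
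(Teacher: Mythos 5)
Your proof is correct: the reduction from the convex distance inequality (certify with the index set $I$, test vector $\alpha_i=|I|^{-1/2}\mathbf 1_{i\in I}$, and the Lipschitz bound giving $d_T(x,A)\ge t$ on $\{h\ge b\}$) is exactly the standard argument. The paper does not prove this proposition at all but simply cites \cite[Section~7.7]{Alon}, and the derivation you give is precisely the one found there, so your proposal matches the paper's (cited) route.
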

The proof of this powerful inequality can be found in, e.g. \cite[Section~7.7]{Alon}. Now we are ready to give the proof of Proposition~\ref{prop-concentration-of-max-O-pi}. Recall the definition of admissibility in Definition~\ref{def-admissible}.
\begin{proof}[Proof of Proposition~\ref{prop-concentration-of-max-O-pi}]
Fix an admissible graph $G$. For our purposes, we consider $\Omega=\{0,1\}^{\binom{n}{2}}$ and $\Pb$ be the product measure of $\binom{n}{2}$ Bernoulli variables with parameter $p$. Denote $$X = h(\Gs)\triangleq \max\limits_{\pi\in \operatorname{S}_n}{\operatorname{O}}(\pi)\,,$$ where we view $\Gs$ as a vector in $\{0,1\}^{\binom{n}{2}}$. It is trivial to verify that $h$ is Lipschitz.

Furthermore, we claim that the function $h$ defined above is $f$-certifiable, where $f(s) = \lceil s \rceil$. To see this, suppose $h(x) \geq s$ for some $x = (\Gs_{i,j})_{(i,j)\in \operatorname{U}} \in \Omega$. Then, there exists a permutation $\pi^*$ and $\lceil s \rceil$ unordered pairs $(i_k,j_k)\in \operatorname{U}$, $1 \leq k \leq \lceil s \rceil$, such that $G_{i_k, j_k} = 1$ and $\Gs_{\pi^*(i_k),\pi^*(j_k)} = 1$ for each $k$. We can select $I$ as the set ${(\pi^*(i_k), \pi^*(j_k)), 1 \leq k \leq \lceil s \rceil}$, which contains $\lceil s \rceil$ edges. For any $\Gs'$ that includes all the edges in $I$, it holds that $h(\Gs') \geq \operatorname{O}(\pi^*) \geq s$.

As a result, Proposition~\ref{prop-talagrand-concentration-ineq} yields that for any $b, t\ge 0$, 
\begin{equation}\label{eq-talagrand-ineq}
\Pb[X\le b-t\sqrt{\lceil b\rceil}]\cdot\Pb[X\ge b]\le e^{-t^2/4}\,.
\end{equation}
%Then, we recall the Talagrand inequality, stated as Theorem 7.7.1 in \cite{AlonProbabilistic}:
%Thus we can apply the above theorem in our case with $b \triangleq Median(X)$ and $f(s)\triangleq s$, which yields:
%$$ \Pb \left(X \leq Median(X) - t\sqrt{Median(X)} \right)\leq 2e^{-\frac{t^2}{4}} , $$
%switching $t$ to $\frac{t}{\sqrt{Median(X)}}$, we have:
Taking $b=\operatorname{Median}(X)$ in \eqref{eq-talagrand-ineq} and by an easy transformation, we get for any $t\ge 0$,
$$ \Pb \left[X - \operatorname{Median}(X) \leq - t \right]\leq 2\exp\left(-\frac{t^2}{4 \lceil \operatorname{Median}(X)\rceil} \right)\,. $$
In addition, for any $r\ge 0$, taking $b=\operatorname{Median}(X)+r, t={r}/{\sqrt{b}}$ in \eqref{eq-talagrand-ineq} yields:
\begin{align*}
\Pb \left[X - \operatorname{Median}(X) \geq r \right]\leq&\ 2\exp\left(-\frac{r^2}{4 (\lceil \operatorname{Median}(X)+r\rceil)}\right)\\
\le&\ 2\exp\left(-\frac{r^2}{8\lceil \operatorname{Median}(X)\rceil}\right)+2\exp\left(-\frac{r^2}{8\lceil r\rceil}\right)\,. 
\end{align*}

From \eqref{eq-fixed-graph-edge-concentration} in admissibility and the proof of the upper bounds in Section~\ref{subsec-info-upper}, we can deduce that for any admissible graph $G$, the quantity $\operatorname{Median}(X)$ is asymptotically given by $\mathbb{E} \operatorname{O}(\pi) \sim n^2p^2/2$. In particular, we have $\lceil \operatorname{Median}(X) \rceil \leq n^2p^2$. Now, we claim that $|\mathbb{E}X - \operatorname{Median}(X)| = o(D_{n,p})$. To see this, we can use the aforementioned tail estimates, which imply that
\begin{equation}
\begin{aligned}
    &\ |\Eb X - \operatorname{Median}(X)|  \leq \Eb [| X -\operatorname{Median}(X)| ]\\
    =&\ \int_{0}^{+\infty} \Pb \left( | X - \operatorname{Median}(X)| \geq t \right) \operatorname{d} t\\
     \leq&\   \int_{0}^{+\infty}\left[2\exp\left(-\frac{t^2}{4n^2p^2}\right)+2\exp\left(-\frac{t^2}{8n^2p^2}\right)+2\exp\left(-\frac{t^2}{8\lceil t\rceil}\right)\right]\operatorname{d} t \\
     =&\ O(np)=o(D_{n,p})\,.
\end{aligned}
\end{equation}
As a result, we see for $n$ large enough,
\begin{equation*}
\begin{aligned}
&\ \Pb\left[|X-\Eb X|\geq \varepsilon D_{n,p} \right]\leq \Pb\left[|X-\operatorname{Median}(X)|\geq \varepsilon D_{n,p}/2 \right] \\
\le&\ 2\exp\left(-\frac{\varepsilon^2 D_{n,p}^2}{16n^2p^2}\right)+2\exp\left(-\frac{\varepsilon^2D_{n,p}^2}{32n^2p^2)}\right)+2\exp\left(-\frac{\varepsilon D_{n,p}}{16}\right)\\
=&\ \exp\big(-\Omega(n\log n)\big)\,.
\end{aligned}
\end{equation*}
Note that $X-\Eb X=\max_{\pi\in \operatorname{S}_n}\widetilde{\operatorname{O}}(\pi)-\mathbb E\max_{\pi\in \operatorname{S}_n}\widetilde{\operatorname{O}}(\pi)$, the desired result follows.
\end{proof}
\subsection{Proof of Lemma~\ref{lem-HG-control}}
\begin{proof}%[Proof of Lemma~\ref{lem-HG-control}]
Recall that 
\[
I_s=\Big[|\operatorname{N}_s|p+\sqrt{2(1-\eta)|\operatorname{N}_s|p\log n},|\operatorname{N}_s|p+\sqrt{10|\operatorname{N}_s|p\log n}\Big]\,.
\]
We denote %$x^*=|\operatorname{N}_s|p+\sqrt{20|\operatorname{N}_s|p\log n}$, 
$p^*=\sqrt{100p\log n/|\operatorname{N}_s|}\le \sqrt{100/\eta}\cdot p_c\ll p$, and $\Pb^*$ as the  product measure of $|\operatorname{N}_s|$ independent Bernoulli variables $\mathbf B(1,p+p^*)$ on $\Omega_1$. 
 For any $x\in I_s$, we claim that there exists a coupling $\{(\omega,\omega'):\omega\sim \Pb_{s,x},\omega'\sim\Pb^*\}$ such that with probability $1-o(1/n)$, it holds $\omega_j\le \omega_j'$ for all $j\in \operatorname{N}_s$.

To prove the existence of such a coupling, we start with the following observation: 
for $\omega=(g_j,\dots)_{j\in \operatorname{N}_s}$ sampled from $\Pb_{s,x}$ (resp. $\Pb^*$), given that $\sum_{j\in \operatorname{N}_s}g_j=K$, the conditional distribution of $\omega$ can be characterized by uniformly sampling a subset $A$ of $\operatorname{N}_s$ with $|A|=K$ and setting $g_j=\mathbf{1}_{j\in A}$. It is easy to see that for any $K_1\le K_2\le |\operatorname{N}_s|$, there exists a coupling between two uniformly sampled subsets $A_1,A_2\subset \operatorname{N}_s$ with $|A_i|=K_i$ for $i=1,2$, such that $A_1\subset A_2$ holds with probability one.

Based on these observations, it suffices to show the existence of a coupling $(\omega,\omega')$ between $\Pb_{s,x}$ and $\Pb^*$ such that with probability $1-o(1/n)$, $\sum_{j\in \operatorname{N}_s}\omega_j\le \sum_{j\in \operatorname{N}_s}\omega_j'$. This can be verified as follows: with $y=|\operatorname{N}_s|p+\sqrt{16|\operatorname{N}_s|p\log n}$, it holds
\[
\Pb_{s,x}[E_{r,s}\ge y]\le \frac{\Pb[E_{r,s}\ge y]}{\Pb[E_{r,s}\ge x]}\le \frac{\exp(-(8+o(1))\log n)}{\exp(-(5+o(1))\log n}=o(1/n)\,,
\]
and (since $\mathbb{E}_{\Pb^*} [E_{r,s}]\ge |\operatorname{N}_s|p+10\sqrt{|\operatorname{N}_s|p\log n}\ge y+6\sqrt{|\operatorname{N}_s|p\log n}$) 
\[
\Pb^*[E_{r,s}\le y]\le \exp(-(18+o(1)\log n)=o(1/n)\,.
\]
Hence, the desired coupling exists.

Note that the event $\{\omega_1\notin \Omega_s^*\}$ is an increasing event. Therefore, with the aforementioned coupling, it suffices to show that $\Pb^*[\omega_1\notin \Omega_s^*]=o(1/n)$. It is clear that under $\Pb^*$,  $\sum_{j\in \operatorname{N}_k\cap \operatorname{N}_s}g_j\sim \mathbf B(|\operatorname{N}_k\cap \operatorname{N}_s|,p+p^*)$. Recall that under $\Gc_0$, we have $|\operatorname{N}_k\cap \operatorname{N}_s|\le 2np^2$ for all $a_\eta<k<s$. The remaining proof is just simple applications of binomial tail estimates and then taking union bounds, which we omit the detailed calculations.
\end{proof}
\subsection{Proof of Lemma~\ref{lem-asmptotic-of-tail}}
\begin{proof}%[Proof of Lemma~\ref{lem-asmptotic-of-tail}]
    Since by definition, 
    $$F_k^*(O_k)=\Pb\Big[\sum_{j\in \operatorname{N}_k}G^*_{r^*,\pi^*(j)}\le O_k\Big]\le \Pb\Big[\sum_{j\in \operatorname{N}_k}G_{r^*,\pi^*(j)}\le O_k-1\Big]\,,$$ it suffices to show 
    \[
    \Pb\Big[\sum_{j\in \operatorname{N}_k\setminus \operatorname{N}_s}G_{r^*,\pi^*(j)}\leq
            O_k-T_k-1\Big]
            \ge  \Pb\Big[\sum_{j\in \operatorname{N}_k}G_{r^*,\pi^*(j)}\le O_k-1\Big]-\frac{2\delta(n)(c_k+1)}{n-k+1}\,.
    \]
    %which is then lower bounded by $F_k^*(O_k)-o(1/n)$ by definition.
    It is clear that $$\sum_{j\in \operatorname{N}_k\setminus\operatorname{N}_s}G_{r^*,\pi^*(j)}\stackrel{\text{def}}{=}S_{k,s}\sim\mathbf{B}(|\operatorname{N}_k\setminus\operatorname{N}_s|,p
    )\,,\quad \sum_{j\in \operatorname{N}_k}\Gs_{r^*,\pi^*(j)}\stackrel{\text{def}}{=}P_k\sim \mathbf B(|\operatorname{N}_k|,p)\,. $$
    We introduce an auxiliary random variable $Y\sim\mathbf{B}(|\operatorname{N}_k\cap\operatorname{N}_s|,p)$ which is independent of everything before, then
    \begin{equation}\label{eq-OkTk-approx}
    %\begin{split}
         \Pb\Big[\sum_{j\in \operatorname{N}_k\setminus \operatorname{N}_s}G_{r^*,\pi^*(j)}\leq
            O_k-T_k-1\Big]=\Pb\left[S_{k,s}+Y\leq O_k-T_k-1+Y\right]\,.%\\
        %=&\Pb[\mathbf{B}(|\operatorname{N}_k\setminus\operatorname{N}_s|,p)+Y\leq O_k-T_k+Y]\,.\\
    %\end{split}
    \end{equation}
    The analysis of \eqref{eq-OkTk-approx} varies in details for $p$ in different regimes. %  The key is to control the magnitude of $Y$ and to compare the tail probability. 
    We take $S_k$ to be
    \begin{equation}\label{eq-Sk}
        S_k=
        \begin{cases}
            |\operatorname{N}_k\cap\operatorname{N}_s|p-\sqrt{10np^3\log n}\,,& np^3\geq (\log n)^3\mbox{ and }p\ll1/(\log n)^4\,,\\
            0\,,&np^3\ll (\log n)^3\text{ and }p\gg p_c\,.
            \end{cases}
    \end{equation}
    From \eqref{eq-chernoff-bound-lower} and the fact that $|\operatorname{N}_k\cap \operatorname{N}_s|\le 2np^2$ we see $\Pb[Y\geq S_k]=1-o(1/n^2)$. Noting that $S_{k,s}+Y\sim \mathbf{B}(|\operatorname{N}_k|,p)$, thus the left hand side of \eqref{eq-OkTk-approx} is lower bounded by 
    \begin{equation}\label{eq-add-Y}
    %\begin{split}
        %&\Pb[\mathbf{B}(|\operatorname{N}_k\setminus\operatorname{N}_s|,p)+Y\leq O_k-T_k+Y,Y\geq S_k]\\
        \Pb[\mathbf{B}(|\operatorname{N}_k|,p)\leq O_k-T_k-1+S_k]-o(1/n^2)\,.%\\
        %\geq &\Pb[\mathbf{B}(|\operatorname{N}_k|,p)\leq O_k-T_k+S_k]-o(1/n)\,.
    %\end{split}
    \end{equation}
    Recall that $\delta(n)=2/\sqrt{\log n}$, in order to prove the claim, it suffices to show that 
    \begin{equation}
        \label{eq-compare-tail-SkTk}
        \frac{\Pb\left[O_k-T_k-1+S_k\leq \mathbf{B}(|\operatorname{N}_k|,p)\leq O_k-2\right]}{\Pb\left[\mathbf{B}(|\operatorname{N}_k|,p)\geq O_k-1\right]}\leq \delta(n)\,,
    \end{equation}
    %for some $\alpha(n)\to 0$ which depends on the regime.
    since this together with \eqref{eq-add-Y} would imply that
    \begin{align*}
        &\Pb[S_{k,s}\le O_k-T_k-1]-\Pb[P_k\le O_k-1]
        \\
        \ge&-\Pb[O_k-T_k-1+S_k\le \mathbf B(|\operatorname{N}_k|,p)\le O_k-2]-o(1/n^2)\\
        \ge&-\delta(n)\Pb[\mathbf B(|\operatorname{N}_k|,p)\ge O_k-1]-o(1/n^2)\\
        \ge&-\frac{2\delta(n)(c_k+1)}{n-k+1}\,,
    \end{align*}
    leading to the desired result.
    Here the last inequality exploits the fact that from the assumption in $\mathcal G_{s-1}$, it holds $$
    \Pb[P_k\ge O_k-1]\ge\Pb[E_{r,k}\ge O_k]\ge \frac{\big(1+o(1)\big)c_k}{n-k+1}\,.
    $$
    %Also note that  $\mathcal{G}_{s-1}$ implies $O_k=|\operatorname{N}_k|p+M_k$ for some  %and that $$\Pb[\mathbf{B}(|\operatorname{N}_k|,p)\geq O_k]=(1+o(1))\frac{c_k}{n-k+1}\,.$$ 
    To prove \eqref{eq-compare-tail-SkTk}, write $O_k=|\operatorname{N}_k|p+M_k$, we compute the binomial one-point density for each $t=|\operatorname{N}_k|p+M_k+r_k$ for $|r_k|\leq R_k$, where
    \begin{equation}
        R_k=
        \begin{cases}
            \sqrt{np^{2.5}\log n}\,,&np^3\geq (\log n)^3\text{ and }p\ll1/(\log n)^4\,,\\
            (\log n)^4\,,&n^{-0.1}\leq np^3\leq (\log n)^3\,,\\
            \sqrt{\log n}\,,&p\gg p_c\text{ and }np^3\leq n^{-0.1}\,.
        \end{cases}
        \notag
    \end{equation}
    Note that $\mathcal G_{s-1}$ implies 
    $\sqrt{2(1-\eta)|\operatorname{N}_k|p\log n}\leq M_k\leq \sqrt{10np^2\log n}$.
    Write $N=|\operatorname{N}_k|$, by Stirling's formula, we have $\Pb[\mathbf B(N,p)=t]=\binom{N}{t}p^t(1-p)^{N-t}$ equals to $1+o(1)$ times
    \begin{equation*}
        \begin{aligned}
            &\ \frac{\sqrt{2\pi N}N^Ne^{-N}}{\sqrt{2\pi t}t^te^{-t}\sqrt{2\pi(N-t)}(N-t)^{N-t}e^{-(N-t)}}p^t(1-p)^{N-t}\\
            =&\ \frac{1}{\sqrt{2\pi}}\exp\left\{N\log N-t\log t-(N-t)\log(N-t)-\frac{1}{2}\log t\right\}p^t(1-p)^{N-t}\\
             =&\ \frac{1+o(1)}{\sqrt{2\pi Np}}\exp\left\{N\log N-t\log t-(N-t)\log(N-t)\right\}p^t(1-p)^{N-t}\\
            =&\ \frac{1+o(1)}{\sqrt{2\pi Np}}\exp\left\{-t\log\left(\frac{t}{Np}\right)-(N-t)\log\left(\frac{N-t}{N(1-p)}\right)\right\}\\
            =&\ \frac{1+o(1)}{\sqrt{2\pi Np}}\exp\left\{-\left(Np+M_k\right)\log\left(1+\frac{M_k+r_k}{Np}\right)-(N(1-p)-M_k)\log\left(1-\frac{M_k+r_k}{N(1-p)}\right)\right\}\\
            =&\ \frac{1+o(1)}{\sqrt{2\pi Np}}\exp\left\{-\left(Np+M_k\right)\log\left(1+\frac{M_k}{Np}\right)-(N(1-p)-M_k)\log\left(1-\frac{M_k}{N(1-p)}\right)\right\}\,,
        \end{aligned}
    \end{equation*}
    where the approximation in the last line is uniform for $r_k$ with $|r_k|\leq R_k$. Let $$\mathcal{E}(k,p)=\left(Np+M_k\right)\log\left(1+\frac{M_k}{Np}\right)+(N(1-p)-M_k)\log\left(1-\frac{M_k}{N(1-p)}\right)\,,$$
    with the aforementioned one-point estimates, we can conclude by only considering the first $R_k$ terms in $\Pb[\mathbf B(N,p)\ge O_k-1]=\sum_{t=-1}^{\infty}\Pb[\mathbf B(N,p)=O_k+t]$ that the denominator in \eqref{eq-compare-tail-SkTk} is lower bounded by $1+o(1)$ times $$\frac{R_k}{\sqrt{2\pi Np}}\exp\left(-\mathcal{E}(k,p)\right)\,.$$ 
    In addition, from the definition of $T_k,S_k$ in \eqref{eq-HG-upper-bound-control} and \eqref{eq-Sk}, we see that for any $t\in [O_k-T_k-1+S_k,O_k-2]$, it holds $|t-O_k|\le R_k$. Therefore, the one-point estimate above can be applied to each term in $\Pb[O_k-T_k-1+S_k\le \mathbf B(N,p)\le O_k-2]=\sum_{t=-T_k-1+S_k}^{-2}\Pb[\mathbf B(N,p)=O_k+t]$,
    which yields that the numerator in \eqref{eq-compare-tail-SkTk} is upper bounded by $1+o(1)$ times $$\frac{T_k-S_k}{\sqrt{2\pi Np}}\exp\left(-\mathcal{E}(k,p)\right)\,.$$  Therefore, uniformly in the three regimes (where we essentially make use of the assumption that $p\le 1/(\log n)^4$), the upper bound of \eqref{eq-compare-tail-SkTk} is given by: $$\big(1+o(1)\big)\cdot\frac{T_k-S_k}{R_k}\leq \frac{1+o(1)}{\sqrt{\log n}}\le \delta(n)\,,$$
    completing the proof. %uniformly for the three regimes (this is where the assumption $p\le 1/(\log n)^4$ is used). This implies $$\Pb[\mathbf{B}(|\operatorname{N}_k|,p)\leq O_k-T_k+S_k]\geq 1-\alpha(n)\frac{c_k}{n-k+1}\,.$$
    %Combining with \eqref{eq-OkTk-approx} and \eqref{eq-add-Y}, we finish the proof.
\end{proof}
\end{appendix}

%\bibliography{sn-bibliography}% common bib file

\begin{thebibliography}{10}

\bibitem{Alon}
N. Alon and J. H. Spencer.
\newblock The probabilistic method.
\newblock Wiley Publishing, 4th edition, 2016.



\bibitem{BBM05}
A.~Berg, T.~Berg, and J.~Malik.
\newblock Shape matching and object recognition using low distortion
  correspondences.
\newblock In  2005 IEEE Computer Society Conference on Computer Vision and
  Pattern Recognition (CVPR'05), volume~1, pages 26--33 vol. 1, 2005.

\bibitem{BCL19}
B. Barak, C.-N. Chou, Z. Lei, T. Schramm, and Y. Sheng. 
\newblock (Nearly) efficient algorithms for
the graph matching problem on correlated random graphs. 
\newblock In  Advances in Neural Information
Processing Systems, volume 32. Curran Associates, Inc., 2019.

\bibitem{BCPP98}
R. E. Burkard, E. Cela, P. M. Pardalos, and L. S. Pitsoulis. 
\newblock The
quadratic assignment problem.
\newblock In Handbook of combinatorial optimization, pages
1713–1809. Springer, 1998.

\bibitem{BPW18}
A.~S. Bandeira, A.~Perry, and A.~S. Wein.
\newblock Notes on computational-to-statistical gaps: predictions using
  statistical physics.
\newblock  Port. Math., 75(2):159--186, 2018.

\bibitem{BSH19}
M.~Bozorg, S.~Salehkaleybar, and M.~Hashemi.
\newblock Seedless graph matching via tail of degree distribution for correlated Erd\H{o}s-R\'enyi graphs.
\newblock Preprint, arXiv:1907.06334.

\bibitem{CG22}
H. E. Cheairi and D. Gamarnik.
\newblock Densest Subgraphs of a Dense Erd\H{o}s-Rényi Graph. Asymptotics, Landscape and Universality.
\newblock Preprint, arXiv:2212.03925.

\bibitem{CK17}
D.~Cullina and N.~Kiyavash.
\newblock Exact alignment recovery for correlated Erd\H{o}s-R\'enyi graphs.
\newblock Preprint, arXiv:1711.06783.

\bibitem{CK16}
D.~Cullina and N.~Kiyavash.
\newblock Improved achievability and converse bounds for Erd\H{o}s-R\'enyi graph
  matching.
\newblock In Proceedings of the 2016 ACM SIGMETRICS International
  Conference on Measurement and Modeling of Computer Science, SIGMETRICS '16,
  page 63–72, New York, NY, USA, 2016. Association for Computing Machinery.

\bibitem{CKMP19}
D.~Cullina, N.~Kiyavash, P.~Mittal, and H.~V. Poor.
\newblock Partial recovery of Erd\H{o}s-R\'enyi graph alignment via $k$-core alignment.
\newblock SIGMETRICS '20, pages 99--100, New York, NY, USA, 2020. Association for Computing Machinery.

\bibitem{CSS07}
T.~Cour, P.~Srinivasan, and J.~Shi.
\newblock Balanced graph matching.
\newblock In B.~Sch\"{o}lkopf, J.~Platt, and T.~Hoffman, editors, Advances
  in Neural Information Processing Systems, volume~19. MIT Press, 2006.

\bibitem{DCKG19}
O.~E. Dai, D.~Cullina, N.~Kiyavash, and M.~Grossglauser.
\newblock Analysis of a canonical labeling algorithm for the alignment of correlated Erd\H{o}s-R\'{e}nyi graphs.
\newblock Proc. ACM Meas. Anal. Comput. Syst., 3(2), jun 2019.

\bibitem{DD22a}
J.~Ding and H.~Du.
\newblock Detection threshold for correlated Erd\H{o}s-R\'enyi graphs via densest
  subgraph.
\newblock In IEEE Transactions on Information Theory, vol. 69, no. 8, pp. 5289-5298, Aug. 2023

\bibitem{DD22b}
J.~Ding and H.~Du.
\newblock Matching recovery threshold for correlated random graphs.
\newblock Ann. Statist. 51(4): 1718-1743, Aug. 2023


\bibitem{DDG22}
J. Ding, H. Du, and S. Gong. 
\newblock A polynomial-time approximation scheme for the maximal
overlap of two independent Erd\H{o}s-R\'enyi graphs. 
\newblock  Random Struct. Alg. 65 (2024), 220–257.

\bibitem{DL22+}
J.~Ding and Z.~Li.
\newblock A polynomial time iterative algorithm for matching Gaussian matrices with non-vanishing correlation.
\newblock Foundations of Computational Mathematics, 2024

\bibitem{DL23+}
J.~Ding and Z.~Li.
\newblock A polynomial-time iterative algorithm for random graph matching with non-vanishing correlation.
\newblock Preprint, arXiv:2306.00266.

\bibitem{DMWX21}
J.~Ding, Z.~Ma, Y.~Wu, and J.~Xu.
\newblock Efficient random graph matching via degree profiles.
\newblock Probab. Theory Relat. Fields, 179(1-2):29--115, 2021.

\bibitem{FMWX22a}
Z.~Fan, C.~Mao, Y.~Wu, and J.~Xu.
\newblock Spectral graph matching and regularized quadratic relaxations: Algorithm and theory.
\newblock Foundations of Computational Mathematics, 2022.

\bibitem{FMWX22b}
Z.~Fan, C.~Mao, Y.~Wu, and J.~Xu.
\newblock Spectral graph matching and regularized quadratic relaxations {II}: Erd\H{o}s-R\'enyi graphs and universality.
\newblock Foundations of Computational Mathematics, 2022.

\bibitem{FQRM+16}
S.~Feizi, G.~Quon, M.~Medard, M.~Kellis, and A.~Jadbabaie.
\newblock Spectral alignment of networks.
\newblock Preprint, arXiv:1602.04181.

\bibitem{Garmarnik21}
D.~Gamarnik.
\newblock The overlap gap property: A topological barrier to optimizing over
  random structures.
\newblock Proceedings of the National Academy of Sciences,
  118(41):e2108492118, 2021.

\bibitem{GL18}
D.~Gamarnik and Q. Li.
\newblock Finding a large submatrix of a Gaussian random matrix.
\newblock Ann. Stat. 46(6A):2511-2561, 2018.


\bibitem{GMZ22}
D. Gamarnik, C. Moore and L. Zdeborová.
\newblock Disordered Systems Insights on Computational Hardness.
\newblock J. Stat. Mech. (2022) 114015.

\bibitem{GKPX23+} D. Gamarnik, E. C. Kızıld\H{a}g, W. Perkins, and C. Xu.
\newblock Geometric Barriers for Stable and Online Algorithms
for Discrepancy Minimization.
\newblock Proceedings of Thirty Sixth Conference on Learning Theory, PMLR 195:3231-3263, 2023.

\bibitem{GS14}
D. Gamarnik and M. Sudan. 
\newblock Limits of local algorithms over sparse random graphs.
\newblock In Proceedings of the 5th conference on Innovations in theoretical computer science, pages
369–376. ACM, 2014.

\bibitem{GZ19+}
D.~Gamarnik and I.~Zadik.
\newblock The landscape of the planted clique problem: Dense subgraphs and the
  overlap gap property.
\newblock Ann. Appl. Probab. 34(4): 3375-3434, Aug. 2024.

\bibitem{GM20}
L.~Ganassali and L.~Massouli\'e.
\newblock From tree matching to sparse graph alignment.
\newblock In J.~Abernethy and S.~Agarwal, editors, Proceedings of Thirty Third Conference on Learning Theory, volume 125 of Proceedings of Machine Learning Research, pages 1633--1665. PMLR, 09--12 Jul 2020.

\bibitem{GML20}
L. Ganassali, L. Massoulié, and M. Lelarge
\newblock Correlation detection in trees for planted graph alignment
\newblock Ann. Appl. Probab. 34(3): 2799-2843, Jun. 2024.

\bibitem{GMS22+}
L.~Ganassali, L.~Massouli\'e, and G.~Semerjian.
\newblock Statistical limits of correlation detection in trees.
\newblock Ann. Appl. Probab. 34 (4): 3701 - 3734, Aug. 2024.

\bibitem{HM20}
G.~Hall and L.~Massouli\'e.
\newblock Partial recovery in the graph alignment problem.
\newblock Operations Research 2023 71(1): 259-272.

\bibitem{HNM05}
A.~Haghighi, A.~Ng, and C.~Manning.
\newblock Robust textual inference via graph matching.
\newblock In Proceedings of Human Language Technology Conference and
  Conference on Empirical Methods in Natural Language Processing, pages
  387--394, Vancouver, British Columbia, Canada, Oct 2005.

\bibitem{HS21}
B. Huang and M. Sellke, 
\newblock Tight Lipschitz Hardness for optimizing Mean Field Spin Glasses,
\newblock 2022 IEEE 63rd Annual Symposium on Foundations of Computer Science (FOCS), Denver, CO, USA, 2022, pp. 312-322.

\bibitem{HSS25}
B. Huang, M. Sellke and N. Sun.
\newblock Algorithmic threshold for random perceptron models.
\newblock In prepration.

\bibitem{KHG15}
E.~Kazemi, S.~H. Hassani, and M.~Grossglauser.
\newblock Growing a graph matching from a handful of seeds.
\newblock Proc. VLDB Endow., 8(10):1010--1021, jun 2015.

\bibitem{KHK11}
M. El-Kebir, J. Heringa, and G. W. Klau. 
\newblock Lagrangian relaxation
applied to sparse global network alignment. 
\newblock In Pattern Recognition in Bioinformatics. Springer, 2011, pp. 225–236.

\bibitem{KP11}
O. Kuchaiev and N. Pržulj. 
\newblock Integrative network alignment reveals large
regions of global network similarity in yeast and human. 
\newblock In Bioinformatics 27.10
(2011), pp. 1390–1396.

\bibitem{LFP14}
V.~Lyzinski, D.~E. Fishkind, and C.~E. Priebe.
\newblock Seeded graph matching for correlated Erd\H{o}s-R\'{e}nyi graphs.
\newblock J. Mach. Learn. Res., 15:3513--3540, 2014.

\bibitem{LS24}
S.~Li and T.~Schramm.
\newblock Some easy optimization problems have the overlap-gap property.
\newblock Preprint. arXiv: 2411.01836.

\bibitem{MMS10}
K. Makarychev, R. Manokaran, and M. Sviridenko. \newblock Maximum
quadratic assignment problem: Reduction from maximum label cover and lp-based
approximation algorithm. 
\newblock Automata, Languages and Programming, pages 594–604,
2010.

\bibitem{Montanari19}
A.~Montanari.
\newblock Optimization of the Sherrington-Kirkpatrick hamiltonian.
\newblock In 2019 IEEE 60th Annual Symposium on Foundations of Computer
  Science (FOCS), pages 1417--1433, 2019.


%\bibitem{MRS21}
%P.~Manurangsi, A.~Rubinstein, and T.~Schramm.
%\newblock {The Strongish Planted Clique Hypothesis and Its Consequences}.
%\newblock In J.~R. Lee, editor, {\em 12th Innovations in Theoretical Computer
 % Science Conference (ITCS 2021)}, volume 185 of {\em Leibniz International
  %Proceedings in Informatics (LIPIcs)}, pages 10:1--10:21, Dagstuhl, Germany,
  %2021. Schloss Dagstuhl--Leibniz-Zentrum f{\"u}r Informatik.

\bibitem{MRT23}
C.~Mao, M.~Rudelson, and K.~Tikhomirov.
\newblock Exact matching of random graphs with constant correlation.
\newblock Probab. Theory Relat. Fields 186, 327–389, 2023.

\bibitem{MWXY21+}
C.~Mao, Y.~Wu, J.~Xu, and S.~H. Yu.
\newblock Testing network correlation efficiently via counting trees.
\newblock Ann. Statist. 52 (6):2483 - 2505, Dec. 2024.

\bibitem{MWXY23+}
C.~Mao, Y.~Wu, J.~Xu, and S.~H.~Yu, 
\newblock Random graph matching at Otter's threshold via counting chandeliers.
\newblock STOC 2023: Proceedings of the 55th Annual ACM Symposium on Theory of Computing
Pages 1345 - 1356.

\bibitem{MX20}
E.~Mossel and J.~Xu.
\newblock Seeded graph matching via large neighborhood statistics.
\newblock Random Structures Algorithms, 57(3):570--611, 2020.

 \bibitem{NS08}
A.~Narayanan and V.~Shmatikov.
\newblock Robust de-anonymization of large sparse datasets.
\newblock In 2008 IEEE Symposium on Security and Privacy (sp 2008), pages
  111--125, 2008.

\bibitem{NS09}
A.~Narayanan and V.~Shmatikov.
\newblock De-anonymizing social networks.
\newblock In 2009 30th IEEE Symposium on Security and Privacy, pages
  173--187, 2009.

\bibitem{PG11}
P.~Pedarsani and M.~Grossglauser.
\newblock On the privacy of anonymized networks.
\newblock In Proceedings of the 17th ACM SIGKDD International Conference on Knowledge Discovery and Data Mining, KDD '11, pages 1235--1243, New York, NY, USA, 2011. Association for Computing Machinery.

\bibitem{PRW94}
P. M. Pardalos, F. Rendl, and H. Wolkowicz. \newblock The quadratic assignment
problem: A survey and recent developments. 
\newblock In Proceedings of the DIMACS Workshop on Quadratic Assignment Problems, volume 16 of DIMACS Series in Discrete
Mathematics and Theoretical Computer Science, pages 1–42. American Mathematical
Society, 1994.

\bibitem{RSS19}
P.~Raghavendra, T.~Schramm, and D.~Steurer.
\newblock High dimensional estimation via sum-of-squares proofs.
\newblock In Proceedings of the {I}nternational {C}ongress of
  {M}athematicians---{R}io de {J}aneiro 2018. {V}ol. {IV}. {I}nvited lectures,
  pages 3389--3423. World Sci. Publ., Hackensack, NJ, 2018.

\bibitem{RV17}
M. Rahman and B. Virág. 
\newblock Local algorithms for independent sets are half-optimal.
\newblock Ann. Probab., 45(3):1543–1577, 2017.

\bibitem{SGE17}
F.~Shirani, S.~Garg, and E.~Erkip.
\newblock Seeded graph matching: Efficient algorithms and theoretical
  guarantees.
\newblock In 2017 51st Asilomar Conference on Signals, Systems, and Computers, pages 253--257, 2017.

\bibitem{Slud77}
E. V. Slud. 
\newblock Distribution inequalities for the binomial law. 
\newblock Ann. Probab.,
5(3):404–412, 1977.

\bibitem{Subag21}
E.~Subag.
\newblock Following the ground states of full-{RSB} spherical spin glasses.
\newblock Comm. Pure Appl. Math., 74(5):1021--1044, 2021.

\bibitem{SXB08}
R.~Singh, J.~Xu, and B.~Berger.
\newblock Global alignment of multiple protein interaction networks with
  application to functional orthology detection.
\newblock Proceedings of the National Academy of Sciences of the United
  States of America, 105:12763--8, 10 2008.

\bibitem{VCP15}
J.~T. Vogelstein, J.~M. Conroy, V.~Lyzinski, L.~J. Podrazik, S.~G. Kratzer,
  E.~T. Harley, D.~E. Fishkind, R.~J. Vogelstein, and C.~E. Priebe.
\newblock Fast approximate quadratic programming for graph matching.
\newblock PLOS ONE, 10(4):1--17, 04 2015.

\bibitem{Wein22}
A. S. Wein. 
\newblock Optimal low-degree hardness of maximum independent set. 
\newblock Mathematical
Statistics and Learning, 2022.

\bibitem{wu_xu_2021}
Y.~Wu and J.~Xu.
\newblock Statistical Problems with Planted Structures:
  Information-Theoretical and Computational Limits, page 383–424.
\newblock Cambridge University Press, 2021.

\bibitem{WXY20}
Y. Wu, J. Xu and S. H. Yu,
\newblock Testing correlation of unlabeled random graphs.
\newblock Ann. Appl. Probab. 33 (4) 2519 - 2558, Aug. 2023.

\bibitem{WXY21}
Y. Wu, J. Xu and S. H. Yu, \newblock Settling the Sharp Reconstruction Thresholds of Random Graph Matching,
\newblock 2021 IEEE International Symposium on Information Theory (ISIT), Melbourne, Australia, 2021, pp. 2714-2719.

\bibitem{YG13}
L.~Yartseva and M.~Grossglauser.
\newblock On the performance of percolation graph matching.
\newblock In Proceedings of the First ACM Conference on Online Social Networks, COSN '13, pages 119--130, New York, NY, USA, 2013. Association for Computing Machinery.

\bibitem{ZK16}
L.~Zdeborová and F.~Krzakala.
\newblock Statistical physics of inference: thresholds and algorithms.
\newblock Advances in Physics, 65(5):453--552, 2016.

\end{thebibliography}
%% if required, the content of .bbl file can be included here once bbl is generated
%%\input sn-article.bbl

\bibliographystyle{plain}
\small

\end{document}